\documentclass[12pt]{amsart}

\usepackage{fullpage}
\usepackage{enumerate,mathdots,mathrsfs,verbatim,amssymb}
\usepackage{color,url}

\numberwithin{equation}{section}

\newtheorem{prop}{Proposition}[section]
\newtheorem{theorem}[prop]{Theorem}
\newtheorem{cor}[prop]{Corollary}
\newtheorem{lemma}[prop]{Lemma}

\theoremstyle{definition}
\newtheorem{defn}[prop]{Definition}
\newtheorem{example}[prop]{Example}

\newtheorem{assume}[prop]{Assumption}
\theoremstyle{remark}
\newtheorem{rem}[prop]{Remark}

\newcommand{\A}{\mathscr{A}}

\newcommand{\C}{\mathbb{C}}
\newcommand{\D}{\mathcal{D}}
\newcommand{\F}{\mathcal{F}}

\renewcommand{\H}{\mathcal{H}}

\newcommand{\K}{\mathcal{K}}
\newcommand{\M}{\mathcal{M}}
\newcommand{\N}{\mathbb{N}}

\newcommand{\Q}{\mathbb{Q}}
\newcommand{\R}{\mathbb{R}}

\newcommand{\T}{\mathbb{T}}
\newcommand{\Z}{\mathbb{Z}}

\newcommand{\ve}{\varepsilon}
\newcommand{\ov}{\overline}

\newcommand{\Norm}[1]{\left\Vert #1 \right\Vert}

\newcommand{\caret}{\char`\^}

\renewcommand{\subset}{\subseteq}
\renewcommand{\supset}{\supseteq}

\DeclareMathOperator{\esssup}{ess\,sup}
\DeclareMathOperator{\essran}{ess\,ran}

\DeclareMathOperator{\spn}{span}
\DeclareMathOperator{\ran}{ran}
\DeclareMathOperator{\rank}{rank}

\begin{document}

\title{Multiplication-invariant operators and the classification of LCA group frames}
\author{Marcin Bownik}
\address{Department of Mathematics, University of Oregon, Eugene, OR 97403--1222, USA}
\email{mbownik@uoregon.edu}

\author{Joseph W. Iverson}
\address{Department of Mathematics, Iowa State University, Ames, IA 50011--2064, USA}
\email{jwi@iastate.edu}
\date{\today}

\thanks{ The first author was partially supported by NSF grant
  DMS-1665056. The second author was partially supported by ARO W911NF-16-1-0008.}

\subjclass{Primary: 42C15, 43A32,  47A15; Secondary: 43A65, 46C05}

\keywords{shift-invariant space, multiplication-invariant space, range function, range operator, dimension function, LCA group frame}

\begin{abstract} In this paper we study the properties of multiplication invariant (MI) operators acting on subspaces of the vector-valued space $L^2(X;\mathcal H)$. We characterize such operators in terms of range functions by showing that there is an isomorphism between the category of MI spaces (with MI operators as morphisms) and the category of measurable range functions whose morphisms are measurable range operators. We investigate how global properties of an MI operator are reflected by local pointwise properties of its corresponding range operator. We also establish several results about frames  generated by multiplications in $L^2(X;\mathcal H)$. This includes the classification of frames of multiplications with respect to unitary equivalence by measurable fields of Gramians. Finally, we show applications of our results in the study of abelian group frames and translation-invariant (TI) operators acting on subspaces of $L^2(\mathcal G)$, where $\mathcal G$ is a locally compact group. 
\end{abstract}

\maketitle

%INTRODUCTION===============================================================================
\section{Introduction}

The main goal of this paper is to develop the theory of multiplication-invariant (MI) operators and study its applications for abelian group frames and translation-invariant spaces. A general theory of MI subspaces of a vector-valued space $L^2(X;\mathcal H)$, where $X$ is a measure space and $\mathcal H$ is a separable Hilbert space, was developed by the first author and Ross \cite{BR}. The development of MI spaces extracts a measure-theoretic component from a characterization of shift-invariant (SI) spaces in terms of range functions which dates back to the work of Helson \cite{He}. This result has been extended in various settings such as $\R^n$ \cite{B, BDR2, BDR, RS}, locally compact abelian (LCA) groups \cite {CP, KR}, and more general translation-invariant spaces \cite{BHP2, BR, I}. It is worth adding that in the setting of MI spaces the role of characters of an LCA group is played by the concept of determining set---a collection of functions in $L^\infty(X)$ whose linear span is dense in the weak-* topology.

However, much less attention was devoted to the study of operators acting between SI spaces which are invariant under shifts. In the setting of $\R^n$ the study of shift-preserving operators was initiated by the first author \cite{B} who has shown a characterization of SI operators in terms of range operators. In the LCA setting SI operators were studied in \cite{KR2}. In this paper we extend this characterization to the general setting of MI operators by appropriately identifying range operators with decomposable operators appearing in the setting of von Neumann algebras \cite{D2}. In particular, we show that the category of MI spaces (with MI operators as morphisms) is isomorphic with the category of measurable range functions whose morphisms are measurable range operators. Moreover, we show that this isomorphism preserves a large number of global properties of an MI operator $T$ in terms of local pointwise properties of its corresponding range operator $R$, which are satisfied uniformly almost everywhere over a measure space $X$. This also includes spectral properties between $T$ and $R$, which necessitates the use of the machinery of measurable set-valued maps \cite{AF, CV}. As an example, we show that an MI operator $T=\int_X^{\oplus} R(x) d\mu(x)$ is normal with spectrum contained in a compact set $K\subset \C$ if and only if the range operators $R(x)$ are normal with spectra contained in $K$ for a.e. $x\in X$. Moreover, using functional calculus for MI operators we exhibit the precise relationship between spectral measure of the global MI operator $T$ and spectral measures of (local) range operators $R(x)$.

The theory of MI spaces and MI operators is developed with the aim of applications in the study of Bessel systems generated by multiplications in $L^2(X;\mathcal H)$. Such study was initiated by the second author in \cite{I} with the introduction of the concept of a Parseval determining set in $L^1(X)$, which is a measure-theoretic generalization of characters on an LCA group. In particular, it was shown in \cite{I} that the global frame properties of an MI system are characterized in terms of local frame properties, which are satisfied uniformly almost everywhere over $X$, thus generalizing the characterization of SI frames over fibers from the work of the first author \cite{B}. Moreover, we show that the analogy between the global and local frame behavior goes much deeper in the setting of MI spaces. The analysis, synthesis, and frame operators of a Bessel system of multiplications, which themselves are MI operators, correspond to appropriate range operators acting locally over fibers indexed by $x\in X$. Furthermore, several natural properties such as orthogonality of ranges for a pair of Bessel systems, duality, and canonical duality for a pair of frames are also preserved.

In addition, we give a classification for Bessel systems of multiplications with respect to unitary equivalence in terms of positive, integrable MI operators with the corresponding range operator being the measurable field of Gramians $\{\mathrm{Gr}(x)\}_{x\in X}$. 
The same classification holds for frames of multiplications with the additional constraint that MI operators are locally invertible operators. In particular, we deduce that Parseval frames of multiplications are in one-to-one correspondence with range operators being orthogonal projections, hence with measurable range functions.

We show two major applications of our measure-theoretic results on MI operators and frames of multiplications. The first deals with admissible unitary representations $\pi: G \to U(\mathcal H)$ of an LCA group $G$, which are also known as square integrable representations \cite{TW06, R04, W08}. We show that every abelian group system, which is generated by a countable collection in $\mathcal H$ by the action of $\pi$, is unitarily equivalent to an appropriate system of multiplications (by characters of $G$) in the MI space $L^2(\hat G; \mathcal K)$, where $\mathcal K$ is a separable Hilbert space. This enables us to apply the above mentioned results for MI operators and Bessel systems of multiplications to characterize abelian group frames. Similar results have been widely reported in the setting of translation-invariant spaces on abelian groups~\cite{BHP2,BL93,B,BR,CP,I,KR,RS}, as well as for representations of finite groups~\cite{VW2,VW3}, discrete groups~\cite{BHP,TW06}, and compact groups~\cite{I2}. %sentence copied from Section 7.3
As a consequence, we obtain a classification (up to unitary equivalence) of abelian group frames with $N\in \N \cup \{\infty\}$ generators  as positive, locally invertible, and integrable MI operators on $L^2(\hat G; \ell^2_N)$. 

Our second application deals with translation-invariant (TI) subspaces of $L^2(\mathcal G)$, where the underlying locally compact group $\mathcal G$ does not have to be abelian. Instead, we merely assume that a subgroup of translations $G \subset \mathcal G$ is abelian. The second author \cite{I} has shown that the study of TI spaces at such level of generality can be reduced to the study of MI spaces using a generalized Zak transform which converts left translation operators into multiplication operators acting on appropriate vector-valued MI spaces $L^2(X;\mathcal H)$. Similar results were obtained by Barbieri, Hern\'andez, and Paternostro in the LCA setting \cite{BHP2} and for discrete non-commutative groups \cite{BHP, BHP3}. Using the generalized Zak transform, whose existence and properties were established in \cite{I}, we apply the above mentioned results on MI operators and frames of multiplications to the setting of TI spaces and frames of translates. In particular, we classify the unitary equivalence of TI spaces using the dimension function, thus generalizing the corresponding results for SI spaces on $\R^n$ shown in \cite{B}.

%GROUP-INVARIANT BESSEL SYSTEMS AND SHIFT-INVARIANT SPACES====================================================
\section{Shift-invariant spaces and Bessel systems generated by group actions}
\label{sec: 2}

We begin with a piece of motivation. Consider a separable Hilbert space $\H$ evolving discretely over time through the action of a unitary $U$, and fix a countable collection of ``sensors'' $\A = \{u_i\}_{i\in I}$ in $\H$. Suppose we use these sensors to measure the evolutions of various $v \in \H$, thereby obtaining the data
\[ Tv = \{ \langle U^k v, u_i \rangle \}_{k \in \Z, i \in I} =  \{ \langle v, U^{-k} u_i \rangle \}_{k \in \Z, i \in I}. \]
In general it is possible to stably reconstruct any $v\in \H$ from $Tv$ if and only if the system $E(\A) := \{ U^k u_i \}_{k\in \Z, i\in I}$ is a frame for $\H$~\cite{ACCMP,ACMT17}. One of the goals of this paper is to classify such systems.

From a group-theoretic perspective, the unitary $U$ is equivalent to a unitary representation of $\Z$, and the system $E(\A)$ consists of orbits of $\A$ under this action. In this section, we consider reproducing systems of this form for a broad class of groups in place of $\Z$. In the general case we establish a close correspondence with the theory of translation-invariant spaces. For abelian groups we show that the action may as well be given by modulation on a multiplication-invariant space involving the Pontryagin dual. This puts a classification of all such reproducing systems within reach---provided we first understand morphisms of MI spaces. Following a detailed study of those morphisms in the body of this paper, we return to classify Bessel systems generated by abelian groups in Section~\ref{sec:LCAFrm}.

\subsection{Preliminaries}
We refer the reader to~\cite{F,HR1,HR2} for background on abstract harmonic analysis. Throughout Section~\ref{sec: 2} we fix a second-countable, locally compact group $G$, which is not necessarily abelian. By a (unitary) \emph{representation} of $G$, we mean a homomorphism $\pi \colon G \to U(\H)$ into the group of unitaries of a Hilbert space $\H$, continuous with the respect to the strong (or equivalently, the weak) operator topology on $U(\H)$. If $\pi' \colon G \to U(\H')$ is another such representation, then an operator $T \colon \H \to \H'$ is said to \emph{intertwine} $\pi$ and $\pi'$ when $T\pi(x) = \pi'(x)T$ for all $x \in G$.

We write $C_c(G)$ for the space of continuous complex-valued functions on $G$ having compact support. The \emph{left Haar measure} on $G$, denoted $d\mu_G(x)$ (or simply $dx$ when the meaning is clear), is a regular Borel measure such that
\begin{equation}
\label{eq:Haar}
\int_G f(x)\, dx = \int_G f(yx)\, dx \qquad \text{for all $y \in G$}
\end{equation}
whenever $f \in C_c(G)$. This measure is unique up to a scaling factor, which we now fix. Whenever we treat $G$ as a measure space below, it is with this measure in mind. Thus we abbreviate $L^p(G) = L^p(G,\mu_G)$ for $1 \leq p \leq \infty$. Likewise, if $I$ is any indexing set, we give it the counting measure and then talk freely of the product measure space $G\times I$.

To any representation $\pi \colon G \to U(\H)$ we associate the mapping $\pi\colon L^1(G) \to B(\H)$ defined weakly by the relation
\[ \langle \pi(f) u, v \rangle = \int_G f(x) \langle \pi(x) u, v \rangle\, dx \qquad (u,v \in \H). \]
A closed subspace $V \subset \H$ is invariant under the mappings $\{ \pi(x) : x \in G\}$ if and only if it is invariant under $\{ \pi(f) : f \in C_c(G)\}$. In particular, for any set of vectors $\{ u_i \}_{i\in I'}$ in $\H$ we have
\[ \overline{\spn}\{ \pi(x) u_i : i \in I',\, x \in G\} = \overline{\spn}\{ \pi(f) u_i : i \in I',\, f \in C_c(G) \}. \]

Let $\H$ be a separable Hilbert space, and let $(X,\mu)$ be a $\sigma$-finite measure space for which $L^2(X)$ is separable. We define $L^2(X;\H)=L^2(X,\mu;\H)$ to be the space of equivalence classes of measurable functions $\varphi \colon X \to \H$, modulo equality a.e., with the property that $\int_X \Norm{\varphi(x)}^2\, d\mu(x) < \infty$. It becomes a Hilbert space with the inner product
\[ \langle \varphi, \psi \rangle = \int_X \langle \varphi(x), \psi(x) \rangle\, d\mu(x) \qquad (\varphi,\psi \in L^2(X;\H)). \]
When $\H = L^2(Y)$ there is a natural identification $L^2(X\times Y) \cong L^2(X;L^2(Y))$ that maps $f \in L^2(X\times Y)$ to $\varphi\in L^2(X;L^2(Y))$ given by $\varphi(x)(y) = f(x,y)$ a.e.\ $x\in X$, $y \in Y$.

Taking $X = G$, we associate each $x \in G$ with the \emph{left translation} operator $L_x \colon L^2(G;\H) \to L^2(G;\H)$ given by $(L_x \varphi)(y) = \varphi(x^{-1} y)$. Then $x \mapsto L_x$ defines a unitary representation of $G$. When $\H = L^2(Y)$ as above, the identification $L^2(X;L^2(Y)) \cong L^2(X\times Y)$ gives the translation operators $L_x \in U( L^2(G \times Y))$ the form $(L_x f)(y,z) = f(x^{-1}y,z)$ for $f \in L^2(G \times Y)$, $x,y \in G$, and $z \in Y$.

\begin{defn} \label{defn:GSys}
Given a representation $\pi \colon G \to U(\H)$ and a sequence $\A = \{ u_i \}_{i\in I}$ in $\H$, we write $E(\A) = \{ \pi(x) u_i \}_{x\in G,\, i \in I}$. (This should properly be viewed as a map $G\times I \to \H$.) We say that $\A$ \emph{generates} $E(\A)$, so that the latter has \emph{$|I|$ generators}. We call $E(\A)$:
\begin{itemize}
\item
\emph{complete}, if $\overline{\spn}\{ \pi(x) u_i : x \in G,\, i \in I\} = \H$;
\item
a \emph{Bessel $G$-system}, if there is a constant $B>0$ such that
\[ \sum_{i\in I} \int_G | \langle v, \pi(x) u_i \rangle |^2\, dx \leq B \Norm{v}^2 \qquad \text{for all }v \in \H; \]
\item
a \emph{$G$-frame}, if there are constants $A,B>0$ such that
\[ A \Norm{v}^2 \leq \sum_{i\in I} \int_G | \langle v, \pi(x) u_i \rangle |^2\, dx \leq B \Norm{v}^2 \qquad \text{for all }v \in \H. \]
\end{itemize}
When $E(\A)$ is Bessel, its \emph{analysis operator} is the bounded linear map $T\colon \H \to L^2(G \times I)$ given by 
\begin{equation} \label{eq:analysis}
(Tv)(x,i) = \langle v, \pi(x) u_i \rangle \qquad (v \in \H,\, x \in G,\, i \in I).
\end{equation}
and its \emph{synthesis operator} is $T^* \colon L^2(G\times I) \to \H$, given by
\[ T^*f = \sum_{i \in I} \int_G f(x,i) \pi(x) u_i \, dx \qquad (f \in L^2(G\times I)), \]
with the vector-valued integral interpreted in the weak sense.
\end{defn}

What we have called Bessel $G$-systems (respectively, $G$-frames) are examples of continuous Bessel systems (respectively, continuous frames), in the sense of~\cite{AAG,K}. The present terminology emphasizes the role of $G$ in the construction of the system. We will discuss more general Bessel systems and frames (absent a group) in a later section.

\begin{defn}
Let $\pi \colon G \to U(\H)$ and $\pi' \colon G \to U(\H')$ be representations, and let $\A = \{ u_i \}_{i\in I}$ and $\A' = \{ u_i' \}_{i\in I}$ be sequences in $\H$ and $\H'$, respectively, with a common indexing set $I$. We say that $E(\A)$ and $E(\A')$ are \emph{unitarily equivalent} if there is a unitary
\[U \colon \overline{\spn}\{ \pi(x) u_i : i\in I,\, x \in G\} \to \overline{\spn}\{ \pi'(x) u_i' : i\in I,\, x \in G\} \]
satisfying $U\pi(x) u_i = \pi'(x) u_i'$ for every $i \in I$ and $x \in G$.
\end{defn}

We emphasize that unitary equivalence must preserve the indexing while mapping $E(\A)$ to $E(\A')$.
In case $E(\A)$ and $E(\A')$ are both complete, it is easy to show that a unitary equivalence between them is the same as a unitary $U \colon \H \to \H'$ that intertwines $\pi$ with $\pi'$ while mapping $U u_i = u_i'$ for every $i \in I$.
Finally, notice that ``Bessel $G$-system'' and ``$G$-frame'' are properties of unitary equivalence classes, while ``complete'' is not.
This concludes our review of the preliminaries.

\subsection{Reduction to translation}

\begin{assume}
For the remainder of Section~\ref{sec: 2} we use the following assumptions.
As above, $G$ denotes a second-countable locally compact group, which is not necessarily abelian.
In addition, we fix a representation $\pi \colon G \to U(\mathcal{H})$ with $\mathcal{H}$ separable.
We assume that $\pi$ is \emph{admissible}, in the sense that there is a countable sequence $\A = \{ u_i \}_{i\in I}$ in $\H$ for which $E(\A) = \{ \pi(x) u_i \}_{x\in G,\, i \in I}$ is both complete and Bessel.
We also fix a choice of $\A$.
\end{assume}

\begin{theorem} \label{thm: SI embedding}
Let $\mathcal{G}$ be any second-countable locally compact group containing $G$ as a closed subgroup of index $[\mathcal{G}:G] \geq |I|$. Then there is a closed subspace $V\subset L^2(\mathcal{G})$ invariant under left translation by $G$, and a unitary $U\colon \H \to V$ that intertwines $\pi$ with left translation. Consequently, $E(\A)$ is unitarily equivalent to the system of translates $\{ L_x U u_i \}_{x \in G, i \in I}$ in $V$.
\end{theorem}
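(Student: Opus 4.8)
The plan is to realize $\H$ inside $L^2(\mathcal{G})$ in two stages: first embed $\H$ $G$-equivariantly into the tautological space $L^2(G\times I)$ via a normalization of the analysis operator, and then transport $L^2(G\times I)$ $G$-equivariantly into $L^2(\mathcal{G})$ using the right-coset structure and the hypothesis $[\mathcal{G}:G]\ge|I|$. Composing the two maps will produce the desired $U$ and $V$.

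First I would record the properties of the analysis operator $T\colon\H\to L^2(G\times I)$ from \eqref{eq:analysis}. Since $E(\A)$ is Bessel, $T$ is bounded; since $E(\A)$ is complete, $T$ is injective. A direct computation using $\pi(y)^*=\pi(y^{-1})$ gives $T\pi(y)=L_yT$ for all $y\in G$, where $L_y$ is left translation in the $G$-variable of $L^2(G\times I)$; hence $W:=\overline{\ran T}$ is a closed, left-translation-invariant subspace. As $T$ need not be isometric, I would pass to the polar decomposition $T=U_0|T|$, where $|T|=(T^*T)^{1/2}=S^{1/2}$ and $S=T^*T$ is the frame operator. Because $T$ intertwines, $S$ commutes with every $\pi(y)$, and therefore so does $S^{1/2}$ by the spectral theorem. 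Injectivity of $T$ forces $\ker|T|=0$, so the partial isometry $U_0$ is in fact a unitary from $\H$ onto $W$; substituting $S^{1/2}\pi(y)=\pi(y)S^{1/2}$ into $T\pi(y)=L_yT$ and cancelling $S^{1/2}$ on the dense set $\ran S^{1/2}$ yields $U_0\pi(y)=L_yU_0$. Thus $U_0$ is a $G$-equivariant unitary $\H\to W\subset L^2(G\times I)$.

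Next I would construct a $G$-equivariant isometry $J\colon L^2(G\times I)\to L^2(\mathcal{G})$. Decomposing $\mathcal{G}$ into the right cosets $Gx$ (which left $G$-translation preserves) and disintegrating Haar measure over the quotient $G\backslash\mathcal{G}$, one obtains a unitary $L^2(\mathcal{G})\cong L^2(G\backslash\mathcal{G},\nu)\tensor L^2(G)$ under which left translation by $G$ becomes $\mathrm{id}\tensor\lambda_G$, where $\lambda_G$ is the left regular representation of $G$; this is the standard fact that the restriction of the regular representation of $\mathcal{G}$ to $G$ is a multiple of $\lambda_G$ with multiplicity space $L^2(G\backslash\mathcal{G},\nu)$. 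Since $L^2(G\times I)=\ell^2(I)\tensor L^2(G)$ with $G$ acting as $\mathrm{id}\tensor\lambda_G$, it suffices to fix an isometric embedding $\iota\colon\ell^2(I)\to L^2(G\backslash\mathcal{G},\nu)$ and set $J=\iota\tensor\mathrm{id}_{L^2(G)}$. Such an $\iota$ exists precisely when $|I|\le\dim L^2(G\backslash\mathcal{G},\nu)$, which I would deduce from $[\mathcal{G}:G]=\card(G\backslash\mathcal{G})\ge|I|$ by cases: for finite index each coset is an atom of positive measure, so the dimension equals the index; for infinite index the countable set $I$ embeds because $\dim L^2(G\backslash\mathcal{G},\nu)$ is then at least $\aleph_0$.

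Finally, I would set $U=J\circ U_0\colon\H\to L^2(\mathcal{G})$ and $V=\ran U=J(W)$. As a composition of $G$-equivariant isometries, $U$ is a unitary onto the closed, left-translation-invariant subspace $V$ intertwining $\pi$ with left translation, giving the first assertion. The consequence is then immediate: $U\pi(x)u_i=L_xUu_i$, and since $E(\A)$ is complete, $U$ is exactly a unitary equivalence in the sense of the preceding definition, carrying $E(\A)$ onto the system of translates $\{L_xUu_i\}_{x\in G,\,i\in I}$ in $V$. I expect the measure-theoretic disintegration of $L^2(\mathcal{G})$ under the left $G$-action---identifying it as a multiple of $\lambda_G$ and matching the multiplicity to the index through the dimension count---to be the \emph{main obstacle}; the polar-decomposition normalization of $T$ and the equivariance bookkeeping are comparatively routine.
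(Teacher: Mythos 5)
Your proposal follows essentially the same route as the paper: a polar-decomposition normalization of the analysis operator giving a $G$-equivariant unitary $\H\to\overline{\ran T}\subset L^2(G\times I)$ (the paper's Lemma~\ref{lem:Bessel regular rep}), followed by an embedding $\ell^2(I)\hookrightarrow L^2(G\backslash\mathcal{G})$ tensored with the identity on $L^2(G)$ and the Weil-type identification $L^2(G\times G\backslash\mathcal{G})\cong L^2(\mathcal{G})$. The one point you assert rather than prove --- that $\dim L^2(G\backslash\mathcal{G},\mu_{G\backslash\mathcal{G}})$ equals the index when finite and is infinite otherwise --- is exactly where the paper invests its effort (showing via the Weil identity that every nonempty open subset of $G\backslash\mathcal{G}$ has positive measure, then a topological case analysis), and you correctly flag it as the main obstacle.
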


When $|I| = 1$ and $\mathcal{G} = G$, it is well known that any Bessel system of the form $\{ \pi(x) u \}_{x\in G}$ may as well be given by left translation in $L^2(G)$. Theorem~\ref{thm: SI embedding} generalizes to the case of countably many generators. For compact groups, a version of this theorem appeared as \cite[Theorem~4.6]{I2}. 

When $G=\Z$, as in the introduction to this section, we can take $\mathcal{G} = \R$ in Theorem~\ref{thm: SI embedding}. Thus, any complete Bessel $\Z$-system $\{ U^k u_i \}_{k\in \Z, i \in I}$ may as well be given by integer shifts in a shift-invariant subspace of $L^2(\R)$. There is a large literature devoted to the study of such systems, which are closely related to multiresolution analysis in classical wavelet theory~\cite{BL93,BW94,B,BDR2,BDR,RS}.

The lemma below is essentially contained in a paper by Weber~\cite{W08}, following a strategy of Rieffel~\cite{R04}. We give a proof for the sake of clarity.

\begin{lemma}
\label{lem:Bessel regular rep}
Let $T \colon \H \to L^2(G\times I)$ be the analysis operator of \eqref{eq:analysis}. In its polar decomposition $T = UP$, the partial isometry $U \colon \H \to L^2(G\times I)$ is a linear isometry onto $\overline{\ran T}$ that intertwines $\pi$ with left translation. Thus, $E(\A)$ is unitarily equivalent to the system of translates $\{ L_x U u_i \}_{x\in G, i\in I}$ in $\overline{\ran T}$.
\end{lemma}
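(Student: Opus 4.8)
The plan is to exploit the uniqueness of the polar decomposition together with the observation that the analysis operator $T$ itself intertwines $\pi$ with left translation. First I would record two preliminary facts. Since $E(\A)$ is complete, $T$ is injective: if $Tv = 0$, then $\langle v, \pi(x) u_i \rangle = 0$ for a.e.\ $x$ and all $i$, forcing $v \perp \overline{\spn}\{\pi(x) u_i : x\in G,\, i \in I\} = \H$, hence $v = 0$. Consequently $P = |T| = (T^*T)^{1/2}$ is injective with dense range, and $U$ is a partial isometry whose initial space is $(\ker T)^\perp = \H$; thus $U$ is in fact an isometry of $\H$ onto its final space $\overline{\ran T}$, i.e.\ a unitary onto $\overline{\ran T}$.

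Next I would verify the intertwining relation for $T$ by a direct computation: for $y \in G$,
\[ (T\pi(y)v)(x,i) = \langle \pi(y) v, \pi(x) u_i \rangle = \langle v, \pi(y^{-1} x) u_i \rangle = (Tv)(y^{-1}x, i) = (L_y Tv)(x,i), \]
so $T\pi(y) = L_y T$ for every $y$. Taking adjoints and using $\pi(y)^* = \pi(y^{-1})$ and $L_y^* = L_{y^{-1}}$ gives $T^* L_y = \pi(y) T^*$, and combining the two identities yields $T^*T \pi(y) = T^* L_y T = \pi(y) T^* T$. Hence $\pi(y)$ commutes with the bounded positive operator $T^*T$ for every $y \in G$.

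The crux of the argument is to pass this commutation from $T^*T$ to its positive square root $P$ and then to the partial isometry $U$. Since $\pi(y)$ commutes with the self-adjoint operator $T^*T$, it commutes with every continuous function of $T^*T$ (the square root is a norm limit of polynomials in $T^*T$ on its compact spectrum), in particular with $P = (T^*T)^{1/2}$. With $P\pi(y) = \pi(y) P$ in hand, I substitute into $T = UP$: from $L_y U P = L_y T = T\pi(y) = U P \pi(y) = U \pi(y) P$ I obtain $(L_y U - U\pi(y))P = 0$, so $L_y U$ and $U\pi(y)$ agree on the dense subspace $\ran P$, and by boundedness they agree on all of $\H$. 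This gives the desired relation $U\pi(y) = L_y U$.

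The main obstacle---really the only nontrivial point---is this transfer of commutativity to $P$ and $U$; everything else is bookkeeping. Finally, I would assemble the conclusion: $U$ is a unitary from $\H$ onto $\overline{\ran T}$ intertwining $\pi$ with left translation, and $\overline{\ran T}$ is left-translation invariant since $\ran T$ is (indeed $L_y Tv = T\pi(y)v \in \ran T$). Evaluating the intertwining relation on the generators gives $U\pi(x) u_i = L_x U u_i$ for all $x \in G$ and $i \in I$, which is precisely the assertion that $E(\A)$ is unitarily equivalent to the system of translates $\{L_x U u_i\}_{x\in G,\, i\in I}$ in $\overline{\ran T}$.
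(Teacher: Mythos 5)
Your proof is correct, but it departs from the paper's argument at both of the two nontrivial points. For the intertwining of $U$, the paper simply cites a known result (that the partial isometry in the polar decomposition of an intertwining operator again intertwines), whereas you derive it from scratch: commuting $\pi(y)$ past $T^*T$, then past $P=(T^*T)^{1/2}$ by functional calculus, then cancelling $P$ on its dense range. This is a legitimate, self-contained substitute for the citation. For the isometry of $U$, the paper proves the dual statement that $T^*$ has dense range, by showing $\overline{\ran T^*}$ contains $\pi(f)u_j$ for all $f\in C_c(G)$ and then invoking the density of $\spn\{\pi(f)u_j : f\in L^1(G)\}$ in $\overline{\spn}\{\pi(x)u_j : x\in G\}$; you instead argue directly that $\ker T=\{0\}$ from completeness. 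Your route is shorter, but it has one small gloss: $Tv=0$ in $L^2(G\times I)$ only gives $\langle v,\pi(x)u_i\rangle=0$ for \emph{a.e.} $x$, while completeness is stated in terms of \emph{all} $x$. You should add that $x\mapsto\langle v,\pi(x)u_i\rangle$ is continuous (by weak continuity of $\pi$) and that nonempty open sets have positive Haar measure, so vanishing a.e.\ forces vanishing everywhere; the paper's detour through $C_c(G)$ and $\pi(f)$ sidesteps this issue entirely. With that one sentence added, your argument is complete and arguably more elementary, at the cost of redoing by hand a fact the paper outsources to the literature.
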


\begin{proof}
Given any $y \in G$ and $v \in \H$, we have
\[ (T \pi(y) v)(x,i) = \langle \pi(y) v, \pi(x) u_i \rangle = \langle v, \pi(y^{-1} x) u_i \rangle = (Tv)(y^{-1} x,i) = (L_y Tv)(x,i) \]
for all $x \in G$ and $i \in I$. Consequently, $T \pi(y) = L_y T$ for all $y \in G$. It follows that $U$ intertwines $\pi$ with the regular representation; see \cite[Prop.~VI.13.13]{FD88}.

Now we only need to prove that $U$ is an isometry. Since $\ker U = (\ran P)^\perp = \ker T$, it suffices to prove that the synthesis operator $T^*$ has dense range in $\H$. Given an index $j \in I$ and a function $f\in C_c(G)$, we let $f_j \in L^2(G\times I)$ be the function 
\[ f_j(x,i) = \begin{cases} f(x), & \text{if }i = j; \\ 0, & \text{otherwise.} \end{cases} \]
Then
\[ T^* f_j = \int_G f(x) \pi(x) u_j\ dx = \pi(f) u_j. \]
Holding $j$ fixed and letting $f$ vary across $C_c(G)$, we see that $\overline{\ran T^*}$ contains the entire space $\overline{\spn}\{ \pi(f) u_j : f \in C_c(G)\}$. By density, it also contains $\overline{\spn}\{ \pi(f) u_j : f \in L^1(G) \}$, which equals $\overline{\spn}\{ \pi(x) u_j : x \in G\}$ (see~\cite[Thm.\ 3.12(c)]{F}). In particular, $\overline{\ran T^*}$ contains $\pi(x) u_j$ for all $j \in I$ and all $x \in G$. Since $E(\A)$ is complete, it follows that $\overline{\ran T^*} = \H$. Therefore, $U$ is an isometry.
\end{proof}

We are almost ready to prove Theorem~\ref{thm: SI embedding}. First we require a measure-theoretic result from \cite{I}. Let $\mathcal{G}$ be as in the hypothesis of Theorem~\ref{thm: SI embedding}. We write $G\backslash \mathcal{G}$ for the topological space of \emph{right} cosets for $G$ in $\mathcal{G}$, and $q\colon \mathcal{G} \to G\backslash \mathcal{G}$ for the quotient map. By \cite{FG}, there is a Borel measurable function $\tau \colon G\backslash \mathcal{G} \to \mathcal{G}$ mapping compact sets in $G\backslash \mathcal{G}$ to pre-compact sets in $\mathcal{G}$, with the property that $\tau \circ q = \text{id}$. Hence $\tau$ gives a measurable choice of right coset representatives for $G$ in $\mathcal{G}$. Then \cite[Theorem 3.4]{I} provides a unique regular Borel measure $\mu_{G\backslash \mathcal{G}}$ on $G\backslash \mathcal{G}$ such that the Weil-like identity
\begin{equation} \label{eq:RWeil}
\int_\mathcal{G} f\, d\mu_\mathcal{G} = \int_{G\backslash \mathcal{G}} \int_G f(x \tau(Gy))\, d\mu_G(x)\, d\mu_{G\backslash \mathcal{G}}(Gy)
\end{equation}
holds for all $f\in L^1(\mathcal{G})$. With this measure in mind, there is a unitary $L^2(G\times G\backslash \mathcal{G}) \cong L^2(\mathcal{G})$ that preserves left translation by $G$; see \cite[Corollary~3.7]{I}.

\begin{proof}[Proof of Theorem~\ref{thm: SI embedding}]

Let $I'$ index an orthonormal basis for $L^2(G\backslash \mathcal{G})$, so that $L^2(G\backslash \mathcal{G}) \cong \ell^2(I')$. We claim that $|I'| = [ \mathcal{G} : G]$ when the right-hand side is finite, and that $I'$ is infinite otherwise. Assuming the claim holds for now, our hypothesis implies that $|I| \leq |I'|$, so there is a linear isometry $\ell^2(I) \to L^2(G \backslash \mathcal{G})$. Consequently, we have a sequence of linear isometries
\[ L^2(G\times I) \cong L^2(G;\ell^2(I)) \to L^2(G; L^2(G \backslash \mathcal{G})) \cong L^2(G\times G \backslash \mathcal{G}) \cong L^2(\mathcal{G}) \]
that all preserve left translation by $G$. Write $U_1 \colon L^2(G\times I) \to L^2(\mathcal{G})$ for the resulting composition, and let $U_2 \colon \H \to L^2(G\times I)$ be the isometry from Lemma~\ref{lem:Bessel regular rep}. Then $U:=U_1U_2 \colon \H \to L^2(\mathcal{G})$ is a linear isometry intertwining $\pi$ with left translation by $G$, and the desired result follows by taking $V = U(\H)$.

It remains to prove our claim.  First, we we will show that every nonempty open subset $U \subset G \backslash \mathcal{G}$ has positive measure. That nonempty open sets have positive measure in $\mathcal{G}$ is well known \cite[Proposition 2.19]{F}. As $\mu_\mathcal{G}$ is regular, it follows that $q^{-1}(U)$ contains a compact subset $K$ with $0 < \mu_\mathcal{G}(K) < \infty$. By \eqref{eq:RWeil},
\[ 0 < \mu_\mathcal{G}(K) = \int_\mathcal{G} \chi_{K}\, d\mu_\mathcal{G} = \int_{G \backslash \mathcal{G}} \int_G \chi_K(x \tau(G y))\, d\mu_G(x)\, d\mu_{G \backslash \mathcal{G}}(G y) \]
\[ \leq \int_{G \backslash \mathcal{G}} \int_G \chi_{q^{-1}(U)}(x \tau(G y))\, d\mu_G(x)\, d\mu_{G \backslash \mathcal{G}}(G y). \]
For any $x \in G$ and $Gy \in G \backslash \mathcal{G}$, we have $x \tau(Gy) \in q^{-1}(U)$ if and only if $q(x \tau(Gy)) \in U$. Since $q(x \tau(Gy)) = q(\tau(Gy)) = Gy$, we have $\chi_{q^{-1}(U)}(x \tau(Gy)) = \chi_U(Gy)$. The inequality above now reads
\[ 0 <  \int_{G \backslash \mathcal{G}} \int_G \chi_U(G y)\, d\mu_G(x)\, d\mu_{G \backslash \mathcal{G}}(G y) = \mu_{G \backslash \mathcal{G}}(U) \cdot \mu_G(G), \]
so $\mu_{G \backslash \mathcal{G}}(U) > 0$, as desired.

If $[\mathcal{G} : G] = n < \infty$, then $G$ is open in $\mathcal{G}$ as the complement of a union of finitely many closed cosets. Applying right translation, we see that all right cosets of $G$ are open in $\mathcal{G}$. Thus, $G \backslash \mathcal{G}$ is discrete with $n$ points. Each of these has positive measure by the above, so $\dim L^2(G\backslash \mathcal{G}) = n$.

Now assume that $[\mathcal{G} : G]$ is infinite. As a topological space, $G\backslash \mathcal{G}$ is homeomorphic to $\mathcal{G}/G$ through the mapping $Gx \mapsto x^{-1}G$. Since $G$ is closed, $\mathcal{G}/G$ is Hausdorff by \cite[Theorem~5.21]{HR1}, so $G\backslash \mathcal{G}$ is, too. Thus, any open set in $G\backslash \mathcal{G}$ with at least two points contains two disjoint open subsets, each of which has positive measure. Therefore, $G \backslash \mathcal{G}$ either contains an infinite sequence of open sets $U_1 \supset U_2 \supset \dotsb$ with the property that $0 < \mu_{G \backslash \mathcal{G}}(U_n) < \infty$ and $\mu_{G \backslash \mathcal{G}}(U_n \setminus U_{n+1}) > 0$ for each $n$, or $G \backslash \mathcal{G}$ contains an open set which is a point. In the first case, the characteristic functions $\chi_{U_1}, \chi_{U_2},\dotsc$ all belong to $L^2(G \backslash \mathcal{G})$ and satisfy $\chi_{U_{n+1}} \notin \spn\{\chi_{U_1},\dotsc,\chi_{U_n}\}$ for all $n$, so $L^2(G \backslash \mathcal{G})$ is infinite dimensional. In the second case, at least one right coset of $G$ is open in $\mathcal{G}$. As above, this implies that every point in $G \backslash \mathcal{G}$ has positive measure, so $\dim L^2(G \backslash \mathcal{G}) = \infty$. This completes the proof.
\end{proof}

\subsection{Abelian groups}

We now specialize to the case where $G$ is abelian, with the following conventions. We write $\hat{G}$ for the Pontryagin dual group of continuous homomorphisms $\alpha \colon G \to \mathbb{T}$. It is a locally compact abelian group under the operation of pointwise multiplication, and the topology of uniform convergence on compact sets. The Fourier transform of $f \in L^1(G)$ is the function $\hat{f} \in C_0(\hat{G})$ given by
\[ \hat{f}(\alpha) = \int_G f(x) \overline{\alpha(x)}\, dx \qquad (\alpha \in \hat{G}). \]
The Fourier transform extends by continuity to a linear mapping $L^2(G) \to L^2(\hat{G})$. We assume that Haar measure on $\hat{G}$ is scaled to make this mapping a unitary. Finally, given a Hilbert space $\K$ we associate each $x \in G$ with the \emph{modulation} $M_x \colon L^2(\hat{G};\K) \to L^2(\hat{G};\K)$ given by
\[ (M_x\varphi)(\alpha) = \overline{\alpha(x)}\cdot \varphi(\alpha) \qquad (\varphi \in L^2(\hat{G};\K), \alpha \in \hat{G}). \]
The mapping $x \mapsto M_x$ is a unitary representation of $G$. Taking $\K=\C$, we have the familiar intertwining property 
\[ (L_x f)\caret = M_x \hat{f} \qquad (x \in G, f \in L^2(G)). \]

When $G$ is abelian, Theorem~\ref{thm: SI embedding} reduces $E(\A)$ to a system of translates by an abelian subgroup. Reproducing systems of this form have been the focus of considerable research in the last two decades~\cite{BHP2,B,BR,CP,I,KR}. The by-now standard approach is to apply a unitary $L^2(\mathcal{G})\cong L^2(\hat{G};\H)$ that converts translation by $G$ into modulation. The existence of such a unitary, for any choice of second-countable $\mathcal{G} \supset G$, was proven in~\cite{I}. In that sense, the theorem below can be seen abstractly as a corollary to Theorem~\ref{thm: SI embedding}. We provide a more direct proof below.

\begin{theorem} \label{thm:LCAMod}
If $G$ is abelian, then for every separable Hilbert space $\mathcal{K}$ with $\dim \mathcal{K} \geq |I|$ there exists a linear isometry $U \colon \H \to L^2(\hat{G}; \mathcal{K})$ that intertwines $\pi$ with modulation. Consequently, $E(\A)$ is unitarily equivalent to the system of modulations $\{ M_x U u_i \}_{x \in G, i \in I}$ in the modulation-invariant subspace $U(\H)$.
\end{theorem}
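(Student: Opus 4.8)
The plan is to bypass the larger group $\mathcal{G}$ altogether and convert translation directly into modulation by means of the Fourier transform. First I would invoke Lemma~\ref{lem:Bessel regular rep} to obtain a linear isometry $U_2 \colon \H \to L^2(G \times I)$ intertwining $\pi$ with left translation by $G$. Giving $I$ the counting measure, the identification $L^2(G \times I) \cong L^2(G;\ell^2(I))$ from the preliminaries realizes the target as a vector-valued space on which left translation acts solely in the $G$-variable, fiberwise trivially over $I$.

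Next I would pass to the Fourier side. The scalar Fourier transform $\F \colon L^2(G) \to L^2(\hat{G})$ is a unitary satisfying $(L_x f)\caret = M_x \hat{f}$. Tensoring with the identity on $\ell^2(I)$ gives a unitary $\F \otimes \mathrm{id} \colon L^2(G;\ell^2(I)) \to L^2(\hat{G};\ell^2(I))$; since both left translation and modulation act on the fiber $\ell^2(I)$ as a scalar times the identity, this unitary intertwines $L_x$ with modulation $M_x$. To land in the prescribed space, I would use the hypothesis $\dim \K \geq |I|$ to fix a linear isometry $\ell^2(I) \hookrightarrow \K$, which induces fiberwise a linear isometry $L^2(\hat{G};\ell^2(I)) \hookrightarrow L^2(\hat{G};\K)$. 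Because modulation multiplies each fiber by the scalar $\overline{\alpha(x)}$, it commutes with this embedding. Composing the three isometries produces the desired $U \colon \H \to L^2(\hat{G};\K)$ intertwining $\pi$ with modulation.

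The ``consequently'' then follows immediately: from $U\pi(x) = M_x U$ we obtain $U\pi(x)u_i = M_x U u_i$ for all $i \in I$ and $x \in G$, so restricting $U$ to a unitary onto $U(\H)$ carries $E(\A)$ onto $\{M_x U u_i\}_{x \in G,\, i \in I}$. I do not anticipate a genuine obstacle here; the only step needing a moment's care is upgrading the scalar intertwining relation $(L_x f)\caret = M_x \hat{f}$ to its vector-valued form, which is routine once one notes that translation and modulation act diagonally (as a scalar multiple of the identity) on the fibers indexed by $I$, so that $\F \otimes \mathrm{id}$ transports one to the other.
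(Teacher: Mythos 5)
Your proposal is correct and follows essentially the same route as the paper: both reduce to translation on $L^2(G\times I)$ via Lemma~\ref{lem:Bessel regular rep}, pass to the dual group with an entrywise (fiberwise) Fourier transform, and use $\dim\mathcal{K}\geq |I|$ to embed into the target space. The only difference is the order of the last two steps (the paper enlarges the index set before taking the Fourier transform, while you embed $\ell^2(I)\hookrightarrow\mathcal{K}$ afterward), which is immaterial.
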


\begin{proof}
Without loss of generality, we may assume that $\mathcal{K} = \ell^2(I')$ for a countable set $I'$ satisfying $|I'| \geq |I|$.
By Lemma~\ref{lem:Bessel regular rep}, $\pi$ is unitarily equivalent to the translation action of $G$ on a closed invariant subspace of $L^2(G\times I)$. Any choice of injection $I\to I'$ determines a linear isometry $L^2(G\times I) \to L^2(G\times I')$ that preserves left translation by $G$. To the latter space we apply a sequence of unitaries
\[ L^2(G\times I') \overset{U_1}{\cong} \ell^2(I'; L^2(G)) \overset{U_2}{\cong} \ell^2(I'; L^2(\hat{G})) \overset{U_3}{\cong} L^2(\hat{G}; \ell^2(I')), \]
where $U_1$ and $U_3$ are the natural identifications and $U_2$ is the ``entrywise'' Fourier transform $U_2\{ f_i \}_{i\in I'} = \{ \hat{f_i} \}_{i\in I'}$.
The resulting unitary $L^2(G\times I') \cong L^2(\hat{G}; \ell^2(I'))$ intertwines left translation with modulation. The composition
\[ \H \to L^2(G \times I) \to L^2(G\times I') \to L^2(\hat{G}; \ell^2(I')) \]
is the desired isometry $U \colon \H \to L^2(\hat{G}; \ell^2(I'))$.
\end{proof}

A major goal of this paper is to classify Bessel $G$-systems for a given abelian group $G$. By the last result, we can focus our attention on systems of modulations in modulation-invariant subspaces of $L^2(\hat{G};\ell^2(I))$. These subspaces were studied in~\cite{BR}, and the corresponding systems of modulations in~\cite{I}. Any unitary equivalence between modulation systems necessarily commutes with the modulation operators. In order to classify such systems, we therefore require an understanding of morphisms between modulation-invariant spaces. In other words, we must broaden our vantage to see modulation-invariant spaces as a category~\cite{ML}. The next few sections focus on this goal, working in an even broader, measure-theoretic context. We return to classify Bessel $G$-systems in Section~\ref{sec:LCAFrm}.

%MORPHISMS OF MI-SPACES=====================================================================
\section{Morphisms of multiplication-invariant spaces} \label{sec: MI morphisms}

We now turn our attention to a measure-theoretic generalization of modulation.

\begin{assume}
\label{assumptions}
Throughout Sections~3--6, we fix separable Hilbert spaces $\H,\H'$ and a positive, $\sigma$-finite, and complete measure space $(X,\mathcal{M},\mu)$ for which $L^2(X)$ is separable. 
\end{assume}

For example, $X = \mathbb{R}$ or $[0,1]$ with Lebesgue measure. More generally, $X$ can be any second countable locally compact group equipped with the completion of a left Haar measure. The condition that $L^2(X)$ is separable occurs whenever $X$ is the completion of a standard measure space equipped with a sigma-finite measure~\cite[Prop.~3.4.5]{Cohn}.

We are interested in the Hilbert space $L^2(X;\H)$, which provides the setting for a measure-theoretic abstraction of modulation: instead of multiplying $L^2(\hat{G};\ell^2(I))$ by characters of $\hat{G}$, we multiply $L^2(X;\H)$ by elements of $L^\infty(X)$. Specifically, every $\phi \in L^\infty(X)$ gives a \emph{multiplication operator} $M_\phi \in B(L^2(X;\H))$, which is defined by the formula
\[ (M_\phi \varphi)(x) = \phi(x) \varphi(x) \qquad (\varphi \in L^2(X;\H),\, \text{a.e.\ }x \in X). \]
We write $M_\phi'$ for the corresponding operator on $L^2(X;\H')$.

\begin{defn}
A closed subspace $V \subset L^2(X;\H)$ is called \emph{multiplication-invariant} (MI) if $M_\phi V \subset V$ for every $\phi \in L^\infty(X)$. Given two MI spaces $V \subset L^2(X;\H)$ and $V' \subset L^2(X;\H')$, a \emph{multiplication-invariant operator} (\emph{MI operator}) between them is a bounded linear operator $T \colon V \to V'$ such that $TM_\phi = M_\phi' T$ for all $\phi \in L^\infty(X)$.
\end{defn}

MI spaces have been characterized in \cite{He2,BR}. Our purpose here is to give a compatible characterization of MI operators. 

In general, we are interested in preserving multiplication by a large subset of $L^\infty(X)$, akin to the characters when $X = \hat{G}$. The following notion was introduced in \cite{BR} for this purpose.

\begin{defn}
A \emph{determining set} for $L^1(X)$ is a subset $\D$ of the dual $L^\infty(X)$ which separates points in $L^1(X)$: given $f_1 \neq f_2$ in $L^1(X)$, there exists $g \in \D$ such that
\[ \int_X f_1\overline{g}\ d\mu \neq \int_X f_2\overline{g}\ d\mu. \]
\end{defn}

\begin{defn}\label{rf}
An $\H$-valued \emph{range function} on $X$ is a mapping 
\[
J \colon X \to \{ \text{closed subspaces of }\H\}.
\]
Equivalently, it is a choice of orthogonal projection $P_J(x) \in B(\H)$ for every $x \in \H$, where $P_J(x)$ projects onto $J(x)$. We call $J$ \emph{measurable} if $P_J$ is weakly measurable: for any $u,v\in \H$, the mapping $x \mapsto \langle P_J(x) u, v \rangle$ is measurable on $X$.
\end{defn}

With every range function $J\colon X \to \{ \text{closed subspaces of }\H\}$ we associate a closed subspace
\[ V_J := \{ \varphi \in L^2(X;\H) \colon \varphi(x) \in J(x) \text{ for a.e.\ }x\in X\}. \]
A moment's reflection shows that $V_J$ is multiplication invariant. The fundamental characterization of MI spaces says that this construction exhausts all possibilities.

\begin{theorem}[\cite{He2,BR}] \label{thm:MISpa}
Let $V\subset L^2(X;\H)$ be a closed subspace. For every determining set $\D$, the following are equivalent:
\begin{enumerate}[(i)]
\item $V$ is an MI space.
\item For every $\phi \in \D$, $M_\phi V \subset V$.
\item There is a measurable range function $J$ such that $V = V_J$.
\end{enumerate}
More precisely, suppose that $V$ is an MI space which is generated by an (at most) countable collection of functions $\{\varphi_i\}_{i\in I}$, in the sense that $\{M_\phi \varphi_i: \phi \in L^\infty(X), i\in I\}$ is dense in $V$. Then the corresponding range function $J$ satisfies
\begin{equation}\label{MI0}
J(x) = \ov{\spn}\{ \varphi_i(x): i\in \N\} \qquad\text{for a.e.\ }x\in X.
\end{equation}
Moreover, the mapping $J\mapsto V_J$ gives a bijection between the set of measurable $\H$-valued range functions on $X$ (up to equality a.e.) and the set of MI subspaces of $L^2(X;\H)$.
\end{theorem}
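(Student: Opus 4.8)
The plan is to prove the explicit formula \eqref{MI0} first, as it yields in one stroke the surjectivity half of the bijection, the implication (ii)$\Rightarrow$(iii), and, together with the trivial implications, the full cycle of equivalences. Indeed (iii)$\Rightarrow$(i) is immediate since $V_J$ is visibly MI (noted just before the theorem), and (i)$\Rightarrow$(ii) is immediate since $\D\subseteq L^\infty(X)$. So the substance is (ii)$\Rightarrow$(iii) plus the bijection. Since $L^2(X)$ and $\H$ are separable, so is $L^2(X;\H)$, and hence its closed subspace $V$ admits a countable dense subset $\{\varphi_i\}_{i\in I}$; being closed, $V=\ov{\spn}\{\varphi_i:i\in I\}$. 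I would define the candidate range function by $J(x)=\ov{\spn}\{\varphi_i(x):i\in I\}$ and show $V=V_J$. The inclusion $V\subset V_J$ is free: each $\varphi_i(x)\in J(x)$ by construction, so $\varphi_i\in V_J$, and $V_J$ is closed.

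The heart of the matter is the reverse inclusion $V_J\subset V$, which is where assumption (ii) and the determining set enter. It suffices to show $V_J\cap V^\perp=\{0\}$, since then any $\psi\in V_J$ decomposes as $\psi=\psi_V+\psi_\perp$ with $\psi_V\in V\subset V_J$ and $\psi_\perp\in V_J\cap V^\perp=\{0\}$, forcing $\psi=\psi_V\in V$. So fix $\psi\in V_J\cap V^\perp$. For each $i$ and each $\phi\in\D$, assumption (ii) gives $M_\phi\varphi_i\in V$, hence $\langle\psi,M_\phi\varphi_i\rangle=0$. Writing $f_i(x)=\langle\psi(x),\varphi_i(x)\rangle$, which lies in $L^1(X)$ by Cauchy--Schwarz, this reads $\int_X f_i\ov{\phi}\,d\mu=0$ for every $\phi\in\D$. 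Because $\D$ separates points of $L^1(X)$, we conclude $f_i=0$ a.e., i.e.\ $\psi(x)\perp\varphi_i(x)$ a.e. Intersecting over the countably many $i$ yields $\psi(x)\perp J(x)$ for a.e.\ $x$; but $\psi\in V_J$ means $\psi(x)\in J(x)$ a.e., so $\psi(x)=0$ a.e.\ and $\psi=0$. This proves $V=V_J$, which simultaneously gives (ii)$\Rightarrow$(iii), the formula \eqref{MI0}, and that every MI space has the form $V_J$ (surjectivity).

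Two measure-theoretic points remain, and I expect these to be the main obstacle, the algebra above being clean. First, one must verify that $J$ is \emph{measurable}, i.e.\ that $P_J$ is weakly measurable. I would obtain this by a measurable Gram--Schmidt procedure: the maps $x\mapsto\varphi_i(x)$ and $x\mapsto\langle\varphi_i(x),\varphi_j(x)\rangle$ are measurable, so one extracts a.e.\ a measurable field $\{e_k(x)\}$ of orthonormal vectors with $\ov{\spn}\{e_k(x)\}=J(x)$, whence $P_J(x)=\sum_k\langle\,\cdot\,,e_k(x)\rangle e_k(x)$ is weakly measurable. Second, for \emph{injectivity} of $J\mapsto V_J$ it suffices (by symmetry) to show that $V_{J_1}\subset V_{J_2}$ implies $J_1(x)\subset J_2(x)$ a.e. Were this false, the set $A=\{x:J_1(x)\not\subset J_2(x)\}$ would have positive measure, and a measurable selection theorem (as in \cite{CV,AF}; compare \cite{BR}) would produce a measurable unit vector field $v(x)\in J_1(x)\cap J_2(x)^\perp$ on a subset of $A$ of positive finite measure, extended by $0$ elsewhere; then $v\in V_{J_1}\setminus V_{J_2}$, a contradiction. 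The same selection principle underlies the Gram--Schmidt step, so the crux of the whole theorem is precisely the availability of measurable selections for the range functions at hand; the remaining bookkeeping (that distinct generating sets give the same $J$ a.e., which follows by combining \eqref{MI0} with injectivity) is routine.
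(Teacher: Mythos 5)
The paper does not prove Theorem~\ref{thm:MISpa}; it is quoted from \cite{He2,BR}, and your route --- trivial implications (iii)$\Rightarrow$(i)$\Rightarrow$(ii), then (ii)$\Rightarrow$(iii) by building $J(x)=\ov{\spn}\{\varphi_i(x)\}$ from a countable dense (or generating) family and killing $V_J\cap V^\perp$ via the determining set acting on the $L^1$ functions $f_i(x)=\langle\psi(x),\varphi_i(x)\rangle$ --- is exactly the standard argument of those references. That part of your proposal, including the measurable Gram--Schmidt construction of $P_J$ (which the paper itself uses in the proof of Theorem~\ref{BR}), is correct.

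There is, however, a genuine error in your injectivity step. From $\mu\{x: J_1(x)\not\subset J_2(x)\}>0$ you propose to select a measurable unit vector field $v(x)\in J_1(x)\cap J_2(x)^\perp$. But $J_1(x)\not\subset J_2(x)$ does \emph{not} imply $J_1(x)\cap J_2(x)^\perp\neq\{0\}$: in $\H=\C^2$ take $J_1(x)=\spn\{(1,1)\}$ and $J_2(x)=\spn\{(1,0)\}$; then $J_1(x)\not\subset J_2(x)$ while $J_1(x)\cap J_2(x)^\perp=\{0\}$. So the object you want to select may simply not exist, and the selection theorem cannot rescue the step. What you actually need is only a $v(x)\in J_1(x)$ with $(I-P_{J_2}(x))v(x)\neq 0$, and no selection theorem is required for that: fixing an orthonormal basis $\{u_j\}$ of $\H$, the measurable sets $A_j=\{x: (I-P_{J_2}(x))P_{J_1}(x)u_j\neq 0\}$ cover $\{x: J_1(x)\not\subset J_2(x)\}$ (since $J_1(x)=\ov{\spn}_j\{P_{J_1}(x)u_j\}$), so some $A_j$ has positive measure; choosing $E\subset A_j$ of finite positive measure, the function $\psi=\chi_E\,P_{J_1}(\cdot)u_j$ lies in $V_{J_1}\setminus V_{J_2}$. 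With that repair (and the same $\chi_E P_J(\cdot)u_j$ device disposes of the ``distinct generating sets give the same $J$'' bookkeeping), your proof is complete.
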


We introduce the following notion in order to derive an operator version of Theorem~\ref{thm:MISpa}.

\begin{defn}
Given measurable range functions
\[ J \colon X \to \{ \text{closed subspaces of }\H\} \qquad \text{and} \qquad J' \colon X \to \{ \text{closed subspaces of }\H' \}, \]
a \emph{range operator} $R \colon J \to J'$ is a choice of linear operators $R(x) \colon J(x) \to J'(x)$ for each $x \in X$. We say $R$ is \emph{bounded} if
\[ \Norm{R}:= \esssup_{x\in X} \Norm{R(x)}_{\text{op}} < \infty. \]
It is \emph{measurable} if for every $u\in \H$, $v \in \H'$ the function $x \mapsto \langle R(x) P_{J}(x) u, v \rangle$ is measurable on $X$.
\end{defn}

Just as with range functions and MI spaces, every bounded, measurable range operator $R\colon J \to J'$ defines a bounded MI operator $\int_X^\oplus R(x)\, d\mu(x) \colon V_{J} \to V_{J'}$, given by
\[ \left[ \int_X^\oplus R(x) d\mu(x)\, \varphi\right](y) = R(y)[\varphi(y)] \qquad (\varphi \in V_{J},\ y \in X). \]
We call such operators \emph{decomposable}.

Our main result in this section is the following analogue of~\cite[Theorem~4.5]{B}.

\begin{theorem} \label{thm:MIOp}
Let
\[ J \colon X \to \{ \text{closed subspaces of }\H\} \qquad \text{and} \qquad J' \colon X \to \{ \text{closed subspaces of }\H' \} \]
be measurable range functions, and let $T \colon V_J \to V_{J'}$ be a bounded linear operator. For every determining set $\D$, the following are equivalent:
\begin{enumerate}[(i)]
\item $T$ is an MI operator.
\item For every $\phi \in \D$, $TM_\phi = M_\phi' T$.
\item There is a bounded measurable range operator $R \colon J \to J'$ such that $T = \int_X^\oplus R(x)\, d\mu(x)$.
\end{enumerate}
Moreover, the mapping $R \mapsto \int_X^\oplus R(x)\, d\mu(x)$ gives a one-to-one correspondence between bounded measurable range operators and MI operators, provided we identify range operators that agree a.e.\ on $X$.
\end{theorem}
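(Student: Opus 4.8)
The plan is to establish the equivalences by proving $(iii)\Rightarrow(i)\Rightarrow(ii)\Rightarrow(i)$ together with the main construction $(i)\Rightarrow(iii)$, and then to read off the one-to-one correspondence. The implication $(iii)\Rightarrow(i)$ is immediate: if $T=\int_X^\oplus R(x)\,d\mu(x)$, then for every $\phi\in L^\infty(X)$ and $\varphi\in V_J$ we have $(M_\phi' T\varphi)(y)=\phi(y)R(y)[\varphi(y)]=R(y)[\phi(y)\varphi(y)]=(TM_\phi\varphi)(y)$ for a.e.\ $y$, using linearity of each $R(y)$. Likewise $(i)\Rightarrow(ii)$ is trivial since $\D\subset L^\infty(X)$.

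For $(ii)\Rightarrow(i)$ I would exploit the determining set through a weak-$*$ density argument. Since $\D$ separates points of $L^1(X)$ under the pairing $f\mapsto\int_X f\overline{g}\,d\mu$, the conjugates $\{\overline g:g\in\D\}$ have trivial annihilator in $L^1(X)$, so by Hahn--Banach their span is weak-$*$ dense in $L^\infty(X)=(L^1(X))^*$; as complex conjugation is a weak-$*$ homeomorphism, $\spn\D$ is weak-$*$ dense as well. Now fix $\varphi\in V_J$ and $\psi\in V_{J'}$ and set $k_1(x)=\langle(T\varphi)(x),\psi(x)\rangle$ and $k_2(x)=\langle\varphi(x),(T^*\psi)(x)\rangle$, both of which lie in $L^1(X)$ by Cauchy--Schwarz. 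A direct computation gives $\langle(TM_\phi-M_\phi'T)\varphi,\psi\rangle=\int_X\phi\,(k_2-k_1)\,d\mu$, which is weak-$*$ continuous in $\phi$. By $(ii)$ it vanishes on $\D$, hence on $\spn\D$, hence on all of $L^\infty(X)$; ranging over $\varphi,\psi$ yields $(i)$.

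The substance of the theorem is $(i)\Rightarrow(iii)$. By Assumption~\ref{assumptions} the space $L^2(X;\H)$ is separable, so $V_J$ admits a countable generating family $\{\varphi_n\}_{n\in\N}$, which by Theorem~\ref{thm:MISpa} satisfies $J(x)=\overline{\spn}\{\varphi_n(x):n\in\N\}$ for a.e.\ $x$. The idea is to define $R(x)$ fiberwise by $R(x)\big(\sum_n c_n\varphi_n(x)\big)=\sum_n c_n(T\varphi_n)(x)$ on finite combinations, and to extend by continuity. Well-definedness and the bound $\Norm{R(x)}\le\Norm T$ both reduce to the pointwise inequality
\[
\Norm{\sum_n c_n(T\varphi_n)(x)}\le\Norm T\cdot\Norm{\sum_n c_n\varphi_n(x)}
\qquad\text{for a.e.\ }x,
\]
for every finite scalar tuple $c$. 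I would prove this by localization: writing $\psi_c=\sum_n c_n\varphi_n\in V_J$, the MI property gives $T\psi_c=\sum_n c_n(T\varphi_n)$, so $g_c(x)=\Norm{(T\psi_c)(x)}^2-\Norm T^2\Norm{\psi_c(x)}^2$ lies in $L^1(X)$. If $g_c>0$ on a set $A$ of positive measure, then testing the global bound $\Norm{T(M_{\chi_A}\psi_c)}\le\Norm T\,\Norm{M_{\chi_A}\psi_c}$---legitimate because $M_{\chi_A}\psi_c\in V_J$ and $T$ commutes with $M_{\chi_A}$---yields $\int_A g_c\,d\mu\le0$, a contradiction. Restricting $c$ to the countably many tuples with entries in $\Q+i\Q$ and discarding a null set, the inequality holds simultaneously for all such $c$ off a null set, and then for all complex $c$ by continuity. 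Off this null set, $R(x)$ is a well-defined linear map on $\spn\{\varphi_n(x)\}$ of norm at most $\Norm T$ whose values lie in the closed subspace $J'(x)$ (using $(T\varphi_n)(x)\in J'(x)$ a.e.); extending it continuously to $J(x)$ and setting $R(x)=0$ elsewhere yields a bounded range operator with $\Norm R\le\Norm T$.

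It remains to check measurability, to verify $T=\int_X^\oplus R\,d\mu$, and to record uniqueness. For the first, the fields $x\mapsto(T\varphi_n)(x)=R(x)\varphi_n(x)$ are measurable, and passing to a measurable orthonormal field $\{e_m\}$ adapted to $J$ (obtained from the $\varphi_n$ by a measurable Gram--Schmidt procedure) shows that each $x\mapsto R(x)e_m(x)$ is measurable and hence that $x\mapsto\langle R(x)P_J(x)u,v\rangle=\sum_m\langle u,e_m(x)\rangle\langle R(x)e_m(x),v\rangle$ is measurable. The identity $(T\varphi)(x)=R(x)\varphi(x)$ holds by construction for $\varphi$ in the dense set of finite sums $\sum_n\phi_n\varphi_n$ with $\phi_n\in L^\infty(X)$, and extends to all $\varphi\in V_J$ since both operators are bounded; thus $T=\int_X^\oplus R\,d\mu$. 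Uniqueness is immediate: if two bounded range operators induce the same MI operator $T$, then for each $n$ their values on $\varphi_n(x)$ both equal $(T\varphi_n)(x)$ for a.e.\ $x$, so they coincide on the dense subspace $\spn\{\varphi_n(x):n\in\N\}$ of $J(x)$ and, by continuity, on all of $J(x)$ for a.e.\ $x$. I expect the verification of measurability to be the main obstacle, since it is the one step genuinely requiring the machinery of measurable fields rather than the soft functional-analytic and localization arguments used elsewhere.
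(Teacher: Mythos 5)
Your proof is correct, but it takes a genuinely different route from the paper's. The paper reduces to the case $\H=\H'$, extends $T$ by zero to all of $L^2(X;\H)$, and then settles the core equivalence by citing the von Neumann algebra fact that the commutant of the multiplication algebra $M(L^\infty(X))$ is precisely the algebra of decomposable operators \cite{D2}; the refinement from $L^\infty(X)$ to a determining set $\D$ comes from the weak-$*$ density of $\spn\D$ (Lemma~\ref{lem:WSD}) combined with the identification of the weak-$*$ and weak operator topologies on $M(L^\infty(X))$ (Lemma~\ref{lem:WOTWS}). You instead build $R(x)$ by hand on a countable generating family, obtaining the pointwise bound $\|R(x)\|\le\|T\|_{op}$ from the localization trick with $M_{\chi_A}$ and a countable set of rational tuples, and you verify measurability through a measurable Gram--Schmidt field; your (ii)$\Rightarrow$(i) step uses the same weak-$*$ density of $\spn\D$ but applies it directly to the sesquilinear form $\phi\mapsto\langle(TM_\phi-M_\phi'T)\varphi,\psi\rangle$ rather than to operator topologies on the von Neumann algebra. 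This is essentially the strategy of \cite{B}, of which the present theorem is the stated analogue, and it buys self-containment (you in effect reprove the needed special case of the commutant theorem) as well as the norm estimate $\|R\|\le\|T\|_{op}$ as a byproduct --- a fact the paper establishes separately in Theorem~\ref{prop}(i) by the very same $M_{\chi_A}$-localization and rational-coefficient argument you use here. The paper's route is shorter modulo the cited machinery and places the result directly in the standard direct-integral framework; yours is longer but elementary and makes the fiberwise construction of $R$ explicit. All the individual steps you give (well-definedness via the pointwise inequality applied to differences of tuples, $(T\varphi_n)(x)\in J'(x)$ a.e., density of finite sums $\sum_n\phi_n\varphi_n$, and the uniqueness argument) check out.
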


In order to prove Theorem~\ref{thm:MIOp}, we will consider each $V_J$ to be a direct integral of Hilbert spaces, in the following sense. Fix an orthonormal basis $\{u_j\}_{j\in I}$ for $\H$. Given a measurable $\H$-valued range function $J$, we define $\int_X^{\oplus} J(x) d\mu(x)$ to be the space of all functions $\varphi \in \prod_{x \in X} J(x)$ such that $\int_X \Norm{\varphi(x)}^2 d\mu(x) < \infty$, and such that, for each $j \in I$, the mapping $x \mapsto \langle \varphi(x), P_J(x) u_j \rangle$ is measurable on $X$. As with $L^p$ spaces, we consider functions in $\int_X^\oplus J(x) d\mu(x)$ to be identical when they differ only on a set of measure zero. The direct integral then becomes a Hilbert space with inner product $\langle \varphi, \psi \rangle = \int_X \langle \varphi(x), \psi(x) \rangle d\mu(x)$. For details, consult~\cite{F}. The following was sketched in \cite{BR}.

\begin{lemma} \label{lem:DI}
Given a measurable $\H$-valued range function $J$, we have $V_J = \int_X^{\oplus} J(x) d\mu(x)$. In particular, the latter space does not depend on the choice of orthonormal basis for $\H$.
\end{lemma}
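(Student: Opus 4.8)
The plan is to show that $V_J$ and $\int_X^\oplus J(x)\,d\mu(x)$ consist of precisely the same functions, carrying the identical inner product, so that the only point needing verification is that the two a priori different measurability requirements coincide. Both spaces are made up of (equivalence classes of) functions $\varphi$ with $\varphi(x)\in J(x)$ for a.e.\ $x$ and $\int_X \Norm{\varphi(x)}^2\,d\mu(x)<\infty$: for the direct integral these are built into the definition, while for $V_J$ the condition $\varphi(x)\in J(x)$ a.e.\ is imposed directly and square-integrability is inherited from $L^2(X;\H)$. The apparent discrepancy that the direct integral demands $\varphi(x)\in J(x)$ everywhere rather than a.e.\ is immaterial, since both spaces identify functions that agree off a null set, and any representative of an element of $V_J$ may be redefined to equal $0\in J(x)$ on its exceptional null set. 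Because the two inner products are given by the same formula $\int_X \langle \varphi(x),\psi(x)\rangle\,d\mu(x)$, it remains only to match the measurability conditions.

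First I would state the two conditions explicitly. Membership in $L^2(X;\H)$ requires that $\varphi$ be measurable as an $\H$-valued function; since $\H$ is separable, the Pettis measurability theorem lets me replace strong measurability by weak measurability, i.e.\ by measurability of $x\mapsto \langle \varphi(x),v\rangle$ for every $v\in\H$. The direct integral instead asks only that $x\mapsto \langle \varphi(x),P_J(x)u_j\rangle$ be measurable for each basis vector $u_j$. The key observation is that, for a function taking values $\varphi(x)\in J(x)$, these two families of conditions collapse to the same thing.

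The central step is the identity
\[ \langle \varphi(x), P_J(x) u_j\rangle = \langle P_J(x)\varphi(x), u_j\rangle = \langle \varphi(x), u_j\rangle, \]
valid wherever $\varphi(x)\in J(x)$, which uses self-adjointness of the projection $P_J(x)$ together with $P_J(x)\varphi(x)=\varphi(x)$. Thus the direct-integral condition is equivalent to measurability of $x\mapsto\langle\varphi(x),u_j\rangle$ for every $j\in I$. To finish I would pass between this and full weak measurability using that $\{u_j\}_{j\in I}$ is an orthonormal basis: weak measurability trivially yields measurability against each $u_j$, while conversely, for arbitrary $v\in\H$, the pointwise-convergent expansion $\langle\varphi(x),v\rangle=\sum_{j\in I}\langle\varphi(x),u_j\rangle\,\overline{\langle v,u_j\rangle}$ exhibits $x\mapsto\langle\varphi(x),v\rangle$ as a countable sum of measurable functions, hence measurable. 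This identifies the two membership conditions and yields $V_J=\int_X^\oplus J(x)\,d\mu(x)$; since $V_J$ was defined with no reference to a basis, the same equality proves the asserted basis-independence of the direct integral.

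I expect no serious obstacle: the substance is entirely the reduction of measurability via the projection identity and the basis expansion, both routine once separability of $\H$ is invoked through Pettis's theorem. The only point demanding a little care is the bookkeeping around ``a.e.'' versus ``everywhere'' membership in $J(x)$ and the attendant choice of representatives, which the redefinition-on-a-null-set remark disposes of.
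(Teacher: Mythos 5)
Your proposal is correct and follows essentially the same route as the paper's proof: the key projection identity $\langle \varphi(x), P_J(x) u_j\rangle = \langle \varphi(x), u_j\rangle$ on $J(x)$, the basis expansion to upgrade measurability against the $u_j$ to full weak measurability, and Pettis's measurability theorem to conclude strong measurability. The paper's version is terser (it leaves the converse and the a.e.\ bookkeeping implicit), but the substance is identical.
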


\begin{proof}
Suppose that $\varphi \colon X \to \H$ has the property that $\varphi(x) \in J(x)$ for all $x\in X$. Then for any $j \in I$ and $x \in X$ we have
\[ \langle \varphi(x), P_J(x) u_j \rangle = \langle P_J(x) \varphi(x), u_j \rangle = \langle \varphi(x), u_j \rangle. \]
If $\varphi \in \int_X^\oplus J(x) d\mu(x)$, it follows that for each $v \in \H$, the  mapping $x \mapsto \langle \varphi(x), v \rangle$ is measurable on $X$. Using Pettis's measurability theorem~\cite[Theorem~1.1]{Pe}, we deduce that $\varphi$ is measurable into $\H$. Consequently, $\varphi \in V_J$. The converse is proved similarly.
\end{proof}

\begin{rem}
Let $R \colon J_1 \to J_2$ be a bounded, measurable range operator. From the perspective of Lemma~\ref{lem:DI}, the operator we have called $\int_X^\oplus R(x)\, d\mu(x)$ is quite literally the direct integral of the measurable field of operators given by $R$. In other words, what we have called \emph{range operators} are in one-to-one correspondence with what are usually called \emph{decomposable operators} in the von Neumann algebra literature, provided we identify range operators that agree a.e.\ on $X$. For background on this and other matters of von Neumann algebras, we refer the reader to \cite{D2}.
\end{rem}

\begin{lemma}[ Proposition 2.2 of \cite{BR} ] \label{lem:rangProj}
If $J$ is a measurable range function, then orthogonal projection onto $V_J$ is given by $\int_X^\oplus P_J(x) d\mu(x)$.
\end{lemma}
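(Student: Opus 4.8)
The plan is to set $P := \int_X^\oplus P_J(x)\, d\mu(x)$, regarded as an operator on all of $L^2(X;\H)$, and to verify directly that it is a self-adjoint idempotent whose range is exactly $V_J$; since an orthogonal projection is determined by its range, this identifies $P$ as the orthogonal projection onto $V_J$. First I would check that $P$ is a legitimate decomposable operator. Viewing $L^2(X;\H)$ as $V_{J_0}$ for the constant range function $J_0 \equiv \H$ (so that $P_{J_0}(x) = I$), the assignment $x \mapsto P_J(x)$ is a range operator $J_0 \to J_0$ that is bounded, since $\Norm{P_J(x)}_{\mathrm{op}} \leq 1$ for every $x$, and measurable, since $x \mapsto \langle P_J(x) u, v\rangle$ is measurable for all $u,v \in \H$ by Definition~\ref{rf}. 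Hence by the construction preceding Theorem~\ref{thm:MIOp}, $P$ is a bounded operator given pointwise by $(P\varphi)(x) = P_J(x)\varphi(x)$ for a.e.\ $x$.

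Next I would transfer the pointwise algebraic identities of the projections $P_J(x)$ to the global operator $P$. Since $P_J(x)^2 = P_J(x)$ for every $x$, we get $(P^2\varphi)(x) = P_J(x)^2\varphi(x) = P_J(x)\varphi(x) = (P\varphi)(x)$ a.e., so $P^2 = P$. Self-adjointness follows from the integral form of the inner product: for $\varphi,\psi \in L^2(X;\H)$,
\[
\langle P\varphi,\psi\rangle = \int_X \langle P_J(x)\varphi(x),\psi(x)\rangle\, d\mu(x) = \int_X \langle \varphi(x),P_J(x)\psi(x)\rangle\, d\mu(x) = \langle \varphi,P\psi\rangle,
\]
using $P_J(x)^* = P_J(x)$ pointwise. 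Thus $P$ is an orthogonal projection.

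It then remains to identify $\ran P$ with $V_J$. On one hand, $(P\varphi)(x) = P_J(x)\varphi(x) \in J(x)$ a.e., so $P\varphi \in V_J$ for every $\varphi$, giving $\ran P \subset V_J$. On the other hand, if $\varphi \in V_J$, then $\varphi(x) \in J(x)$ a.e., whence $P_J(x)\varphi(x) = \varphi(x)$ a.e.\ and $P\varphi = \varphi$; in particular $V_J \subset \ran P$. Combining the two inclusions gives $\ran P = V_J$, and the conclusion follows.

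I do not anticipate a serious obstacle: the whole argument is a pointwise-to-global transfer. The one point requiring care is the well-definedness of the decomposable operator $P$, which rests on the boundedness and measurability of the range operator $x \mapsto P_J(x)$ together with Lemma~\ref{lem:DI} identifying $L^2(X;\H)$ with the relevant direct integral. Once that is in place, idempotency, self-adjointness, and the computation of the range all follow by reading off the corresponding pointwise statements for the projections $P_J(x)$.
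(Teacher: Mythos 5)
Your argument is correct and complete: you define $P=\int_X^\oplus P_J(x)\,d\mu(x)$, verify it is a bounded decomposable operator, transfer idempotency and self-adjointness pointwise, and identify $\operatorname{ran}P=V_J$, which pins down $P$ as the orthogonal projection. The paper itself offers no proof of this lemma---it simply cites Proposition~2.2 of \cite{BR}---so there is nothing internal to compare against, but your direct verification is exactly the standard argument one would give. The only point worth making explicit is that $P\varphi$ is genuinely an element of $L^2(X;\H)$, i.e.\ that $x\mapsto P_J(x)\varphi(x)$ is strongly measurable into $\H$; this follows from weak measurability together with separability of $\H$ via Pettis's theorem, just as in the proof of Lemma~\ref{lem:DI}, and you correctly defer this to the construction of decomposable operators preceding Theorem~\ref{thm:MIOp}.
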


Finally, we require the following characterization of determining sets, one direction of which was noted in~\cite{BR}.

\begin{lemma} \label{lem:WSD}
A subset $\D \subset L^\infty(X)$ is a determining set for $L^1(X)$ if and only if its finite linear span is weak-$\ast$ dense in $L^\infty(X) \cong L^1(X)^*$.
\end{lemma}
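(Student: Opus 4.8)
The plan is to recognize both conditions as restatements of the same annihilator being trivial, and then invoke the standard duality between a weak-$\ast$ topology and its predual. The key structural fact, which I would record first, is that under Assumption~\ref{assumptions} the measure $\mu$ is $\sigma$-finite, so the pairing $\langle f, g \rangle = \int_X f \ov{g}\, d\mu$ identifies $L^\infty(X)$ with the dual $L^1(X)^\ast$; the weak-$\ast$ topology on $L^\infty(X)$ is then $\sigma(L^\infty,L^1)$, and its space of continuous linear functionals is exactly $L^1(X)$, acting through this same pairing.

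Next I would reformulate the determining-set condition. Since the pairing is linear in $f$, writing $f = f_1 - f_2$ shows that ``$\D$ separates points of $L^1(X)$'' is equivalent to the statement that the only $f \in L^1(X)$ satisfying $\int_X f \ov{g}\, d\mu = 0$ for every $g \in \D$ is $f = 0$. Because the pairing is conjugate-linear in $g$, this annihilator condition is unchanged if we replace $\D$ by its finite linear span $W := \spn \D$. Thus being a determining set is precisely the statement that the pre-annihilator of $W$ in $L^1(X)$ is $\{0\}$.

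The remaining step is the duality dictionary. In any locally convex space a subspace is dense if and only if every continuous linear functional vanishing on it is identically zero; applied to $L^\infty(X)$ with its weak-$\ast$ topology, whose continuous dual is $L^1(X)$, this says that $W$ is weak-$\ast$ dense if and only if the only $f \in L^1(X)$ with $\langle f, w\rangle = 0$ for all $w \in W$ is $f = 0$. This is exactly the reformulated determining condition, so both implications follow simultaneously. Concretely, for the converse direction a nonzero $f$ gives a weak-$\ast$ continuous functional that, vanishing on a dense $W$, would have to vanish on all of $L^\infty(X)$ and hence force $f = 0$; for the forward direction, if $W$ were not weak-$\ast$ dense, Hahn--Banach separation in the weak-$\ast$ topology would produce a nonzero $f \in L^1(X)$ annihilating $W$, contradicting that $\D$ separates $f$ from $0$.

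I do not expect a serious obstacle here, as the argument is entirely formal once the duality is in place. The only points needing care are that $\sigma$-finiteness is what guarantees $L^\infty(X) = L^1(X)^\ast$ (so that the duality applies at all), that the weak-$\ast$ continuous dual of $L^\infty(X)$ is its predual $L^1(X)$ rather than the full bidual (this is why the separating functional can be taken in $L^1$), and the bookkeeping that the conjugate-linear slot does not disturb the passage from $\D$ to $\spn \D$. The one genuinely nontrivial ingredient is the Hahn--Banach separation used in the forward direction, which matches the remark that only the converse was already noted in \cite{BR}.
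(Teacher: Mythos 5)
Your proof is correct and follows essentially the same route as the paper: both reduce the determining-set condition to the triviality of the pre-annihilator of $\spn \D$ in $L^1(X)$ and then invoke the standard weak-$\ast$ duality (the paper packages this as the double-annihilator theorem $({}^\perp N)^\perp = \overline{N}^{\,w\ast}$ from Rudin, which is the same Hahn--Banach separation argument you describe). Your explicit remarks on $\sigma$-finiteness and the conjugate-linear slot are sound and do not change the substance.
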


\begin{proof}
Let $N\subset L^\infty(X) = L^1(X)^*$ be the finite linear span of $\D$. Its annihilator in $L^1(X)$ is
\begin{align*}
{}^\perp N &:= \left\{ f \in L^1(X) : \int_X f(x) \overline{\phi(x)}\, d\mu(x) = 0 \text{ for all } \phi \in N \right\} \\
&= \left\{ f \in L^1(X) : \int_X f(x) \overline{\phi(x)}\, d\mu(x) = 0 \text{ for all } \phi \in \D \right\}.
\end{align*}
To say that $\D$ is a determining set means precisely that ${}^\perp N = \{0\}$, or equivalently, that the double annihilator
\[ ( {}^\perp N)^\perp := \left\{ \phi \in L^\infty(X) : \int_X f(x) \overline{\phi(x)}\, d\mu(x) = 0 \text{ for all }f \in {}^\perp N \right\} \]
is all of $L^\infty(X)$. But $({}^\perp N)^\perp$ is the weak-$\ast$ closure of $N$ \cite[Theorem~4.7]{Ru}. Hence, ${}^\perp N=\{0\}$ if and only if $N$ is dense in $L^\infty(X)$ under the weak-$\ast$ topology.
\end{proof}

\begin{proof}[Proof of Theorem~\ref{thm:MIOp}]
By replacing both $\H$ and $\H'$ by $\H\oplus \H'$ if necessary, we may assume without loss of generality that $\H = \H'$.

To begin, extend $T$ to a bounded linear operator $\bar{T}$ on all of $L^2(X;\H)$ by setting $\bar{T} = 0$ on the orthogonal complement of $V_{J}$. For this extended operator, we consider the analogous statements of (i)---(iii), namely:
\begin{enumerate}[(i')]
\item $\bar{T}$ is an MI operator.
\item For every $\phi \in \D$, $\bar{T}M_\phi = M_\phi \bar{T}$.
\item There is a bounded measurable range operator $\bar{R}$ such that $\bar{T} = \int_X^\oplus \bar{R}(x)\, d\mu(x)$.
\end{enumerate}
We will show that (i')---(iii') are equivalent, and then we will show that each of (i)---(iii) is equivalent to its primed version.

(i') $\iff$ (ii') $\iff$ (iii').
Let $M \colon L^\infty(X) \to B(L^2(X;\H))$ be the embedding of $L^\infty(X)$ into $B(L^2(X;\H))$ as a von Neumann algebra of multiplication operators. Then (i') says that $\bar{T}$ belongs to the commutant
\[ M(L^\infty(X))' = \{ S \in B(L^2(X;\H)) : S M_\phi = M_\phi S \text{ for all } \phi \in L^\infty(X)\}, \]
(ii') says that $\bar{T} \in M(\D)'$, and (iii') says that $\bar{T}$ is decomposable. By Lemma~\ref{lem:WSD}, the finite linear span of $\D$ is dense in $L^\infty(X)$ under its weak-$\ast$ topology, which is identical to the weak operator topology on $M(L^\infty(X))$; see Lemma~\ref{lem:WOTWS} in the appendix. Hence, $M(\D)'=M(L^\infty(X))'$, which is well known to be the algebra of decomposable operators~\cite[Thm~II.2.1]{D2}. In other words, (i')---(iii') are equivalent.

(ii') $\implies$ (ii). 
If (ii') is true, then for any $\varphi \in V_1$ and $\phi \in \D$ we have
\[ TM_\phi \varphi = \bar{T} M_\phi \varphi  = M_\phi \bar{T} \varphi = M_\phi T \varphi. \]

(ii) $\implies$ (ii'). 
Conversely, suppose (ii) holds. Given any $\varphi \in L^2(X;\H)$, we can decompose $\varphi = \varphi' + \varphi''$ with $\varphi' \in V_J$ and $\varphi'' \in V_J^\perp$. The orthogonal complement of an MI space is again MI by \cite[Lemma 2.5]{BR}. Thus, for any $\phi \in \D$ we have $M_\phi \varphi' \in V_J$ and $M_\phi \varphi'' \in V_J^\perp$, so that
\[ \bar{T} M_\phi \varphi = \bar{T} M_\phi \varphi' + \bar{T} M_\phi \varphi'' = TM_\phi \varphi'. \]
Applying (ii), we see that
\[ \bar{T} M_\phi \varphi = TM_\phi \varphi' = M_\phi T \varphi' = M_\phi \bar{T} \varphi. \]

(i) $\iff$ (i') follows from the above by taking $\D = L^\infty(X)$.

(iii) $\implies$ (iii').
Suppose (iii) holds for a measurable range operator $R \colon J \to J'$. For each $x\in X$, let $\bar{R}(x) = R(x) P_{J}(x) \in B(\H)$. Then $\bar{R}$ is measurable, and we claim that $\bar{T} = \int_X^{\oplus} \bar{R}(x) d\mu(x)$. To see this, let $P = \int_X^{\oplus} P_{J}(x) d\mu(x)$ be orthogonal projection onto $V_J$, as in Lemma~\ref{lem:rangProj}. Then for any $\varphi \in L^2(X;\H)$ and a.e.\ $x \in X$ we have
\[ (\bar{T} \varphi)(x) = (T P \varphi)(x) = R(x) (P \varphi)(x) = R(x) P_{J}(x) \varphi(x) = \bar{R}(x) \varphi(x). \]

(iii') $\implies$ (iii).
Conversely, assume (iii'). Since the image of $\bar{T}$ is contained in $V_{J'}$, and since orthogonal projection onto $V_{J'}$ is given pointwise by projection onto $J'(x)$, we conclude that $\bar{R}(x)$ takes its image in $J'(x)$ for a.e.\ $x \in X$. Hence we can define $R(x) \colon J(x) \to J'(x)$ to be the restriction of $\bar{R}(x)$ for a.e.\ $x \in X$. The resulting range operator $R \colon J \to J'$ is easily seen to be measurable, and for any $\varphi \in V_J$ and a.e.\ $x \in X$ we obtain
\[ (T \varphi)(x) = (\bar{T} \varphi)(x) = \bar{R}(x) \varphi(x) = R(x) \varphi(x). \]
Thus, $T = \int_X^{\oplus} R(x) d\mu(x)$. This completes the proof that (i)---(iii) are equivalent.

Finally, for the uniqueness claim, suppose we have measurable range operators $R,R' \colon J \to J'$ such that $R(x) \neq R'(x)$ on a set of positive measure. After choosing an orthonormal basis $\{e_n\}_{n=1}^\infty$ for $\H$, a standard argument shows there must be a basis vector $e_n$ with ${[R(x) - R'(x)]e_n} \neq 0$ on a set $E\subset X$ of finite, positive measure. Letting $\varphi \in L^2(X;\H)$ be given by 
\[ \varphi(x) = \begin{cases} e_n, & \text{if }x \in E \\ 0, & \text{otherwise}, \end{cases} \]
we then have $R(x) \varphi(x) \neq R'(x) \varphi(x)$ for all $x \in E$. Consequently, 
\[ \int_X^\oplus R(x) d\mu(x) \neq \int_X^\oplus R'(x) d\mu(x). \qedhere \]
\end{proof}

Theorem~\ref{thm:MISpa} and Theorem~\ref{thm:MIOp} have a convenient formulation in terms of category theory. For background, we refer the reader to~\cite{ML}.

\begin{defn}
We write $X$-\textbf{MI} for the category whose objects are MI subspaces of $L^2(X;\K)$ for various separable Hilbert spaces $\K$, with MI operators as morphisms. 
We write $X$-\textbf{Ran} for the category whose objects are equivalence classes of measurable range functions $J \colon X \to \{ \text{closed subspaces of }\K\}$ for various separable Hilbert spaces $\K$, and whose morphisms are equivalence classes of measurable range operators. Here, range functions (resp.\ operators) are considered equivalent when they agree a.e.\ on $X$.
\end{defn}

\begin{cor} \label{cor:CatIso}
The functor $F\colon X\text{-}\textbf{Ran} \to X\text{-}\textbf{MI}$ given by $F(J) = V_J$ and $F(R) = \int_X^\oplus R(x)\, d\mu(x)$ is an isomorphism of categories.
\end{cor}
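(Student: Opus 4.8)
The plan is to exhibit $F$ as an invertible functor by checking three things in turn: that $F$ is a well-defined functor, that it is a bijection on objects, and that it is a bijection on each hom-set. All of the genuine analysis has already been carried out in Theorems~\ref{thm:MISpa} and~\ref{thm:MIOp}, so what remains is essentially to repackage those results categorically, and the argument should be short.

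First I would verify that $F$ is a functor. That $F$ carries objects to objects is Theorem~\ref{thm:MISpa}, since each $V_J$ is an MI space; that $F$ carries a morphism $R\colon J\to J'$ to a morphism $V_J\to V_{J'}$ is the implication (iii)$\implies$(i) of Theorem~\ref{thm:MIOp}, which says every decomposable operator is MI. I would then check functoriality directly and pointwise. The identity morphism on $J$ in $X$-\textbf{Ran} is the range operator $x\mapsto \mathrm{id}_{J(x)}$, and $\int_X^\oplus \mathrm{id}_{J(x)}\,d\mu(x)$ acts as the identity on $V_J$ by the definition of the direct integral. For composition, recall that composition in $X$-\textbf{Ran} is given pointwise a.e., $(S\circ R)(x)=S(x)R(x)$; applying $\int_X^\oplus S(x)R(x)\,d\mu(x)$ and $\big(\int_X^\oplus S(x)\,d\mu(x)\big)\big(\int_X^\oplus R(x)\,d\mu(x)\big)$ to any $\varphi\in V_J$ yields $S(x)[R(x)\varphi(x)]$ at a.e.\ $x$ in both cases, so the two operators coincide. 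I would also note that $F$ is well defined on equivalence classes: if $J=J'$ a.e.\ then the a.e.\ membership condition defining $V_J$ and $V_{J'}$ is the same, and if $R=R'$ a.e.\ then their direct integrals agree.

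Next I would establish bijectivity on objects and on morphisms, both of which are now immediate. Bijectivity on objects is precisely the final assertion of Theorem~\ref{thm:MISpa}: the map $J\mapsto V_J$ is a bijection between equivalence classes of measurable range functions and MI subspaces, and since this holds for every separable $\K$ it gives a bijection between the object classes of the two categories. For morphisms, fix objects $J$ and $J'$; then the ``moreover'' clause of Theorem~\ref{thm:MIOp} states that $R\mapsto\int_X^\oplus R(x)\,d\mu(x)$ is a one-to-one correspondence between bounded measurable range operators $J\to J'$ (modulo a.e.\ equality) and MI operators $V_J\to V_{J'}$, so $F$ restricts to a bijection on each hom-set. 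A functor that is bijective on objects and on every hom-set is an isomorphism of categories, with inverse $G$ sending $V_J\mapsto J$ and each MI operator to its range operator; this finishes the proof.

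The only point requiring genuine (if routine) attention is functoriality, and within that the compatibility with composition: nothing in the cited theorems asserts that the morphism bijection of Theorem~\ref{thm:MIOp} respects composition, so this must be confirmed by the pointwise computation above. I do not expect a real obstacle there, since composition on both sides is literally pointwise composition of the fields $R(x)$; the measurability and boundedness of the composite field $x\mapsto S(x)R(x)$ are standard facts about measurable fields of operators, ensuring that $S\circ R$ is again a morphism in $X$-\textbf{Ran}.
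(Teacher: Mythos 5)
Your proof is correct and takes the route the paper intends: the paper gives no explicit argument for Corollary~\ref{cor:CatIso}, presenting it as an immediate categorical repackaging of Theorems~\ref{thm:MISpa} and~\ref{thm:MIOp}, and your verification supplies exactly the routine details. You are also right to flag compatibility with composition as the one point not literally contained in the cited theorems; your pointwise computation (together with the standard measurability of the composite field $x \mapsto S(x)R(x)$) settles it.
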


%POINTWISE PROPERTIES OF MI-OPERATORS=======================================================
\section{Pointwise properties of multiplication-invariant operators} \label{sec:pointwise properties}

We retain Assumption~\ref{assumptions}.
The purpose of this section is to examine the isomorphism of categories from Corollary \ref{cor:CatIso} in greater detail. Theorem \ref{prop} shows that this isomorphism preserves a lot of operator properties, in the sense that global properties of $\int_X^\oplus R(x)\, d\mu(x)$ often reduce to pointwise a.e.\ properties for $R(x)$.

\begin{theorem}\label{prop}
Let $V \subset L^2(X;\H)$ and $V' \subset L^2(X;\H')$ be two MI spaces with range functions $J$ and $J'$, resp.
Let $T:V \to V'$ be an MI operator and $R:J \to J'$ be the corresponding measurable range operator as in Theorem \ref{thm:MIOp}. Then, the following are true.
\begin{enumerate}[(i)]
\item $||T||_{op}= \esssup_{x\in X} ||R(x)||_{op}$.
\item $T$ is bounded from below, i.e., there exists a constant $C>0$ such that
\begin{equation}\label{prop1}
||T \varphi|| \ge C ||\varphi|| \qquad\text{for all }\varphi \in V,
\end{equation}
if and only if for a.e.\ $x\in X$,
\begin{equation}\label{prop2}
||R(x)v || \ge C ||v|| \qquad\text{for all } v\in J(x).
\end{equation}
\item $T$ is invertible if and only if $R(x)$ is invertible for a.e.\ $x\in X$ and 
\begin{equation}\label{prop3}
\esssup_{x\in X} \Norm{R(x)^{-1}}_{op}<\infty.
\end{equation}
In this case, $T^{-1}$ is also an MI operator and $x\mapsto R(x)^{-1}$ is its range operator.
\item the adjoint $T^*: V' \to V$ is an MI operator with the corresponding range operator $R^*: J' \to J$ given by $R^*(x)=(R(x))^*$ for a.e.\ $x\in\ X$.
\item $T$ is unitary if and only if $R(x)$ is unitary for a.e.\ $x\in X$. 
\item $T$ is normal if and only if $R(x)$ is normal for a.e.\ $x\in X$. 
\item $T$ is 1-to-1 if and only if $R(x)$ is 1-to-1 for a.e.\ $x\in X$.
\item $T$ is an isometry if and only if $R(x)$ is an isometry for a.e.\ $x\in X$.
\item $T$ is a partial isometry if and only if $R(x)$ is a partial isometry for a.e.\ $x\in X$.
\end{enumerate}
\end{theorem}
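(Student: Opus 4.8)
The backbone is the pointwise action $(T\varphi)(x)=R(x)\varphi(x)$ from Theorem~\ref{thm:MIOp}, supplemented by two structural facts. The first is the uniqueness clause of Theorem~\ref{thm:MIOp}: a decomposable operator determines its range operator up to a.e.\ equality, so any operator identity among MI operators may be read off pointwise once both sides are written as direct integrals. The second is that direct integration respects composition, $\int_X^\oplus R_2(x)R_1(x)\,d\mu(x)=\big(\int_X^\oplus R_2(x)\,d\mu(x)\big)\big(\int_X^\oplus R_1(x)\,d\mu(x)\big)$, which is immediate from the pointwise formula and is encoded in the functoriality of Corollary~\ref{cor:CatIso}. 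I would prove (iv) first, as it feeds the rest: expanding $\langle T\varphi,\psi\rangle=\int_X\langle R(x)\varphi(x),\psi(x)\rangle\,d\mu(x)$ exhibits $T^*$ as the decomposable operator with range operator $x\mapsto R(x)^*$, whose measurability and boundedness are routine since $\Norm{R(x)^*}=\Norm{R(x)}$; uniqueness then identifies $R^*(x)=R(x)^*$ a.e.

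With (iv) available, items (v), (vi), (viii), and (ix) are purely formal. Each is an operator identity: $T$ is an isometry iff $T^*T=I$, normal iff $T^*T=TT^*$, unitary iff $T^*T=I=TT^*$, and a partial isometry iff $T^*T$ is an orthogonal projection. By (iv) and the composition rule, $T^*T$ and $TT^*$ are the decomposable operators with range operators $x\mapsto R(x)^*R(x)$ and $x\mapsto R(x)R(x)^*$. The uniqueness clause then turns each global identity into its pointwise counterpart a.e.; for instance $T^*T=I$ corresponds to $R(x)^*R(x)=\mathrm{id}_{J(x)}$ a.e., which is exactly the statement that $R(x)$ is an isometry. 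For (ix) I would additionally record the $P=P^2=P^*$ instance of the same principle: a decomposable operator is an orthogonal projection iff it is pointwise one, so $T^*T$ being a projection is equivalent to $R(x)^*R(x)$ being a projection a.e.

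The analytic content lives in (i), (ii), and (vii). The pointwise-to-global directions are direct integrations: integrating $\Norm{R(x)\varphi(x)}\le\Norm{R(x)}\,\Norm{\varphi(x)}$ yields $\Norm{T}\le\esssup_x\Norm{R(x)}$, and integrating $\Norm{R(x)v}\ge C\Norm{v}$ yields \eqref{prop1}. For the reverse directions of (i) and (ii) I would argue by contradiction: if $\esssup_x\Norm{R(x)}>c>\Norm{T}$, then $E=\{x:\Norm{R(x)}>c\}$ has positive measure, and on $E$ the set-valued map $x\mapsto\{v\in J(x):\Norm{v}=1,\ \Norm{R(x)v}>c\}$ has nonempty closed values; a measurable selection $v(\cdot)$, restricted to a subset of finite positive measure, produces a unit test function $\varphi$ with $\Norm{T\varphi}>c$, a contradiction. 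The reverse direction of (ii) is identical, selecting near-minimizers of $\Norm{R(x)v}$ on the set where the claimed lower bound fails. For (vii) I would instead show directly that $\ker T=V_{\ker R}$, where $\ker R$ is the range function $x\mapsto\ker R(x)$: if $T\varphi=0$ then $R(x)\varphi(x)=0$ a.e., so $\varphi(x)\in\ker R(x)$ a.e., and by the bijection in Theorem~\ref{thm:MISpa}, $\ker T=\{0\}$ iff $\ker R(x)=\{0\}$ a.e.

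The main obstacle throughout is measurability: for the selections in (i) and (ii) one must know that scalar quantities such as $x\mapsto\inf\{\Norm{R(x)v}:v\in J(x),\ \Norm{v}=1\}$ are measurable and that the associated set-valued maps admit measurable selections, and for (vii) one must know that $x\mapsto\ker R(x)$ is a measurable range function. Both follow from the existence of a measurable field of orthonormal bases for $J$ — obtained by Gram--Schmidt from a countable generating family as in Theorem~\ref{thm:MISpa} — together with the measurable-selection machinery for set-valued maps in \cite{AF,CV}. Finally, (iii) is assembled from the pieces. First, $T^{-1}$ is MI because inverting $TM_\phi=M_\phi'T$ gives $T^{-1}M_\phi'=M_\phi T^{-1}$. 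A bounded operator between Hilbert spaces is invertible iff it and its adjoint are bounded below, and the same holds pointwise for $R(x)$; so by (ii) and (iv), $T$ is invertible iff $R(x)$ and $R(x)^*$ are uniformly bounded below a.e., i.e.\ $R(x)$ is invertible a.e.\ with $\esssup_x\Norm{R(x)^{-1}}<\infty$, which is \eqref{prop3}. Applying the composition rule to $TT^{-1}=T^{-1}T=I$ forces the range operator of $T^{-1}$ to equal $x\mapsto R(x)^{-1}$ a.e., and (i) applied to $T^{-1}$ gives $\esssup_x\Norm{R(x)^{-1}}=\Norm{T^{-1}}$.
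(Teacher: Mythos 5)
Your proof is correct in substance and shares the paper's overall strategy (read global operator identities pointwise via the uniqueness clause of Theorem~\ref{thm:MIOp} and the functoriality of Corollary~\ref{cor:CatIso}), but it diverges from the paper's argument at the three analytically nontrivial points. For the hard directions of (i) and (ii), the paper avoids measurable selections entirely: it tests $T(f\varphi)$ against arbitrary $f\in L^\infty(X)$ to extract, for each \emph{fixed} $\varphi\in V$, the pointwise inequality $\Norm{R(x)\varphi(x)}\le \Norm{T}\,\Norm{\varphi(x)}$ a.e.\ (resp.\ the lower bound), and then runs this over a countable family $\mathcal A$ of rational linear combinations of generators, which is pointwise dense in $J(x)$ by \eqref{MI0}; your contradiction-plus-Castaing route works but is heavier, and note that your set $\{v\in J(x):\Norm{v}=1,\ \Norm{R(x)v}>c\}$ is not closed --- you should select from $\{v:\Norm{v}\le 1,\ \Norm{R(x)v}\ge c\}$ instead to stay within Theorem~\ref{char}. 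For (vii), the paper does not prove measurability of $x\mapsto\ker R(x)$ by selection machinery; it sidesteps the issue via the adjoint, writing $\ker T=V\cap(T^*(V'))^\perp$ and $\ker R(x)=J(x)\cap\bigl(R(x)^*[J'(x)]\bigr)^\perp$, where measurability of $\alpha\mapsto\overline{R(x)^*[J'(x)]}$ comes for free from \eqref{MI0} applied to the generators $T^*\psi_i$ (Lemma~\ref{rk}); your direct identification of $\ker T$ is fine, but the measurability of the kernel range function is the real content there and deserves an actual argument rather than a pointer. For surjectivity in (iii), your route ($T$ invertible iff $T$ and $T^*$ are bounded below, then apply (ii) and (iv)) is cleaner than, and a legitimate alternative to, the paper's computation of the range function of $T(V)$ via generators. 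The formal items (iv)--(vi), (viii), (ix) match the paper (the paper even records your $(T^*T)^2=T^*T$ criterion for (ix) as an alternative).
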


\begin{proof} To prove (i), let $||R||=\esssup_{x\in X} ||R(x)||_{op}$. Then for any $\varphi \in V$ we have
\[
||T \varphi||^2 = \int_X ||R(x)[\varphi(x)]||^2_{\mathcal H'} d\mu(x) \le ||R||^2 \int_X ||\varphi(x)||^2_{\mathcal H} d\mu(x) = ||R||^2 ||\varphi||^2.
\]
Hence, $||T||_{op} \le ||R||$. To prove the converse inequality, take any $f\in L^\infty(X)$,
\[
\int_X |f(x)|^2 ||R(x)[\varphi(x)]||^2_{\mathcal H'} d\mu(x) 
= ||T (f\varphi)||^2  \le ||T||_{op}^2 \int_X |f(x)|^2 ||\varphi(x)||^2_{\mathcal H} d\mu(x).
\]
Since $f$ is arbitrary, we deduce that
\begin{equation}\label{prop4}
||R(x)[\varphi(x)]|| \le ||T||_{op} ||\varphi(x)|| \qquad\text{for a.e.\ }x\in X.
\end{equation}
Now we take a collection of functions $\{\varphi_i\}_{i\in \N}$, which generates $V$. That is, $\{M_\phi \varphi_i: \phi \in L^\infty, i\in \N\}$ is dense in $V$. Define a countable sets $\mathcal A$ of finite linear combinations with rational coefficients 
\begin{equation}\label{prop5}
\mathcal A = \bigg \{\varphi = \sum_{i=1}^\infty c_i \varphi_i:   c_i \in \Q \text{ and $c_i=0$ for all but finitely many $i\in \N$} \bigg\}. 
\end{equation}
By the identity \eqref{MI0} in Theorem \ref{thm:MISpa} the set $\{\varphi(x): \varphi \in \mathcal A\}$ is dense in $J(x)$ for a.e.\ $x\in X$. Applying \eqref{prop4} for all $\varphi \in \mathcal A$ yields $||R|| \le ||T||_{op}$.

To prove (ii), suppose that \eqref{prop1} holds. An analogous argument as used in the proof of \eqref{prop4} yields that for any $\varphi \in V$, 
\[
||R(x)[\varphi(x)]|| \ge C ||\varphi(x)|| \qquad\text{for a.e.\ }x\in X.
\]
Applying the above for all $\varphi \in \mathcal A$ yields \eqref{prop2}. Conversely, if \eqref{prop2} holds, then for any $\varphi \in V$ we have
\[
||T \varphi||^2 = \int_X ||R(x)[\varphi(x)]||^2_{\mathcal H'} d\mu(x) \ge C^2 \int_X ||\varphi(x)||^2_{\mathcal H} d\mu(x) = C^2 ||\varphi||^2.
\]

To prove (iii), suppose that $T: V \to V'$ is an invertible MI operator. Clearly, $T^{-1}: V' \to V$ is also an MI operator. Let $S: J' \to J$ be a bounded measurable range operator such that $T^{-1}=\int_X^{\oplus} S(x) d\mu(x)$ which is given by Theorem \ref{thm:MIOp}. Since $T^{-1} \circ T$ and $T \circ T^{-1}$ are the identity operators on $V$ and $V'$, by Corollary \ref{cor:CatIso}, the corresponding range operators $S(x)\circ R(x)$ and $R(x) \circ S(x)$ are the identity operators on $J(x)$ and $J(x')$, respectively, for a.e.~$x\in X$. Hence, $x \mapsto S(x)=R(x)^{-1}$ is a bounded measurable range operator by (i). 
Conversely, suppose that $R(x)$ is invertible for a.e.\ $x\in X$ and \eqref{prop3} holds. By (ii), $T: V \to V'$ is bounded from below. Hence, it suffices to show that $T$ is onto. Since $T(V)$ is a closed MI subspace of $L^2(X; \mathcal H')$, by Theorem \ref{thm:MISpa} there exists a measurable range function $J''$ with values in closed subspaces of $\mathcal H'$ such that $V_{J''} = T(V)$. Let $\{\varphi_i\}_{i\in I}$ be a sequence of generators of $V$. By \eqref{MI0} we have for a.e.~$x\in X$,
\[
J''(x) = \ov{\spn} \{ T \varphi_i(x): i \in \N\} = 
\ov{\spn} \{ R(x)[\varphi_i(x)]: i \in \N\} = R(x)[J(x)]= J'(x).
\]
Hence, $T(V)=V'$.

To prove (iv), take any $\varphi \in V$ and $\psi \in V'$. Then,
\[
\langle T\varphi,\psi \rangle 
= \int_X \langle R(x)[\varphi(x)], \psi(x) \rangle d\mu(x) = 
\int_X \langle \varphi(x), R^*(x)[\psi(x)] \rangle d\mu(x)
=\langle \varphi, T^*\psi \rangle
\]
This combined with the fact that $R^*:J' \to J$ is a measurable range operator shows that $T^*\psi(x)= R^*(x)[\psi(x)]$ for a.e.\ $x\in X$.

To prove (v) and (vi), observe that by Corollary \ref{cor:CatIso} and part (iv) the range operator of an MI operator $T^*T: V \to V$ and $TT^*: V' \to V'$ is $R^*R: J \to J$ and $RR^*: J' \to J'$, respectively. Hence, (v) follows immediately. 

The property (vii) follows immediately from the following lemma.

\begin{lemma}\label{rk}
 Let $V \subset L^2(X;\H)$ and $V' \subset L^2(X;\H')$ be two MI spaces with range functions $J$ and $J'$, resp. Let $T:V \to V'$ be an MI operator and $R:J \to J'$ be the corresponding measurable range operator as in Theorem \ref{thm:MIOp}. Then the following hold:
\begin{enumerate}[(i)]
\item
The space $V''=\ov{T(V)}  \subset L^2(X;\H') $ is an MI space and its range function $J''$ satisfies 
\[
J''(x)= \ov{R(x)[J(x)]} \qquad\text{for a.e.\ }x\in X.
\]
\item
The space $\ker T  \subset L^2(X;\H)$ is an MI space and its range function $K$
\[
K(x)= \ker R(x) \qquad\text{for a.e.\ }x\in X.
\]
\end{enumerate}
\end{lemma}

\begin{proof}
To prove (i), let $\{\varphi_i\}_{i\in \N}$ be a collection of functions which generates $V$. Clearly, $\ov{T(V)}$ is an MI space, which is generated by $\{T\varphi_i\}_{i\in \N}$. Since $T\varphi_i(x)=R(x)[\varphi_i(x)]$ for a.e.\ $x\in X$, by Theorem \ref{thm:MISpa} we have
\[
J''(x) = \ov{\spn} \{ R(x)[\varphi_i(x)]: i \in \N \} = \ov{R(x)[J(x)]}.
\]

To prove (ii), we consider the adjoint $T^*: V' \to V$. By Theorem \ref{prop}(iv), $T^*$ is an MI operator with range operator $R^*: J' \to J$. By part (i), $\ov{T^*(V')}$ is an MI space with the range function $J''$ given by $J''(x)=\ov{R(x)^*[J'(x)]}$ for a.e.\ $x\in X$. Consequently, by Theorem \ref{thm:MISpa} 
\[
T^*(V')^\perp = L^2(X; \mathcal H) \ominus \ov{T^*(V')}
\]
is also an MI space with a measurable range function $(J'')^\perp$ given by
\[
(J'')^\perp (x)=(R(x)^*[J'(x)])^\perp \qquad\text{for a.e.\ }x\in X.
\]
Therefore, the intersection of two measurable range functions
\[
K(x) = \ker R(x) = J(x) \cap (J'')^\perp (x)
\]
is a measurable range function corresponding to the intersection of the corresponding MI spaces
\[
\ker T = V \cap ({T^*(V')})^\perp.
\]
This completes the proof of Lemma \ref{rk}.
\end{proof}

To finish the proof of Theorem \ref{prop} it remains to show the last two properties. The property (viii) is an immediate consequence of (i) and (ii). To prove (ix), suppose that $T$ is partial isometry. That is, the restriction of $T$ to $V \cap (\ker T)^\perp$ is an isometry. By Lemma \ref{rk}, the range function of an MI space $V \cap (\ker T)^\perp$ is $J''$ given by $J''(x)=J(x) \cap (\ker R(x))^\perp$. Thus, the range operator of $T$ restricted to this subspace is a mapping $x \mapsto R(x)|_{J''(x)}$. By (viii) $R(x)$ is a partial isometry for a.e.\ $x\in X$. Reversing this argument shows the opposite implication. Alternatively, one can use the characterization that $T$ is partial isometry if and only if $(T^*T)^2=T^*T$. This combined with Corollary \ref{cor:CatIso} and part (iv) is equivalent to $(R^*(x)R(x))^2=R^*(x)R(x)$ for a.e.\ $x\in X$. This completes the proof of Theorem \ref{prop}.
\end{proof}

Despite the fact that so many properties of MI operators are reflected by range operators, not all properties are preserved. 

\begin{example} If an MI operator $T$ is onto, then its range operator $R(x)$ is onto for a.e.\ $x\in X$, but the converse implication is not true in general. 
A simple example is a multiplication operator $T: L^2([0,1];\mathcal H) \to L^2([0,1];\mathcal H)$ given by 
\[
T \varphi(x)= x \varphi(x), \qquad \text{for } \varphi\in L^2([0,1];\mathcal H),\  x\in [0,1].
\]
However, it is not difficult to show that the combined properties of $T$ being onto and bounded below is characterized by range operator $R$ satisfying uniform bound from below \eqref{prop2} and $R(x)$ is onto for  a.e.\ $x\in X$. We leave the details to the reader.
\end{example}

In the case when an MI operator $T$ acts on the same space MI space $V$, we have additional properties linking the spectrum of $T$ with the spectra of its range operator.

\begin{theorem}\label{spec}
Let $T:V \to V$ be an MI operator and $R: J \to J$ be the corresponding measurable range operator as in Theorem \ref{thm:MIOp}. Then, the following are true.
\begin{enumerate}[(i)]
\item Let $A\le B$ be two real numbers. Then, $T$ is self-adjoint with spectrum $\sigma(T) \subset [A,B]$ $\iff$ $R(x)$ is self-adjoint with spectrum $\sigma(R(x)) \subset [A,B]$ for a.e.\ $x\in X$,
\item Let $K \subset \C$ be a compact set. Then, $T$ is normal  with spectrum $\sigma(T) \subset K$ $\iff$ $R(x)$ is normal with spectrum $\sigma(R(x)) \subset K$ for a.e.\ $x\in X$.
\end{enumerate}
\end{theorem}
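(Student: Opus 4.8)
The plan is to reduce both statements to Theorem~\ref{prop}, using the standard characterization of the spectrum of a normal operator via bounded-below estimates. The key external fact I would invoke is that for a bounded normal operator $N$ on a Hilbert space and any $\lambda \in \C$, one has $\inf_{\Norm{v}=1}\Norm{(N-\lambda)v} = \operatorname{dist}(\lambda,\sigma(N))$; in particular $N-\lambda$ is bounded below by a constant $c>0$ exactly when $\operatorname{dist}(\lambda,\sigma(N))\ge c$, and $\lambda\notin\sigma(N)$ precisely when $N-\lambda$ is bounded below (for normal operators, bounded below is equivalent to invertible). I would prove (ii) first and deduce (i) as the special case $K=[A,B]$. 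Throughout, I would note that $T-\lambda I$ is again an MI operator whose range operator is $x\mapsto R(x)-\lambda I_{J(x)}$ (with $I_{J(x)}$ the identity on $J(x)$), by Corollary~\ref{cor:CatIso}; and since the normality equivalence ``$T$ normal $\iff R(x)$ normal a.e.'' is already Theorem~\ref{prop}(vi), I may assume both $T$ and a.e.\ $R(x)$ are normal.

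For the forward direction of (ii), suppose $\sigma(R(x))\subset K$ for a.e.\ $x$. Fix $\lambda\notin K$ and put $d=\operatorname{dist}(\lambda,K)>0$. Since $\sigma(R(x))\subset K$ forces $\operatorname{dist}(\lambda,\sigma(R(x)))\ge d$, the range operator $R(x)-\lambda I_{J(x)}$ is bounded below by $d$ uniformly in $x$, so Theorem~\ref{prop}(ii) shows $T-\lambda I$ is bounded below; as $T$ is normal this gives $\lambda\notin\sigma(T)$. Hence $\sigma(T)\subset K$.

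For the converse, the main obstacle is passing from ``for each $\lambda$, a.e.\ $x$'' to ``a.e.\ $x$, for all $\lambda$,'' which I would resolve by a countable-dense-set-plus-continuity argument. Choose a countable dense set $D\subset\C\setminus K$. For each $\lambda\in D$ we have $\lambda\notin\sigma(T)$, so $T-\lambda I$ is bounded below by $\operatorname{dist}(\lambda,\sigma(T))\ge\operatorname{dist}(\lambda,K)$; Theorem~\ref{prop}(ii) then yields a full-measure set $X_\lambda$ on which $\Norm{(R(x)-\lambda I_{J(x)})v}\ge\operatorname{dist}(\lambda,K)\Norm{v}$ for all $v\in J(x)$. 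Intersecting over $\lambda\in D$ (and with the full-measure set where $R(x)$ is normal) gives a full-measure set $X_0$ on which this inequality holds for every $\lambda\in D$ simultaneously. Because both sides are $1$-Lipschitz in $\lambda$ and $D$ is dense in $\C\setminus K$, the inequality extends by continuity to all $\lambda\in\C\setminus K$; for such $\lambda$ the right-hand constant is positive, so $R(x)-\lambda I_{J(x)}$ is invertible and $\lambda\notin\sigma(R(x))$. Thus $\sigma(R(x))\subset K$ for every $x\in X_0$, completing (ii).

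Finally, to obtain (i) I would apply (ii) with $K=[A,B]$ together with the standard fact that a normal operator with real spectrum is self-adjoint, and conversely. Then ``$T$ self-adjoint with $\sigma(T)\subset[A,B]$'' is equivalent to ``$T$ normal with $\sigma(T)\subset[A,B]$,'' and likewise for each $R(x)$, so (i) is immediate from (ii). I expect the only delicate point to be the uniformity and measurability bookkeeping in the converse of (ii), which the continuity-in-$\lambda$ argument handles cleanly.
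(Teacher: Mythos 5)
Your proof is correct, but it takes a genuinely different route from the paper's. The paper proves part (ii) by first building machinery for measurable set-valued maps: it shows that $F(x)=\sigma(R(x))$ is a measurable set-valued map (Lemma~\ref{chow}), that $\essran(F)\subset\sigma(T)$ always, and that equality $\sigma(T)=\essran(F)$ holds for normal $T$ (Corollary~\ref{nor}) via the identity $\Norm{(\lambda\mathbf{I}-T)^{-1}}=1/\operatorname{dist}(\lambda,\sigma(T))$; Theorem~\ref{spec}(ii) then drops out of Lemma~\ref{er}, which characterizes $\essran(F)$ as the smallest closed set containing $F(x)$ a.e. You bypass all of that, using only Theorem~\ref{prop}(ii) and (vi) together with the same normal-operator fact $\inf_{\Norm{v}=1}\Norm{(N-\lambda)v}=\operatorname{dist}(\lambda,\sigma(N))$, and you resolve the quantifier exchange (``for each $\lambda$, a.e.\ $x$'' versus ``a.e.\ $x$, for all $\lambda$'') by a countable dense subset of $\C\setminus K$ plus $1$-Lipschitz continuity in $\lambda$ of both sides of the lower bound --- a step that is sound and that the paper's Lemma~\ref{er} handles instead via second countability of $\C$. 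Your deduction of (i) from (ii) with $K=[A,B]$ matches the paper, with the standard fact that a normal operator with real spectrum is self-adjoint made explicit. What each approach buys: yours is shorter and self-contained for the two containment statements actually claimed in Theorem~\ref{spec}; the paper's essential-range route costs more setup but delivers strictly more information --- the exact identity $\sigma(T)=\essran(x\mapsto\sigma(R(x)))$ and the measurability of the spectrum map --- which the paper reuses for the functional calculus in Theorem~\ref{ft} and for the spectral-measure corollary that follows it.
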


While part (i) can be deduced using similar techniques as used in the proof of Theorem \ref{prop}, the proof of part (ii) requires a machinery of set-valued mappings. Hence, we need to postpone the proof of Theorem \ref{spec} and instead review results about measurability of set-valued maps.

\subsection{Measurability of set-valued maps}
We will employ some rudimentary facts about set-valued measurable maps from the book of Aubin and Frankowska \cite[Chapter 8]{AF} and the monograph of Castaing and Valadier \cite[Chapter III]{CV}. The following definition resembles most closely the classical notion of measurability. Recall that $\mathcal{M}$ denotes the underlying $\sigma$-algebra for $X$.

\begin{defn}\label{mea}
Let $Y$ be a topological space, and suppose $F: X \leadsto Y$ is a set-valued map with closed values. That is, $F(x)$ is a closed subset of $Y$ for each $x\in X$. We say that a set-valued map $F$ is measurable if for each open set $O \subset Y$, we have
\begin{equation}\label{mea0}
F^{-1}(O):= \{ x\in X: F(x) \cap O \not= \emptyset \}\in \mathcal M
\end{equation}
\end{defn}

We have the following characterization of set-valued measurable functions with values in closed subsets of a complete separable metric space, see \cite[Theorem 8.1.4]{AF} and \cite[Theorem III.30]{CV}.
The result below uses our assumption that $X$ is a complete measure space.

\begin{theorem}\label{char}
Let $(Y,d)$ be a complete separable metric space, and suppose that $F: X \leadsto Y$ is a set-valued map with non-empty closed images. Then the following are equivalent:

\begin{enumerate}[(i)]
\item $F$ is measurable,

\item the graph of $F$, given by
\[
\operatorname{Graph}(F)=\{(x,v) \in X \times Y: v \in F(x) \},
\]
belongs to the product $\sigma$-algebra $\mathcal M \otimes \mathcal B$, where $\mathcal B$ is the Borel $\sigma$-algebra on $Y$,

\item $F^{-1}(B) \in \mathcal M$ for any Borel set $B \subset Y$, 

\item for all $y\in Y$, the distance map $X \ni x \mapsto d(y,F(x)) \in [0,\infty)$ is measurable,

\item there exists a sequence of measurable selections
  $f_k:X \to Y$, $k\ge 1$, of $F$ such that
\begin{equation}\label{char2}
F(x) = \overline{\{ f_k(x): k\ge 1 \}} \qquad\text{for all }x\in X.
\end{equation}
\end{enumerate}
\end{theorem}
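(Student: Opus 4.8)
The plan is to prove the equivalence of the five conditions in Theorem~\ref{char} for a set-valued map $F \colon X \leadsto Y$ with non-empty closed images into a complete separable metric space. This is a standard cycle-of-implications result whose hardest ingredient is the existence of measurable selections, so I would structure the argument to isolate that difficulty and deduce the easier equivalences quickly. My skeleton would be the implication chain $(\mathrm{i}) \Rightarrow (\mathrm{iv}) \Rightarrow (\mathrm{v}) \Rightarrow (\mathrm{ii}) \Rightarrow (\mathrm{iii}) \Rightarrow (\mathrm{i})$, since each arrow is either routine or can be attributed to a cited theorem.

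First I would handle $(\mathrm{i}) \Rightarrow (\mathrm{iv})$: fix $y \in Y$ and observe that for any $r > 0$ the set $\{x : d(y, F(x)) < r\} = \{x : F(x) \cap B(y,r) \neq \emptyset\} = F^{-1}(B(y,r))$, which is measurable by \eqref{mea0} because $B(y,r)$ is open. Since sets of this form generate the measurability of the real-valued function $x \mapsto d(y, F(x))$ (the preimages of all half-open rays are measurable), that distance map is measurable. The reverse direction $(\mathrm{iv}) \Rightarrow (\mathrm{i})$ is equally direct and, together with the selection step, is the crux; I would note that for an open set $O$ one can write $F^{-1}(O) = \bigcup_n \{x : d(y_n, F(x)) < r_n\}$ over a countable family exhausting $O$ by balls with centers in a countable dense set, using separability of $Y$.

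The main obstacle is $(\mathrm{iv}) \Rightarrow (\mathrm{v})$, the construction of a countable dense family of measurable selections. The plan is to quote this directly from \cite[Theorem~8.1.4]{AF} and \cite[Theorem III.30]{CV}, since reproving the Kuratowski--Ryll-Nardzewski selection theorem from scratch would be lengthy and is exactly what the cited references supply; this is where completeness of $(X,\mathcal M,\mu)$ (or at least of the $\sigma$-algebra) and completeness of $(Y,d)$ are genuinely used, as flagged before the statement. Granting $(\mathrm{v})$, the implication $(\mathrm{v}) \Rightarrow (\mathrm{ii})$ is a clean computation: with selections $f_k$ satisfying \eqref{char2}, the graph decomposes as
\[
\operatorname{Graph}(F) = \bigcap_{m \ge 1} \bigcup_{k \ge 1} \{ (x,v) \in X \times Y : d(v, f_k(x)) < 1/m \},
\]
and each set in the union is the preimage of the open set $\{d < 1/m\}$ under the $\mathcal M \otimes \mathcal B$-measurable map $(x,v) \mapsto (f_k(x), v)$, hence lies in $\mathcal M \otimes \mathcal B$; the closedness of $F(x)$ guarantees the displayed intersection really recovers the graph.

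Finally I would close the loop with $(\mathrm{ii}) \Rightarrow (\mathrm{iii})$ and $(\mathrm{iii}) \Rightarrow (\mathrm{i})$. For $(\mathrm{ii}) \Rightarrow (\mathrm{iii})$, given a Borel set $B \subset Y$ we have $F^{-1}(B) = \pi_X(\operatorname{Graph}(F) \cap (X \times B))$, where $\pi_X$ is the projection onto $X$; measurability of this projection of a $\mathcal M \otimes \mathcal B$ set is precisely the measurable projection theorem, valid because $\mu$ is $\sigma$-finite and complete and $Y$ is Polish, and I would cite it through \cite{AF, CV}. The implication $(\mathrm{iii}) \Rightarrow (\mathrm{i})$ is immediate since every open set is Borel, so $(\mathrm{iii})$ trivially yields condition \eqref{mea0} defining $(\mathrm{i})$. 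Assembling these arrows gives the full equivalence, with the selection theorem and the projection theorem being the two non-elementary inputs that I would explicitly attribute rather than reprove.
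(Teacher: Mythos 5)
Your cycle of implications is correct, but note that the paper itself offers no proof of Theorem~\ref{char}: it is imported wholesale from \cite[Theorem~8.1.4]{AF} and \cite[Theorem~III.30]{CV}, so your sketch is strictly more detailed than what appears in the text. Each of your arrows checks out: the identity $\{x : d(y,F(x))<r\} = F^{-1}(B(y,r))$ gives (i)$\Leftrightarrow$(iv) using separability of $Y$; the decomposition $\operatorname{Graph}(F)=\bigcap_m\bigcup_k\{(x,v): d(v,f_k(x))<1/m\}$ correctly uses closedness of the values for (v)$\Rightarrow$(ii); and $F^{-1}(B)=\pi_X(\operatorname{Graph}(F)\cap(X\times B))$ with the measurable projection theorem handles (ii)$\Rightarrow$(iii). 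The one inaccuracy is your claim that completeness of the measure space is ``genuinely used'' in the selection step (iv)$\Rightarrow$(v): the Kuratowski--Ryll-Nardzewski selection theorem needs only that $X$ be a measurable space and $Y$ Polish, and indeed the paper's remark following Theorem~\ref{BR} points out (citing \cite[Theorem~III.9]{CV}) that (i), (iv), and (v) are equivalent over a bare measurable space. Completeness and $\sigma$-finiteness of $\mu$ enter only through the projection theorem in (ii)$\Rightarrow$(iii), which you do correctly flag there, so the misattribution is cosmetic rather than a gap.
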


The last equivalence is especially useful and it is known as Castaign's selection theorem. When a set-valued map $F: X \leadsto Y$ takes values in compact sets, there exists yet another equivalent definition of measurability.
The collection of nonempty compact sets of $Y$,
\[
\mathcal P_c(Y) = \{K \subset Y: K \ne \emptyset \text{ is compact} \}
\]
equipped with the Hausdorff distance is a complete and separable metric space, see \cite[Theorem II.8]{CV}.
Recall that the Hausdorff distance between two nonempty compact sets $K_1,K_2 \subset Y$ is defined as
\begin{equation}\label{hd}
d_H(K_1,K_2)=\max\{ \sup_{v\in K_1} \inf_{w\in K_2} d(v,w), \sup_{v\in K_2} \inf_{w\in K_1} d(v,w) \}
\end{equation}
Castaing and Valadier \cite[Theorem III.2]{CV} have shown the following useful characterization of measurability of compact-valued mappings. 
Note that Theorem~\ref{CV} does not require any of our standing assumptions on $X$.

\begin{theorem}\label{CV} 
Let $Y$ be a separable metric space, and suppose that $F: X \leadsto Y$ is a set-valued map with non-empty compact images.  Then, $F$ is measurable in the sense of Definition \ref{mea} if and only if $F$ is measurable as a function into $\mathcal P_c(Y)$ with the Hausdorff distance $d_H$. That is, for every open $U\subset \mathcal P_c(Y)$, $F^{-1}(U)$ is measurable.
\end{theorem}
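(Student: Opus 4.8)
The plan is to prove both implications directly from the definitions, exploiting the description \eqref{hd} of the Hausdorff distance as a maximum of two one-sided excesses. Throughout, write $e(K,L)=\sup_{v\in K}d(v,L)$ for the excess of one nonempty compact set over another, so that $d_H(K,L)=\max\{e(K,L),e(L,K)\}$. The key preliminary observation, valid whenever $F$ is measurable in the sense of Definition~\ref{mea}, is that for each fixed $y\in Y$ the scalar map $x\mapsto d(y,F(x))$ is measurable: indeed $\{x:d(y,F(x))<r\}=F^{-1}(B(y,r))$ for the open ball $B(y,r)$, which lies in $\mathcal M$ by \eqref{mea0}. (I prefer to re-derive this directly rather than quote Theorem~\ref{char}(iv), since here $Y$ is only assumed separable, not complete.)

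For the implication from measurability into $\mathcal P_c(Y)$ to measurability in the sense of Definition~\ref{mea}, I would show that for every open $O\subset Y$ the set $\mathcal O:=\{K\in\mathcal P_c(Y):K\cap O\neq\emptyset\}$ is open for the Hausdorff metric. If $K\cap O\neq\emptyset$, pick $p\in K\cap O$ and $r>0$ with $B(p,r)\subset O$; then any $K'$ with $d_H(K,K')<r$ satisfies $d(p,K')<r$, so $K'$ meets $B(p,r)\subset O$ and hence $K'\in\mathcal O$. Since $F^{-1}(O)=F^{-1}(\mathcal O)$, measurability of $F$ as a map into $\mathcal P_c(Y)$ gives $F^{-1}(O)\in\mathcal M$.

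The substantive direction is the converse. Because $Y$ is separable, so is $\mathcal P_c(Y)$ (finite subsets of a countable dense subset of $Y$ are dense in $d_H$), and therefore every open subset of $\mathcal P_c(Y)$ is a countable union of open balls; it thus suffices to prove that $x\mapsto d_H(F(x),K_0)$ is measurable for each fixed $K_0\in\mathcal P_c(Y)$. I would treat the two excesses separately. For $e(K_0,F(x))=\sup_{w\in K_0}d(w,F(x))$, continuity of $w\mapsto d(w,F(x))$ lets me replace $K_0$ by a countable dense subset $\{w_k\}$, whence $e(K_0,F(x))=\sup_k d(w_k,F(x))$ is a countable supremum of measurable functions by the preliminary observation. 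For the harder term $e(F(x),K_0)=\sup_{v\in F(x)}d(v,K_0)$, I would argue through super-level sets: writing $g(v):=d(v,K_0)$, one has $e(F(x),K_0)>t$ if and only if $F(x)\cap g^{-1}((t,\infty))\neq\emptyset$, and since $g^{-1}((t,\infty))$ is open, $\{x:e(F(x),K_0)>t\}=F^{-1}(g^{-1}((t,\infty)))\in\mathcal M$ by hypothesis. Taking the maximum of the two measurable excesses yields measurability of $x\mapsto d_H(F(x),K_0)$, which completes the argument.

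I expect the main obstacle to be the excess $e(F(x),K_0)$, since its supremum runs over the \emph{moving} compact set $F(x)$ rather than a fixed domain; the point is to avoid any selection argument and instead read this excess off the weak inverses $F^{-1}$ of open super-level sets supplied by Definition~\ref{mea}. Compactness of the images is used at two places: to guarantee that $d_H$ is a genuine finite metric on $\mathcal P_c(Y)$, and to ensure that $K_0$ is separable so that the first excess reduces to a countable supremum.
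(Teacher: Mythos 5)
Your argument is correct. Note, however, that the paper does not prove this statement at all: it is quoted verbatim from Castaing--Valadier \cite[Theorem III.2]{CV}, so there is no internal proof to compare against. What you have supplied is a clean, self-contained derivation along the classical lines. The forward reduction (openness of $\{K \in \mathcal P_c(Y) : K \cap O \neq \emptyset\}$ in the Hausdorff metric) and the converse via separability of $\mathcal P_c(Y)$ and measurability of $x \mapsto d_H(F(x),K_0)$, split into the two one-sided excesses, are exactly the standard route; your treatment of the excess $e(F(x),K_0)$ through the super-level sets $F^{-1}(g^{-1}((t,\infty)))$ is the right way to avoid any selection theorem, which matters here because the paper explicitly flags that Theorem~\ref{CV} holds without the standing hypotheses on $X$ (so Castaing's selection theorem, i.e.\ Theorem~\ref{char}(v), is not available). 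You are also right to re-derive the measurability of $x \mapsto d(y,F(x))$ directly from Definition~\ref{mea} rather than invoke Theorem~\ref{char}(iv), since $Y$ is only assumed separable and not complete. The only cosmetic point worth tightening in a written version is the justification that separability of $Y$ gives separability of $(\mathcal P_c(Y),d_H)$ and hence that every open set there is a countable union of balls; a one-line $\varepsilon/2$-net argument with finite subsets of a countable dense set suffices, as you indicate.
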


Finally, we shall prove that the measurability of range functions coincides with a more general concept of measurability in Definition \ref{mea}.

\begin{theorem}\label{BR}
Let $\H$ be a separable Hilbert space. Suppose that $J$ is a range function with values in closed subspaces of $\H$. Then $J$ is measurable as a set-valued mapping $X \leadsto \H$ if and only if $J$ is measurable in the sense of Definition \ref{rf}. 
\end{theorem}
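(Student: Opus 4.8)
The plan is to use Theorem~\ref{char} as a bridge between the two notions of measurability, with the distance function in the role of translator. The key elementary observation is that for a closed subspace $J(x) \subset \H$ and any $y \in \H$, the nearest point of $J(x)$ to $y$ is exactly $P_J(x)y$, and a short computation using that $P_J(x)$ is an orthogonal projection (so $P_J(x)^2 = P_J(x) = P_J(x)^*$ and $\langle P_J(x)y, y\rangle$ is real) gives
\[ d(y, J(x)) = \Norm{y - P_J(x)y}, \qquad\text{hence}\qquad d(y,J(x))^2 = \Norm{y}^2 - \langle P_J(x)y, y\rangle. \]
Since each $J(x)$ is a subspace it contains $0$, so it is nonempty and closed, and $\H$ is a complete separable metric space. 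Thus all the equivalences of Theorem~\ref{char} are available; in particular, measurability of $J$ in the sense of Definition~\ref{mea} is equivalent to measurability of the map $x \mapsto d(y, J(x))$ for every $y \in \H$ (condition (iv)).

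For the forward implication, suppose $J$ is measurable in the sense of Definition~\ref{rf}, i.e.\ $P_J$ is weakly measurable. Taking $u = v = y$ shows that $x \mapsto \langle P_J(x) y, y\rangle$ is measurable for each $y \in \H$, so by the displayed identity $x \mapsto d(y, J(x))$ is measurable for every $y$. By Theorem~\ref{char}, $J$ is then measurable as a set-valued map. For the converse, suppose $J$ is measurable as a set-valued map. By Theorem~\ref{char} the distance functions $x \mapsto d(y, J(x))$ are measurable for every $y$, so the displayed identity gives that $x \mapsto \langle P_J(x) y, y\rangle = \Norm{y}^2 - d(y, J(x))^2$ is measurable for each $y \in \H$. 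To promote this to full weak measurability, I would invoke the complex polarization identity
\[ \langle P_J(x) u, v\rangle = \frac{1}{4} \sum_{k=0}^{3} i^k \langle P_J(x)(u + i^k v),\, u + i^k v\rangle, \]
which expresses every matrix coefficient as a finite combination of the (measurable) quadratic-form values; hence $x \mapsto \langle P_J(x) u, v\rangle$ is measurable for all $u, v \in \H$, which is exactly Definition~\ref{rf}.

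The genuine content lies entirely in recognizing the distance function $d(y, J(x))$ as the common quantity controlled by both definitions: Definition~\ref{rf} supplies it through the quadratic form $\langle P_J(x)y, y\rangle$, while Definition~\ref{mea} supplies it through Theorem~\ref{char}(iv). Once the identity $d(y,J(x))^2 = \Norm{y}^2 - \langle P_J(x)y, y\rangle$ and the polarization identity are in hand, both implications are immediate. The one point deserving care, rather than a true obstacle, is that the equivalence $(\mathrm{i}) \Leftrightarrow (\mathrm{iv})$ of Theorem~\ref{char} rests on $\H$ being complete and separable and on $X$ being a complete measure space; I would flag explicitly that these are precisely our standing hypotheses, so no additional assumptions are needed.
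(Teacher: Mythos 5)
Your proof is correct, but it travels a genuinely different road from the paper's. The paper invokes Castaing's selection theorem, i.e.\ condition (v) of Theorem~\ref{char}: in the forward direction it produces the measurable selections $\varphi_k(x) = P_J(x)v_k$ for a dense sequence $\{v_k\}$ in $\H$, and in the converse direction it runs a pointwise Gram--Schmidt process on a given family of selections to build measurable orthonormal-ish fields $\psi_k$ and then reconstructs $P_J(x)$ as $\sum_k \langle \cdot, \psi_k(x)\rangle \psi_k(x)$. You instead route everything through condition (iv), the distance criterion, via the identity $d(y,J(x))^2 = \Norm{y}^2 - \langle P_J(x)y, y\rangle$ and complex polarization. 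Your converse is arguably cleaner: it sidesteps the need to check that the Gram--Schmidt process preserves measurability (a point the paper passes over quickly), replacing it with two one-line algebraic identities. What the paper's selection-based argument buys in exchange is that the measurable selections $P_J(x)v_k$ are exactly the objects used elsewhere (e.g.\ in the generator formula \eqref{MI0}), so the proof doubles as a construction. One small refinement: you flag that the equivalence $(\mathrm{i})\Leftrightarrow(\mathrm{iv})$ needs $X$ to be a complete measure space, but as the paper's remark after the theorem notes, the equivalence of (i), (iv), and (v) already holds when $X$ is merely a measurable space, so your argument (like the paper's) actually proves the theorem under weaker hypotheses than you claim to need.
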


\begin{proof} By Theorem \ref{char}(v), it suffices to show that $J$ is measurable in the sense of Definition \ref{rf} if and only if there exists a sequence of measurable functions $\varphi_k: X \to \mathcal H$, $k\ge 1$, such that
\begin{equation}\label{BR2}
J(x) = \overline{\{ \varphi_k(x): k\ge 1 \}} \qquad\text{for all }x\in X.
\end{equation}
Suppose first that $J$ is a measurable range function. Hence, the mapping $P_J: X \to B(\mathcal H)$ is weakly measurable, where $P_J(x)$ is an orthogonal projection onto $J(x)$ for all $x\in X$. Let $\{v_k:k \ge 1\}$ be a dense subset of $\H$. Then, functions $\varphi_k(x)=P_J(x)v_k$, $k\ge 1$, satisfy \eqref{BR2}. Conversely, suppose that measurable functions $\varphi_k: X \to \mathcal H$, $k\ge 1$, satisfy \eqref{BR2}. 
For each $x\in X$, we apply the Gram--Schmidt process to vectors $\{\varphi_k(x)\}_{k\ge 1}$ to obtain a collection of orthogonal vectors $\{\psi_k(x)\}_{k\ge 1}$ with norms either $0$ or $1$ and such that 
\[
\spn\{\varphi_k: 1\le k \le n \} = \spn\{\psi_k: 1\le k \le n \} \qquad\text{for all }n\ge 1.
\]
The resulting functions $\psi_k: X \to \H$ are measurable and 
\[
P_J(x)v=\sum_{k\ge 1} \langle v,\psi_k(x)\rangle v \qquad v\in \H.
\]
defines an orthogonal projection onto $J(x)$. Consequently, the resulting projection-valued mapping $P_J$ is weakly measurable. 

\end{proof}

\begin{rem} 
Theorem \ref{BR} does not require our standing assumption on a measure space $(X,\mu)$ made in Section \ref{sec: MI morphisms}. Instead, it holds under a minimal assumption
that $X$ is a measurable space. This is a consequence of the fact that the statements (i), (iv), and (v) in Theorem \ref{char} are all equivalent if $X$ is merely a measurable space, see \cite[Theorem III.9]{CV}. 
\end{rem}

In a close analogy to complex-valued functions we introduce the concept of an essential range for complex set-valued functions.

\begin{defn} 
Suppose that a set-valued map $F: X \leadsto \C$  with closed values is measurable. Define the {\it essential range} of $F$ as
\[
\essran(F):=\{ z\in \C: \mu(F^{-1}(D(z,\ve)))>0 \text{ for all }\ve>0\}.
\]
Here, $D(z,\ve)=\{w\in\C: |w-z|<\ve\}$ denotes an open disk in $\C$.
\end{defn}

The following lemma shows basic properties of the essential range. 

\begin{lemma}\label{er}
Let $F: X \leadsto \C$ be a measurable set-valued map with closed images. Then,
\begin{equation}\label{er0}
F(x) \subset \essran(F) \qquad\text{for $\mu$-a.e.\ }x\in X.
\end{equation}
Furthermore, $\essran(F)$ is the smallest closed subset of $\C$ which contains $F(x)$ for a.e.\ $x\in X$. That is,
\begin{equation}\label{er1}
\essran(F) = \bigcap \bigg\{ \ov{ \bigcup_{x\in S} F(x)}: S \subset X \text{ is measurable with } \mu(X \setminus S)=0 \bigg\}.
\end{equation}
\end{lemma}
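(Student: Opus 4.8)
The plan is to establish, in order, that $\essran(F)$ is closed, that it contains $F(x)$ for a.e.\ $x$ (equation \eqref{er0}), that it is the \emph{smallest} closed set with that property, and finally the explicit formula \eqref{er1}, which will fall out of the first three facts. I would begin with closedness by showing the complement is open: if $z \notin \essran(F)$, choose $\ve>0$ with $\mu(F^{-1}(D(z,\ve)))=0$; then for every $w \in D(z,\ve/2)$ we have $D(w,\ve/2)\subset D(z,\ve)$, so $F^{-1}(D(w,\ve/2))\subset F^{-1}(D(z,\ve))$ is null and hence $w\notin\essran(F)$. Thus $\C\setminus\essran(F)$ is open.

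The heart of the argument is \eqref{er0}, which I would prove in the equivalent form $\mu(F^{-1}(U))=0$, where $U:=\C\setminus\essran(F)$. By definition, each $z\in U$ carries a disk $D(z,\ve_z)$ with $\mu(F^{-1}(D(z,\ve_z)))=0$, and these disks form an open cover of $U$. Because $\C$ is second countable, $U$ is Lindel\"of, so a countable subfamily already covers $U$, say $U\subset\bigcup_n D(z_n,\ve_{z_n})$. Then $F^{-1}(U)\subset\bigcup_n F^{-1}(D(z_n,\ve_{z_n}))$ is a countable union of null sets, hence null. Since $F^{-1}(U)=\{x:F(x)\cap U\neq\emptyset\}$, this is precisely the assertion that $F(x)\subset\essran(F)$ for a.e.\ $x$. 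I expect the passage to a countable subcover via second countability to be the one step that needs care, although it is routine; note also that $F^{-1}(U)\in\M$ because $U$ is open and $F$ is measurable, so the null set is genuinely measurable.

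For minimality, suppose $C\subset\C$ is closed with $F(x)\subset C$ for a.e.\ $x$, and take $z\in\essran(F)$. If $z\notin C$, then closedness of $C$ gives some $\ve>0$ with $D(z,\ve)\cap C=\emptyset$, so $F(x)\cap D(z,\ve)=\emptyset$ for a.e.\ $x$, forcing $\mu(F^{-1}(D(z,\ve)))=0$ and contradicting $z\in\essran(F)$; hence $\essran(F)\subset C$. Finally, to prove \eqref{er1}, denote its right-hand side by $\mathcal I$. For each measurable $S$ with $\mu(X\setminus S)=0$, the closed set $\ov{\bigcup_{x\in S}F(x)}$ contains $F(x)$ for a.e.\ $x$, so minimality yields $\essran(F)\subset\ov{\bigcup_{x\in S}F(x)}$; intersecting over all such $S$ gives $\essran(F)\subset\mathcal I$. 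For the reverse inclusion I would exhibit a single good choice $S_0:=X\setminus F^{-1}(U)$, which is measurable with $\mu(X\setminus S_0)=0$ by \eqref{er0}; since $\bigcup_{x\in S_0}F(x)\subset\essran(F)$ and $\essran(F)$ is closed, we get $\ov{\bigcup_{x\in S_0}F(x)}\subset\essran(F)$, whence $\mathcal I\subset\essran(F)$. Combining the two inclusions proves \eqref{er1}.
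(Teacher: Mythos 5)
Your proof is correct and follows essentially the same route as the paper: the key step for \eqref{er0} is the identical second-countability (Lindel\"of) argument reducing the cover of $\C\setminus\essran(F)$ by null-preimage disks to a countable subcover, and \eqref{er1} rests on the same two observations. Your packaging of \eqref{er1} via an explicit minimality claim plus the single full-measure set $S_0=X\setminus F^{-1}(\C\setminus\essran(F))$ is a slightly cleaner reorganization of the paper's pointwise equivalence, but not a genuinely different argument.
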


\begin{proof}
We first prove~\eqref{er0}.
For each $z \notin \essran(F)$, we can choose $\varepsilon_z > 0$ satisfying $\mu(F^{-1}(D(z,\varepsilon_z))) = 0$.
Then $D(z,\varepsilon_z) \subset \mathbb{C} \setminus \essran(F)$.
Indeed, for any $z' \in D(z,\varepsilon_z)$ there exists $\varepsilon' > 0$ with $D(z',\varepsilon') \subset D(z,\varepsilon_z)$, so that
$F^{-1}(D(z',\varepsilon')) \subset F^{-1}(D(z,\varepsilon_z))$ and $\mu(F^{-1}(D(z',\varepsilon'))) = 0$.
In other words, $z' \notin \essran(F)$.
Consequently,
\[ \mathbb{C} \setminus \essran(F) = \bigcup \bigl\{ D(z,\varepsilon_z) : z \notin \essran(F) \bigr\}. \]
Since $\mathbb{C}$ is second countable, we can reduce this to a countable union $\mathbb{C} \setminus \essran(F) = \bigcup_{n=1}^\infty D(z_n, \varepsilon_{z_n})$.
Then
\[ \{ x \in X : F(x) \nsubseteq \essran(F) \} =  F^{-1}(\mathbb{C} \setminus \essran(F)) = \bigcup_{n=1}^\infty F^{-1}(D(z_n,\varepsilon_{z_n})) \] 
is a set of measure zero.
This proves~\eqref{er0}.

A point $z$ does not belong to the set on the right-hand side of \eqref{er1} if and only if there exists a measurable set $S\subset X$ with $\mu(X \setminus S)=0$ such that $z\not\in \ov{ \bigcup_{x\in S} F(x)}$. This is equivalent with the existence of $\ve>0$ such that
\begin{equation}\label{er2}
F(x) \cap D(z,\ve)= \emptyset \qquad\text{for all }x\in S.
\end{equation}
Since $\mu(X \setminus S) =0$, this implies that $\mu(F^{-1}(D(z,\ve))=0$, and hence 
$z\not \in \essran(F)$.

Conversely, suppose that $z\not \in \essran(F)$. Then, $\mu(F^{-1}(D(z,\ve)))=0$ for some $\ve>0$. Let $
S= X \setminus F^{-1}(D(z,\ve))$. Then $\mu(X\setminus S)=0$ and \eqref{er2} holds.
Thus,  $z$ does not belong to the set on the right-had side of \eqref{er1}.
\end{proof}

\subsection{Measurability of the spectrum of range operators}
We are ready to show a result about the spectrum of range operators. 
The results of Azoff \cite[Theorem 3.5]{Az} and Chow \cite[Lemma 2.1]{Chow}, both of which were originally formulated in the language of decomposable operators, are closely related to the following lemma.

\begin{lemma}\label{chow}
Suppose $T:V \to V$ is an MI operator and $R: J \to J$ be the corresponding measurable range operator as in Theorem \ref{thm:MIOp}. Consider the set-valued map $F: X \leadsto \C$ given by $F(x) = \sigma(R(x))$ for $x\in X$. Then, $F$ is measurable and 
\begin{equation}\label{chow1}
\essran(F) \subset \sigma(T).
\end{equation}
In addition, if  the operator-valued resolvent function $x\mapsto (\lambda \mathbf I(x) - R(x))^{-1}$ is $\mu$-essentially bounded for every $\lambda \not \in \essran(F)$, then \eqref{chow1}  is an equality. Here, $\mathbf I(x)$ is the identity on $J(x)$. 
\end{lemma}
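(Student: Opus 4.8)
The plan is to handle the three assertions in sequence: measurability of $F$, the inclusion $\essran(F)\subset\sigma(T)$, and the reverse inclusion under the resolvent hypothesis. I will use throughout that $R^{*}$ is again a measurable range operator (Theorem~\ref{prop}(iv)) and that $R^{*}R$ is the range operator of $T^{*}T$, together with the extended operators $\bar R(x)=R(x)P_{J}(x)$.

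For measurability, I would encode the spectrum through lower bounds. For a bounded Hilbert-space operator $A$, $\lambda\notin\sigma(A)$ precisely when both $\lambda I-A$ and $(\lambda I-A)^{*}$ are bounded below, so setting
\[
g(\lambda,x)=\min\Bigl\{\inf_{v\in J(x),\,\|v\|=1}\|(\lambda\mathbf I(x)-R(x))v\|,\ \inf_{v\in J(x),\,\|v\|=1}\|(\bar\lambda\,\mathbf I(x)-R(x)^{*})v\|\Bigr\}
\]
we get $\lambda\in F(x)$ iff $g(\lambda,x)=0$. Fixing a countable dense set $\{v_{k}\}\subset\H$, the vectors $\{P_{J}(x)v_{k}\}$ are dense in $J(x)$, so each inner infimum equals the countable infimum of the ratios $\|(\lambda\mathbf I(x)-R(x))P_{J}(x)v_{k}\|/\|P_{J}(x)v_{k}\|$; expanding the squared numerator shows it is built from the measurable functions $\langle P_{J}(x)v_{k},v_{k}\rangle$, $\langle\bar R(x)v_{k},v_{k}\rangle$, and $\langle R(x)^{*}R(x)P_{J}(x)v_{k},v_{k}\rangle$. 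Hence $g(\lambda,\cdot)$ is measurable for each $\lambda$, while $g(\cdot,x)$ is $1$-Lipschitz in $\lambda$ (an infimum of $1$-Lipschitz functions). Thus $g$ is a Carath\'eodory function and so jointly $\mathcal M\otimes\mathcal B$-measurable, and $\operatorname{Graph}(F)=g^{-1}(\{0\})$. After restricting to the measurable set where $J(x)\neq\{0\}$ (on the complementary fibers the spectrum is empty and irrelevant to the essential range), $F$ is nonempty compact-valued, and Theorem~\ref{char}(ii)$\Rightarrow$(i) gives its measurability.

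For $\essran(F)\subset\sigma(T)$ I argue the contrapositive. If $\lambda\notin\sigma(T)$, then $\lambda I-T$ is an invertible MI operator with range operator $\lambda\mathbf I(x)-R(x)$, so Theorem~\ref{prop}(iii) yields that $\lambda\mathbf I(x)-R(x)$ is invertible a.e.\ with $C:=\esssup_{x}\|(\lambda\mathbf I(x)-R(x))^{-1}\|<\infty$. The resolvent estimate $d(\lambda,\sigma(A))\ge\|(\lambda I-A)^{-1}\|^{-1}$ then gives $d(\lambda,F(x))\ge 1/C$ a.e., so $\mu(F^{-1}(D(\lambda,1/C)))=0$ and $\lambda\notin\essran(F)$. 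The reverse inclusion under the hypothesis is equally direct: if $\lambda\notin\essran(F)$, then $F(x)\subset\essran(F)$ a.e.\ by Lemma~\ref{er}, so $\lambda\mathbf I(x)-R(x)$ is invertible a.e.; the hypothesis supplies the uniform bound on $\|(\lambda\mathbf I(x)-R(x))^{-1}\|$, and Theorem~\ref{prop}(iii) makes $\lambda I-T$ invertible, i.e.\ $\lambda\notin\sigma(T)$. Combining the two gives equality.

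The main obstacle is the measurability in the second paragraph, where the delicate points are the joint measurability of $g$ via the Carath\'eodory property and the reduction of each fiberwise infimum to a fixed countable family of measurable selections; once this is in place, both inclusions follow quickly from Theorem~\ref{prop}(iii), the key being that it provides a uniform bound on the fiber resolvents rather than mere fiberwise invertibility.
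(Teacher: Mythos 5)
Your proposal is correct and follows essentially the same route as the paper: measurability via the product-measurability of $\operatorname{Graph}(F)$ using the ``invertible iff both $S$ and $S^*$ are bounded below'' criterion over a countable dense set, and both inclusions via Theorem~\ref{prop}(iii) transferring uniform resolvent bounds between $T$ and the fibers $R(x)$, with Lemma~\ref{er} supplying $F(x)\subset\essran(F)$ a.e. The only (cosmetic) differences are that you package the graph as the zero set of a Carath\'eodory function rather than writing the complement directly as $\bigcup_n\bigcap_v(G_{n,v}\cap H_{n,v})$, and that you derive $\lambda\notin\essran(F)$ from the single-point estimate $d(\lambda,\sigma(R(x)))\ge\|(\lambda\mathbf I(x)-R(x))^{-1}\|^{-1}$ instead of the paper's uniform bound over a closed disk --- a slightly cleaner step; just make sure to exclude the indices $k$ with $P_J(x)v_k=0$ from your fiberwise infima.
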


\begin{proof}
First, we must show that $F: X \leadsto \C$ is measurable.
By Theorem \ref{char}(ii), it suffices to show that the graph of $F$
\[
\operatorname{Graph}(F)=\{(x,\lambda)\in X \times \C: \lambda \in F(x)=\sigma(R(x)) \} 
\]
belongs to the product $\sigma$-algebra of measurable sets in $X$ and Borel sets in $\C$. Let $\mathcal D$ be a dense countable subset of $\mathcal H$. Let $P_{J}(x)$ be the orthogonal projection of $\mathcal H$ onto $J(x)$. For any $n\in\N$ and $v\in \mathcal D$, consider the set
\[
G_{n,v}=\{(x,\lambda) \in X \times \C: ||(\lambda \mathbf I(x)-R(x))(P_{J(x)}v)|| \ge
 1/n ||P_{J(x)} v|| \},
 \]
where $\mathbf I(x)$ is the identity on $J(x)$.
 Likewise, define
 \[
H_{n,v}=\{(x,\lambda) \in X \times \C: ||(\overline{\lambda} \mathbf I(x)-R^*(x))(P_{J(x)}v)|| \ge
 1/n ||P_{J(x)} v|| \}.
 \]
The sets $G_{n,v}$ and $H_{n,v}$ belong to the product $\sigma$-algebra  on $X\times \C$. 

We will employ the fact that $S:\mathcal H \to \mathcal H$ is invertible if and only if $S$ and $S^*$ are bounded from below. Hence, $\lambda \not\in \sigma(R(x))$ if and only if both $\lambda \mathbf I(x) - R(x)$ and $\ov{\lambda} \mathbf I(x) - R^*(x)$ are bounded from below. Consequently,
\[
(X \times \C) \setminus  \operatorname{Graph}(F) = \bigcup_{n\in \N} \bigcap_{ v\in\mathcal D} (G_{n,v} \cap H_{n,v}).
\]
Hence, $\operatorname{Graph}(F)$ belongs to the product $\sigma$-algebra on $X\times \C$, which shows the measurability of $F$.

The remaining statements such as \eqref{chow1} might possibly be deduced from \cite[Lemma 2.1]{Chow} using Lemma \ref{er}. However, this can be easily shown by the following direct argument.
Suppose that $\lambda_0 \in \C \setminus \sigma(T)$. Choose $\ve>0$ such that $\overline{D(\lambda_0,\ve)} \cap \sigma(T) = \emptyset$. 
Let 
\[
C= \sup_{\lambda \in \overline{D(\lambda_0,\ve)}}||(\lambda \mathbf I - T)^{-1}||<\infty.
\]
By Theorem \ref{prop}(iii) for any $\lambda \in \overline{D(\lambda_0,\ve)}$, we have
\begin{equation}\label{chow3}
||(\lambda \mathbf I(x) - R(x))^{-1}|| \le C<\infty
\qquad\text{for a.e.\ }x\in X.
\end{equation}
Hence,
\[
\overline{D(\lambda_0,\ve)} \cap \sigma(R(x)) = \emptyset 
\qquad\text{for a.e.\ }x\in X.
\]
Thus, $\lambda_0 \not\in \essran(F)$, which shows \eqref{chow1}. 

Finally, suppose that for every $\lambda \not \in \essran(F)$ there exists $C>0$ such that \eqref{chow3} holds. Take any $\lambda \not \in \essran(F)$. By Theorem \ref{prop}(iii), $\lambda I - T$ is invertible. Hence, $\lambda \not \in \sigma(T)$. This proves the equality in \eqref{chow1}.
\end{proof}

Combining Lemmas \ref{er} and \ref{chow} yields the following result which is well-known for decomposable operators, see \cite[Proposition 1]{Gil73} and \cite[Proposition 1.1]{Le}.

\begin{cor}\label{len}
Suppose $T:V \to V$ is an MI operator and $R$ is its corresponding range operator satisfying $T= \int_X^{\oplus} R(x) d\mu(x)$. Then,
\[
\sigma(R(x)) \subset \sigma(T) \qquad\text{for $\mu$-a.e.\ }x\in X.
\]
\end{cor}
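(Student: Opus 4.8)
The plan is to simply chain together the two lemmas that immediately precede the statement. First I would apply Lemma~\ref{chow} to the set-valued map $F \colon X \leadsto \C$ defined by $F(x) = \sigma(R(x))$. That lemma delivers exactly the two ingredients I need: it asserts that $F$ is measurable, and it establishes the containment $\essran(F) \subset \sigma(T)$ in \eqref{chow1}. Note that $F$ is automatically closed-valued, since the spectrum of a bounded operator is compact and hence a closed subset of $\C$; this is the only hypothesis verification required to pass $F$ into the essential-range machinery.

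Next I would invoke Lemma~\ref{er}, whose hypotheses (a measurable set-valued map with closed images) are met by the previous paragraph. The relevant conclusion is the pointwise containment \eqref{er0}, namely $F(x) \subset \essran(F)$ for $\mu$-a.e.\ $x \in X$. Combining this with \eqref{chow1} gives the chain of inclusions
\[
\sigma(R(x)) = F(x) \subset \essran(F) \subset \sigma(T) \qquad \text{for $\mu$-a.e.\ }x \in X,
\]
which is precisely the assertion.

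I expect no genuine obstacle here: the corollary is a bookkeeping consequence of the two lemmas, and all the real work—the measurability of $x \mapsto \sigma(R(x))$ via the graph criterion of Theorem~\ref{char}(ii), and the resolvent estimate \eqref{chow3} built on Theorem~\ref{prop}(iii)—has already been carried out in the proof of Lemma~\ref{chow}, while the geometric fact that a measurable set-valued map lies a.e.\ inside its essential range was handled in Lemma~\ref{er}. The only point that deserves a word is the closedness of $\sigma(R(x))$, which legitimizes applying the essential-range definition and Lemma~\ref{er} to $F$.
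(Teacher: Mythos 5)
Your proposal is correct and matches the paper exactly: the paper derives Corollary~\ref{len} by combining Lemma~\ref{chow} (measurability of $x\mapsto\sigma(R(x))$ and $\essran(F)\subset\sigma(T)$) with the a.e.\ containment $F(x)\subset\essran(F)$ from Lemma~\ref{er}. Your added remark on closedness of the spectrum is a harmless extra verification.
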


The following simple example shows that the condition on essential boundedness of resolvent is necessary in Lemma \ref{chow}.

\begin{example}
Let $X=\N$ be equipped with the counting measure and $\mathcal H=\ell^2(\N)$. Define a range function 
\[
J(n)= \{ (v_k)_{k\in \N} \in \mathcal H: v_k=0 \text{ for all }k \ge n+1\}.
\]
Define a range operator $R(n): J(n) \to J(n)$ by its matrix representation
\[
R(n) = \begin{bmatrix} 1 & 1 & & \\
& 1 & 1 & \\
& & \ddots & \ddots  \\
& & & 1
\end{bmatrix}.
\]
That is, $R(n)$ is a Jordan block of size $n$ for eigenvalue $1$, and hence $\sigma(R(n))=\{1\}$. Let $v\in J(n)$ be a vector 
\[
v_i= \begin{cases} (-1)^i/\sqrt{n} & \text{for } 1\le i \le n,
\\
0 & \text{otherwise.}
\end{cases}
\]
Then $||v||=1$ and $||R(n)v||=1/\sqrt{n}$. Hence the inverse operators $R^{-1}(n)$ are not uniformly bounded. This shows that there exist a sequence $\varphi_n \in L^2(\N; \ell^2)$  such that $||\varphi_n||=1$ and $||T \varphi_n || \to 0$ as $n\to \infty$. Thus, $0 \in \sigma(T)$, and we do not have equality in \eqref{chow1}.
\end{example}

If $T$ is a normal operator, i.e., $T T^*=T^* T$, then we have the equality in \eqref{chow1}.

\begin{cor}\label{nor}
Suppose $T:V \to V$ is a normal MI operator of the form 
\[
T= \int_X^{\oplus} R(x) d\mu(x).
\]
Consider the set-valued map $F: X \leadsto \C$ given by $F(x) = \sigma(R(x))$. Then,
\begin{equation}\label{nor1}
\sigma(T) =\essran(F).
\end{equation}
\end{cor}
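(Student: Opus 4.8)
The plan is to reduce everything to the conditional equality already established in Lemma~\ref{chow}. That lemma gives the inclusion $\essran(F) \subset \sigma(T)$ unconditionally, and upgrades it to an equality provided the operator-valued resolvent $x \mapsto (\lambda \mathbf I(x) - R(x))^{-1}$ is $\mu$-essentially bounded for each $\lambda \notin \essran(F)$. Thus the entire task collapses to verifying this essential boundedness, which is precisely the point at which the normality hypothesis enters.

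First I would invoke Theorem~\ref{prop}(vi): since $T$ is normal, the range operator $R(x)$ is normal for a.e.\ $x \in X$. The key analytic input is the classical resolvent identity for a normal operator $S$ on a Hilbert space: for $\lambda \notin \sigma(S)$ one has $\Norm{(\lambda I - S)^{-1}} = \operatorname{dist}(\lambda, \sigma(S))^{-1}$. This follows from the continuous functional calculus, because for normal $S$ the norm of $f(S)$ equals $\sup_{z \in \sigma(S)} |f(z)|$, applied to the function $f(z) = (\lambda - z)^{-1}$. Applying this pointwise on $J(x)$ gives, for a.e.\ $x \in X$,
\[
\Norm{(\lambda \mathbf I(x) - R(x))^{-1}} = \operatorname{dist}(\lambda, \sigma(R(x)))^{-1} = \operatorname{dist}(\lambda, F(x))^{-1}.
\]

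Next I would fix $\lambda \notin \essran(F)$. Since $\essran(F)$ is closed, $\delta := \operatorname{dist}(\lambda, \essran(F)) > 0$. By \eqref{er0} in Lemma~\ref{er}, $F(x) \subset \essran(F)$ for a.e.\ $x$, whence $\operatorname{dist}(\lambda, F(x)) \geq \delta$ for a.e.\ $x$. Combined with the displayed identity, this yields $\Norm{(\lambda \mathbf I(x) - R(x))^{-1}} \leq 1/\delta$ for a.e.\ $x$, which is exactly the essential boundedness required. Lemma~\ref{chow} then delivers $\sigma(T) = \essran(F)$, proving \eqref{nor1}.

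The only genuinely new ingredient---and hence the main point to get right---is the resolvent-distance identity for normal operators; everything else is a routine assembly of the previously established machinery (Lemmas~\ref{chow} and~\ref{er} together with Theorem~\ref{prop}(vi)). I would therefore state that identity explicitly, with its one-line functional-calculus justification, since the whole argument hinges on the uniform lower bound it provides for $\operatorname{dist}(\lambda, F(x))$.
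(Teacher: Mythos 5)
Your proposal is correct and follows essentially the same route as the paper: normality of $R(x)$ via Theorem~\ref{prop}(vi), the resolvent--distance identity $\Norm{(\lambda \mathbf I(x)-R(x))^{-1}} = \operatorname{dist}(\lambda,F(x))^{-1}$, a uniform a.e.\ lower bound on $\operatorname{dist}(\lambda,F(x))$ for $\lambda\notin\essran(F)$, and then Lemma~\ref{chow}. The only cosmetic difference is that you obtain the lower bound from \eqref{er0} and the closedness of $\essran(F)$, whereas the paper reads it directly off the definition of the essential range via $\mu(F^{-1}(D(\lambda,\epsilon)))=0$.
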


\begin{proof}
By the spectral theorem for a normal operator $T$ we have that
\begin{equation}\label{sn}
|| (\lambda \mathbf I - T )^{-1} || = \frac{1}{\operatorname{dist}(\lambda, \sigma(T))}
\qquad\text{for }\lambda \not\in \sigma(T).
\end{equation}
Let $J$ be the range function of a MI space $V$.
By Theorem \ref{prop}(vi), $R(x)$ is normal for a.e. $x\in X$, and hence we have
\begin{equation}\label{sn2}
|| (\lambda \mathbf I(x) - R(x) )^{-1} || = \frac{1}{\operatorname{dist}(\lambda, F(x))}
\qquad\text{for }\lambda \not\in F(x),
\end{equation}
where $\mathbf I(x)$ is the identity on $J(x)$. For any $\lambda \not \in \operatorname{ess\ ran}(F)$, 
there exists $\epsilon>0$ such that $\mu(F^{-1}(D(\lambda,\epsilon)))=0$. By \eqref{sn2}
\[
|| (\lambda \mathbf I(x) - R(x) )^{-1} || \le \frac{1}{\epsilon}
\qquad\text{for a.e. }x\in X.
\]
Therefore, Lemma \ref{chow} yields \eqref{nor1}.
\end{proof}

We are now ready to give the proof of Theorem \ref{spec}.

\begin{proof}[Proof of Theorem \ref{spec}]
It suffices to show only part (ii) as it immediately implies (i) by taking $K=[A,B]$. 
Suppose that $T$ is a normal operator and $\sigma(T) \subset K$ for some compact $K\subset \C$. Then, by Theorem \ref{prop}(vi), $R(x)$ is normal for a.e.\ $x\in X$. Moreover, by \eqref{er0}
 and Corollary \ref{nor}
\[
\sigma(R(x)) \subset \sigma(T) \subset K \qquad\text{for a.e.\ }x\in X.
\]
Conversely, if $R(x)$ is normal for a.e.\ $x$, then by Theorem \ref{prop}(vi) the operator $T$ must be normal as well. If $\sigma(R(x)) \subset K$ for some compact $K\subset \C$ and a.e.\ $x$, then~\eqref{er1} implies $\essran(x\mapsto \sigma(R(x))) \subset K$. By Corollary \ref{nor} we have $\sigma(T) \subset K$.
\end{proof}

\subsection{Dimension function of MI spaces}
Next we show the generalization of \cite[Theorems 4.9 and 4.10]{B} to the setting of MI spaces and MI operators.

\begin{defn}
Let $V \subset L^2(X,\mathcal H)$ be an MI space with the corresponding range function $J$. The {\it dimension function} of $V$ is 
\[
\dim_V: X \to \N \cup \{0,\infty\}, \qquad \dim_V(x) = \dim J(x), \ x\in X.
\]
\end{defn}

\begin{theorem} \label{dim}
Suppose that $V \subset L^2(X;\mathcal H)$ is an MI space and $T: V \to L^2(X;\mathcal H)$ is an MI operator. Then, $V'=\ov{T(V)}$ is an MI space and
\begin{equation}\label{dim0}
\dim_{V'}(x) \le \dim_V(x) \qquad\text{for a.e.\ }x\in X.
\end{equation}
\end{theorem}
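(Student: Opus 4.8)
The plan is to reduce the global inequality to a pointwise fact about bounded operators between Hilbert spaces, feeding in the description of the range function of $\ov{T(V)}$ that is already available.

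First I would apply Lemma~\ref{rk}(i) with target space $L^2(X;\H)$, viewed as the MI space $V_{J_0}$ with the constant range function $J_0(x)=\H$. Writing $R\colon J \to J_0$ for the range operator of $T$ from Theorem~\ref{thm:MIOp}, the lemma shows at once that $V'=\ov{T(V)}$ is an MI space and that its range function $J'$ satisfies $J'(x)=\ov{R(x)[J(x)]}$ for a.e.\ $x\in X$. This disposes of the first assertion and recasts \eqref{dim0} as the pointwise estimate
\[
\dim \ov{R(x)[J(x)]} \le \dim J(x) \qquad\text{for a.e.\ }x\in X.
\]

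The heart of the matter is then the elementary fact that a bounded linear operator $R\colon \mathcal K_1 \to \mathcal K_2$ between separable Hilbert spaces satisfies $\dim \ov{\ran R} \le \dim \mathcal K_1$. Fixing an orthonormal basis $\{e_\alpha\}_{\alpha\in A}$ of $\mathcal K_1$, continuity of $R$ gives $\ov{\ran R}=\ov{\spn}\{R e_\alpha:\alpha\in A\}$. If $\dim\mathcal K_1=n<\infty$, then $\ran R$ is spanned by the $n$ vectors $Re_\alpha$, hence is closed with dimension at most $n$. If $\dim\mathcal K_1=\infty$, then $\ov{\ran R}$ is a closed subspace of the separable space $\mathcal K_2$, so its dimension is at most $\aleph_0=\dim\mathcal K_1$. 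Taking $\mathcal K_1=J(x)$, $\mathcal K_2=\H$, and $R=R(x)$ yields the displayed estimate for a.e.\ $x$, and therefore \eqref{dim0}.

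I do not expect a serious obstacle. The only delicate point is the passage through the closure in the infinite-dimensional case, where an algebraic spanning set should not be confused with a Hilbert-space basis; separability of $\H$ settles this by capping every dimension at $\aleph_0$. Notably, no measurability of $\dim_V$ or $\dim_{V'}$ is required for \eqref{dim0}, since the claim is a pointwise a.e.\ inequality and both $J$ and $J'$ are already known to be measurable range functions.
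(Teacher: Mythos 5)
Your proposal is correct and follows essentially the same route as the paper: both invoke Lemma~\ref{rk}(i) to identify $J'(x)=\ov{R(x)[J(x)]}$ a.e.\ and then conclude $\dim J'(x)\le\dim J(x)$ pointwise. The paper leaves that last dimension inequality as immediate, whereas you spell out the (correct) elementary argument for $\dim\ov{\ran R}\le\dim\mathcal K_1$; this is just added detail, not a different method.
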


\begin{proof}
By Lemma~\ref{rk}(i) the range function of $V'$ satisfies
\[
J'(x) = \ov{R(x)(J(x))} \qquad\text{for a.e.\ }x\in X.
\]
This implies $\dim J'(x) \le \dim J(x)$, which yields \eqref{dim0}.

\begin{comment}
Since $V$ is separable, there exists a sequence of functions $f_n \in V$, $n\in \N$, such that 
\[
V= \ov{\spn} \{ f_n: n\in \N\} \subset L^2(X;\mathcal H).
\]
Let $J$ be the range function of $V$.
By \ref{thm:MISpa}, in particular \cite[formula (2.6)]{BR}, we have
\[
J(x) = \ov{\spn} \{ f_n(x): n\in \N\} \subset \mathcal H
\qquad\text{for a.e.\ }x\in X.
\]
Let $R$ be the range operator corresponding to $T$ as in Theorem \ref{thm:MIOp}. Let $J'$ be the range function of $V'=\ov{T(V)}$. 
Then, for a.e.\ $x$,
\[
J'(x) = \ov{\spn} \{ Tf_n(x): n\in \N\} = \ov{\spn} \{ R(x)(f_n(x)) : n\in \N\} = \ov{R(x)(J(x))}.
\]
This implies $\dim J'(x) \le \dim J(x)$, which yields \eqref{dim0}.
\end{comment}
\end{proof}

\begin{theorem} \label{dimo}
Suppose that $V, V' \subset L^2(X;\mathcal H)$ are MI spaces. The following are equivalent:
\begin{enumerate}[(i)]
\item $\dim_V(x)=\dim_{V'}(x)$ for a.e.\ $x\in X$,
\item there exists an MI isometric isomorphism $T: V \to V'$,
\item there exists an MI isomorphism $T:V \to V'$.
\end{enumerate}
\end{theorem}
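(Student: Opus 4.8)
The plan is to prove the cycle (ii) $\Rightarrow$ (iii) $\Rightarrow$ (i) $\Rightarrow$ (ii), with the last implication doing all the real work. The implication (ii) $\Rightarrow$ (iii) is immediate, since an MI isometric isomorphism is in particular an MI isomorphism. For (iii) $\Rightarrow$ (i), let $T\colon V\to V'$ be an MI isomorphism; it is a bounded bijective MI operator, and by Theorem~\ref{prop}(iii) its inverse is again MI, so $T$ is invertible and its range operator $R$ satisfies: $R(x)\colon J(x)\to J'(x)$ is invertible for a.e.\ $x\in X$. A bounded invertible linear map between two Hilbert spaces forces them to have the same Hilbert-space dimension, so $\dim J(x)=\dim J'(x)$ for a.e.\ $x$, which is (i).

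The substance is (i) $\Rightarrow$ (ii). The idea is to construct a measurable range operator $R$ with $R(x)\colon J(x)\to J'(x)$ \emph{unitary} for a.e.\ $x$. Then $\Norm{R}=\esssup_{x\in X}\Norm{R(x)}_{\mathrm{op}}\le 1<\infty$, so by Theorem~\ref{thm:MIOp} the decomposable operator $T=\int_X^\oplus R(x)\,d\mu(x)$ is a well-defined MI operator $V\to V'$, and by Theorem~\ref{prop}(v) it is unitary, hence the desired MI isometric isomorphism. To build $R$, I first observe that the dimension function is measurable: fixing an orthonormal basis $\{u_k\}$ of $\H$, one has $\dim J(x)=\sum_k \langle P_J(x)u_k,u_k\rangle$, a measurable sum, so each level set $X_n=\{x\in X:\dim J(x)=n\}$ ($n\in\{0,1,2,\dots\}\cup\{\infty\}$) is measurable, and by (i) we may assume $\dim J'(x)=n$ for a.e.\ $x\in X_n$.

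Next I would produce consistently indexed measurable orthonormal bases for $J$ and $J'$. Applying the Gram--Schmidt construction from the proof of Theorem~\ref{BR} to the fields $x\mapsto P_J(x)v_k$ (for a countable dense $\{v_k\}\subset\H$) yields measurable fields $\psi_1,\psi_2,\dots$ that are pairwise orthogonal, have norms in $\{0,1\}$, and span $J(x)$. To relabel them into a gap-free basis, define $\kappa_j(x)$ to be the index of the $j$-th nonzero vector among $\psi_1(x),\psi_2(x),\dots$; since each $\Norm{\psi_k(\cdot)}$ is measurable, the level sets $\{\kappa_j=k\}$ are measurable, so $f_j(x):=\psi_{\kappa_j(x)}(x)$ is a measurable field and $\{f_j(x):1\le j\le \dim J(x)\}$ is an orthonormal basis of $J(x)$. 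Performing the identical construction for $J'$ gives fields $f_j'$. I then set $R(x)f_j(x)=f_j'(x)$, extended linearly (and $R(x)=0$ where $J(x)=\{0\}$); this defines a unitary $J(x)\to J'(x)$ for a.e.\ $x$ because $\dim J(x)=\dim J'(x)$. Measurability of $R$ follows since for $u\in\H$, $v\in\H'$,
\[
\langle R(x)P_J(x)u, v\rangle = \sum_{j} \langle u, f_j(x)\rangle\,\langle f_j'(x), v\rangle
\]
is a measurable sum of products of measurable functions.

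The main obstacle is precisely the measurability bookkeeping in (i) $\Rightarrow$ (ii): one must choose matching orthonormal bases in the two fibers $J(x)$ and $J'(x)$ not just pointwise but measurably, and with a uniform indexing across $X$ so that the assignment $f_j(x)\mapsto f_j'(x)$ makes sense fiberwise. The relabeling via $\kappa_j$ together with the measurability of the dimension function (hence of the decomposition $X=\bigsqcup_n X_n$) is what resolves this; everything else reduces to invoking Theorem~\ref{thm:MIOp} and Theorem~\ref{prop}(v).
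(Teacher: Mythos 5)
Your proof is correct, but it follows a genuinely different route from the paper on the substantive implication. The paper disposes of the equivalence (i)$\iff$(ii) by citing Helson's classical theorem (Theorem 1 in \S 1.4 of \cite{He2}), and obtains (iii)$\implies$(i) by applying Theorem~\ref{dim} to both $T$ and $T^{-1}$ to get the two inequalities $\dim_{V'}\le\dim_V$ and $\dim_V\le\dim_{V'}$ a.e. You instead prove (i)$\implies$(ii) from scratch: measurability of the dimension function via $\dim J(x)=\operatorname{tr}P_J(x)$, the Gram--Schmidt fields from the proof of Theorem~\ref{BR}, the relabeling $\kappa_j$ to produce gap-free measurable orthonormal bases $\{f_j\}$ and $\{f_j'\}$ of $J$ and $J'$, and the fiberwise unitary $R(x)f_j(x)=f_j'(x)$, whose weak measurability you verify by the expansion $\langle R(x)P_J(x)u,v\rangle=\sum_j\langle u,f_j(x)\rangle\langle f_j'(x),v\rangle$ (absolutely convergent by Cauchy--Schwarz); Theorem~\ref{thm:MIOp} and Theorem~\ref{prop}(v) then finish. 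This is essentially a self-contained proof of the Helson result in the present MI language, which makes the argument longer but independent of the external reference and consistent with the measurable-selection machinery already developed in Section~\ref{sec:pointwise properties}. Your (iii)$\implies$(i) via Theorem~\ref{prop}(iii) (invertibility of $T$ forces $R(x)$ invertible a.e., and a bounded linear bijection of separable Hilbert spaces preserves Hilbert dimension) is equally valid and essentially interchangeable with the paper's use of Theorem~\ref{dim}. No gaps.
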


\begin{proof}
The equivalence of (i) and (ii) is due to Helson \cite[Theorem 1 in \S1.4]{He2}. (ii) trivially implies (iii). Finally, (iii) implies (i) by applying Theorem \ref{dim} for $T$ and $T^{-1}$.
\end{proof}

\subsection{Functional calculus for MI operators}

We conclude this section by observations about functional calculus for MI operators. Our presentation extends and is influenced by the results in \cite[Section 6]{B3}. We start by recalling two basic forms of functional calculus for operators on Banach and Hilbert spaces, respectively.

\begin{defn} Suppose that $T: B \to B$ is a bounded operator acting on a Banach space $B$. Let $\sigma(T)$ be the spectrum of $T$. Then, for any holomorphic function $h$ defined on some neighborhood $\Omega$ of $\sigma(T)$, define
\begin{equation}\label{ft1}
h(T) = \frac{1}{2\pi i } \int_{\gamma} h(\lambda) (\lambda \mathbf I - T)^{-1} d\lambda,
\end{equation}
where $\gamma$ is any positively oriented contour that surrounds $\sigma(T)$ in $\Omega$. It is known that this definition does not depend on the choice of $\gamma$, see \cite[Section 10.26]{Ru}.
\end{defn}

\begin{defn}
Suppose that $T$ is a normal operator acting on a Hilbert space $\mathcal H$. Let $E$ be the spectral decomposition of $T$, see \cite[Sec. 12.23]{Ru}. Then, for any bounded Borel function $h$ on $\sigma(T)$, define
\begin{equation}\label{ft2}
h(T) = \int_{\sigma(T)} h(\lambda) dE(\lambda).
\end{equation}
\end{defn}

These notations agree wherever they overlap, see~\cite[Theorem 2.6]{Co}.

\begin{theorem}\label{ft} Suppose that $V \subset L^2(X;\mathcal H)$ is an MI space and $T: V \to V$ is an MI operator. Let $R$ be its corresponding range operator so that $T= \int_X^{\oplus} R(x) d\mu(x)$.
Assume that either:
\begin{enumerate}[(i)]
\item $h$ is a holomorphic function on some neighborhood of $\sigma(T)$, or
\item $h$ is a bounded complex Borel function on $\sigma(T)$ and $T$ is normal. 
\end{enumerate}
Then, $h(T)$ is also an MI operator and its corresponding range operator is $x \mapsto h(R(x))$.
\end{theorem}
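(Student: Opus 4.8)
The plan is to treat both cases by approximating $h(T)$ by simpler MI operators built from resolvents (case (i)) or from $*$-polynomials in $T$ (case (ii)), exploiting three facts established earlier. First, by Corollary~\ref{len} we have $\sigma(R(x)) \subset \sigma(T)$ for a.e.\ $x$, so in both cases $h$ is defined on a neighborhood of, resp.\ on, $\sigma(R(x))$ for a.e.\ $x$, and hence $h(R(x))$ makes sense. Second, the class of MI operators $V \to V$ is closed in the operator norm, since the commutation relations $SM_\phi = M_\phi S$ pass to norm limits; more strongly, by Theorem~\ref{prop}(i) a sequence of MI operators converges in operator norm if and only if the corresponding range operators converge in the $\esssup$ norm, and $\esssup$-convergence forces a.e.\ pointwise convergence of the range operators, which lets me identify limit range operators pointwise. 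Third, for $\lambda \notin \sigma(T)$, Theorem~\ref{prop}(iii) shows the resolvent $(\lambda\mathbf I - T)^{-1}$ is itself an MI operator with range operator $x \mapsto (\lambda\mathbf I(x) - R(x))^{-1}$.

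For case (i), I would fix a contour $\gamma$ surrounding $\sigma(T)$ inside the domain of holomorphy; since $\sigma(R(x)) \subset \sigma(T)$, the same $\gamma$ surrounds $\sigma(R(x))$ for a.e.\ $x$, so $h(R(x)) = \frac{1}{2\pi i}\int_\gamma h(\lambda)(\lambda\mathbf I(x) - R(x))^{-1}\, d\lambda$. I would then approximate the defining integral $h(T) = \frac{1}{2\pi i}\int_\gamma h(\lambda)(\lambda\mathbf I - T)^{-1}\, d\lambda$ by Riemann sums $S_n$ over refining partitions of $\gamma$; because $\lambda \mapsto (\lambda\mathbf I - T)^{-1}$ is norm-continuous on the compact set $\gamma$, these converge to $h(T)$ in operator norm. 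Each $S_n$ is a finite linear combination of resolvents, hence an MI operator whose range operator is the corresponding Riemann sum $R_n(x) = \frac{1}{2\pi i}\sum_k h(\lambda_k)(\lambda_k\mathbf I(x) - R(x))^{-1}\,\Delta\lambda_k$ by the third fact and linearity. By Theorem~\ref{prop}(i), $R_n(x)$ converges in $\esssup$ norm, hence a.e.\ pointwise, to the range operator $\tilde R(x)$ of $h(T)$; but for a.e.\ fixed $x$ the same $R_n(x)$ are Riemann sums converging to $h(R(x))$. Therefore $\tilde R(x) = h(R(x))$ a.e.

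For case (ii), I would build up the functional calculus in stages, noting that $T$ and a.e.\ $R(x)$ are normal (Theorem~\ref{prop}(vi)) with spectra in $K = \sigma(T)$. For a $*$-polynomial $h(\lambda) = p(\lambda,\overline\lambda)$, Corollary~\ref{cor:CatIso} together with Theorem~\ref{prop}(iv) (which gives $T^*$ the range operator $R(x)^*$) shows $h(T) = p(T,T^*)$ has range operator $p(R(x), R(x)^*) = h(R(x))$. For continuous $h$ on $K$, Stone--Weierstrass provides $*$-polynomials $p_n \to h$ uniformly on $K$; since the continuous functional calculus of a normal operator is isometric, $\Norm{p_n(T) - h(T)} = \sup_K|p_n - h| \to 0$, and because $\sigma(R(x)) \subset K$ also $\Norm{p_n(R(x)) - h(R(x))} \le \sup_K|p_n - h| \to 0$ uniformly in $x$; arguing as in case (i) yields the range operator $h(R(x))$ for continuous $h$. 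Finally, for bounded Borel $h$ I would run a functional monotone class argument: the set of bounded Borel $h$ for which $x \mapsto h(R(x))$ is a measurable range operator and $\int_X^\oplus h(R(x))\, d\mu(x) = h(T)$ contains $C(K)$, is a vector space, and is closed under uniformly bounded pointwise sequential limits; this closure uses the bounded convergence theorem for spectral integrals applied to both $T$ (giving $h_n(T) \to h(T)$ in the weak operator topology) and to a.e.\ $R(x)$, combined with dominated convergence in the integral over $X$, while measurability of the limit follows since pointwise limits of measurable functions are measurable. The monotone class theorem then extends the conclusion to all bounded Borel $h$, and the uniqueness clause of Theorem~\ref{thm:MIOp} identifies the range operator of $h(T)$ with $x \mapsto h(R(x))$.

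The main obstacle is the last stage of case (ii). Passing from continuous to bounded Borel functions cannot be done by uniform approximation, so I must replace norm convergence by weak operator convergence; the delicate point is to verify simultaneously (a) that $x \mapsto h(R(x))$ remains a \emph{measurable} range operator in the limit and (b) that the global weak limit $h(T)$ and the integrated local limit $\int_X^\oplus h(R(x))\, d\mu(x)$ coincide. I reconcile these through the weak formulation $\langle \,\cdot\,\varphi,\psi\rangle$ and the uniqueness statement in Theorem~\ref{thm:MIOp}, rather than the $\esssup$ identity of Theorem~\ref{prop}(i), the latter being unavailable for merely weakly convergent sequences.
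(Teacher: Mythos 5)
Your proposal is correct and follows essentially the same route as the paper: in case (i) you approximate the contour integral by Riemann sums of resolvents (each an MI operator with the pointwise resolvent as range operator, via Theorem~\ref{prop}(iii)) and pass to the limit using the $\esssup$ identity of Theorem~\ref{prop}(i); in case (ii) you go from $*$-polynomials to continuous functions by Stone--Weierstrass and then to bounded Borel functions by a bounded-pointwise-limit/monotone class argument, exactly as in the paper. Your closing discussion of the measurability and weak-limit issues in the Borel stage fills in details the paper leaves implicit, but it is the same argument.
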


\begin{proof}
By Corollary \ref{len} and Theorem \ref{spec}, $h(R(x))$ is a well-defined operator for a.e.\ $x$ in both cases (i) and (ii). We need to show $x \mapsto h(R(x))$ is a measurable range operator and 
\begin{equation}\label{ft4}
h(T)= h\left( \int_X^\oplus R(x)\, d\mu(x) \right) = \int_X^\oplus h(R(x))\, d\mu(x). 
\end{equation}
If $\lambda \not \in \sigma(T)$, then
\[
(\lambda \mathbf I -T)^{-1}
= \int_X^\oplus (\lambda \mathbf I(x) - R(x))^{-1} d\mu(x). 
\]
The integral in \eqref{ft1} is approximated by Riemann sums in the operator norm. Outside a set of measure zero $X_0 \subset X$, the mappings $\lambda \mapsto (\lambda \mathbf I(x) - R(x))^{-1}$, $x\in X \setminus X_0$, are continuous at every point $\lambda \not \in \sigma(T)$. Hence, the integrals
\[
h(R(x)) = \frac{1}{2\pi i } \int_{\gamma} h(\lambda) (\lambda \mathbf I(x) - R(x))^{-1} d\lambda
\]
are approximated uniformly a.e.\ by Riemann sums. This yields
 the required conclusion in case (i). 
 
To prove case (ii), observe that 
\[
p(T,T^*) = \int_{\sigma(T)} p(\lambda, \bar \lambda) d E(\lambda),
\]
where $p$ is any polynomial in two variables with complex coefficients and $E$ is the spectral measure of $T$. Clearly, the range operator of $p(T,T^*)$ is $x \mapsto p(R(x),R(x)^*)$.
By the Stone-Weierstrass theorem, functions $\lambda \mapsto p(\lambda,\bar \lambda)$ are dense in $C(\sigma(T))$. Hence, by \cite[Theorem 12.24]{Ru}, the conclusion \eqref{ft4} holds for continuous functions $h$. Finally, it suffices to use two basic facts about functional calculus for Borel functions. First, if $\{h_i\}$ is a uniformly bounded sequence of Borel functions converging pointwise to $h$ on $\sigma(T)$, then $\{h_i(T)\}$ converges to $h(T)$ in the strong operator topology. Second, the space of bounded Borel functions on a compact set $K \subset \C$ is the smallest space $X$ containing $C(K)$ and closed under pointwise limits of uniformly bounded sequences in $X$. In particular, \eqref{ft4} holds if $h=\chi_U$, where $U \subset \C$ is open. By Dunkin's $\pi$-$\lambda$ lemma \eqref{ft4} holds for $h=\chi_U$, where $U \subset \C$ is Borel. Consequently, the required conclusion holds for all bounded Borel functions $h$.
\end{proof}

As an immediate corollary of Theorem \ref{ft} we have a result which is very close to known results for decomposable operators, see \cite[Lemma 2.5]{Chow} and \cite[Proposition 1.4]{Le}.

\begin{cor} Suppose $T$ is a normal MI operator  and $R$ is its corresponding range operator. Let $E$ be the spectral measure of $T$, and $E_{x}$ be the spectral measure of $R(x)$. Then for any Borel set $U \subset \C$, $E(U)$ is an MI operator and its corresponding range operator is $E_x(U)$.
\end{cor}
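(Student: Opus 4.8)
The plan is to apply Theorem~\ref{ft}(ii) directly, taking for $h$ the characteristic function $\chi_U$ of $U$. Since $T$ is a normal MI operator, Theorem~\ref{ft}(ii) applies to every bounded complex Borel function on $\sigma(T)$, and $\chi_U$ restricted to the compact set $\sigma(T)$ is visibly bounded and Borel. Thus $\chi_U(T)$ is an MI operator whose corresponding range operator is $x \mapsto \chi_U(R(x))$.

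The remaining work is to recognize these operators as the spectral projections named in the statement. By the definition \eqref{ft2} of functional calculus through the spectral decomposition,
\[ \chi_U(T) = \int_{\sigma(T)} \chi_U(\lambda)\, dE(\lambda) = E(U), \]
using the convention that the spectral measure $E$ is concentrated on $\sigma(T)$, so that $E(U) = E(U \cap \sigma(T))$. For a.e.\ $x \in X$ the operator $R(x)$ is normal by Theorem~\ref{prop}(vi), and the same computation gives $\chi_U(R(x)) = E_x(U)$. Combining these identifications with the previous paragraph yields exactly the claim.

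Since the result is advertised as immediate, I do not anticipate a genuine obstacle. The one point deserving attention is the compatibility of the two functional calculi: the calculus for $T$ evaluates $\chi_U$ on $\sigma(T)$, while the calculus for each $R(x)$ evaluates it on $\sigma(R(x))$. This is harmless because Corollary~\ref{len} gives $\sigma(R(x)) \subset \sigma(T)$ for a.e.\ $x$, so the restriction to $\sigma(R(x))$ is unambiguous; this is precisely the setting in which Theorem~\ref{ft} was established, so no further verification is required.
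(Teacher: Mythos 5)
Your proposal is correct and follows exactly the paper's own argument: apply Theorem~\ref{ft}(ii) with $h=\chi_U$ and identify $\chi_U(T)=E(U)$ and $\chi_U(R(x))=E_x(U)$. The extra remark on compatibility of the spectral domains via Corollary~\ref{len} is a fair point that the paper leaves implicit, but it does not change the route.
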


\begin{proof} It suffices to take $h=\chi_U$ in Theorem \ref{ft}, where $U \subset \C$ is Borel. Indeed, $\chi_U(R(x))$ is an orthogonal projection on $J(x)$ and $E_x$ given by $E_x(U)=\chi_U(R(x))$ is the spectral measure of $R(x)$. By Theorem \ref{ft}, we have $E(U) = \int_X^{\oplus} E_x(U) d\mu(x).$
\end{proof}

%Bessel systems of multiplications============================================================
\section{Bessel systems of multiplications}

We continue to hold Assumption~\ref{assumptions}.
In this section, we study Bessel systems generated by multiplications in $L^2(X;\H)$. Given a sequence of generators $\A = \{ \varphi_i \}_{i\in I}$ in $L^2(X;\H)$ and a suitable set of functions $\D = \{g_t \}_{t \in Y}$ in $L^\infty(X)$, we study systems of the form $\{ M_{g_t} \varphi_i \}_{t \in Y, i \in I}$. For example, this is the generic form of a Bessel system produced by the unitary action of a locally compact abelian group $G$, by Theorem~\ref{thm:LCAMod}, with $X = \hat{G}$ and $\D$ equal to the characters of $\hat{G}$. When $\D$ is well behaved, the ``global'' system $\{ M_{g_t} \varphi_i \}_{t \in Y, i \in I}$ is Bessel if and only if the ``pointwise'' systems $\{ \varphi_i(x) \}_{i \in I}$ are Bessel for a.e.\ $x \in X$, with uniform bound. As we will see, the properties of these two types of systems are closely related.

The following measure-theoretic generalization of characters on an abelian group was introduced in~\cite{I}.

\begin{defn}
A \emph{Parseval determining set} for $L^1(X)$ consists of another measure space $(Y,\nu)$ and a family $\{g_t\}_{t\in Y}$ in $L^\infty(X)$ such that:
\begin{enumerate}[(i)]
\item For every $f \in L^1(X)$, the mapping $t \mapsto \int_X f \overline{g_t}\, d\mu$ is measurable on $Y$; and
\item For every $f \in L^1(X)$,
\begin{equation} \label{eq:ParsDet}
\int_Y \left| \int_X f(x) \overline{g_t(x)}\, d\mu(x) \right|^2 d\nu(t) = \int_X | f(x) |^2 d\mu(x).
\end{equation}
(Possibly both sides are infinite.)
\end{enumerate}
\end{defn}

\begin{example}
\begin{enumerate}[(a)]
\item
\cite[Lemma~5.2]{I}
When $X=G$ is a locally compact abelian group, its characters form a Parseval determining set for $L^1(G)$ when the dual $Y=\hat{G}$ is equipped with Plancherel measure.
In this case, \eqref{eq:ParsDet} amounts to Plancherel's Theorem.
\item

Let $X$ and $Y$ be countable sets endowed with counting measures. Then, a system $\{g_t\}_{t \in Y}$ in $\ell^2(X) \subset \ell^\infty(X)$ is a Parseval determining set for $\ell^1(X)$ if and only if it is a Parseval frame for $\ell^2(X)$.
\item
Let $G$ be a second countable locally compact group, and let $K\subset G$ be a compact subgroup for which $(G,K)$ is a Gelfand pair~\cite{W07}. Take $X = K\backslash G / K$ to be the space of double cosets of $K$ in $G$, with the quotient topology and Haar measure. Then the space $Y=P(G,K)$ of positive definite spherical functions, equipped with Plancherel measure, forms a Parseval determining set for $L^1(K\backslash G / K)$. When the right-hand side of~\eqref{eq:ParsDet} is finite, this is just Plancherel's Theorem for the spherical transform~\cite[Theorem~9.5.1]{W07}. When the left-hand side is finite, one can emulate the proof of~\cite[Theorem~31.31]{HR2}. We leave details to the reader.
\end{enumerate}
\end{example}

The following is the measure-theoretic generalization of Definition~\ref{defn:GSys}.
\begin{defn} \label{def:CF}
Given a $\sigma$-finite measure space $(\M,m)$ and a separable Hilbert space $\mathcal{K}$, a system $\{u_s\}_{s\in M}$ is called \emph{Bessel} with bound $B > 0$ (briefly, $B$-Bessel) when both of the following hold:
\begin{enumerate}[(i)]
\item For every $v\in \mathcal{K}$, the mapping $s \mapsto \langle v, u_s \rangle$ is measurable on $\M$; and
\item For every $v\in \mathcal{K}$,
\[ \int_\M | \langle v, u_s \rangle |^2\, dm(s) \leq B \Norm{ v}^2. \]
\end{enumerate}
It is a \emph{continuous frame} with bounds $A,B > 0$ (briefly, an $A,B$-frame) if, in addition,
\begin{equation} \label{eq:FD}
A \Norm{v}^2 \leq \int_\M | \langle v, u_s \rangle |^2\, dm(s) \leq B \Norm{v}^2
\end{equation}
for every $v\in \mathcal{K}$. When $A=B$, the frame is called \emph{tight}; when $A=B=1$, it is \emph{Parseval}.
\end{defn}

These notions were introduced independently by Kaiser~\cite{K} and by Ali, Antoine, and Gazeau~\cite{AAG}. The word ``continuous'' is best understood in relation to ``discrete''. The usual notions of (discrete) Bessel systems and frames can be obtained as special cases by taking $m$ to be counting measure. We will use the terms ``frame'' and ``continuous frame'' synonymously.

Continuous frames and Bessel systems behave similarly to their discrete counterparts; see \cite{AAG,GH,K,RND}. In particular, every Bessel system $\{u_s\}_{s\in \M}$ in $\mathcal{K}$ comes with an \emph{analysis operator} $T\colon \mathcal{K} \to L^2(\M)$ given by $(Tv)(s) = \langle v, u_s \rangle$ for $v \in \mathcal{K}$ and $s \in \M$. Its adjoint is the \emph{synthesis operator} $T^* \colon L^2(\M) \to \mathcal{K}$, defined weakly by the vector-valued integral
\[ T^* f = \int_\M f(s) u_s\, dm(s) \qquad (f\in L^2(\M)). \]
Composing these in one direction gives the \emph{frame operator} $S = T^* T \colon \mathcal{K} \to \mathcal{K}$,
\[ Sv = \int_\M \langle v, u_s \rangle u_s\, dm(s) \qquad (v\in \H). \]
Composing in the other direction gives the \emph{Gramian} $T T^* \colon L^2(\M) \to L^2(\M)$. When $\{ u_s \}_{s\in \M}$ is a frame, $S$ is automatically positive and invertible; moreover, $\{ S^{-1/2} u_s \}_{s\in \M}$ is a Parseval frame, called the \emph{canonical tight frame} for $\{ u_s \}_{s\in \M}$.

\begin{assume}
\label{assumptions 2}
For the remainder of Section~5 and all of Section~6 we fix a $\sigma$-finite measure space $(Y,\nu)$ and a Parseval determining set $\D = \{g_t\}_{t\in Y}$ for $L^1(X)$. 
\end{assume}

Given a countable sequence $\A = \{ \varphi_i \}_{i\in I}$ in $L^2(X;\H)$, we consider
\[ E(\A) = \{ M_{g_t} \varphi_i \}_{t\in Y,\, i \in I}, \]
the system of all multiplications of $\A$ by $\D$. We write
\[ S(\A) = \overline{\spn}\{ M_{g_t} \varphi_i \colon t \in Y,\, i \in I\} \]
for the MI space generated by $\A$. According to Theorem \ref{thm:MISpa} and Lemma \ref{lem:DI}, the associated range function for which
\[ S(\A) = \int_X^\oplus J(x)\, d\mu(x) \]
is given by
\begin{equation} \label{eq:J}
J(x) = \overline{\spn}\{ \varphi_i(x) \colon i \in I\} \qquad (\text{a.e.\ }x\in X).
\end{equation}
In particular, every $\varphi_i$ belongs to $S(\A)$.

\begin{theorem} \label{thm:MIF}
Let $\A = \{\varphi_i\}_{i\in I}$ be a countable sequence in $L^2(X;\H)$. For constants $A,B >0$, the following are equivalent:
\begin{enumerate}[(i)]
\item $E(\A)$ is an $A,B$-frame for $S(\A)$ (resp.\ $B$-Bessel).
\item For a.e.\ $x\in X$, $\{\varphi_i(x)\}_{i \in I}$ is an $A,B$-frame for $J(x)$ (resp.\ $B$-Bessel).
\end{enumerate}
\end{theorem}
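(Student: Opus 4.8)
The plan is to reduce both frame bounds to a single pointwise identity and then localize. The crucial computation is that for \emph{any} $v\in L^2(X;\H)$, writing $f_i(x)=\langle v(x),\varphi_i(x)\rangle$, one has $f_i\in L^1(X)$ by Cauchy--Schwarz, and
\[ \langle v, M_{g_t}\varphi_i\rangle = \int_X f_i(x)\ov{g_t(x)}\,d\mu(x). \]
Applying the Parseval determining set identity \eqref{eq:ParsDet} to each $f_i$ and then interchanging the (nonnegative) sum and integral by Tonelli gives the master identity
\[ \sum_{i\in I}\int_Y |\langle v, M_{g_t}\varphi_i\rangle|^2\,d\nu(t) = \int_X \sum_{i\in I} |\langle v(x),\varphi_i(x)\rangle|^2\,d\mu(x). \]
Here measurability of $t\mapsto\langle v,M_{g_t}\varphi_i\rangle$ is part of the definition of a Parseval determining set, and measurability of the integrand on the right is clear. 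This identity says that the global analysis quantity for $E(\A)$ equals the integral over $X$ of the pointwise analysis quantities for $\{\varphi_i(x)\}$.

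Granting the identity, the implication (ii)$\Rightarrow$(i) is immediate: for $v\in S(\A)=V_J$ we have $v(x)\in J(x)$ a.e., so the pointwise frame (resp.\ Bessel) inequalities on $J(x)$ integrate against $d\mu$ to the global inequalities on $S(\A)$. For the reverse (i)$\Rightarrow$(ii) I would first localize. Since $S(\A)$ is MI, $M_{\chi_E}v\in S(\A)$ for every $v\in S(\A)$ and measurable $E\subset X$; inserting $M_{\chi_E}v$ into the master identity together with the global bound shows that the $L^1$ function $B\Norm{v(x)}^2-\sum_i|\langle v(x),\varphi_i(x)\rangle|^2$ has nonnegative integral over every finite-measure set, hence is nonnegative a.e. (using $\sigma$-finiteness). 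Thus for each fixed $v\in S(\A)$ one gets $\sum_{i\in I}|\langle v(x),\varphi_i(x)\rangle|^2 \le B\Norm{v(x)}^2$ for a.e.\ $x$, and, in the frame case, the analogous lower bound with constant $A$. The exceptional null set depends on $v$, so I would apply this only to the countable set $\mathcal A$ of rational combinations of generators from \eqref{prop5}, obtaining a single null set off of which all these inequalities hold simultaneously for every $v\in\mathcal A$.

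It remains to upgrade the inequalities from the dense set $\{v(x):v\in\mathcal A\}$ to all of $J(x)$; by \eqref{MI0} this set is dense in $J(x)$ for a.e.\ $x$. For the Bessel bound this is routine: each finite partial sum $w\mapsto\sum_{i\in F}|\langle w,\varphi_i(x)\rangle|^2$ is continuous on $\H$ and is bounded by $B\Norm{w}^2$ on the dense set, hence everywhere, and taking the supremum over finite $F\subset I$ gives the full Bessel bound on $J(x)$. This settles the Bessel case and, for frames, shows the pointwise analysis operator $\Theta_x\colon J(x)\to\ell^2(I)$, $\Theta_x w=(\langle w,\varphi_i(x)\rangle)_{i}$, is bounded for a.e.\ $x$, so that $w\mapsto\Norm{\Theta_x w}^2=\sum_i|\langle w,\varphi_i(x)\rangle|^2$ is continuous.

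The main obstacle is precisely the lower frame bound: unlike the upper bound it cannot be read off from finite truncations, so I expect this to be the delicate point. The resolution is to exploit the continuity just established---since $\sum_i|\langle w,\varphi_i(x)\rangle|^2\ge A\Norm{w}^2$ holds on the dense set $\{v(x):v\in\mathcal A\}$ and both sides are continuous in $w$, it passes to all $w\in J(x)$, completing (i)$\Rightarrow$(ii). As an alternative to this last step one could identify the frame operator of $E(\A)$ with the MI operator $\int_X^\oplus G(x)\,d\mu(x)$, where $G(x)w=\sum_i\langle w,\varphi_i(x)\rangle\varphi_i(x)$, and then deduce the equivalences from Theorem~\ref{prop}(i) and Theorem~\ref{spec}(i); but that route still requires a polarized form of the master identity in order to identify the range operator, so I regard the direct argument above as the cleaner path.
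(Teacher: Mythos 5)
Your proof is correct. Note that the paper does not actually prove Theorem~\ref{thm:MIF} itself: it simply cites Theorem~2.10 of~\cite{I} and remarks that the upper bound can be handled separately, so you have supplied an argument the paper omits. Your route is the natural one and matches the standard proof pattern for such fiberization results (going back to~\cite{B} in the shift-invariant case): the master identity obtained from~\eqref{eq:ParsDet} plus Tonelli converts the global analysis quantity into $\int_X \sum_i |\langle v(x),\varphi_i(x)\rangle|^2\,d\mu(x)$, localization via $M_{\chi_E}$ and $\sigma$-finiteness turns the global inequalities into pointwise ones for each fixed $v$, the countable family $\mathcal{A}$ of~\eqref{prop5} consolidates the null sets, and density in $J(x)$ via~\eqref{MI0} finishes. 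You correctly identify and resolve the one genuinely delicate point, namely that the lower frame bound cannot be recovered from finite truncations: establishing the Bessel bound first makes $w\mapsto\sum_i|\langle w,\varphi_i(x)\rangle|^2$ continuous on $J(x)$, after which the lower bound passes from the dense set to all of $J(x)$. All auxiliary steps check out ($f_i\in L^1(X)$ by Cauchy--Schwarz, both terms of the localized difference lie in $L^1(X)$ so the a.e.\ nonnegativity argument is legitimate, and the measurability requirements in Definition~\ref{def:CF} are covered by condition~(i) of the Parseval determining set).
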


\begin{proof}
This is Theorem~2.10 of \cite{I}. A careful reading of the proof shows that the upper frame bound can be handled separately.
\end{proof}

\subsection{The Plancherel transform of a Parseval determining set}

When $E(\A)$ is Bessel, Theorem~\ref{thm:MIF} gives us two different kinds of analysis operators: the global analysis operator $T\colon S(\A) \to L^2(Y\times I)$ of $E(\A)$, and the field of pointwise analysis operators $\tilde{T}(x) \colon J(x) \to \ell^2(I)$ associated with $\{\varphi_i(x)\}_{i\in I}$ for a.e.\ $x\in X$. Taken together, the latter describe a decomposable operator $\tilde{T} \colon S(\A) \to {L^2(X;\ell^2(I))}$, by Lemma~\ref{lem:PtAnDec} below. 
Morally, one feels that this should somehow be the ``same'' as the global analysis operator $T$. However, $T$ and $\tilde{T}$ map into completely different spaces, so they cannot literally be equal. 

To resolve this discrepancy, we introduce a variant of the Fourier transform. Given $f\in L^1(X) \cap L^2(X)$, let $\F f \in L^2(Y)$ be the function
\begin{equation} \label{eq:PF}
(\mathcal{F} f)(t) = \int_X f(x) \overline{g_t(x)}\, d\mu(x) \qquad (\text{a.e.\ }t\in Y).
\end{equation}
According to \eqref{eq:ParsDet}, $\Norm{\F f}_{L^2(Y)} = \Norm{ f }_{L^2(X)}$.

\begin{defn}
The \emph{Plancherel transform} associated with $\D$ is the unique linear isometry $\F \colon L^2(X) \to L^2(Y)$ extending~\eqref{eq:PF}.
\end{defn}

When $\D$ is the set of characters on a locally compact abelian group $G$, $\F$ reduces to the Fourier transform $L^2(G) \to L^2(\hat{G})$, which is not only isometric, but unitary. On the other hand, when $X$ and $Y$ are countable indexing sets with their counting measures, $\F$ becomes the analysis operator for the Parseval frame $\D$. Therefore, we cannot generally expect $\F$ to be surjective.

In the general case, $\F$ provides a way to map ${L^2(X;\ell^2(I))}$ into ${L^2(Y\times I)}$. Namely, we can identify $L^2(X;\ell^2(I)) \cong \ell^2(I; L^2(X))$, apply the direct sum 
\[ \bigoplus_{i\in I} \F \colon \ell^2(I; L^2(X)) \to \ell^2(I; L^2(Y)), \]
and then identify $\ell^2(I; L^2(Y)) \cong L^2(Y\times I)$. We will write 
\[ \F_I \colon L^2(X;\ell^2(I)) \to L^2(Y\times I) \]
for this composition, which is again a linear isometry. When $\varphi \in L^2(X;\ell^2(I))$ and the mapping $x \mapsto [\varphi(x)]_i$ belongs to $L^1(X) \cap L^2(X)$ for every $i\in I$, we have
\begin{equation} \label{eq:PFT}
( \F_I \varphi)(t,i) = \int_X [ \varphi(x) ]_i\cdot  \overline{g_t(x)}\, d\mu(x) \qquad (\text{a.e.\ }t \in Y,\, i \in I).
\end{equation}

\subsection{Analysis and synthesis operators}

\begin{theorem} \label{thm:MIAn}
Let $\A = \{ \varphi_i \}_{i\in I}$ be a sequence in $L^2(X;\H)$ for which $E(\A)$ is Bessel, and let $J$ be the range function \eqref{eq:J}.
Then the analysis operator $T\colon S(\A) \to L^2(Y\times I)$ of $E(\A)$ maps into the image of the isometry $\F_I \colon L^2(X;\ell^2(I)) \to L^2(Y\times I)$, with
\[ T = \F_I \int_X^{\oplus} \tilde{T}(x)\, d\mu(x). \]
Here, $\tilde{T}(x) \colon J(x) \to \ell^2(I)$ is the analysis operator of $\{ \varphi_i(x) \}_{i \in I}$ for a.e.\ $x\in X$.
\end{theorem}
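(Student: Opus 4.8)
The plan is to show that $T$ coincides with the composite $T' := \F_I \circ \int_X^\oplus \tilde{T}(x)\, d\mu(x)$, both viewed as bounded maps $S(\A) \to L^2(Y\times I)$. Since $\F_I$ is an isometry, its range is closed and $\ran T' \subseteq \ran \F_I$; hence the single equality $T = T'$ simultaneously delivers the factorization formula and the assertion that $T$ maps into the image of $\F_I$. First I would record that $T'$ is legitimately defined: by Theorem~\ref{thm:MIF} the Bessel property of $E(\A)$ (with bound $B$) forces $\{\varphi_i(x)\}_{i\in I}$ to be $B$-Bessel for a.e.\ $x$, so each $\tilde{T}(x)$ is bounded with $\Norm{\tilde{T}(x)}_{\mathrm{op}} \le \sqrt{B}$, and weak measurability of $x \mapsto \tilde{T}(x)P_J(x)$ is immediate from $\langle \tilde{T}(x)P_J(x)u, w\rangle = \sum_{i\in I}\langle P_J(x)u, \varphi_i(x)\rangle\,\overline{w_i}$, a pointwise-convergent countable sum of measurable functions. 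This is exactly Lemma~\ref{lem:PtAnDec}, so $\int_X^\oplus \tilde{T}(x)\, d\mu(x)$ is a bounded decomposable operator into $L^2(X;\ell^2(I))$.

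The heart of the argument is a direct computation. For $\psi \in S(\A)$, the definition of the analysis operator gives
\[ (T\psi)(t,i) = \langle \psi, M_{g_t}\varphi_i \rangle = \int_X \overline{g_t(x)}\,\langle \psi(x), \varphi_i(x)\rangle\, d\mu(x). \]
On the other hand, writing $\Phi := \big(\int_X^\oplus \tilde{T}(x)\, d\mu(x)\big)\psi \in L^2(X;\ell^2(I))$, we have $[\Phi(x)]_i = [\tilde{T}(x)\psi(x)]_i = \langle \psi(x), \varphi_i(x)\rangle$ for a.e.\ $x$. Provided the map $x \mapsto \langle \psi(x), \varphi_i(x)\rangle$ lies in $L^1(X)\cap L^2(X)$ for every $i$, formula~\eqref{eq:PFT} applies and yields
\[ (T'\psi)(t,i) = (\F_I \Phi)(t,i) = \int_X \langle \psi(x), \varphi_i(x)\rangle\,\overline{g_t(x)}\, d\mu(x) = (T\psi)(t,i). \]
So on the set of $\psi$ satisfying this integrability hypothesis, $T\psi = T'\psi$.

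The main obstacle is precisely this integrability: since $X$ is only $\sigma$-finite, $x \mapsto \langle \psi(x), \varphi_i(x)\rangle$ need not be in $L^1(X)$, even though it belongs to $L^2(X)$ as a component of $\Phi$. I would dispose of this by truncation. Choose measurable sets $E_n \uparrow X$ with $\mu(E_n) < \infty$ and set $\psi_n := M_{\chi_{E_n}}\psi$; because $S(\A)$ is MI, each $\psi_n \in S(\A)$, and $\psi_n \to \psi$ in $L^2(X;\H)$ by dominated convergence. For $\psi_n$ the relevant components are $x \mapsto \chi_{E_n}(x)\langle \psi(x),\varphi_i(x)\rangle$, which lie in $L^1(X)$ by Cauchy--Schwarz, since $\int_{E_n}\Norm{\psi(x)}\,\Norm{\varphi_i(x)}\, d\mu \le \Norm{\psi}_{L^2}\Norm{\varphi_i}_{L^2} < \infty$, and in $L^2(X)$ as before. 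Hence $T\psi_n = T'\psi_n$ by the previous paragraph. Finally, both $T$ and $T'$ are bounded, so letting $n \to \infty$ gives $T\psi = T'\psi$. As $\psi \in S(\A)$ was arbitrary, $T = T'$, which completes the proof.
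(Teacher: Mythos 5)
Your proof is correct and follows essentially the same route as the paper: the identity $T\psi = \F_I\bigl(\int_X^\oplus \tilde T(x)\,d\mu(x)\bigr)\psi$ is verified by the same direct computation on a collection of $\psi$ for which \eqref{eq:PFT} applies, and then extended to all of $S(\A)$ by boundedness. The only difference is the choice of dense set --- the paper uses projections of pure tensors $\chi_E u$ onto $S(\A)$, while you truncate an arbitrary $\psi\in S(\A)$ by $M_{\chi_{E_n}}\psi$ --- and your variant is equally valid (indeed the $L^1$ bound already holds without truncation, by Cauchy--Schwarz applied to $\psi,\varphi_i\in L^2(X;\H)$).
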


The following lemma ensures that the operator $\int_X^\oplus \tilde{T}(x)\, d\mu(x)$ is well defined.
\begin{lemma} \label{lem:PtAnDec}
Let $J'$ be the $\ell^2(I)$-valued range function with $J'(x) = \ell^2(I)$ for all $x \in X$. With circumstances and notation as in Theorem~\ref{thm:MIAn}, $\tilde{T} \colon J \to J'$ is a bounded, measurable range operator.
\end{lemma}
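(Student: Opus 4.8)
The plan is to establish two things about the field of pointwise analysis operators $\tilde T(x)$: first, that it is uniformly bounded in operator norm (so $\tilde T$ is a bounded range operator), and second, that it is weakly measurable in the sense required of range operators. Boundedness follows almost immediately from Theorem~\ref{thm:MIF}: since $E(\A)$ is Bessel, part (ii) of that theorem tells us the pointwise systems $\{\varphi_i(x)\}_{i\in I}$ are $B$-Bessel for a.e.\ $x$, with a common bound $B$. The analysis operator of a $B$-Bessel system has operator norm $\le \sqrt{B}$, so $\esssup_x \Norm{\tilde T(x)}_{\mathrm{op}} \le \sqrt B < \infty$, giving boundedness of $\tilde T \colon J \to J'$.

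\emph{For measurability}, recall the definition: I must show that for every $u \in \H$ and every $w \in \ell^2(I)$, the scalar function $x \mapsto \langle \tilde T(x) P_J(x) u, w\rangle_{\ell^2(I)}$ is measurable on $X$. By definition of the analysis operator, $\tilde T(x)$ sends a vector $v \in J(x)$ to the sequence $(\langle v, \varphi_i(x)\rangle)_{i\in I} \in \ell^2(I)$. Hence for $v = P_J(x)u$ and $w = (w_i)_{i\in I}$ we have
\[
\langle \tilde T(x) P_J(x) u, w\rangle_{\ell^2(I)} = \sum_{i\in I} \langle P_J(x) u, \varphi_i(x)\rangle_{\H}\, \overline{w_i}.
\]
The plan is to verify measurability of each summand and then pass to the sum. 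Since $\varphi_i \in L^2(X;\H)$ is (a.e.\ equal to) a measurable $\H$-valued function and $P_J$ is weakly measurable, the map $x \mapsto \langle P_J(x) u, \varphi_i(x)\rangle_\H$ is measurable: one can write it as $\langle u, P_J(x)\varphi_i(x)\rangle = \langle u, \varphi_i(x)\rangle$ using $\varphi_i(x) \in J(x)$ a.e.\ (from \eqref{eq:J}), and $x\mapsto \langle u,\varphi_i(x)\rangle$ is measurable because $\varphi_i$ is measurable into $\H$. A countable sum of measurable functions is measurable, so the displayed expression is measurable for each fixed $w$, as required.

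\emph{I anticipate the main subtlety} lies not in any single computation but in bookkeeping between the two genuinely different descriptions of $J(x)$ and the domain of $\tilde T(x)$: one must be careful that $\tilde T(x)$ is defined on $J(x)$ (not all of $\H$) and that inserting $P_J(x)$ correctly restricts $u$ into the fiber, so that the range-operator measurability condition (which tests $R(x)P_J(x)u$) matches the pointwise analysis operator applied to $P_J(x)u \in J(x)$. A second point requiring care is that the a.e.\ identity \eqref{eq:J} for $J$, the a.e.\ Bessel bound from Theorem~\ref{thm:MIF}, and the a.e.\ definition of each $\tilde T(x)$ must all hold simultaneously off a single null set; since $I$ is countable, the union of the relevant null sets is again null, so this causes no difficulty. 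Once these measurability and boundedness facts are assembled, $\tilde T$ qualifies as a bounded measurable range operator and $\int_X^\oplus \tilde T(x)\,d\mu(x)$ is a well-defined decomposable MI operator by Theorem~\ref{thm:MIOp}, completing the proof.
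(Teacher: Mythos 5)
Your proof is correct and follows essentially the same route as the paper: boundedness is read off from the uniform a.e.\ Bessel bound in Theorem~\ref{thm:MIF}, and measurability is reduced to the identity $\langle \tilde T(x) P_J(x) u, w\rangle = \sum_{i\in I}\langle u,\varphi_i(x)\rangle\,\overline{w_i}$ together with the measurability of each $x\mapsto\langle u,\varphi_i(x)\rangle$ and a countable sum. The only cosmetic difference is that the paper invokes Pettis's measurability theorem to pass from strong to weak measurability of $\varphi_i$, whereas you state this directly; the substance is identical.
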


\begin{proof}
Boundedness follows immediately from Theorem~\ref{thm:MIF}. For measurability, take any $u \in \H$ and $v = \{ v_i \}_{i\in I} \in \ell^2(I)$, and observe that
\[ \langle \tilde{T}(x) P_J(x) u, v \rangle = \sum_{i\in I} \langle P_J(x) u, \varphi_i(x) \rangle \overline{v_i} = \sum_{i\in I} \langle u, P_J(x) \varphi_i(x) \rangle \overline{v_i} = \sum_{i\in I} \langle u, \varphi_i(x) \rangle \overline{v_i} \]
for a.e.\ $x \in X$. Since each $\varphi_i \colon X \to \H$ is measurable, Pettis's Measurability Theorem ensures that each mapping $x \mapsto \langle u, \varphi_i(x) \rangle \overline{v_i}$ is measurable on $X$. Taking the sum, we find that $x \mapsto \langle \tilde{T}(x) P_J(x) u, v \rangle$ is measurable.
\end{proof}

\begin{proof}[Proof of Theorem~\ref{thm:MIAn}]
If $\psi \in S(\A)$ and if~\eqref{eq:PFT} applies to $\varphi := \int_X^\oplus \tilde{T}(x)\, d\mu(x) \psi$, then we easily compute
\[ \Bigl[ \mathcal{F}_I \int_X^\oplus \tilde{T}(x)\, d\mu(x) \psi \Bigr](t,i) = \int_X \bigl[ \tilde{T}(x) \psi(x) \bigr]_i \overline{g_t(x)}\, d\mu(x) = \int_X \langle \psi(x), \varphi_i(x) \rangle \overline{g_t(x)}\, d\mu(x) \]
\[ = \langle \psi, M_{g_t} \varphi_i \rangle = (T\psi)(t,i). \]
It remains to show that~\eqref{eq:PFT} applies for all $\psi$ in a spanning set for a dense subspace.

To that end, take any $u \in \H$ and any measurable subset $E\subset X$ with $\mu(E) < \infty$, and define $\psi \colon X \to \H$ by $\psi(x) = \chi_E(x) P_J(x) u$. It is straightforward to show that $\psi \in L^2(X;\H)$, and indeed, $\psi \in S(\A)$. If $B > 0$ is a Bessel bound for $E(\A)$, then $\{ \varphi_i(x) \}_{i \in I}$ is $B$-Bessel for a.e.\ $x \in X$, by Theorem~\ref{thm:MIF}. Consequently,
\[ \sum_{i\in I} \int_X | \langle \psi(x), \varphi_i(x) \rangle |^2 d\mu(x) = \int_X \chi_E(x) \sum_{i\in I} | \langle P_J(x) u, \varphi_i(x) \rangle |^2 d\mu(x) \leq \mu(E) B \Norm{u}^2 < \infty, \]
so that each of the mappings $x \mapsto \langle \psi(x), \varphi_i(x) \rangle$ belongs to $L^2(X)$. A simple application of Cauchy-Schwarz shows it is also in $L^1(X)$. Therefore,~\eqref{eq:PFT} applies to $\int_X^\oplus \tilde{T}(x)\, d\mu(x) \psi$, and the above calculation shows that $T\psi = \mathcal{F}_I \int_X^\oplus \tilde{T}(x)\, d\mu(x) \psi$.

Finally, $\psi$ is the projection onto $S(\A)$ of the pure tensor $\chi_E u$, by Lemma~\ref{lem:rangProj}, and such pure tensors span a dense subspace of $L^2(X;\H) \cong L^2(X) \otimes \H$. Hence, $T\psi = \mathcal{F}_I \int_X^\oplus \tilde{T}(x)\, d\mu(x) \psi$ for all $\psi$ in a dense subspace of $S(\A)$. By continuity, the same holds for all $\psi \in S(\A)$.
\end{proof}

As a corollary, we obtain a measure-theoretic generalization of~\cite[Theorem~5.1]{B}.

\begin{cor} \label{cor:MIFrmOp}
Under the circumstances of Theorem~\ref{thm:MIAn}, the frame operator $S \colon S(\A) \to S(\A)$ of $E(\A)$ is multiplication invariant, with
\[ S = \int_X^{\oplus} S(x)\, d\mu(x). \]
Here, $S(x) \colon J(x) \to J(x)$ is the frame operator of $\{ \varphi_i(x)\}_{i\in I}$ for a.e.\ $x \in X$.
\end{cor}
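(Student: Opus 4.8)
The plan is to express the frame operator as $S = T^*T$ and then exploit the factorization of the analysis operator supplied by Theorem~\ref{thm:MIAn}. Writing $D := \int_X^\oplus \tilde{T}(x)\, d\mu(x) \colon S(\A) \to L^2(X;\ell^2(I))$ for the decomposable operator assembled from the pointwise analysis operators, Theorem~\ref{thm:MIAn} gives $T = \F_I D$, so that
\[ S = T^*T = D^* \F_I^* \F_I D. \]
Since $\F_I$ is a linear isometry, $\F_I^* \F_I$ is the identity on $L^2(X;\ell^2(I))$, which is precisely the target space of $D$ (recall $J'(x) = \ell^2(I)$ for all $x$, so $V_{J'} = L^2(X;\ell^2(I))$). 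Hence the middle factor collapses and $S = D^*D$.

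The remaining task is to identify $D^*D$ as a decomposable operator with the correct range operator. First I would apply Theorem~\ref{prop}(iv) to $D$: its adjoint $D^*$ is again decomposable, with range operator $x \mapsto \tilde{T}(x)^*$. Next, since the functor $F$ of Corollary~\ref{cor:CatIso} is an isomorphism of categories, it respects composition of morphisms; consequently the composite $D^*D$ is decomposable with range operator equal to the pointwise composite $x \mapsto \tilde{T}(x)^* \tilde{T}(x)$. But $\tilde{T}(x)^* \tilde{T}(x)$ is by definition the frame operator $S(x)$ of the pointwise system $\{\varphi_i(x)\}_{i\in I}$ acting on $J(x)$. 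This yields
\[ S = D^*D = \int_X^\oplus \tilde{T}(x)^* \tilde{T}(x)\, d\mu(x) = \int_X^\oplus S(x)\, d\mu(x), \]
and $S$ is multiplication invariant by the equivalence (i)$\iff$(iii) of Theorem~\ref{thm:MIOp}.

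There is no serious obstacle here, as the substantive work has already been carried out in Theorem~\ref{thm:MIAn}. The one point requiring care is the collapse $\F_I^*\F_I = I$: it is essential that $\F_I$ be an \emph{isometry} rather than merely bounded, and that $D$ land in its domain, so that composing with $\F_I^*\F_I$ genuinely returns $D$ and not some proper projection of it. Everything else is a formal consequence of the adjoint formula of Theorem~\ref{prop}(iv) and the functoriality of Corollary~\ref{cor:CatIso}, both established earlier.
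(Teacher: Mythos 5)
Your proof is correct and follows the same route as the paper, which likewise derives the result from Theorem~\ref{thm:MIAn}, the adjoint formula of Theorem~\ref{prop}(iv), and the fact that $\F_I$ is an isometry (so that $\F_I^*\F_I$ is the identity on $L^2(X;\ell^2(I))$). Your write-up merely makes explicit the steps the paper leaves as ``immediate.''
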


\begin{proof}
This is immediate from Theorem~\ref{thm:MIAn}, Theorem \ref{prop}(iv), and the fact that $\F_I$ is an isometry.
\end{proof}

\begin{cor} \label{cor:MIGram}
Under the circumstances of Theorem~\ref{thm:MIAn}, the Gramian of $E(\A)$ conjugates to an MI operator on $L^2(X;\ell^2(I))$, with
\begin{equation} \label{eq:MIGrm}
\F_I^* (T T^*) \F_I = \int_X^\oplus \tilde{T}(x) \tilde{T}(x)^*\, d\mu(x).
\end{equation}
\end{cor}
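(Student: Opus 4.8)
The plan is to read the identity \eqref{eq:MIGrm} off directly from the factorization $T = \F_I \int_X^{\oplus} \tilde{T}(x)\, d\mu(x)$ furnished by Theorem~\ref{thm:MIAn}, using only that $\F_I$ is an isometry and that adjoints and products of MI operators are computed pointwise on range operators.

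First I would abbreviate $\tilde{T} = \int_X^\oplus \tilde{T}(x)\, d\mu(x) \colon S(\A) \to L^2(X;\ell^2(I))$, so that Theorem~\ref{thm:MIAn} reads $T = \F_I \tilde{T}$. Taking adjoints gives $T^* = \tilde{T}^* \F_I^*$, and hence the Gramian factors as
\[ TT^* = \F_I\, \tilde{T}\tilde{T}^*\, \F_I^*. \]
Conjugating by $\F_I$ and invoking that $\F_I$ is a linear isometry, so that $\F_I^* \F_I$ is the identity on $L^2(X;\ell^2(I))$, I obtain
\[ \F_I^*(TT^*)\F_I = (\F_I^*\F_I)\, \tilde{T}\tilde{T}^*\, (\F_I^*\F_I) = \tilde{T}\tilde{T}^*. \]

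It then remains to identify $\tilde{T}\tilde{T}^*$ as the decomposable operator with range operator $x \mapsto \tilde{T}(x)\tilde{T}(x)^*$. By Lemma~\ref{lem:PtAnDec} the range operator of $\tilde{T}$ is $x \mapsto \tilde{T}(x)$; by Theorem~\ref{prop}(iv) the range operator of $\tilde{T}^*$ is $x \mapsto \tilde{T}(x)^*$; and by the functoriality in Corollary~\ref{cor:CatIso} the range operator of the composite $\tilde{T}\tilde{T}^*$ is the pointwise composite $x \mapsto \tilde{T}(x)\tilde{T}(x)^*$. Thus $\tilde{T}\tilde{T}^* = \int_X^\oplus \tilde{T}(x)\tilde{T}(x)^*\, d\mu(x)$, which combined with the previous display yields \eqref{eq:MIGrm}.

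Since every step is a direct invocation of results already established, I do not anticipate a genuine obstacle; the only point demanding care is the bookkeeping of domains and codomains, ensuring that the conjugation $\F_I^*(\cdot)\F_I$ lands all operators on $L^2(X;\ell^2(I))$ and that one uses $\F_I^*\F_I = I$ there, rather than $\F_I\F_I^*$, which need not be the identity since $\F_I$ need not be surjective.
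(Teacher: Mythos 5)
Your proof is correct and follows exactly the route the paper takes: the paper's own proof simply declares the result immediate from Theorem~\ref{thm:MIAn} and Theorem~\ref{prop}(iv) together with the fact that $\F_I$ is an isometry whose range contains that of $T$, and your write-up is a careful expansion of precisely that argument, including the correct use of $\F_I^*\F_I = I$ rather than $\F_I\F_I^*$.
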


\begin{proof}
This is immediate from Theorem~\ref{thm:MIAn} and Theorem \ref{prop}(iv), since $\F_I$ is an isometry whose range contains that of $T$.
\end{proof}

\subsection{Other properties of Bessel systems}

We now explore several other properties of a Bessel system. For instance, the following relates to the ``disjoint'' or ``orthogonal'' frames independently introduced by Balan~\cite{Ba} and by Han and Larson~\cite{HL}. See~\cite{GH} for a detailed study of disjoint continuous frames. The following measure-theoretic result has many predecessors in the setting of LCA groups, including~\cite{W04,KKLS07,GS18}.

\begin{cor} \label{cor:MIOrth}
Let $\A=\{\varphi_i\}_{i\in I}$ and $\A'=\{\varphi'_i\}_{i\in I}$ be sequences in $L^2(X;\H)$ for which $E(\A)$ and $E(\A')$ are both Bessel. Then the following are equivalent:
\begin{enumerate}[(i)]
\item The analysis operators of $E(\A)$ and $E(\A')$ have orthogonal ranges.
\item For a.e.\ $x\in X$, the analysis operators of $\{\varphi_i(x)\}_{i \in I}$ and $\{\varphi_i'(x)\}_{i \in I}$ have orthogonal ranges.
\end{enumerate}
\end{cor}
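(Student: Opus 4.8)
The plan is to express both analysis operators through their decomposable pointwise pieces via Theorem~\ref{thm:MIAn}, recast ``orthogonal ranges'' as the vanishing of a single MI operator, and let the uniqueness of range operators detect that vanishing pointwise.

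Recall that two bounded operators $A,B$ into a common Hilbert space have orthogonal ranges precisely when $B^*A=0$. Write $T\colon S(\A)\to L^2(Y\times I)$ and $T'\colon S(\A')\to L^2(Y\times I)$ for the analysis operators of $E(\A)$ and $E(\A')$, and let $J,J'$ denote the range functions of $S(\A),S(\A')$. By Theorem~\ref{thm:MIAn},
\[ T=\F_I\int_X^\oplus \tilde T(x)\,d\mu(x), \qquad T'=\F_I\int_X^\oplus \tilde T'(x)\,d\mu(x), \]
where $\tilde T(x)\colon J(x)\to\ell^2(I)$ and $\tilde T'(x)\colon J'(x)\to\ell^2(I)$ are the pointwise analysis operators of $\{\varphi_i(x)\}_{i\in I}$ and $\{\varphi_i'(x)\}_{i\in I}$. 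Since $\F_I$ is an isometry, $\F_I^*\F_I$ is the identity on $L^2(X;\ell^2(I))$, so
\[ (T')^*T=\Bigl(\int_X^\oplus \tilde T'(x)\,d\mu(x)\Bigr)^*\int_X^\oplus \tilde T(x)\,d\mu(x). \]
Thus condition (i) is equivalent to the vanishing of this MI operator $S(\A)\to S(\A')$.

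Next I would identify its range operator. By Theorem~\ref{prop}(iv) the adjoint $\bigl(\int_X^\oplus \tilde T'(x)\,d\mu(x)\bigr)^*$ is decomposable with range operator $x\mapsto \tilde T'(x)^*$, and since the functor of Corollary~\ref{cor:CatIso} is an isomorphism of categories it respects composition, so the range operator of a composite of MI operators is the pointwise composite of their range operators. Hence
\[ \Bigl(\int_X^\oplus \tilde T'(x)\,d\mu(x)\Bigr)^*\int_X^\oplus \tilde T(x)\,d\mu(x)=\int_X^\oplus \tilde T'(x)^*\tilde T(x)\,d\mu(x). \]
By the uniqueness clause of Theorem~\ref{thm:MIOp}, this operator is zero if and only if $\tilde T'(x)^*\tilde T(x)=0$ for a.e.\ $x\in X$. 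Finally, for fixed $x$ the identity $\langle \tilde T(x)a,\tilde T'(x)b\rangle=\langle \tilde T'(x)^*\tilde T(x)a,b\rangle$ for $a\in J(x)$, $b\in J'(x)$ shows that $\tilde T'(x)^*\tilde T(x)=0$ exactly when $\ran\tilde T(x)\perp\ran\tilde T'(x)$ in $\ell^2(I)$, which is condition (ii). Chaining these equivalences yields (i) $\iff$ (ii).

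I expect no serious obstacle here; the content is entirely in the global-to-local dictionary already built. The only direction with substance is (i) $\Rightarrow$ (ii), where one must pass from the vanishing of $\langle Tv,T'w\rangle=\int_X\langle \tilde T(x)v(x),\tilde T'(x)w(x)\rangle\,d\mu(x)$ for all $v,w$ to the pointwise vanishing of the integrand. The point worth care is that this passage is supplied cleanly by the uniqueness of range operators once orthogonality is encoded as the nullity of $\bigl(\int_X^\oplus\tilde T'(x)\,d\mu(x)\bigr)^*\int_X^\oplus\tilde T(x)\,d\mu(x)$, rather than by an ad hoc measurable selection argument.
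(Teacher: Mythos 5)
Your argument is correct and follows essentially the same route as the paper: both express $T$ and $T'$ via Theorem~\ref{thm:MIAn}, use that $\F_I$ is an isometry to reduce $(T')^*T$ (the paper writes $T^*T'$, the adjoint) to the direct integral $\int_X^\oplus \tilde T'(x)^*\tilde T(x)\,d\mu(x)$, and invoke the uniqueness of range operators to localize the vanishing. The only difference is that you spell out the appeals to Theorem~\ref{prop}(iv) and Corollary~\ref{cor:CatIso} that the paper leaves implicit.
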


\begin{proof}
Let $T\colon S(\A) \to L^2(Y \times I)$ and $T' \colon S(\A') \to L^2(Y\times I)$ be the analysis operators of $E(\A)$ and $E(\A')$, respectively. Using notation as in Theorem~\ref{thm:MIAn}, we have
\begin{equation} \label{eq:MIMG}
T^* T' = \int_X \tilde{T}(x)^*\, d\mu(x)\, \F_I^* \F_I \int_X \tilde{T}'(x)\, d\mu(x) =  \int_X \tilde{T}(x)^* \tilde{T}'(x)\, d\mu(x).
\end{equation}
The operators $T$ and $T'$ have orthogonal range if and only if $T^*T' = 0$. The equation above shows that that happens if and only if $\tilde{T}(x)^* \tilde{T}'(x) = 0$ for a.e.\ $x\in X$, if and only if $\tilde{T}(x)$ and $\tilde{T}'(x)$ have orthogonal ranges for a.e.\ $x\in X$.
\end{proof}

\begin{defn}
If $(\M,m)$ is a $\sigma$-finite measure space and $\K$ is a separable Hilbert space, two Bessel systems $\{ u_s \}_{s \in \M}$ and $\{ u_s' \}_{s \in \M}$ are called \emph{dual} when their respective analysis operators $T,T'\colon \K \to L^2(\M,m)$ satisfy $T^* T' = I$. In that case, both systems are frames. The \emph{canonical dual} of a frame $\{ u_s \}_{s\in \M}$ with frame operator $S$ is $\{ S^{-1} u_s \}_{s\in \M}$.
\end{defn}

\begin{cor} \label{cor:MIDual}
Let $\A=\{\varphi_i\}_{i\in I}$ and $\A'=\{\varphi'_i\}_{i\in I}$ be sequences in $L^2(X;\H)$ with $S(\A) = S(\A')$, and suppose that $E(\A)$ and $E(\A')$ are both frames for $S(\A)$. Then the following are equivalent:
\begin{enumerate}[(i)]
\item $E(\A)$ and $E(\A')$ are dual frames.
\item For a.e.\ $x\in X$, $\{\varphi_i(x)\}_{x\in X}$ and $\{\varphi_i'(x)\}_{x\in X}$ are dual frames.
\end{enumerate}
\end{cor}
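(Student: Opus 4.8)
The plan is to mirror the proof of Corollary~\ref{cor:MIOrth}, replacing the orthogonality condition $T^*T' = 0$ with the duality condition $T^*T' = I$, and then to pass between the global and pointwise statements by invoking the uniqueness of range operators from Theorem~\ref{thm:MIOp}.

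First I would settle the bookkeeping forced by the hypothesis $S(\A) = S(\A')$. Since these two MI spaces coincide, the bijection of Theorem~\ref{thm:MISpa} forces their range functions to agree a.e., so there is a single range function $J$ with $S(\A) = S(\A') = V_J$; by Theorem~\ref{thm:MIF} both $\{\varphi_i(x)\}_{i\in I}$ and $\{\varphi_i'(x)\}_{i\in I}$ are frames for $J(x)$ for a.e.\ $x\in X$, so the pointwise duality in (ii) is well posed, and the coincidence of domains also makes the global duality condition $T^*T' = I$ on $S(\A)$ meaningful. Let $T,T'\colon S(\A) \to L^2(Y\times I)$ be the analysis operators of $E(\A)$ and $E(\A')$; by Theorem~\ref{thm:MIAn} they factor through the isometry $\F_I$ as $T = \F_I \int_X^\oplus \tilde T(x)\,d\mu(x)$ and $T' = \F_I \int_X^\oplus \tilde T'(x)\,d\mu(x)$, where $\tilde T(x),\tilde T'(x)\colon J(x)\to \ell^2(I)$ are the pointwise analysis operators.

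The key step is the identity
\[ T^*T' = \int_X^\oplus \tilde T(x)^*\tilde T'(x)\,d\mu(x), \]
obtained exactly as in \eqref{eq:MIMG}, using $\F_I^*\F_I = I$ together with Theorem~\ref{prop}(iv); thus $T^*T'$ is the MI operator on $S(\A)$ whose range operator is $x\mapsto \tilde T(x)^*\tilde T'(x)$. By definition, $E(\A)$ and $E(\A')$ are dual frames precisely when $T^*T' = I$, the identity on $S(\A)$, and the functor of Corollary~\ref{cor:CatIso} identifies the range operator of this identity with $x\mapsto \mathbf I(x)$, the identity on $J(x)$. The uniqueness clause of Theorem~\ref{thm:MIOp} then says these two MI operators coincide if and only if their range operators agree a.e., i.e.\ $\tilde T(x)^*\tilde T'(x) = \mathbf I(x)$ for a.e.\ $x\in X$. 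Finally, $\tilde T(x)^*\tilde T'(x) = \mathbf I(x)$ is by definition the assertion that $\{\varphi_i(x)\}_{i\in I}$ and $\{\varphi_i'(x)\}_{i\in I}$ are dual frames for $J(x)$, which is (ii).

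I expect no serious obstacle, as the entire argument is a reuse of machinery already assembled for Theorem~\ref{thm:MIAn} and Corollary~\ref{cor:MIOrth}. The only point requiring care is the passage through the common range function: one must check that $S(\A) = S(\A')$ really yields a single $J$ so that "dual frames for $J(x)$" is unambiguous, and that the global identity on $S(\A)$ corresponds fiberwise to $\mathbf I(x)$ rather than to some projection. Both are immediate from Theorem~\ref{thm:MISpa} and Corollary~\ref{cor:CatIso}, so the main work is simply assembling the factorizations and applying uniqueness.
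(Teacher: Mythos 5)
Your proposal is correct and follows essentially the same route as the paper, which simply cites the identity \eqref{eq:MIMG} (that is, $T^*T' = \int_X^\oplus \tilde T(x)^*\tilde T'(x)\,d\mu(x)$, established in the proof of Corollary~\ref{cor:MIOrth}) and concludes by matching $T^*T'=I$ against the fiberwise identities via the uniqueness of range operators. Your write-up just makes explicit the bookkeeping that the paper leaves implicit.
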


\begin{proof}
This follows immediately from \eqref{eq:MIMG}.
\end{proof}

\begin{cor} \label{cor:MICan}
Let $\A = \{\varphi_i\}_{i\in I}$ be a sequence in $L^2(X;\H)$, and assume that $E(\A)$ is a frame for $S(\A)$. Let $J$ be the range function associated with $S(\A)$, and let $S(x) \colon J(x) \to J(x)$ be the frame operator of $\{\varphi_i(x)\}_{i\in I}$ a.e.\ $x\in X$. Then:
\begin{enumerate}[(i)]
\item The canonical dual frame for $E(\A)$ is $E(\A')$, where $\A' = \{ S^{-1} \varphi_i \}_{i\in I}$ is given by
\begin{equation} \label{eq:MICD}
(S^{-1} \varphi_i)(x) = S(x)^{-1} \varphi_i(x) \qquad (\text{a.e.\ }x\in X).
\end{equation}
\item The canonical tight frame for $E(\A)$ is $E(\A'')$, where $\A'' = \{ S^{-1/2} \varphi_i \}_{i\in I}$ is given by
\begin{equation} \label{eq:MICT}
(S^{-1/2} \varphi_i)(x) = S(x)^{-1/2} \varphi_i(x) \qquad (\text{a.e.\ }x\in X).
\end{equation}
\end{enumerate}
\end{cor}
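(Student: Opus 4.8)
The plan is to leverage the decomposable structure of the frame operator together with the functional calculus for MI operators, and then to exploit the single fact that MI operators commute with every multiplication. First I would recall from Corollary~\ref{cor:MIFrmOp} that the frame operator $S$ of $E(\A)$ is itself an MI operator with $S = \int_X^\oplus S(x)\, d\mu(x)$, where $S(x)$ is the pointwise frame operator of $\{\varphi_i(x)\}_{i\in I}$ for a.e.\ $x$. Because $E(\A)$ is a frame for $S(\A)$, Theorem~\ref{thm:MIF} gives that $\{\varphi_i(x)\}_{i\in I}$ is an $A,B$-frame for $J(x)$ with uniform bounds $0 < A \le B$, so each $S(x)$ is positive and invertible with $A\,\mathbf{I}(x) \le S(x) \le B\,\mathbf{I}(x)$; hence $\sigma(S(x)) \subset [A,B]$ a.e., and likewise $\sigma(S) \subset [A,B]$.

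For part~(i), Theorem~\ref{prop}(iii) says $S^{-1}$ is again an MI operator whose range operator is $x \mapsto S(x)^{-1}$. Since each $\varphi_i \in S(\A)$ (as noted after~\eqref{eq:J}), applying this decomposable inverse to $\varphi_i$ yields the pointwise formula~\eqref{eq:MICD}, namely $(S^{-1}\varphi_i)(x) = S(x)^{-1}\varphi_i(x)$ a.e. The canonical dual of $E(\A)$ consists of the vectors $S^{-1}(M_{g_t}\varphi_i)$; because $S^{-1}$ is MI it commutes with every multiplication operator, so $S^{-1} M_{g_t}\varphi_i = M_{g_t}(S^{-1}\varphi_i)$. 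Thus the canonical dual is precisely $E(\A')$ with $\A' = \{S^{-1}\varphi_i\}_{i\in I}$.

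For part~(ii), I would invoke the holomorphic functional calculus for MI operators (Theorem~\ref{ft}). The function $h(\lambda) = \lambda^{-1/2}$ is holomorphic on $\C \setminus (-\infty,0]$, which is a neighborhood of $\sigma(S) \subset [A,B]$, so $h(S) = S^{-1/2}$ is an MI operator whose range operator is $x \mapsto h(S(x)) = S(x)^{-1/2}$; here $S(x)^{-1/2}$ is well defined because $\sigma(S(x)) \subset [A,B]$ a.e. This gives the pointwise formula~\eqref{eq:MICT}. Exactly as in part~(i), $S^{-1/2}$ commutes with each $M_{g_t}$, so the canonical tight frame $\{S^{-1/2} M_{g_t}\varphi_i\}$ equals $E(\A'')$ with $\A'' = \{S^{-1/2}\varphi_i\}_{i\in I}$.

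The routine points are domain bookkeeping — confirming $\varphi_i \in S(\A)$ so that $S^{-1}\varphi_i$ and $S^{-1/2}\varphi_i$ make sense. The one genuinely load-bearing step is the passage to $S^{-1/2}$: it rests on checking that the frame bounds produce a uniform spectral gap away from $0$, so that $\lambda^{-1/2}$ lies in the holomorphic functional calculus of both $S$ and (uniformly) each $S(x)$, and that Theorem~\ref{ft} then transfers this function to the range-operator level. Once that is in place, both assertions collapse to the observation that MI operators commute with multiplications.
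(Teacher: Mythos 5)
Your proposal is correct and follows essentially the same route as the paper: Corollary~\ref{cor:MIFrmOp} to decompose the frame operator, Theorem~\ref{ft} (or Theorem~\ref{prop}(iii) for the inverse) to identify the range operators of $S^{-1}$ and $S^{-1/2}$, and the commutation of MI operators with multiplications to conclude. The paper writes out only part~(ii) and notes that~(i) is similar, but the substance is identical to what you propose.
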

Part (i) generalizes \cite[Theorem~5.2]{B} to the measure-theoretic setting. 

\begin{proof}
We prove (ii), the proof of (i) being similar. Combining Corollary~\ref{cor:MIFrmOp} and Theorem~\ref{ft}, we see that $S^{-1/2}$ is the MI operator
\[ S^{-1/2} = \int_X^\oplus S(x)^{-1/2}\, d\mu(x). \]
For each $t \in Y$ and $i \in I$, we have $S^{-1/2} (M_{g_t} \varphi_i )= M_{g_t} (S^{-1/2} \varphi_i)$, so the canonical tight frame of $E(\A)$ is $S^{-1/2} E(\A) = E(\A'')$, where $\A'' = \{ S^{-1/2} \varphi_i \}_{i \in I}$ is as in \eqref{eq:MICT}.
\end{proof}

\section{Classification for Bessel systems of multiplications}
\label{sec:MIclass}

We continue to follow Assumptions~\ref{assumptions} and~\ref{assumptions 2}. The goal of this section is to classify Bessel systems $E(\{\varphi_i\}_{i\in I})$ in $L^2(X;\H)$.

\begin{defn}
Let $\M$ be a $\sigma$-finite measure space, and let $\K$ and $\K'$ be separable Hilbert spaces. Bessel systems $\{ u_s \}_{s\in \M}$ and $\{ u_s'\}_{s \in \M}$, in $\K$ and $\K'$ respectively, are called \emph{unitarily equivalent} if there is a unitary 
\[ U\colon \overline{\spn}\{u_s : s\in \M\} \to \overline{\spn}\{ u_s' : s\in \M\} \]
such that $U u_s = u_s'$ for every $s\in \M$. 
\end{defn}

\begin{theorem} \label{thm:MIE}
Let $\A=\{\varphi_i\}_{i\in I}$ and $\A' = \{ \varphi'_i\}_{i\in I}$ be sequences in $L^2(X;\H)$ indexed by $I$, and let $U\colon S(\A) \to S(\A')$ be a bounded operator. Then the following are equivalent:
\begin{enumerate}[(i)]
\item $U(M_{g_t} \varphi_i) = M_{g_t} \varphi'_i$ for every $t\in Y$ and every $i\in I$.
\item $U$ is an MI operator
\[ U = \int_X^\oplus U(x)\, d\mu(x) \]
whose fibers satisfy $U(x)\varphi_i(x) = \varphi_i'(x)$ a.e.\ on $X$.
\end{enumerate}
\end{theorem}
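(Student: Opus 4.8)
The plan is to prove the two implications separately; the direction (ii)$\implies$(i) is immediate, while the content lies in (i)$\implies$(ii).

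For (ii)$\implies$(i) I would argue as follows. Writing $U = \int_X^\oplus U(x)\,d\mu(x)$, the fiber hypothesis gives $(U\varphi_i)(x) = U(x)\varphi_i(x) = \varphi_i'(x)$ for a.e.\ $x$, so $U\varphi_i = \varphi_i'$. Since each $g_t$ lies in $L^\infty(X)$ and $U$ is MI, applying the relation $UM_{g_t} = M_{g_t}U$ to $\varphi_i\in S(\A)$ yields $U(M_{g_t}\varphi_i) = M_{g_t}U\varphi_i = M_{g_t}\varphi_i'$, which is exactly (i).

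For (i)$\implies$(ii), the strategy is to upgrade the hypothesis --- which only controls $U$ on multiplications by the individual members of $\D$ --- to a statement about multiplication by all of $L^\infty(X)$, and then to quote Theorem~\ref{thm:MIOp}. The first and central step is to prove the extended identity $U(M_\phi\varphi_i) = M_\phi\varphi_i'$ for every $\phi\in L^\infty(X)$ and $i\in I$. It holds by linearity for $\phi$ in the finite linear span $N$ of $\D$, and I would extend it by a weak-$\ast$ density argument. Both maps $\phi\mapsto U(M_\phi\varphi_i)$ and $\phi\mapsto M_\phi\varphi_i'$ are continuous from the weak-$\ast$ topology on $L^\infty(X)$ into the weak topology on $L^2(X;\H)$: for fixed $\eta$, the function $\langle\varphi_i(\cdot),\eta(\cdot)\rangle$ lies in $L^1(X)$ by Cauchy--Schwarz, so $\phi\mapsto\langle M_\phi\varphi_i,\eta\rangle = \int_X\phi\,\langle\varphi_i(x),\eta(x)\rangle\,d\mu(x)$ is weak-$\ast$ continuous, and the first map is the composition of $\phi\mapsto M_\phi\varphi_i$ with the weak-to-weak continuous bounded operator $U$. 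Since a Parseval determining set separates points of $L^1(X)$ --- if $\int_X f\overline{g_t}\,d\mu = 0$ for all $t$, the Parseval identity forces $\int_X|f|^2\,d\mu = 0$ --- it is a determining set, so $N$ is weak-$\ast$ dense in $L^\infty(X)$ by Lemma~\ref{lem:WSD}. Two weak-$\ast$-to-weak continuous maps agreeing on the dense set $N$ must agree everywhere, which gives the extended identity.

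With the extended identity in hand the remaining steps are routine. To see that $U$ is MI, I would verify $UM_\phi = M_\phi U$ on the dense spanning set $\{M_\psi\varphi_i : \psi\in L^\infty(X),\, i\in I\}$: for $\phi,\psi\in L^\infty(X)$ the extended identity gives $U(M_\phi M_\psi\varphi_i) = U(M_{\phi\psi}\varphi_i) = M_{\phi\psi}\varphi_i' = M_\phi U(M_\psi\varphi_i)$, and since $M_\phi$ preserves $S(\A)$ this extends by continuity to all of $S(\A)$. Theorem~\ref{thm:MIOp} then produces the decomposition $U = \int_X^\oplus U(x)\,d\mu(x)$ with a bounded measurable range operator $U(x)$. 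Finally, setting $\phi\equiv 1$ in the extended identity gives $U\varphi_i = \varphi_i'$, whence $U(x)\varphi_i(x) = (U\varphi_i)(x) = \varphi_i'(x)$ for a.e.\ $x$. I expect the weak-$\ast$ density step to be the main obstacle: because products $g_tg_s$ need not lie in $\D$, one cannot verify the commutation relation directly on the generators, and the passage through the weak-$\ast$ topology (via Lemma~\ref{lem:WSD}) together with the weak-$\ast$-to-weak continuity of $\phi\mapsto M_\phi\varphi_i$ is precisely what circumvents this.
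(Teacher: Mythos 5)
Your proposal is correct and follows essentially the same route as the paper: extend the identity $U(M_\phi\varphi_i)=M_\phi\varphi_i'$ from the span of $\D$ to all of $L^\infty(X)$ via the weak-$\ast$ density given by Lemma~\ref{lem:WSD}, deduce $UM_\phi=M_\phi U$ on the spanning set $\{M_\psi\varphi_i\}$, and invoke Theorem~\ref{thm:MIOp}. The only (cosmetic) difference is that you phrase the density step as agreement of two weak-$\ast$-to-weak continuous maps on a dense subspace, whereas the paper pairs against $U^*\psi$ and passes to the limit in the weak operator topology using Lemma~\ref{lem:WOTWS}; both arguments rest on the same $L^1$-pairing $x\mapsto\langle\varphi_i(x),\eta(x)\rangle$.
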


\begin{proof}
We prove (i)$\implies$(ii), the converse being trivial. Fix $i\in I$ and $\psi \in S(\A')$. Given any $W = \sum_{j=1}^n c_j M_{g_{t_j}}$ in $\spn\{ M_g : g \in \D\}$, we compute
\[ \langle W \varphi_i, U^* \psi \rangle = \langle UW \varphi_i, \psi \rangle = \Bigl\langle \sum_{j=1}^n c_j U M_{g_{t_j}} \varphi_i, \psi \Bigr\rangle = \Bigl\langle \sum_{j=1}^n c_j M_{g_{t_j}} \varphi_i', \psi \Bigr\rangle = \langle W \varphi_i', \psi \rangle. \]
Given any $f \in L^\infty(X)$, Lemma~\ref{lem:WSD} and Lemma~\ref{lem:WOTWS} provide a sequence $\{W_n\}_{n=1}^\infty$ in $\spn\{ M_g : g \in \D\}$ with $W_n \to M_f$ in the weak operator topology of $L^2(X;\H)$. Taking limits in the equation $\langle W_n \varphi_i, U^* \psi \rangle = \langle W_n \varphi_i', \psi \rangle$ then produces $\langle UM_f \varphi_i, \psi \rangle = \langle M_f \varphi_i', \psi \rangle$. As $\psi \in S(\A')$ was arbitrary, we conclude that $U M_f \varphi_i = M_f \varphi_i'$ for every $f \in L^\infty(X)$. Setting $f = 1$ shows that $U \varphi_i = \varphi_i'$. Moreover, for any $f,g\in L^\infty(X)$ we have
\[ M_f U M_g \varphi_i = M_f M_g \varphi_i' = M_{fg} \varphi_i' = U M_{fg} \varphi_i = U M_f M_g \varphi_i. \]
Since functions of the form $M_g \varphi_i$ span a dense subspace of $S(\A)$, we conclude that $M_f U = U M_f$ for every $f \in L^\infty(X)$. The remainder of (ii) follows from Theorem~\ref{thm:MIOp} and the fact that $U\varphi_i = \varphi_i'$.
\end{proof}

\begin{cor} \label{cor:MIBE1}
Let $\A=\{\varphi_i\}_{i\in I}$ and $\A'=\{\varphi'_i\}_{i\in I}$ be sequences in $L^2(X;\H)$ indexed by $I$, and suppose that $E(\A)$ and $E(\A')$ are both Bessel. 
Then the following are equivalent:
\begin{enumerate}[(i)]
\item $E(\A)$ is unitarily equivalent to $E(\A')$.
\item For a.e.\ $x\in X$, $\{\varphi_i(x)\}_{i\in I}$ is unitarily equivalent to $\{\varphi'_i(x)\}_{i\in I}$.
\item For a.e.\ $x\in X$, $\{ \varphi_i(x)\}_{i\in I}$ and $\{ \varphi'_i(x) \}_{i\in I}$ have the same Gramian.
\end{enumerate}
\end{cor}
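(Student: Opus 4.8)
The plan is to prove the cycle (i)$\Rightarrow$(ii)$\Rightarrow$(iii)$\Rightarrow$(i). Throughout I would use that, since $E(\A)$ and $E(\A')$ are Bessel, Theorem~\ref{thm:MIF} makes the pointwise systems $\{\varphi_i(x)\}_{i\in I}$ and $\{\varphi_i'(x)\}_{i\in I}$ Bessel (with a uniform bound) for a.e.\ $x\in X$, so their analysis operators $\tilde T(x)\colon J(x)\to\ell^2(I)$, $\tilde T'(x)\colon J'(x)\to\ell^2(I)$ and their Gramians $\tilde T(x)\tilde T(x)^*,\tilde T'(x)\tilde T'(x)^*$ on $\ell^2(I)$ are defined a.e. For (i)$\Rightarrow$(ii), suppose $U\colon S(\A)\to S(\A')$ is a unitary implementing the equivalence, so that $U(M_{g_t}\varphi_i)=M_{g_t}\varphi_i'$ for all $t,i$. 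By Theorem~\ref{thm:MIE}, $U$ is automatically an MI operator $\int_X^\oplus U(x)\,d\mu(x)$ whose fibers satisfy $U(x)\varphi_i(x)=\varphi_i'(x)$ a.e., and by Theorem~\ref{prop}(v) each $U(x)\colon J(x)\to J'(x)$ is unitary for a.e.\ $x$. Thus $U(x)$ witnesses the pointwise unitary equivalence a.e. For (ii)$\Rightarrow$(iii), if $U(x)$ is unitary with $U(x)\varphi_i(x)=\varphi_i'(x)$, then $\langle\varphi_i'(x),\varphi_j'(x)\rangle=\langle\varphi_i(x),\varphi_j(x)\rangle$ for all $i,j$; since the $(i,j)$ entry of $\tilde T(x)\tilde T(x)^*$ is exactly $\langle\varphi_j(x),\varphi_i(x)\rangle$, the two pointwise Gramians coincide a.e.

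The substance of the argument is (iii)$\Rightarrow$(i), where the task is to upgrade the a.e.\ equality of Gramians into a single \emph{measurable} field of fiber unitaries (conceptually, this is the assertion that the two decomposable Gramians of Corollary~\ref{cor:MIGram} agree). I would mimic the Gram--Schmidt argument from the proof of Theorem~\ref{BR}. Starting from the a.e.\ equality $\langle\varphi_i(x),\varphi_j(x)\rangle=\langle\varphi_i'(x),\varphi_j'(x)\rangle$ supplied by (iii), apply the Gram--Schmidt process \emph{simultaneously} to the measurable sequences $\{\varphi_k(x)\}_k$ and $\{\varphi_k'(x)\}_k$, producing measurable families $\{\psi_k(x)\}_k$ and $\{\psi_k'(x)\}_k$ of orthonormal-or-zero vectors spanning $J(x)$ and $J'(x)$, respectively. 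Because the Gram--Schmidt coefficients, and the norms of the intermediate vectors, are built only from the inner products $\langle\varphi_i(x),\varphi_j(x)\rangle$, the two orthonormalizations use identical coefficients: $\psi_k(x)$ is nonzero exactly when $\psi_k'(x)$ is, and each $\varphi_n(x)$, $\varphi_n'(x)$ is the same linear combination of the respective $\psi_k$'s.

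I would then define $U(x)\colon J(x)\to J'(x)$ by $U(x)\psi_k(x)=\psi_k'(x)$ for every $k$. This is a well-defined unitary, since the nonzero $\psi_k(x)$ and $\psi_k'(x)$ give orthonormal bases of $J(x)$ and $J'(x)$ with matching support, and the matched Gram--Schmidt coefficients force $U(x)\varphi_i(x)=\varphi_i'(x)$ for all $i$. Measurability of $U$ follows from
\[ \langle U(x)P_J(x)u, v \rangle = \sum_{k} \langle u, \psi_k(x) \rangle\, \langle \psi_k'(x), v \rangle, \]
a convergent sum of measurable functions. Hence $U$ is a bounded measurable range operator, and $\int_X^\oplus U(x)\,d\mu(x)\colon S(\A)\to S(\A')$ is MI with unitary fibers, so it is unitary by Theorem~\ref{prop}(v). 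Being MI it commutes with each $M_{g_t}$, and since $U\varphi_i=\varphi_i'$ we obtain $U(M_{g_t}\varphi_i)=M_{g_t}\varphi_i'$, establishing (i). The main obstacle is exactly this measurable selection of the fiber unitaries: the a.e.\ existence of unitaries from (ii) is not by itself enough, and it is the equal-Gramian hypothesis (iii) that forces the two Gram--Schmidt procedures to run in lockstep, which is what rescues measurability.
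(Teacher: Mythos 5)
Your proof is correct, and the directions (i)$\Rightarrow$(ii) and (ii)$\Rightarrow$(iii) coincide with the paper's argument (invoking Theorem~\ref{thm:MIE} and Theorem~\ref{prop}(v), then reading off the Gramian). Where you genuinely diverge is (iii)$\Rightarrow$(i). The paper works \emph{globally}: it observes that two total collections of vectors in Hilbert spaces are related by a unitary sending one to the other if and only if they have the same Gramian, and then checks directly that
\[ \langle M_{g_t}\varphi_i, M_{g_s}\varphi_j\rangle = \int_X g_t(x)\overline{g_s(x)}\,\langle \varphi_i(x),\varphi_j(x)\rangle\, d\mu(x) \]
depends only on the pointwise Gramians, so (iii) forces the global Gramians of $E(\A)$ and $E(\A')$ to agree and the required unitary $S(\A)\to S(\A')$ exists abstractly. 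This completely sidesteps the measurable-selection problem you identify as the crux. Your route instead builds the unitary \emph{fiberwise}, running Gram--Schmidt in lockstep on the two sequences (legitimate, since the coefficients are functions of the Gramian entries alone, and the countability of $I$ lets you arrange the a.e.\ equalities on a single conull set) and then assembling a measurable field of unitaries into a decomposable operator. Both arguments are sound; the paper's is shorter and makes the existence of the unitary essentially formal, while yours is more constructive and has the side benefit of exhibiting the equivalence explicitly as an MI operator $\int_X^\oplus U(x)\,d\mu(x)$ with unitary fibers --- a fact the paper only recovers a posteriori via Theorem~\ref{thm:MIE} in the (i)$\Rightarrow$(ii) direction.
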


\begin{proof}
(i)$\implies$(ii).
Given $x\in X$, we denote $J(x) = \overline{\spn}\{\varphi_i(x) : i \in I\}$ and $J'(x) = \overline{\spn}\{\varphi_i'(x) : i \in I\}$. 
If there is a unitary $U \colon S(\A) \to S(\A')$ such that $UM_{g_t} \varphi_i = M_{g_t}\varphi_i'$ for every $t \in Y, i \in I$, then Theorem~\ref{thm:MIE} implies that $U = \int_X^\oplus U(x)\, d\mu(x)$ with $U(x) \varphi_i(x) = \varphi_i'(x)$ a.e.\ on $X$. By Theorem~\ref{prop}(v), $U(x) \colon J(x) \to J'(x)$ is unitary for a.e.\ $x\in X$.

(ii)$\implies$(iii) is obvious.

(iii)$\implies$(i).
Generally speaking, whenever $\K$ and $\K'$ are two Hilbert spaces with total collections of vectors $\{ u_i \}_{i\in A}$ and $\{ u_i' \}_{i\in A}$, respectively, it is easy to show that there is a unitary $U \colon \K \to \K'$ mapping $U(u_i) = u_i'$ for every $i\in A$ if and only if $\langle u_i, u_j \rangle = \langle u_i', u_j' \rangle$ for every $i,j \in A$.
In the present case, when (iii) holds it is easy to show that
\[ \langle M_{g_t} \varphi_i, M_{g_s} \varphi_j \rangle = \langle M_{g_t} \varphi'_i, M_{g_s} \varphi'_j \rangle \]
for every $s,t \in Y$ and $i,j \in I$, so there is a unitary $U : S(\A) \to S(\A')$ mapping $U( M_{g_t} \varphi_i ) = M_{g_t} \varphi_i'$ for every $t \in Y$ and $i \in I$, as desired.
\end{proof}

In the discrete setting, unitary equivalence classes of Bessel sequences indexed by $I$ are in one-to-one correspondence with positive operators on $\ell^2(I)$, which are their Gramians. Corollary~\ref{cor:MIBE1} indicates that, in our MI setting, we should consider bounded, measurable fields $\{\mathrm{Gr}(x)\}_{x\in X}$ of positive operators on $\ell^2(I)$. However, when $\mu(X) = \infty$, not every such field will correspond to a Bessel system $E(\A)$, since the square norm of a vector in $\A$ can be found by integrating a diagonal entry of $\{\mathrm{Gr}(x)\}_{x\in X}$.

\begin{defn}
We call a positive MI operator
\[ \int_X^\oplus \mathrm{Gr}(x)\, d\mu(x) \colon L^2(X;\ell^2(I)) \to L^2(X;\ell^2(I)) \]
\emph{integrable} if 
\[ \int_X \langle \mathrm{Gr}(x) \delta_i, \delta_i \rangle\, d\mu(x) < \infty \qquad \text{for every }i\in I. \]
Here, $\delta_i \in \ell^2(I)$ is the canonical basis element corresponding to $i \in I$.
\end{defn}

\begin{rem}
\label{rem:IntGram}
When $\int_X^\oplus \mathrm{Gr}(x)\, d\mu(x)$ is a positive, integrable MI operator on $L^2(X;\ell^2(I))$, the off-diagonal entries of $\{ \mathrm{Gr}(x) \}_{x\in X}$ also lie in $L^1(X)$.
Indeed, $\mathrm{Gr}(x) \geq 0$ a.e.~$x\in X$ by Theorem~\ref{spec}(i), and successive applications of Cauchy--Schwarz show that
\[
\int_X | \langle \mathrm{Gr}(x) \delta_j, \delta_i \rangle |\, d\mu(x)
=
\int_X | \langle \mathrm{Gr}(x)^{1/2} \delta_j, \mathrm{Gr}(x)^{1/2} \delta_i \rangle | \, d\mu(x)
\]
\[
\leq
\left\{ \int_X \langle \mathrm{Gr}(x) \delta_j, \delta_j \rangle \, d\mu(x) \right\}^{1/2}
\left\{ \int_X \langle \mathrm{Gr}(x) \delta_i, \delta_i \rangle \, d\mu(x) \right\}^{1/2}
< \infty.
\]

When $I$ is finite, it follows that a positive MI operator $\int_X^\oplus \mathrm{Gr}(x)\, d\mu(x)$ is integrable if and only if 
\[ \int_X | \langle \mathrm{Gr}(x) u, v \rangle |\, d\mu(x)< \infty \qquad \text{ for every }u,v \in \ell^2(I). \]
However, that is not the case when $I$ is infinite. 
For a counterexample, take $X = I = \N$, and let $\mathrm{Gr}(n)$ be orthogonal projection onto $\overline{\spn}\{ \delta_k : k \geq n\}$. 
The corresponding MI operator is integrable, but if we take $u = \bigl\{ ( k^{-1} - (k+1)^{-1} )^{1/2} \bigr\}_{k=1}^\infty \in \ell^2(\N)$, then 
\[ \sum_{n=1}^\infty \langle \mathrm{Gr}(n) u, u \rangle 
= \sum_{n=1}^\infty \sum_{k=n}^\infty \Bigl[ \frac{1}{k} - \frac{1}{k+1} \Bigr]
= \sum_{n=1}^\infty \frac{1}{n} = \infty. \]
\end{rem}

\begin{theorem} \label{thm:MIBE2}
Unitary equivalence classes of Bessel systems $E(\{\varphi_i\}_{i\in I})$ in $L^2(X;\H)$ are in one-to-one correspondence with positive, integrable MI operators
\[ \int_X^\oplus \mathrm{Gr}(x)\, d\mu(x) \colon {L^2(X;\ell^2(I))} \to {L^2(X;\ell^2(I))} \]
having $\rank \mathrm{Gr}(x) \leq \dim \H$ a.e.\ $x \in X$. In this correspondence, $\mathrm{Gr}(x)$ is the Gramian of $\{\varphi_i(x)\}_{i\in I}$ a.e.\ $x \in X$.
\end{theorem}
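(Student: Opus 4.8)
The plan is to exhibit the assignment that sends a Bessel system $E(\A)$, $\A=\{\varphi_i\}_{i\in I}$, to the field of pointwise Gramians $x\mapsto\mathrm{Gr}(x)$, and to prove that it descends to a bijection from unitary equivalence classes onto the stated class of MI operators. I organize the argument into well-definedness, injectivity, and surjectivity.

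\emph{The map lands in the target class.} Given a Bessel $E(\A)$, let $J$ be the range function \eqref{eq:J} and let $\tilde T(x)\colon J(x)\to\ell^2(I)$ be the pointwise analysis operators of Theorem~\ref{thm:MIAn}. Then $\mathrm{Gr}(x)=\tilde T(x)\tilde T(x)^*$ is the Gramian of $\{\varphi_i(x)\}_{i\in I}$, and Corollary~\ref{cor:MIGram} identifies $\int_X^\oplus\mathrm{Gr}(x)\,d\mu(x)$ with $\F_I^*(TT^*)\F_I$, an MI operator on $L^2(X;\ell^2(I))$; it is positive since each $\mathrm{Gr}(x)\ge 0$. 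Integrability is immediate, as $\int_X\langle\mathrm{Gr}(x)\delta_i,\delta_i\rangle\,d\mu(x)=\int_X\|\varphi_i(x)\|^2\,d\mu(x)=\|\varphi_i\|^2<\infty$. Finally $\ker\tilde T(x)=J(x)\cap J(x)^\perp=\{0\}$, so $\tilde T(x)$ is injective and $\rank\mathrm{Gr}(x)=\dim J(x)\le\dim\H$ a.e.

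\emph{Injectivity and descent to equivalence classes.} Both follow directly from Corollary~\ref{cor:MIBE1}: two Bessel systems indexed by $I$ are unitarily equivalent iff they share the same pointwise Gramian a.e., which by the uniqueness clause of Theorem~\ref{thm:MIOp} is the same as producing the same operator $\int_X^\oplus\mathrm{Gr}(x)\,d\mu(x)$.

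\emph{Surjectivity.} Let $\int_X^\oplus\mathrm{Gr}(x)\,d\mu(x)$ be positive, integrable, and satisfy $\rank\mathrm{Gr}(x)\le\dim\H$ a.e. Applying the Borel functional calculus of Theorem~\ref{ft} to $\lambda\mapsto\sqrt\lambda$ yields the measurable field $x\mapsto\mathrm{Gr}(x)^{1/2}$, and I set $\psi_i(x)=\mathrm{Gr}(x)^{1/2}\delta_i\in\ell^2(I)$. Then $\langle\psi_i(x),\psi_j(x)\rangle=\langle\mathrm{Gr}(x)\delta_i,\delta_j\rangle$, so $\{\psi_i(x)\}_{i\in I}$ has Gramian $\mathrm{Gr}(x)$ and spans $J_0(x):=\overline{\ran\mathrm{Gr}(x)}$, a measurable range function in $\ell^2(I)$ with $\dim J_0(x)=\rank\mathrm{Gr}(x)\le\dim\H$; integrability gives $\psi_i\in L^2(X;\ell^2(I))$. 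I then transport these vectors into $\H$ via a measurable field of isometries $W(x)\colon J_0(x)\to\H$, setting $\varphi_i(x)=W(x)\psi_i(x)$. Since $W(x)$ preserves inner products, $\{\varphi_i(x)\}$ again has Gramian $\mathrm{Gr}(x)$; its pointwise Bessel bound is $\|\mathrm{Gr}(x)\|_{\mathrm{op}}$, and $\esssup_x\|\mathrm{Gr}(x)\|_{\mathrm{op}}=\|\int_X^\oplus\mathrm{Gr}(x)\,d\mu(x)\|_{\mathrm{op}}<\infty$ by Theorem~\ref{prop}(i), so $E(\A)$ is Bessel by Theorem~\ref{thm:MIF}. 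Its image under the map is the given operator, which completes the proof.

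The main obstacle is the construction of the measurable field of isometries $W(x)$, i.e. an MI isometry $V_{J_0}\to L^2(X;\H)$, under the mere inequality $\dim J_0(x)\le\dim\H$. I would derive it from the selection theory underlying Theorem~\ref{dimo} and Theorem~\ref{BR}: run a measurable Gram--Schmidt process on $\{\psi_i\}_{i\in I}$ to obtain measurable orthonormal selections $\{e_k(x)\}$ spanning $J_0(x)$, fix an orthonormal basis $\{f_j\}$ of $\H$, and define $W(x)$ by measurably packing the (at most $\dim\H$ many) nonzero $e_k(x)$ into the $f_j$, organizing the packing over the measurable sets on which a given index is the $m$-th nonzero one. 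Checking the measurability of this assignment, and that $\dim J_0(x)\le\dim\H$ genuinely suffices, is the technical heart of the argument.
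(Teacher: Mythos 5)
Your proposal is correct and, for most of its length, coincides with the paper's proof: the well-definedness step (positivity via Theorem~\ref{spec}(i), integrability from $\langle \mathrm{Gr}(x)\delta_i,\delta_i\rangle = \Norm{\varphi_i(x)}^2$, and the rank bound $\rank\mathrm{Gr}(x)=\dim J(x)\le\dim\H$), the reduction of injectivity to Corollary~\ref{cor:MIBE1}, and the surjectivity construction $\psi_i(x)=\mathrm{Gr}(x)^{1/2}\delta_i$ are all exactly what the paper does. The only divergence is the final transport of $\{\psi_i\}$ from $V_{J_0}\subset L^2(X;\ell^2(I))$ into $L^2(X;\H)$, which you correctly identify as the technical heart and propose to handle by a hand-built measurable Gram--Schmidt plus a measurable ``packing'' of the nonzero orthonormal selections into a fixed basis of $\H$. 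That construction would work (it is essentially the argument behind Theorem~\ref{BR} and Helson's theorem), but the paper discharges it with no new measure theory: it first standardizes the range function by invoking Theorem~\ref{dimo} to get an MI unitary $V_{J_0}\to V_{J'}$, where $J'(x)=\overline{\spn}\{e_n : 1\le n\le\rank\mathrm{Gr}(x)\}$ is built from a fixed orthonormal basis of $\ell^2(I)$ (measurability of $J'$ being immediate from the measurability of the sets $\{x:\rank\mathrm{Gr}(x)\le n\}$), and then composes with a single \emph{constant} partial isometry $\ell^2(I)\to\H$ whose initial space contains every $J'(x)$; this is available precisely because $\esssup_x\rank\mathrm{Gr}(x)\le\dim\H$. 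In other words, the measurable packing you flag as needing verification is already encapsulated in Theorem~\ref{dimo}, and splitting the transport into ``MI unitary within $L^2(X;\ell^2(I))$'' followed by ``constant isometry into $\H$'' sidesteps the issue that Theorem~\ref{dimo} as stated only compares MI subspaces of the same ambient space. If you replace your sketched construction of $W(x)$ with this two-step appeal, your argument becomes a complete proof identical in substance to the paper's.
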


\begin{proof}
Let $\A = \{\varphi_i\}_{i\in I}$ be a sequence in $L^2(X;\H)$ for which $E(\A)$ is Bessel, and let $\mathrm{Gr}(x) \colon \ell^2(I) \to \ell^2(I)$ be the Gramian of $\{ \varphi_i(x) \}_{i\in I}$ a.e.\ $x \in X$. Then $x \mapsto \mathrm{Gr}(x)$ is a bounded, measurable range function, by Lemma~\ref{lem:PtAnDec}, and $\mathrm{Gr} = \int_X^\oplus \mathrm{Gr}(x)\, d\mu(x)$ is an MI operator. Moreover, $\mathrm{Gr} \geq 0$ by Theorem~\ref{spec}(i), and 
\[ \rank \mathrm{Gr}(x) = \dim \overline{\spn}\{ \varphi_i(x) : i \in I\} \leq \dim \H \qquad (\text{a.e.\ }x \in X). \]
Since $\langle \mathrm{Gr}(x) \delta_i, \delta_i \rangle = \Norm{\varphi_i(x)}^2$, we have
\[ \int_X \langle \mathrm{Gr}(x) \delta_i, \delta_i \rangle d\mu(x) = \int_X \Norm{\varphi_i(x)}^2 d\mu(x) = \Norm{\varphi_i}^2 < \infty \]
for every $i \in I$. In other words, $\mathrm{Gr}$ is integrable.
Finally, let $\A' = \{ \varphi_i' \}_{i\in I}$ be another sequence in $L^2(X;\H)$ for which $E(\A')$ is Bessel, with corresponding MI operator $\mathrm{Gr}' \colon L^2(X;\ell^2(I)) \to L^2(X;\ell^2(I))$. Then $\mathrm{Gr} = \mathrm{Gr}'$ if and only if $E(\A)$ is unitarily equivalent to $E(\A')$, by Corollary~\ref{cor:MIBE1}. Overall, $E(\A) \mapsto \mathrm{Gr}$ induces a well-defined, one-to-one mapping from unitary equivalence classes of Bessel systems $E(\{\varphi_i\}_{i\in I})$ to positive, integrable MI operators $\mathrm{Gr} = \int_X^\oplus \mathrm{Gr}(x)\, d\mu(x)$ with $\rank \mathrm{Gr}(x) \leq \dim \H$ a.e.\ $x \in X$. It remains to prove that this mapping is onto.

To that end, let $\mathrm{Gr}$ be a positive, integrable MI operator on $L^2(X;\ell^2(I))$ corresponding to the measurable range operator $x\mapsto \mathrm{Gr}(x)$, and assume that $\rank \mathrm{Gr}(x) \leq \dim \H$ a.e.\ $x \in X$.
Then $\mathrm{Gr}(x) \geq 0$ a.e.\ $x \in X$, by Theorem~\ref{spec}(i). Given $i \in I$, we define $\psi_i(x) = \mathrm{Gr}(x)^{1/2} \delta_i$. As above, $\psi_i \in L^2(X;\ell^2(I))$ since $\mathrm{Gr}$ is integrable. Moreover, $\langle \psi_j(x), \psi_i(x) \rangle = \langle \mathrm{Gr}(x) \delta_j, \delta_i \rangle$, so $\{ \psi_i(x) \}_{i \in I}$ has Gramian $\mathrm{Gr}(x)$ a.e.\ $x \in X$. To complete the proof, we need only transport $\{ \psi_i \}_{i\in I}$ to a corresponding sequence $\{ \varphi_i \}_{i\in I}$ in $L^2(X;\H)$.

By Lemma~\ref{rk}, $J(x) := \overline{ \mathrm{Gr}(x) \ell^2(I)}$ is a measurable range function, and we clearly have $\psi_i \in V_J$ for each $i \in I$. Fix an orthonormal basis $\{ e_n \}_{n=1}^{|I|}$ for $\ell^2(I)$, and define
\[ J'(x) = \overline{\spn}\{ e_n : 1 \leq n \leq \rank \mathrm{Gr}(x) \} \qquad (\text{a.e.\ }x \in X). \]
Each of the sets $E_n := \{ x \in X : \rank \mathrm{Gr}(x) \leq n \} = \{ x \in X : \dim J(x) \leq n \}$ is measurable, so for any $u,v \in \ell^2(I)$ the mapping
\[ x \mapsto \langle P_{J'}(x) u, v \rangle = \Bigl \langle \sum_{n=1}^{\rank \mathrm{Gr}(x)} \langle u, e_n \rangle e_n, v \Bigr \rangle = \sum_{n=1}^{|I|} \chi_{E_n}(x) \langle u, e_n \rangle \langle e_n, v \rangle \]
is measurable on $X$. Therefore $J'$ is also measurable, and Theorem~\ref{dimo} provides an MI unitary $T \colon V_{J} \to V_{J'}$. 

Next, define $d = \operatorname*{ess\, sup}_{x\in X} \rank \mathrm{Gr}(x)$ and let $M = \overline{\spn}\{ e_n : 1 \leq n \leq d \}$, so that $J'(x) \subset M$ a.e.\ $x \in X$. We have $d \leq \dim \H$ by assumption, so there is a partial isometry $\ell^2(I) \to \H$ with initial space $M$. This gives rise to an MI partial isometry $U \colon L^2(X;\ell^2(I)) \to L^2(X;\H)$ whose initial subspace contains $V_{J'}$. The composition $UT$ is thus an MI operator that maps $V_J$ isometrically into $L^2(X;\H)$. Setting $\varphi_i = UT \psi_i$, we have $\langle \varphi_j(x), \varphi_i(x) \rangle = \langle \psi_j(x), \psi_i(x) \rangle$, so $\{ \varphi_i(x) \}_{i\in I}$ again has Gramian $\mathrm{Gr}(x)$ a.e.\ $x \in X$. In particular, $E(\{ \varphi_i \}_{i\in I})$ is Bessel by Theorem~\ref{thm:MIF}. This completes the proof.
\end{proof}

\begin{defn}
Given a Hilbert space $\K$, a positive operator $T \in B(\K)$ is called \emph{locally invertible} if there exists $\delta > 0$ such that $\sigma(T) \subset \{0\} \cup [\delta,\infty)$.
\end{defn}

\begin{cor} \label{cor:MIFrCl}
Unitary equivalence classes of systems $E(\{\varphi_i\}_{i\in I})$ in $L^2(X;\H)$ that are frames for $S(\{\varphi_i\}_{i\in I})$ are in one-to-one correspondence with locally invertible, integrable MI operators 
\[ \int_X^\oplus \mathrm{Gr}(x)\, d\mu(x) \colon L^2(X;\ell^2(I)) \to L^2(X;\ell^2(I)) \]
having $\rank \mathrm{Gr}(x) \leq \dim \H$ a.e.\ $x \in X$. In this correspondence, $\mathrm{Gr}(x)$ is the Gramian of $\{\varphi_i(x)\}_{i\in I}$ a.e.\ $x \in X$.
\end{cor}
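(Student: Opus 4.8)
The plan is to build directly on Theorem~\ref{thm:MIBE2}, which already furnishes a one-to-one correspondence between unitary equivalence classes of Bessel systems $E(\{\varphi_i\}_{i\in I})$ in $L^2(X;\H)$ and positive, integrable MI operators $\int_X^\oplus \mathrm{Gr}(x)\,d\mu(x)$ on $L^2(X;\ell^2(I))$ with $\rank\mathrm{Gr}(x)\le\dim\H$ a.e. Since ``being a frame for $S(\A)$'' is a property of a unitary equivalence class and ``being locally invertible'' is a property of the MI operator, it will suffice to show that under this correspondence $E(\A)$ is a frame for $S(\A)$ \emph{if and only if} the associated operator $\mathrm{Gr}=\int_X^\oplus\mathrm{Gr}(x)\,d\mu(x)$ is locally invertible. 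The corollary then follows at once by restricting the bijection of Theorem~\ref{thm:MIBE2} to these two matching subclasses, with $\mathrm{Gr}(x)$ still the Gramian of $\{\varphi_i(x)\}_{i\in I}$ a.e.\ by construction.

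The heart of the argument is a spectral translation between the global frame property and the global spectrum of $\mathrm{Gr}$. First I would invoke Theorem~\ref{thm:MIF} to rephrase the frame property: $E(\A)$ is a frame for $S(\A)$ exactly when there are uniform constants $A,B>0$ with $\{\varphi_i(x)\}_{i\in I}$ an $A,B$-frame for $J(x)$ for a.e.\ $x$. As $E(\A)$ is Bessel the upper bound is automatic, so the substance is a \emph{uniform} a.e.\ lower bound. For each $x$ the frame operator $S(x)=\tilde T(x)^*\tilde T(x)$ on $J(x)$ and the Gramian $\mathrm{Gr}(x)=\tilde T(x)\tilde T(x)^*$ on $\ell^2(I)$ have the same nonzero spectrum, so $\{\varphi_i(x)\}_{i\in I}$ has lower frame bound $\delta>0$ for $J(x)$ if and only if $\sigma(\mathrm{Gr}(x))\subset\{0\}\cup[\delta,\infty)$; the degenerate fibers $J(x)=\{0\}$, where $\mathrm{Gr}(x)=0$, are covered trivially. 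Hence $E(\A)$ is a frame precisely when a single $\delta>0$ satisfies $\sigma(\mathrm{Gr}(x))\subset\{0\}\cup[\delta,\infty)$ for a.e.\ $x$. Because $\mathrm{Gr}$ is positive, hence normal, Corollary~\ref{nor} gives $\sigma(\mathrm{Gr})=\essran\bigl(x\mapsto\sigma(\mathrm{Gr}(x))\bigr)$, and Lemma~\ref{er} identifies this essential range as the smallest closed set containing $\sigma(\mathrm{Gr}(x))$ a.e. It follows that $\sigma(\mathrm{Gr}(x))\subset\{0\}\cup[\delta,\infty)$ a.e.\ if and only if $\sigma(\mathrm{Gr})\subset\{0\}\cup[\delta,\infty)$, which is exactly local invertibility of $\mathrm{Gr}$.

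The step I expect to require the most care is the passage between the uniform pointwise condition and the global spectral condition: one must guarantee that the single $\delta$ witnessing local invertibility is the same constant serving as an a.e.\ lower frame bound, and conversely. This is precisely the role of Lemma~\ref{er}: the inclusion \eqref{er0} pushes a global bound $\sigma(\mathrm{Gr})\subset\{0\}\cup[\delta,\infty)$ down to a.e.\ fibers, while the characterization \eqref{er1} lifts a uniform a.e.\ fiberwise bound back up to $\sigma(\mathrm{Gr})$, so no quantifier slippage in $\delta$ occurs. With this equivalence established, the restriction of the bijection of Theorem~\ref{thm:MIBE2} to frames on one side and to locally invertible (still positive, integrable, and satisfying $\rank\mathrm{Gr}(x)\le\dim\H$ a.e.) MI operators on the other is the desired one-to-one correspondence, completing the proof.
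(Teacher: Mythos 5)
Your proposal is correct and follows essentially the same route as the paper: restrict the bijection of Theorem~\ref{thm:MIBE2}, use Theorem~\ref{thm:MIF} to turn the frame property into the uniform a.e.\ spectral condition $\sigma(\mathrm{Gr}(x))\subset\{0\}\cup[A,B]$, and then match that with local invertibility of $\mathrm{Gr}$. The only cosmetic difference is that the paper cites Theorem~\ref{spec}(ii) for the last step, whereas you unpack that theorem into its ingredients (Corollary~\ref{nor} and Lemma~\ref{er}), which is the same argument with the black box opened.
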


\begin{proof}
Let $\A = \{ \varphi_i(x) \}_{i\in I}$ be a sequence in $L^2(X;\H)$ for which $E(\A)$ is $B$-Bessel, and let $\mathrm{Gr}(x)$ be the Gramian of $\{ \varphi_i(x) \}_{i\in I}$ a.e.\ $x \in X$. By Theorem~\ref{thm:MIF}, $E(\A)$ is a frame for $S(\A)$ if and only there exists $A>0$ such that $\sigma(\mathrm{Gr}(x)) \subset \{0\} \cup [A,B]$ a.e.\ $x \in X$. The latter is equivalent to local invertibility of $\mathrm{Gr}$ by Theorem~\ref{spec}(ii). In light of Theorem~\ref{thm:MIBE2}, the proof is complete.
\end{proof}

\begin{defn}
A measurable range function 
\[ J \colon X \to \{\text{closed subspaces of }\ell^2(I)\} \]
is \emph{integrable} if the projection $\int_X^\oplus P_J(x)\, d\mu(x)$ onto $V_J$ is integrable as an MI operator. Equivalently, $J$ is integrable if
\[ \int_X \langle P_J(x) \delta_i, \delta_i \rangle\, d\mu(x) < \infty \qquad \text{for every }i \in I. \]
\end{defn}

\begin{rem}
Given a measurable range function $J \colon X \to \{\text{closed subspaces of }\ell^2(I)\}$, notice that
\[ \sum_{i\in I} \int_X \langle P_J(x) \delta_i, \delta_i \rangle\, d\mu(x) = \int_X \dim J(x)\, d\mu(x). \]
When $I$ is finite, it follows that $J$ is integrable if and only if its supporting set 
\[
\operatorname{supp} J := \{ x \in X : J(x) \neq \{0\} \}
\]
has finite measure. This is not the case when $I$ is infinite, however, as shown by the counterexample $X = I = \N$, $J(n) = \overline{\spn}\{ \delta_k : k \geq n \}$ (as in~Remark~\ref{rem:IntGram}). Here, $J$ is integrable and yet $\operatorname{supp} J = \mathbb{N}$.
\end{rem}

\begin{cor} \label{cor:MIPF}
Unitary equivalence classes of systems $E(\{\varphi_i\}_{i\in I})$ in $L^2(X;\H)$ that are Parseval frames for $S(\{\varphi_i\}_{i\in I})$ are in one-to-one correspondence with 
integrable range functions
\[ J \colon X \to \{ \text{closed subspaces of }\ell^2(I)\} \]
having $\dim J(x) \leq \dim \H$ a.e.\ $x\in X$. In this correspondence, the Gramian of $\{ \varphi_i(x) \}_{i\in I}$ is orthogonal projection onto $J(x)$ a.e.\ $x \in X$.
\end{cor}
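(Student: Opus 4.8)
The plan is to obtain Corollary~\ref{cor:MIPF} by singling out the Parseval frames inside the classification of frames established in Corollary~\ref{cor:MIFrCl}. The decisive observation is the elementary fact that a (continuous) frame is Parseval precisely when its Gramian is an orthogonal projection. Indeed, if $\{u_s\}$ is a frame with analysis operator $T$, frame operator $S=T^*T$, and Gramian $G=TT^*$, then $G^2=G$ rewrites as $T(T^*T-I)T^*=0$, and multiplying this on the left by $T^*$ and on the right by $T$ gives $S^3=S^2$; since $S$ is positive and invertible for a frame, this forces $S=I$, i.e.\ the frame is Parseval. The converse is immediate, since a Parseval frame has $T$ isometric, whence $G=TT^*$ is the orthogonal projection onto $\ran T$.

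First I would reduce to the pointwise level. By Theorem~\ref{thm:MIF} with $A=B=1$, the system $E(\A)$ is a Parseval frame for $S(\A)$ if and only if $\{\varphi_i(x)\}_{i\in I}$ is a Parseval frame for $J(x)=\ov{\spn}\{\varphi_i(x):i\in I\}$ for a.e.\ $x\in X$. Applying the Gramian characterization fiberwise, this happens if and only if the Gramian $\mathrm{Gr}(x)=\tilde T(x)\tilde T(x)^*$ of $\{\varphi_i(x)\}_{i\in I}$ is an orthogonal projection on $\ell^2(I)$ for a.e.\ $x\in X$. Writing $\tilde J(x):=\ran\tilde T(x)=\mathrm{Gr}(x)[\ell^2(I)]$, this says exactly that $\mathrm{Gr}(x)=P_{\tilde J}(x)$ a.e.

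Next I would identify the resulting class of Gramians with integrable range functions. By Corollary~\ref{cor:MIFrCl}, frame systems $E(\A)$ correspond bijectively (up to unitary equivalence) to locally invertible, integrable MI operators $\int_X^\oplus \mathrm{Gr}(x)\,d\mu(x)$ with $\rank\mathrm{Gr}(x)\le\dim\H$. Restricting to Parseval frames replaces ``locally invertible'' by ``$\mathrm{Gr}(x)$ is an orthogonal projection a.e.''; note every projection has $\sigma(\mathrm{Gr}(x))\subset\{0,1\}$, so local invertibility is automatic in this subclass. A measurable, projection-valued range operator $x\mapsto\mathrm{Gr}(x)$ carries the same information as a measurable range function $\tilde J\colon X\to\{\text{closed subspaces of }\ell^2(I)\}$ via $\mathrm{Gr}(x)=P_{\tilde J}(x)$, the measurability of $\tilde J$ being exactly the condition of Definition~\ref{rf} (and $\tilde J(x)=\ov{\mathrm{Gr}(x)\ell^2(I)}$ is measurable by Lemma~\ref{rk}(i)). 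Under this identification, integrability of $\int_X^\oplus P_{\tilde J}(x)\,d\mu(x)$ is by definition integrability of $\tilde J$, while $\rank\mathrm{Gr}(x)=\dim\tilde J(x)$ turns the rank bound into $\dim\tilde J(x)\le\dim\H$.

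Combining these steps, the bijection of Corollary~\ref{cor:MIFrCl} restricts to a bijection between unitary equivalence classes of Parseval frame systems $E(\{\varphi_i\}_{i\in I})$ and integrable range functions $J\colon X\to\{\text{closed subspaces of }\ell^2(I)\}$ with $\dim J(x)\le\dim\H$ a.e., in which $\mathrm{Gr}(x)=P_J(x)$. I expect the main point requiring care to be the fiberwise equivalence between being a Parseval frame and having projection-valued Gramian, together with the verification that passing from projection-valued Gramians to range functions is both measurability-preserving and one-to-one; once that is in place, integrability and the rank bound transfer verbatim from Corollary~\ref{cor:MIFrCl} by unwinding the definitions.
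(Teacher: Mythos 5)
Your proposal is correct and follows essentially the same route as the paper: reduce to the fibers via Theorem~\ref{thm:MIF} with $A=B=1$, observe that a frame is Parseval exactly when its Gramian is an orthogonal projection (the paper phrases this as $\sigma(\mathrm{Gr}(x))\subset\{0,1\}$, you prove it via the identity $S^3=S^2$, but it is the same step), and then restrict the bijection of Corollary~\ref{cor:MIFrCl}/Theorem~\ref{thm:MIBE2}, identifying projection-valued Gramians with integrable range functions $J(x)=\ran\mathrm{Gr}(x)$.
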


\begin{proof}
Let $\A = \{ \varphi_i(x) \}_{i\in I}$ be a sequence in $L^2(X;\H)$ for which $E(\A)$ is $B$-Bessel, and let $\mathrm{Gr}(x)$ be the Gramian of $\{ \varphi_i(x) \}_{i\in I}$ a.e.\ $x \in X$. The argument given in the proof of Corollary~\ref{cor:MIFrCl} shows that $E(\A)$ is a Parseval frame for $S(\A)$ if and only if $\sigma(\mathrm{Gr}(x)) \subset \{0,1\}$ a.e.\ $x \in X$, if and only if $\mathrm{Gr}(x)$ is an orthogonal projection a.e.\ $x \in X$. This completes the proof with $J(x)$ equal to the range of $\mathrm{Gr}(x)$ a.e.\ $x \in X$, by Theorem~\ref{thm:MIBE2}.
\end{proof}

%APPLICATIONS FOR ABELIAN GROUP FRAMES=======================================================
\section{Applications for admissible representations of abelian groups} \label{sec:LCAFrm}

Throughout this section, we fix a locally compact abelian group $G$, which we assume to be second countable. As usual, $\hat{G}$ denotes the Pontryagin dual group. Haar measures on $G$ and $\hat{G}$ are simply denoted $dx$ and $d\alpha$, and scaling is assumed to satisfy the Plancherel theorem. Our goal is to interpret the results of the previous sections in the special case where:
\begin{itemize}
\item
$X = \hat{G}$ with $d\mu(\alpha) = d\alpha$,
\item
$Y = G$ with $d\nu(x) = dx$, and
\item
$\D = \{ \hat{x} \}_{x\in G}$, where $\hat{x} \colon \hat{G} \to \mathbb{T}$ is given by $\hat{x}(\alpha) = \overline{\alpha(x)}$.
\end{itemize}
By Pontryagin duality, $\D$ is the set of characters on $\hat{G}$. It is a Parseval determining set for $L^1(\hat{G})$ by Lemma~5.2 in~\cite{I}. The corresponding Plancherel transform is the inverse Fourier transform $\mathcal{F} \colon L^2(\hat{G}) \to L^2(G)$, $\mathcal{F} f = \check{f}$.

\begin{defn}
A representation $\pi \colon G \to U(\H)$ is called \emph{admissible} if there is a sequence $\{ u_i \}_{i \in I}$ in $\H$ for which $\{ \pi(x) u_i \}_{x \in G,\, i \in I}$ is a complete Bessel system in $\H$.
\end{defn}

This usage appears in~\cite{TW06} for discrete $G$.
A related condition, ``square integrable''~\cite{R04}, was shown in~\cite{W08} to be equivalent to the above.
In~\cite{W08}, the term ``$\sigma$-admissible'' is used instead of the above, with ``admissible'' reserved for the case where $|I|=1$. 

Given an admissible representation $\pi \colon G \to U(\H)$, Theorem~\ref{thm:LCAMod} provides the existence of a separable Hilbert space $\mathcal{K}$ and a linear isometry $U \colon \H \to L^2(\hat{G};\K)$ intertwining $\pi$ with modulation. Since modulation is simply multiplication by $\D$, Theorem~\ref{thm:MISpa} furnishes a measurable range function $J \colon \hat{G} \to \{ \text{closed subspaces of }\K\}$ such that $U(\H) = V_J$. We take this as a jumping-off point for the following applications.

\subsection{Characterizations of admissibility}

\begin{defn}
Let $\pi \colon G \to U(\H)$ be a representation, where $\H$ is a separable Hilbert space. A \emph{direct integral decomposition} of $\pi$ consists of the following data:
\begin{itemize}
\item
a Borel measure $\mu$ on $\hat{G}$, 
\item
a separable Hilbert space $\K$, 
\item a measurable range function $J \colon \hat{G} \to \{ \text{closed subspaces of }\K\}$ satisfying $J(\alpha) \neq \{0\}$ $\mu$-a.e.\ $\alpha \in \hat{G}$, and 
\item
a unitary $U \colon \H \to V_J \subset L^2(\hat{G},\mu;\K)$ intertwining $\pi$ with modulation.
\end{itemize}
We call $\mu$ a \emph{decomposing measure} for $\pi$, while $m(\alpha) : = \dim J(\alpha)$ is the \emph{multiplicity function}.
\end{defn}

Intuitively, this data determines a unitary equivalence $\pi \cong \int_{\hat{G}}^\oplus \alpha^{ (m(\alpha) )}\, d\mu(\alpha)$, where $\alpha$ is viewed as a one-dimensional representation of $G$, and $\alpha^{( m(\alpha) )}$ is the direct sum of $m(\alpha)$ copies of $\alpha$. Every representation of $G$ on a separable Hilbert space admits such a decomposition. Moreover, two representations are unitarily equivalent if and only if their decomposing measures are mutually absolutely continuous and their multiplicity functions agree a.e.\ (with respect to either measure)~\cite{F,He2}.

When $G$ is discrete, the equivalence of (i) and (iv) below is essentially contained in~\cite{TW06}.

\begin{theorem}
The following are equivalent for every representation $\pi \colon G \to U(\H)$ with $\H$ separable:

\begin{enumerate}[(i)]
\item
$\pi$ is admissible.
\item
A decomposing measure of $\pi$ is absolutely continuous with respect to Haar measure on $\hat{G}$.
\item
There is a separable Hilbert space $\mathcal{K}$ and a linear isometry $U \colon \H \to L^2(\hat{G};\mathcal{K})$ that intertwines $\pi$ with modulation.
\item
There is a sequence $\A$ in $\H$ such that $E(\A)$ is a Parseval frame for $\H$.
\end{enumerate}
\end{theorem}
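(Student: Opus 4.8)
The plan is to establish the cycle (iv) $\Rightarrow$ (i) $\Rightarrow$ (iii) $\Rightarrow$ (iv) together with the equivalence (ii) $\Leftrightarrow$ (iii), which makes all four statements equivalent. Two of these links are immediate. For (iv) $\Rightarrow$ (i): a Parseval frame is in particular a frame, hence complete (a positive lower frame bound forces the only vector orthogonal to every frame element to be $0$) and Bessel with bound $1$, so $\pi$ is admissible. For (i) $\Rightarrow$ (iii): admissibility furnishes a complete Bessel system $E(\A)$, and Theorem~\ref{thm:LCAMod} applied to this $\A$ produces, for any separable $\K$ with $\dim \K \geq |I|$, a linear isometry $U \colon \H \to L^2(\hat{G};\K)$ intertwining $\pi$ with modulation, which is precisely (iii).

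For (iii) $\Leftrightarrow$ (ii) I would reason through Theorem~\ref{thm:MISpa}. Given the isometry $U$ of (iii), its image is invariant under every $M_{\hat{x}}$, since $M_{\hat{x}} U(\H) = U\pi(x)(\H) = U(\H)$; as $\D = \{\hat{x}\}_{x\in G}$ is a determining set, Theorem~\ref{thm:MISpa} yields a measurable range function $J$ with $U(\H) = V_J$. Restricting Haar measure to $\supp J = \{\alpha : J(\alpha) \neq \{0\}\}$ changes nothing (functions in $V_J$ vanish off $\supp J$) and turns $(d\alpha, \K, J, U)$ into a direct integral decomposition whose decomposing measure is absolutely continuous with respect to Haar measure; this is (ii), and since the decomposing measure class is unique, exhibiting one absolutely continuous representative suffices. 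Conversely, given a decomposition $(\mu, \K, J, U)$ with $\mu \ll d\alpha$, let $w = d\mu/d\alpha$. Multiplication by $w^{1/2}$ is an isometry $L^2(\hat{G},\mu;\K) \to L^2(\hat{G}, d\alpha; \K)$ that commutes with modulation (a character is a scalar function of $\alpha$), so composing it with $U$ yields the isometry demanded by (iii).

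The substantive step is (iii) $\Rightarrow$ (iv). Starting from $U \colon \H \to L^2(\hat{G};\K)$ with $U(\H) = V_J$, I seek a Parseval frame of multiplications generating $V_J$; pulling it back through $U^{-1}$ then gives a Parseval frame $E(\A)$ for $\H$. By Corollary~\ref{cor:MIPF}, Parseval frames of multiplications are classified up to unitary equivalence by integrable range functions with $\dim \leq \dim \K$, so it suffices to realize the dimension function $m(\alpha) := \dim J(\alpha)$ (which satisfies $m(\alpha) \leq \dim \K$) as the dimension function of an \emph{integrable} range function $\tilde{J} \colon \hat{G} \to \{\text{closed subspaces of } \ell^2(\N)\}$. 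This realization is the main obstacle, since an arbitrary measurable $J$ need not be integrable (cf.\ the remark following Corollary~\ref{cor:MIPF}). I would construct $\tilde{J}$ explicitly: put $S_j = \{\alpha : m(\alpha) \geq j\}$, partition each $\sigma$-finite set into finite-measure pieces $S_j = \bigsqcup_k S_{j,k}$, fix an injection $(j,k) \mapsto n(j,k) \in \N$, and set $\tilde{J}(\alpha) = \overline{\spn}\{\delta_{n(j,k)} : \alpha \in S_{j,k}\}$. For each $\alpha$ and each $j \leq m(\alpha)$ exactly one $k$ has $\alpha \in S_{j,k}$, so $\dim \tilde{J}(\alpha) = m(\alpha)$, while $\langle P_{\tilde{J}}(\alpha)\delta_{n(j,k)}, \delta_{n(j,k)}\rangle = \chi_{S_{j,k}}(\alpha)$ integrates to $\mu(S_{j,k}) < \infty$, which gives both integrability and weak measurability of $P_{\tilde{J}}$. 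With $\tilde{J}$ in hand, Corollary~\ref{cor:MIPF} produces a Parseval frame of multiplications in $L^2(\hat{G};\K)$ whose generated MI space has dimension function $m$; since this equals $\dim_{V_J}$, Theorem~\ref{dimo} supplies an MI unitary onto $V_J$, through which the Parseval frame transports to $V_J$ and then, via $U^{-1}$, to the sought Parseval frame $E(\A)$ for $\H$. The remaining points—the measurable bookkeeping in defining $\tilde{J}$ and checking that the composed intertwining and MI unitaries carry modulations onto the orbit $\{\pi(x)u_i\}$—are routine.
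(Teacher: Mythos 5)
Your proposal is correct, but it routes two of the implications differently from the paper. For (ii)$\Leftrightarrow$(iii) the paper transfers the decomposition to the measure $\chi_E(\alpha)\,d\alpha$ and then invokes Helson's unitary-equivalence theorem for modulation-invariant subspaces to build the intertwining unitary, whereas you simply compose with multiplication by $(d\mu/d\alpha)^{1/2}$, which is an isometry commuting with modulation; this is cleaner, though you should note that the Radon--Nikodym derivative exists because the decomposing measure is $\sigma$-finite (the paper supplies this as Lemma~\ref{lem:SigFin}). For (iii)$\Rightarrow$(iv) the paper quotes Theorem~2.6 of \cite{BR} to obtain, for a.e.\ $\alpha$, a Parseval frame $\{\varphi_n(\alpha)\}_n$ for $J(\alpha)$ with $\varphi_n\in L^\infty(\hat G;\K)$, and then forces the generators into $L^2$ by cutting against a partition of $\hat G$ into finite-measure sets; you instead realize the dimension function $m(\alpha)=\dim J(\alpha)$ by an explicitly constructed \emph{integrable} range function $\tilde J$ (partitioning the level sets $S_j=\{m\ge j\}$ into finite-measure pieces), feed $\tilde J$ into the classification of Corollary~\ref{cor:MIPF}, and transport the resulting Parseval frame to $V_J$ via the MI unitary from Theorem~\ref{dimo}. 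Both constructions hinge on the same finite-measure decomposition of $\hat G$; the paper's is shorter because it leans on the external existence result from \cite{BR}, while yours stays entirely within the classification machinery developed in Sections~4--6 of this paper at the cost of the bookkeeping for $\tilde J$. All the steps you label routine (measurability of the $S_{j,k}$, weak measurability of $P_{\tilde J}$, and the fact that MI unitaries and $U^{-1}$ carry the modulation orbit onto $\{\pi(x)u_i\}$) do indeed check out.
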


\begin{proof}
(i) $\implies$ (ii) follows from Theorem~\ref{thm:LCAMod} and Theorem~\ref{thm:MISpa}.

(ii) $\implies$ (iii).
Assume (ii) holds.
We claim there is a measurable range function
\[ J \colon \hat{G} \to \{ \text{closed subspaces of }\ell^2(\Z)\} \]
and a unitary $U \colon \H \to V_J \subset L^2(\hat{G};\ell^2(\Z))$ intertwining $\pi$ with modulation.

Let $\mu$ be a decomposing measure for $\pi$, absolutely continuous with respect to Haar measure on $\hat{G}$.
Then there is a separable Hilbert space $\mathcal{K}'$, a measurable range function
\[ J' \colon \hat{G} \to \{ \text{closed subspaces of }\mathcal{K}'\}, \]
and a unitary $U' \colon \H \to V_{J'} \subset L^2(\hat{G},\mu;\mathcal{K}')$ that intertwines $\pi$ with modulation.
Fix a linear isometry $W \colon \mathcal{K}' \to \ell^2(\Z)$, and define 
\[ J \colon \hat{G} \to \{ \text{closed subspaces of }\ell^2(\Z)\} \]
by $J(\alpha) = W J'(\alpha)$. Then $\dim J(\alpha) = \dim J'(\alpha)$ for every $\alpha \in \hat{G}$.

Since $\hat{G}$ is second countable, its Haar measure is $\sigma$-finite, while $\mu$ is $\sigma$-finite by Lemma~\ref{lem:SigFin}.
Therefore, there exists a Radon-Nikodym derivative $f \colon \hat{G} \to [0,\infty)$ to write $d\mu(\alpha) = f(\alpha) d\alpha$.
Setting 
\[ E := \{ \alpha \in \hat{G} : f(\alpha) \neq 0 \}, \]
one easily shows that $d\mu(\alpha)$ and $\chi_E(\alpha) d\alpha$ are mutually absolutely continuous.
By~\cite[Thm.~1, p.~10]{He2}, there is a unitary $U'' \colon V_{J'} \to V_J$ that intertwines modulations.
Defining $U = U''U'$ proves the claim.

(iii) $\implies$ (iv).
If (iii) holds, then Theorem~\ref{thm:MISpa} provides the existence of a measurable range function $J \colon \hat{G} \to \{ \text{closed subspaces of }\K\}$ such that $U(\H) = V_J$.
According to Theorem~2.6 of~\cite{BR}, there is a sequence $\{ \varphi_i \}_{i=1}^\infty$ in $L^\infty(\hat{G};\K)$ such that $\varphi_n(\alpha) \in J(\alpha)$ a.e.~$\alpha \in \hat{G}$, and such that $\{ \varphi_n(\alpha) \}_{n=1}^\infty$ forms a Parseval frame for $J(\alpha)$ a.e.~$\alpha \in \hat{G}$.
Write $\hat{G}$ as a disjoint union $\hat{G} = \bigcup_{m=1}^\infty E_m$ of sets with finite measure, and then define $\varphi_{m,n} = \chi_{E_m} \varphi_n$.
Then $\varphi_{m,n} \in V_J \subset L^2(\hat{G};\mathcal{K})$, and $\{ \varphi_{m,n}(\alpha) \}_{m,n=1}^\infty$ is a Parseval frame for $J(\alpha)$ a.e.~$\alpha\in \hat{G}$.
Finally, define $\A = \{ U^{-1} \varphi_{m,n} \}_{m,n=1}^\infty$. Then (iv) follows from Theorem~\ref{thm:MIF}.

(iv) $\implies$ (i) is trivial.
\end{proof}

\begin{rem}
Every admissible representation admits a Parseval $G$-frame, but the same cannot be said for \emph{equal norm} Parseval $G$-frames, as the following example shows.

Let $J \colon \hat{G} \to \{ \text{closed subspaces of }\K\}$ be a measurable range function, and consider the modulation representation on $V_J \subset L^2(\hat{G};\K)$. 
Suppose that $E(\A)$ is a complete $B$-Bessel system in $V_J$ for some $\A = \{ \varphi_i \}_{i\in I}$.
Using Theorem~\ref{thm:MIF}, one easily shows that 
\[ \sum_{i \in I} \Norm{ \varphi_i(\alpha) }^2 \leq B\cdot \dim J(\alpha) \qquad (\text{a.e.\ $\alpha \in \hat{G}$}), \]
and therefore
\[ \sum_{i \in I} \Norm{\varphi_i}^2 \leq B \int_{\hat{G}} \dim J(\alpha)\ d\alpha. \]
On the other hand, since $\{ \varphi_i(\alpha) \}_{i\in I}$ is complete in $J(\alpha)$ a.e.\ $\alpha \in \hat{G}$, we must have
\[ \operatorname*{ess\, sup}_{\alpha \in \hat{G}} \dim J(\alpha) \leq |I|. \]
When the dimension function is both integrable and essentially unbounded, we conclude that the generators cannot have equal norms.
Such representations exist even for $G=\Z$.
\end{rem}

\subsection{Characterizations of invariant subspaces and intertwining operators}

The remainder of this section is devoted to interpreting results from the MI setting for admissible representations of LCA groups.
In each case, the ``interpreter'' is an isometry of the form $U \colon \H \to L^2(\hat{G};\K)$ intertwining an admissible representation with modulation.
Its existence was established by Theorem~\ref{thm:LCAMod}.
All of our results follow immediately from corresponding statements in the MI setting.

The following are the LCA versions of Theorems~\ref{thm:MISpa} and~\ref{thm:MIOp}, respectively.

\begin{cor} \label{cor:piInv}
Let $\pi \colon G \to U(\H)$ be an admissible representation. Choose any linear isometry $U \colon \H \to L^2(\hat{G};\K)$ intertwining $\pi$ with modulation, where $\K$ is a separable Hilbert space, and let $J \colon \hat{G} \to \{ \text{closed subspaces of } \K \}$ be the measurable range function for which $U(\H) = V_J$. Then invariant subspaces of $\H$ are in one-to-one correspondence with equivalence classes of measurable range functions $J' \colon \hat{G} \to \{ \text{closed subspaces of } \K \}$ satisfying $J'(\alpha) \subset J(\alpha)$ a.e.~$\alpha \in \hat{G}$.
The invariant subspace corresponding to $J'$ is $U^{-1}(V_{J'})$. 
\end{cor}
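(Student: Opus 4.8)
The plan is to transport the problem through the intertwining isometry $U$ and then invoke the structure theorem for MI spaces, Theorem~\ref{thm:MISpa}. Since $U \colon \H \to V_J$ is a unitary that carries $\pi$ to modulation, so that $U\pi(x) = M_x U$ for all $x \in G$, the assignment $W \mapsto U(W)$ is a bijection between the closed subspaces of $\H$ and the closed subspaces of $V_J$, and it preserves invariance: a closed subspace $W \subset \H$ satisfies $\pi(x) W \subset W$ for all $x \in G$ if and only if $M_x\, U(W) \subset U(W)$ for all $x \in G$. Indeed, if $\pi(x)W \subset W$ then $M_x U(W) = U\pi(x)(W) \subset U(W)$, and conversely applying $U^{-1}$ gives $\pi(x)W = U^{-1} M_x U(W) \subset W$. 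Because $V_J = U(\H)$ is itself an MI space, it is preserved by every $M_x$, so the displayed condition is precisely invariance of the closed subspace $U(W) \subset V_J$ under multiplication by every element of $\D = \{\hat x\}_{x\in G}$.

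First I would record that $\D$ is a determining set for $L^1(\hat G)$, as noted at the start of this section via Lemma~5.2 of~\cite{I}. With this in hand, Theorem~\ref{thm:MISpa}, applied with the determining set $\D$, shows that a closed subspace of $L^2(\hat G;\K)$ is invariant under multiplication by every $\phi \in \D$ if and only if it is an MI space, hence of the form $V_{J'}$ for a measurable range function $J' \colon \hat G \to \{\text{closed subspaces of }\K\}$, uniquely determined up to a.e.\ equality. Thus the $\pi$-invariant (closed) subspaces $W \subset \H$ correspond bijectively to those equivalence classes of measurable range functions $J'$ for which $V_{J'} \subset V_J$, the correspondence being $W = U^{-1}(V_{J'})$.

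It remains to identify the condition $V_{J'} \subset V_J$ with the pointwise containment $J'(\alpha) \subset J(\alpha)$ a.e. One implication is immediate from the definitions of $V_{J'}$ and $V_J$. For the converse I would pick a countable family $\{\psi_i\}$ generating the MI space $V_{J'}$; by the refined identity~\eqref{MI0} in Theorem~\ref{thm:MISpa}, $J'(\alpha) = \overline{\spn}\{\psi_i(\alpha) : i\}$ for a.e.\ $\alpha$. If $V_{J'} \subset V_J$, then each $\psi_i \in V_J$, so $\psi_i(\alpha) \in J(\alpha)$ for a.e.\ $\alpha$, and since $J(\alpha)$ is a closed subspace it contains the closed span, yielding $J'(\alpha) \subset J(\alpha)$ a.e. Composing the three steps gives the asserted one-to-one correspondence.

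The only nontrivial point is the passage from invariance under the restricted family $\D$ to full MI invariance; this is exactly the equivalence (ii)$\iff$(iii) of Theorem~\ref{thm:MISpa} and hinges on $\D$ being a determining set, so beyond citing that theorem no further work is required. Everything else is bookkeeping: transporting subspaces through the unitary $U$, and the elementary generators argument for the containment of range functions.
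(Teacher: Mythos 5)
Your proof is correct and follows exactly the route the paper intends: the paper states that Corollary~\ref{cor:piInv} ``follows immediately'' from Theorem~\ref{thm:MISpa} via the intertwining isometry $U$, and your write-up simply makes that transport explicit (invariance under $\pi$ becomes invariance under the determining set of characters, Theorem~\ref{thm:MISpa} yields $V_{J'}$, and the generator argument with~\eqref{MI0} identifies $V_{J'}\subset V_J$ with $J'(\alpha)\subset J(\alpha)$ a.e.). No gaps.
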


\begin{cor} \label{cor:piInt}
Let $\pi \colon G \to U(\H)$ and $\pi' \colon G \to U(\H')$ be admissible representations. Choose linear isometries $U \colon \H \to L^2(\hat{G};\K)$ and $U' \colon \H' \to L^2(\hat{G};\K)$ intertwining $\pi$ and $\pi'$ with modulation, where $\K$ is a separable Hilbert space, and let $J$ and $J'$ be the measurable range functions for which $U(\H) = V_J$ and $U'(\H') = V_{J'}$. Then bounded operators $\H \to \H'$ that intertwine $\pi$ and $\pi'$ are in one-to-one correspondence with equivalence classes of range operators $R \colon J \to J'$.
The intertwining operator corresponding to $R$ is $(U')^* [\int_{\hat{G}}^\oplus R(\alpha)\, d\alpha] U$.
\end{cor}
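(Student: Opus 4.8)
The plan is to transport the problem to the multiplication-invariant setting via the intertwining isometries and then invoke Theorem~\ref{thm:MIOp}. Since $U$ is an isometry with $U(\H) = V_J$, it is in fact a unitary from $\H$ onto $V_J$; likewise $U'$ is a unitary from $\H'$ onto $V_{J'}$. Conjugation therefore gives a bijection between bounded operators $\H \to \H'$ and bounded operators $V_J \to V_{J'}$, sending $S$ to $T := U' S U^*$ with inverse $T \mapsto (U')^* T U$; here we use $U^* U = I_\H$ and that $U^*$ inverts $U$ on $V_J$, and similarly for $U'$. The task then reduces to showing that under this bijection the intertwining operators $S$ correspond exactly to the MI operators $T$, after which Theorem~\ref{thm:MIOp} supplies the final correspondence with range operators $R \colon J \to J'$ and the stated formula.

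The key computation is the transfer of the intertwining relation to the MI relation. Because $U$ intertwines $\pi$ with modulation, we have $M_{\hat{x}}|_{V_J} = U \pi(x) U^*$ for each $x \in G$, and similarly $U'\pi'(x) = M_{\hat{x}} U'$. Assuming that $S$ intertwines, i.e.\ $S\pi(x) = \pi'(x) S$, I would compute
\[ T M_{\hat{x}} = U' S U^* U \pi(x) U^* = U' S \pi(x) U^* = U' \pi'(x) S U^* = M_{\hat{x}} U' S U^* = M_{\hat{x}} T, \]
using $U^*U = I_\H$ in the first equality. Thus $T$ commutes with multiplication by every element of $\D = \{\hat{x}\}_{x\in G}$. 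Since $\D$ is a determining set for $L^1(\hat{G})$ (being even a Parseval determining set, as recorded at the start of this section), condition (ii) of Theorem~\ref{thm:MIOp} holds for $T$, so $T$ is an MI operator and $T = \int_{\hat{G}}^\oplus R(\alpha)\, d\alpha$ for a bounded measurable range operator $R \colon J \to J'$, unique up to a.e.\ equality. Running the same chain of equalities in reverse shows conversely that whenever $T$ is MI, the operator $S = (U')^* T U$ intertwines $\pi$ and $\pi'$; hence the conjugation bijection restricts to a bijection between intertwining operators and MI operators.

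Composing the three bijections—$S \leftrightarrow T$ by conjugation, the restriction to intertwining operators versus MI operators just established, and $T \leftrightarrow R$ from the uniqueness clause of Theorem~\ref{thm:MIOp}—yields the desired one-to-one correspondence between bounded intertwining operators $\H \to \H'$ and equivalence classes of measurable range operators $R \colon J \to J'$, with the intertwining operator attached to $R$ being $(U')^*[\int_{\hat{G}}^\oplus R(\alpha)\, d\alpha]U$. I do not anticipate a serious obstacle here; the only points demanding care are the bookkeeping of adjoints and domains in the conjugation $T = U' S U^*$ (in particular that $U$, being an isometry onto $V_J$, is unitary onto its image) and the correct invocation of Theorem~\ref{thm:MIOp} relative to the determining set $\D$.
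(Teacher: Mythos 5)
Your proposal is correct and is exactly the argument the paper intends: the paper gives no separate proof of Corollary~\ref{cor:piInt}, stating only that the results of Section~7 ``follow immediately from corresponding statements in the MI setting,'' and your conjugation $S \mapsto U'SU^*$ together with the observation that the characters $\{\hat{x}\}_{x\in G}$ form a determining set, followed by an appeal to Theorem~\ref{thm:MIOp}(ii)$\Leftrightarrow$(iii) and its uniqueness clause, is precisely that intended reduction. The bookkeeping you flag (that $U$ and $U'$ are unitaries onto $V_J$ and $V_{J'}$, so $U^*U=I_\H$ and $U^*$ inverts $U$ on $V_J$) is handled correctly.
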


\begin{example} \label{ex:fib}
Take $G= \Z^n$ acting on $\H = L^2(\R^n)$ by integer shifts $\pi(k)f = f (\cdot - k)$. We can identify $\hat{G} = [0,1)^n$ as a measure space. Then the fiberization operator $\mathcal{T} \colon L^2(\R^n) \to L^2([0,1)^n;\ell^2(\Z^n))$ given by
\[ (\mathcal{T} f)(x) = \left\{ \hat{f}(x - k) \right\}_{k\in \Z^n}  \qquad (f \in L^2(\R^n),\, x \in [0,1)^n) \]
 is a linear isometry that intertwines $\pi$ with modulation.
 Choosing $U = \mathcal{T}$ in Corollary~\ref{cor:piInv}, we recover the usual range-function characterization of shift-invariant spaces~\cite{B,He,S}.
 
 Similarly, if $\H,\H' \subset L^2(\R^n)$ are shift-invariant subspaces on which $G=\Z^n$ acts by integer shifts $\pi,\pi'$, then we can take $U = \left. \mathcal{T} \right|_{\H}$ and $U' = \left. \mathcal{T} \right|_{\H'}$ in Corollary~\ref{cor:piInt} to recover the characterization of shift-preserving operators in Theorem~4.5 of~\cite{B}.
\end{example}

\subsection{Characterizations of $G$-systems}

The pedigree of the following corollary stretches to the seminal work of Benedetto and Li~\cite{BL93}, as demonstrated by Example~\ref{ex:BL} below. Similar results have been widely reported in the setting of translation-invariant spaces on abelian groups~\cite{BHP2,BL93,B,BR,CP,I,KR,RS}, as well as for representations of finite groups~\cite{VW2,VW3}, discrete groups~\cite{BHP,TW06}, and compact groups~\cite{I2}.
We have the LCA version of Theorem~\ref{thm:MIF}:

\begin{cor} \label{cor:piF}
Fix an admissible representation $\pi \colon G \to U(\H)$. Given a sequence $\A = \{ u_i \}_{i\in I}$ in $\H$, put $S(\A) = \overline{\spn}\{\pi(x) u_i : x \in G,\, i \in I\}$. Choose any linear isometry $U \colon \H \to L^2(\hat{G};\K)$ intertwining $\pi$ with modulation, where $\K$ is a separable Hilbert space, and let $J$ be a measurable range function for which $V_J = US(\A)$. 
Then the following are equivalent, for any $A,B>0$:
\begin{enumerate}[(i)]
\item 
The orbit $\{ \pi(x) u_i \}_{x\in G,\, i \in I}$ is an $A,B$-frame (resp.\ $B$-Bessel system) for $S(\A)$.
\item 
The system $\{ (U u_i)(\alpha) \}_{i\in I}$ is an $A,B$-frame (resp.\ $B$-Bessel system) for $J(\alpha)$ a.e.\ $\alpha \in \hat{G}$.
\end{enumerate}
\end{cor}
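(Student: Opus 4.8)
The plan is to transport the entire situation through the intertwining isometry $U$ and then invoke Theorem~\ref{thm:MIF} verbatim. First I would set $\varphi_i := U u_i \in L^2(\hat{G};\K)$ for each $i \in I$. The crucial observation is that in the present specialization ($X = \hat{G}$, $Y = G$, $\D = \{\hat{x}\}_{x\in G}$), the modulation operator $M_x$ is \emph{precisely} multiplication by the element $\hat{x} \in \D$. Since $U$ intertwines $\pi$ with modulation, this gives
\[ U\pi(x) u_i = M_x U u_i = M_{\hat{x}}\, \varphi_i \qquad (x \in G,\ i \in I). \]
Consequently $U$ carries the orbit $\{\pi(x) u_i\}_{x\in G,\,i\in I}$ onto the MI system $E(\{\varphi_i\}) = \{M_{\hat{x}}\varphi_i\}_{x\in G,\,i\in I}$ in the sense of Section~5, and it carries $S(\A)$ unitarily onto $S(\{\varphi_i\}) = \overline{\spn}\{M_{\hat{x}}\varphi_i\} = V_J$.

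Next, since the restriction $U|_{S(\A)}\colon S(\A) \to V_J$ is a surjective isometry, it preserves inner products and norms. Thus for every $v \in S(\A)$, writing $w = Uv$, one has $\langle v, \pi(x) u_i\rangle = \langle w, M_{\hat{x}}\varphi_i\rangle$ and $\Norm{v} = \Norm{w}$. Integrating over the product of Haar measure $dx$ on $G$ with counting measure on $I$---which is exactly the continuous-frame index measure on both sides, because $Y = G$ carries the same Haar measure $dx$ used in Definition~\ref{defn:GSys}---shows that $\{\pi(x) u_i\}$ is an $A,B$-frame (resp.\ $B$-Bessel) for $S(\A)$ if and only if $E(\{\varphi_i\})$ is an $A,B$-frame (resp.\ $B$-Bessel) for $V_J = S(\{\varphi_i\})$. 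This reduces statement (i) to the global frame property of the MI system $E(\{\varphi_i\})$.

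I would then apply Theorem~\ref{thm:MIF} directly to the sequence $\{\varphi_i\}_{i\in I}$: the MI system $E(\{\varphi_i\})$ is an $A,B$-frame (resp.\ $B$-Bessel) for $S(\{\varphi_i\})$ if and only if $\{\varphi_i(\alpha)\}_{i\in I}$ is an $A,B$-frame (resp.\ $B$-Bessel) for the fiber $J(\alpha)$ for a.e.\ $\alpha \in \hat{G}$. Finally, recalling $\varphi_i(\alpha) = (U u_i)(\alpha)$ and that the range function of $S(\{\varphi_i\})$ satisfies $J(\alpha) = \overline{\spn}\{(U u_i)(\alpha) : i \in I\}$ a.e.\ by~\eqref{eq:J}---so this $J$ coincides a.e.\ with the one named in the statement, by the uniqueness clause of Theorem~\ref{thm:MISpa}---yields exactly condition (ii).

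The argument is a routine translation, so I do not anticipate any genuine obstacle; the only points needing care are bookkeeping rather than depth. Specifically, I would make sure to confirm (a) that modulation is literally multiplication by an element of the determining set $\D$, so that the image of the orbit under $U$ is an MI system of precisely the type covered by Theorem~\ref{thm:MIF}, and (b) that the continuous-frame index measures agree on $G \times I$ on both sides, which holds because $Y = G$ is equipped with the same Haar measure $dx$ appearing in the $G$-frame condition.
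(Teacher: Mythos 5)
Your proposal is correct and matches the paper's (implicit) argument exactly: the paper states that all results in this subsection ``follow immediately from corresponding statements in the MI setting'' via the intertwining isometry $U$, and your write-up simply fleshes out that translation---identifying modulation with multiplication by the Parseval determining set $\{\hat{x}\}_{x\in G}$, transporting the frame inequalities through the surjective isometry $U|_{S(\A)}$, and invoking Theorem~\ref{thm:MIF}. The two bookkeeping points you flag (modulation lies in $\D$, and the index measures on $G\times I$ agree) are precisely the right ones, and both check out.
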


As a special case of Corollary~\ref{cor:piF} we obtain a generalization of~\cite[Theorem 3]{BL93}.

\begin{cor} \label{cor:piFs}
Fix an admissible representation $\pi \colon G \to U(\H)$. Given a vector $u \in \H$, put $S(u) = \overline{\spn}\{\pi(x) u : x \in G\}$. Choose any linear isometry $U \colon \H \to L^2(\hat{G};\K)$ intertwining $\pi$ with modulation, where $\K$ is a separable Hilbert space. Then the following are equivalent, for any $A,B > 0$:
\begin{enumerate}[(i)]
\item
The orbit $\{ \pi(x) u \}_{x\in G}$ is a frame for $S(u)$ with bounds $A,B$.
\item
For a.e.~$\alpha \in \hat{G}$, $\Norm{ (Uu)(\alpha) }^2 \in \{0\} \cup [A,B]$.
\end{enumerate}
\end{cor}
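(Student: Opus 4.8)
The plan is to deduce this directly from Corollary~\ref{cor:piF} applied to the single-generator system, and then to unwind what the pointwise frame condition means for a one-element system. First I would invoke Corollary~\ref{cor:piF} with $\A = \{u\}$, so that the index set $I$ is a singleton. That corollary then tells us that the orbit $\{\pi(x)u\}_{x\in G}$ is an $A,B$-frame for $S(u)$ if and only if the one-element system $\{(Uu)(\alpha)\}$ is an $A,B$-frame for $J(\alpha)$ for a.e.~$\alpha \in \hat{G}$. This reduces the global statement to a purely pointwise one, and the remaining work is to identify exactly when a single vector is a frame with bounds $A,B$ for the relevant fiber space.

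The key structural observation is that here the fiber spaces are at most one-dimensional. Indeed, by the range-function formula~\eqref{eq:J}, the range function $J$ for which $V_J = US(u)$ satisfies $J(\alpha) = \ov{\spn}\{(Uu)(\alpha)\}$ a.e.~$\alpha \in \hat{G}$. Writing $w = (Uu)(\alpha)$, this means $J(\alpha)$ is the closed span of the single vector $w$, which is $\{0\}$ when $w = 0$ and the one-dimensional line $\C w$ otherwise.

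With this in hand, I would analyze the single-vector frame condition by cases. If $w = 0$, then $J(\alpha) = \{0\}$ and the frame inequalities $A\Norm{v}^2 \le |\langle v, w\rangle|^2 \le B\Norm{v}^2$ hold vacuously over $v \in J(\alpha)$; correspondingly $\Norm{w}^2 = 0 \in \{0\} \cup [A,B]$. If $w \ne 0$, then every $v \in J(\alpha)$ has the form $v = cw$, and substituting into the frame inequalities and cancelling the common factor $|c|^2\Norm{w}^2$ (for $c \ne 0$) reduces them to $A \le \Norm{w}^2 \le B$. Thus $\{w\}$ is an $A,B$-frame for $J(\alpha)$ precisely when $\Norm{w}^2 \in \{0\} \cup [A,B]$, which is exactly condition (ii).

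The proof presents no serious obstacle once Corollary~\ref{cor:piF} is available: the only real content is recognizing that the fibers are at most one-dimensional, after which the equivalence between the single-vector frame condition and the scalar condition $\Norm{(Uu)(\alpha)}^2 \in \{0\} \cup [A,B]$ is an elementary computation. The one point to state carefully is the degenerate fiber $w = 0$, where the frame inequalities are vacuous and correctly match the isolated value $0$ appearing in $\{0\} \cup [A,B]$.
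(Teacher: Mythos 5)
Your proof is correct and follows exactly the route the paper intends: Corollary~\ref{cor:piFs} is stated there as a special case of Corollary~\ref{cor:piF} with a single generator, and your observation that $J(\alpha) = \ov{\spn}\{(Uu)(\alpha)\}$ is at most one-dimensional, together with the case analysis reducing the single-vector frame condition to $\Norm{(Uu)(\alpha)}^2 \in \{0\}\cup[A,B]$, is precisely the elementary computation the paper leaves implicit.
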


\begin{example} \label{ex:BL}
Let $G = \Z^n$, acting on $\H = L^2(\R^n)$ by integer shifts, as in Example~\ref{ex:fib}. For any $f \in L^2(\R^n)$ and $x \in [0,1)^n$ we have $\Norm{ (\mathcal{T} f)(x) }^2 = \sum_{k \in \Z^n} | \hat{f}(x+k) |^2$. Taking $U= \mathcal{T}$ and $u = f$ in Corollary~\ref{cor:piFs}, we see that $\{ f(\cdot - k) \}_{k\in \Z}$ is an $A,B$-frame for its closed linear span if and only if $\sum_{k \in \Z^n} | \hat{f}(x+k) |^2 \in \{0\} \cup [A,B]$ a.e.~$x \in [0,1)^n$. This is~\cite[Theorem 3]{BL93}.
\end{example}

\begin{defn}
If $\K$ is a separable Hilbert space, then a \emph{Riesz basis} for $\K$ is a sequence $\{ u_i \}_{i\in I}$ for which there are \emph{bounds} $A,B > 0$ such that, for any sequence $\{ c_i \}_{i\in I} \in \ell^2(I)$ having finite support,
\[ A \sum_{i\in I} | c_i |^2 \leq \Norm{ \sum_{i\in I} c_i u_i }^2 \leq B \sum_{i \in I} | c_i |^2. \]
\end{defn}

Riesz bases in the MI setting were studied by the second author in~\cite{I}.
The following result interprets~\cite[Theorem~2.3]{I} for LCA groups.

\begin{cor}
In addition to the standing assumptions, assume that $G$ is discrete and that Haar measure is normalized as counting measure. With the same notation and hypotheses of Corollary~\ref{cor:piF}, the following are equivalent for any $A,B>0$:
\begin{enumerate}[(i)]
\item 
The orbit $\{ \pi(x) u_i \}_{x\in G,\, i \in I}$ is a Riesz basis for $S(\A)$ with bounds $A,B$.
\item 
The system $\{ (U u_i)(\alpha) \}_{i\in I}$ is a Riesz basis for $J(\alpha)$ with bounds $A,B$ a.e.\ $\alpha \in \hat{G}$.
\end{enumerate}
\end{cor}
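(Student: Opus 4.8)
The plan is to transport the statement through the intertwining isometry $U$ into the multiplication-invariant setting and then invoke the corresponding MI characterization of Riesz bases, namely \cite[Theorem~2.3]{I}. This parallels the proof of Corollary~\ref{cor:piF} almost verbatim, with the frame characterization Theorem~\ref{thm:MIF} replaced by its Riesz-basis analogue.

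First I would set $\varphi_i = U u_i \in L^2(\hat{G};\K)$ for each $i \in I$. Since $U$ intertwines $\pi$ with modulation, $U\pi(x) u_i = M_{\hat{x}} U u_i = M_{\hat{x}} \varphi_i$ for every $x \in G$, so $U$ carries the orbit $\{\pi(x) u_i\}_{x\in G,\,i\in I}$ bijectively onto the system of multiplications $\{M_{\hat{x}} \varphi_i\}_{x\in G,\,i\in I} = E(\A')$, where $\A' = \{\varphi_i\}_{i\in I}$. Taking closed spans gives $U S(\A) = S(\A') = V_J$, whence $J(\alpha) = \overline{\spn}\{\varphi_i(\alpha) : i \in I\} = \overline{\spn}\{(U u_i)(\alpha) : i \in I\}$ for a.e.\ $\alpha \in \hat{G}$. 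Because $U$ restricts to a unitary from $S(\A)$ onto $V_J$, and a unitary preserves both the Riesz-basis property and its bounds (the quantity $\Norm{\sum_i c_i w_i}$ is unchanged when each $w_i$ is replaced by $U w_i$), statement (i) is equivalent to the assertion that $E(\A')$ is a Riesz basis for $V_J = S(\A')$ with bounds $A,B$.

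Next I would identify this assertion with (ii). This is where the standing hypotheses of the corollary enter: since $G$ is discrete with counting measure, the index set $Y = G$ is discrete, the global index $G \times I$ is the discrete set appropriate to the Riesz-basis notion, and the Plancherel transform $\F \colon L^2(\hat{G}) \to \ell^2(G)$ is unitary. With $X = \hat{G}$, $Y = G$, and the Parseval determining set $\D = \{\hat{x}\}_{x\in G}$ (valid by \cite[Lemma~5.2]{I}), the MI characterization \cite[Theorem~2.3]{I} applies and yields that $E(\A')$ is a Riesz basis for $S(\A')$ with bounds $A,B$ if and only if $\{\varphi_i(\alpha)\}_{i\in I}$ is a Riesz basis for $J(\alpha)$ with bounds $A,B$ for a.e.\ $\alpha \in \hat{G}$. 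Since $\varphi_i(\alpha) = (U u_i)(\alpha)$, this is precisely (ii), closing the chain of equivalences.

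The only real obstacle is bookkeeping: one must confirm that the parameters of the present setup—the Parseval determining set $\D$, the counting-measure normalization on $G$, and the separability of $\K$—match the hypotheses under which \cite[Theorem~2.3]{I} is stated. The discreteness assumption on $G$, imposed only for this corollary, is exactly what guarantees that the Riesz-basis notion, which is inherently a statement about $\ell^2$ coefficient sequences over a \emph{discrete} index set, transfers correctly between the global system over $G \times I$ and the pointwise systems over $I$; for a non-discrete $Y$ the system of multiplications would not be indexed by a discrete set and the classical Riesz-basis notion would not directly apply.
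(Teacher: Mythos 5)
Your proposal is correct and follows the paper's intended route exactly: the paper offers no separate proof for this corollary, stating only that it ``interprets \cite[Theorem~2.3]{I} for LCA groups'' via the intertwining isometry $U$ of Theorem~\ref{thm:LCAMod}, which is precisely your transport-then-apply argument. Your additional remarks on why discreteness of $G$ is needed for the Riesz-basis notion to make sense are accurate and consistent with the paper's framing.
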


\subsection{Analysis and synthesis operators}

For Bessel systems generated by LCA groups, the analysis operator and its corollaries can be represented by direct integrals.
The following are the LCA versions of Theorem~\ref{thm:MIAn}, Corollary~\ref{cor:MIFrmOp}, and Corollary~\ref{cor:MIGram}, respectively.

\begin{cor} \label{cor:piGram}
Fix an admissible representation $\pi \colon G \to U(\H)$. Given a sequence $\A = \{ u_i \}_{i\in I}$ in $\H$, put $S(\A) = \overline{\spn}\{\pi(x) u_i : x \in G,\, i \in I\}$. Choose any linear isometry $U \colon \H \to L^2(\hat{G};\K)$ intertwining $\pi$ with modulation, where $\K$ is a separable Hilbert space, and let $J$ be a measurable range function for which $V_J = US(\A)$.
If the orbit $E(\A) = \{ \pi(x) u_i \}_{i\in I}$ is Bessel, then its analysis operator $T \colon S(\A) \to L^2(G\times I)$ is given by
\[ T = \mathcal{F}_I \left[ \int_{\hat{G}}^\oplus \tilde{T}(\alpha)\, d\alpha \right] \left.U\right|_{S(\A)}. \]
Here, $\tilde{T}(\alpha) \colon J(\alpha) \to \ell^2(I)$ is the analysis operator of $\{ (U u_i)(\alpha) \}_{i\in I}$ for a.e.~$\alpha \in \hat{G}$.
\end{cor}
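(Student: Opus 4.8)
The plan is to transfer the statement to the MI setting by means of the intertwining isometry $U$, and then invoke Theorem~\ref{thm:MIAn} directly. First I would set $\varphi_i := Uu_i \in L^2(\hat{G};\K)$ for each $i\in I$ and consider the associated system of multiplications $E(\{\varphi_i\}) = \{ M_{g_t}\varphi_i\}_{t\in G,\,i\in I}$, where here $g_t = \hat{t}$ ranges over the characters $\D$ of $\hat{G}$. Since $U$ intertwines $\pi$ with modulation and preserves inner products, I compute
\[ US(\A) = \ov{\spn}\{ U\pi(x) u_i : x\in G,\, i\in I\} = \ov{\spn}\{ M_{\hat{x}} \varphi_i : x\in G,\, i\in I\} = S(\{\varphi_i\}), \]
so that $U|_{S(\A)}$ is a unitary from $S(\A)$ onto $V_J = S(\{\varphi_i\})$, and by~\eqref{eq:J} the range function $J$ satisfies $J(\alpha) = \ov{\spn}\{\varphi_i(\alpha) : i\in I\}$ a.e.

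Next I would check that $E(\{\varphi_i\})$ is Bessel in the MI sense, so that Theorem~\ref{thm:MIAn} applies. For any $v \in S(\A)$ and $\psi = Uv \in V_J$, the intertwining property gives
\[ \langle \psi, M_{\hat{x}} \varphi_i \rangle = \langle Uv, M_{\hat{x}} U u_i \rangle = \langle Uv, U\pi(x) u_i \rangle = \langle v, \pi(x) u_i \rangle. \]
Hence the MI-Bessel sum $\sum_{i\in I} \int_G |\langle \psi, M_{\hat{x}}\varphi_i \rangle|^2\, dx$ equals $\sum_{i\in I}\int_G |\langle v, \pi(x) u_i\rangle|^2\, dx \leq B\Norm{v}^2 = B\Norm{\psi}^2$, using the Bessel hypothesis on $E(\A)$ and the fact that $U$ is an isometry. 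Since $U$ carries $S(\A)$ onto $V_J$, this bound holds for all $\psi \in V_J$, so $E(\{\varphi_i\})$ is Bessel and Theorem~\ref{thm:MIF} is in force.

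The same computation identifies the two analysis operators. Writing $T_{MI}\colon S(\{\varphi_i\}) \to L^2(G\times I)$ for the MI analysis operator of $E(\{\varphi_i\})$, the display above shows that $(Tv)(x,i) = \langle v, \pi(x) u_i\rangle = \langle \psi, M_{\hat{x}}\varphi_i\rangle = (T_{MI}\psi)(x,i)$ for $\psi = Uv$, whence $T = T_{MI}\circ U|_{S(\A)}$. Finally I would apply Theorem~\ref{thm:MIAn} to the Bessel system $E(\{\varphi_i\})$ to obtain $T_{MI} = \F_I\int_{\hat{G}}^\oplus \tilde{T}(\alpha)\, d\alpha$, where $\tilde{T}(\alpha)$ is the analysis operator of $\{ \varphi_i(\alpha)\}_{i\in I} = \{(Uu_i)(\alpha)\}_{i\in I}$; composing with $U|_{S(\A)}$ yields exactly the asserted formula. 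There is no genuine obstacle beyond the bookkeeping: the only points needing care are verifying that $U$ maps $S(\A)$ isometrically onto $V_J$ and that the Bessel condition survives the transfer, both of which follow at once from the intertwining relation $U\pi(x) = M_{\hat{x}} U$.
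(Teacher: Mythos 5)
Your proposal is correct and matches the paper's approach: the paper explicitly states that all results in this subsection ``follow immediately from corresponding statements in the MI setting'' via the intertwining isometry $U$, which is precisely the transfer argument you carry out (with the details of the Bessel transfer and the identification $T = T_{MI}\circ U|_{S(\A)}$ spelled out). No gaps.
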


\begin{cor}
Under the circumstances of Corollary~\ref{cor:piGram}, the frame operator $S \colon S(\A) \to S(\A)$ of $E(\A)$ is the conjugate of an MI operator, with
\[ S =  \left( \left.U\right|_{S(\A)} \right)^* \left[ \int_{\hat{G}}^\oplus S(\alpha)\, d\alpha \right] \left.U\right|_{S(\A)}. \]
Here, $S(\alpha) \colon J(\alpha) \to J(\alpha)$ is the frame operator of $\{ (U u_i)(\alpha) \}_{i\in I}$ for a.e.~$\alpha \in \hat{G}$.
\end{cor}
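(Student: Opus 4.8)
The plan is to obtain this identity by the same reasoning that established Corollary~\ref{cor:MIFrmOp}, namely by writing the frame operator as $S = T^*T$ and inserting the formula for the analysis operator $T$ from Corollary~\ref{cor:piGram}. That corollary gives
\[ T = \mathcal{F}_I \left[ \int_{\hat{G}}^\oplus \tilde{T}(\alpha)\, d\alpha \right] \left.U\right|_{S(\A)}, \]
so, abbreviating $A = \int_{\hat{G}}^\oplus \tilde{T}(\alpha)\, d\alpha$ and taking adjoints of the composition, I would compute
\[ S = T^*T = \left( \left.U\right|_{S(\A)} \right)^* A^* \, \mathcal{F}_I^* \mathcal{F}_I \, A \, \left.U\right|_{S(\A)}. \]
Here all three factors compose on matching domains: $\left.U\right|_{S(\A)} \colon S(\A) \to V_J$ maps onto $V_J = U(S(\A))$, then $A \colon V_J \to L^2(\hat{G};\ell^2(I))$ is the decomposable operator built from the pointwise analysis operators, and then $\mathcal{F}_I \colon L^2(\hat{G};\ell^2(I)) \to L^2(G\times I)$.

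The first key step is to use that $\mathcal{F}_I$ is a linear isometry, so that $\mathcal{F}_I^* \mathcal{F}_I$ is the identity on $L^2(\hat{G};\ell^2(I))$ and drops out, leaving $S = \left( \left.U\right|_{S(\A)} \right)^* A^* A \, \left.U\right|_{S(\A)}$. The second step is to identify $A^*A$ fiberwise. Since $A$ is an MI (decomposable) operator, Theorem~\ref{prop}(iv) shows its adjoint $A^*$ is again MI with range operator $\alpha \mapsto \tilde{T}(\alpha)^*$, and by the functoriality of the category isomorphism in Corollary~\ref{cor:CatIso} the composition of MI operators corresponds to fiberwise composition of their range operators. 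Hence
\[ A^* A = \int_{\hat{G}}^\oplus \tilde{T}(\alpha)^* \tilde{T}(\alpha)\, d\alpha = \int_{\hat{G}}^\oplus S(\alpha)\, d\alpha, \]
where the last equality records that $S(\alpha) = \tilde{T}(\alpha)^* \tilde{T}(\alpha)$ is exactly the frame operator of the pointwise system $\{ (Uu_i)(\alpha) \}_{i\in I}$ for a.e.\ $\alpha \in \hat{G}$. Substituting this back gives the claimed formula.

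I do not expect a genuine obstacle, as the result is an immediate consequence of Corollary~\ref{cor:piGram} in the manner of Corollary~\ref{cor:MIFrmOp}. The only points that deserve care are purely a matter of bookkeeping: confirming that $\mathcal{F}_I^* \mathcal{F}_I$ is the \emph{full} identity on $L^2(\hat{G};\ell^2(I))$ (which holds because $\mathcal{F}_I$ is an isometry on the whole space, even though it need not be surjective, so that no projection survives), and verifying that the decomposable operator $A$ and its adjoint genuinely act between the correct MI spaces $V_J$ and $L^2(\hat{G};\ell^2(I))$ so that the range-operator calculus of Theorem~\ref{prop}(iv) and Corollary~\ref{cor:CatIso} applies.
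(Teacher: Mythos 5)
Your proposal is correct and follows exactly the route the paper intends: the paper derives this corollary as an immediate consequence of Corollary~\ref{cor:piGram} in the same way that Corollary~\ref{cor:MIFrmOp} is derived from Theorem~\ref{thm:MIAn}, Theorem~\ref{prop}(iv), and the isometry of $\F_I$. Your bookkeeping points (that $\F_I^*\F_I$ is the full identity and that $U|_{S(\A)}$ is unitary onto $V_J$) are exactly the right things to check, and they hold.
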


\begin{cor} 
Under the circumstances of Corollary~\ref{cor:piGram}, the Gramian of $E(\A)$ conjugates to an MI operator on $L^2(\hat{G};\ell^2(I))$, with
\[ \F_I^* (T T^*) \F_I = \int_{\hat{G}}^\oplus \tilde{T}(\alpha) \tilde{T}(\alpha)^*\, d\alpha. \]
\end{cor}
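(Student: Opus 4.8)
The plan is to deduce this immediately from the multiplication-invariant version, Corollary~\ref{cor:MIGram}, by transporting the $G$-system $E(\A)$ to the corresponding system of multiplications through the intertwining isometry $U$. Write $\tilde\A = \{Uu_i\}_{i\in I}$ for the image sequence in $L^2(\hat G;\K)$. Since $U$ intertwines $\pi$ with modulation, we have $U(\pi(x)u_i) = M_{\hat x}(Uu_i)$, so $U$ carries the orbit $E(\A)$ onto the multiplication system $E(\tilde\A) = \{M_{\hat x}(Uu_i)\}_{x\in G,\,i\in I}$. In particular, $U$ restricts to a \emph{unitary} $W := \left.U\right|_{S(\A)} \colon S(\A) \to V_J$ onto the MI space $V_J = US(\A) = S(\tilde\A)$, being a surjective isometry onto its range.

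Next I would record the factorization of the analysis operator supplied by Corollary~\ref{cor:piGram}, namely
\[ T = \F_I \Bigl[ \int_{\hat G}^\oplus \tilde T(\alpha)\,d\alpha \Bigr] W, \]
where $\tilde T(\alpha)\colon J(\alpha)\to\ell^2(I)$ is the pointwise analysis operator of $\{(Uu_i)(\alpha)\}_{i\in I}$. Abbreviating the decomposable operator by $\tilde T_{\mathrm{MI}} := \int_{\hat G}^\oplus \tilde T(\alpha)\,d\alpha$, this reads $T = \F_I\,\tilde T_{\mathrm{MI}}\,W$, whence
\[ TT^* = \F_I\,\tilde T_{\mathrm{MI}}\,WW^*\,\tilde T_{\mathrm{MI}}^*\,\F_I^*. \]

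The central computation is then to cancel the two unitary/isometry factors. Because $W$ is unitary onto $V_J$, we have $WW^* = I_{V_J}$, so the inner factors drop out and $TT^* = \F_I\,\tilde T_{\mathrm{MI}}\tilde T_{\mathrm{MI}}^*\,\F_I^*$. Conjugating by $\F_I$ and using that $\F_I$ is an isometry, so $\F_I^*\F_I = I$ on $L^2(\hat G;\ell^2(I))$, gives
\[ \F_I^*(TT^*)\F_I = \tilde T_{\mathrm{MI}}\,\tilde T_{\mathrm{MI}}^*. \]
Finally, Theorem~\ref{prop}(iv) together with Corollary~\ref{cor:CatIso} identifies the range operator of $\tilde T_{\mathrm{MI}}\tilde T_{\mathrm{MI}}^*$ as $\alpha\mapsto \tilde T(\alpha)\tilde T(\alpha)^*$, yielding the claimed identity $\F_I^*(TT^*)\F_I = \int_{\hat G}^\oplus \tilde T(\alpha)\tilde T(\alpha)^*\,d\alpha$. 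Equivalently, one may simply observe that $TT^* = T_{\mathrm{mult}}T_{\mathrm{mult}}^*$, where $T_{\mathrm{mult}} = \F_I\,\tilde T_{\mathrm{MI}}$ is the MI analysis operator of $E(\tilde\A)$ from Theorem~\ref{thm:MIAn}, and then quote Corollary~\ref{cor:MIGram} verbatim. There is no genuine obstacle here; the only point to state with care is that the outer conjugation survives precisely because $\F_I$ is an isometry, so that $\F_I^*\F_I$ acts as the identity and no information is lost when passing $\F_I^*$ and $\F_I$ through the MI operator.
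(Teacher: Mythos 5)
Your proposal is correct and follows essentially the same route as the paper: transport $E(\A)$ to a system of multiplications via the intertwining isometry, invoke the factorization $T = \F_I\,[\int_{\hat G}^\oplus \tilde T(\alpha)\,d\alpha]\,U|_{S(\A)}$ from Corollary~\ref{cor:piGram}, and reduce to Corollary~\ref{cor:MIGram} together with Theorem~\ref{prop}(iv). The only minor quibble is that the ``no information lost'' remark rests not just on $\F_I^*\F_I = I$ but on the fact that $\ran T \subset \ran \F_I$ (which Theorem~\ref{thm:MIAn} supplies), exactly as the paper notes in its proof of the MI version.
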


\subsection{Other properties of $G$-systems}

Other properties of Bessel systems, such as orthogonality and duality, can also be studied from the fibers.
The proof of the following is similar to that of Corollary~\ref{cor:MIOrth}.
We leave details to the reader.

\begin{cor}
Let $\pi \colon G \to U(\H)$ and $\pi' \colon G \to U(\H')$ be admissible representations. Fix sequences $\A = \{ u_i \}_{i\in I}$ and $\A' = \{ u_i' \}_{i\in I}$ in $\H$ and $\H'$, respectively, whose orbits $E(\A) = \{ \pi(x) u_i \}_{x\in G,\, i \in I}$ and $E(\A') = \{ \pi'(x) u_i' \}_{x\in G,\, i \in I}$ are both Bessel. Then the following are equivalent, for any choice of linear isometries $U \colon \H \to L^2(\hat{G};\K)$ and $U' \colon \H' \to L^2(\hat{G};\K')$ intertwining $\pi$ and $\pi'$ with modulation, respectively, where $\K$ and $\K'$ are separable Hilbert spaces:
\begin{enumerate}[(i)]
\item 
The analysis operators of $E(\A)$ and $E(\A')$ have orthogonal ranges.
\item
For a.e.~$\alpha \in \hat{G}$, the analysis operators of $\{ (U u_i)(\alpha) \}_{i \in I}$ and $\{ (U' u_i')(\alpha) \}_{i \in I}$ have orthogonal ranges.
\end{enumerate}
\end{cor}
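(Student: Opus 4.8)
The plan is to transport the statement to the multiplication-invariant setting through the intertwining isometries $U,U'$, and then to run the argument of Corollary~\ref{cor:MIOrth} using the factorization of the analysis operator already recorded in Corollary~\ref{cor:piGram}. Write $\varphi_i = Uu_i$ and $\varphi_i' = U'u_i'$, and let $\tilde T(\alpha)\colon J(\alpha)\to\ell^2(I)$ and $\tilde T'(\alpha)\colon J'(\alpha)\to\ell^2(I)$ denote the pointwise analysis operators of $\{\varphi_i(\alpha)\}_{i\in I}$ and $\{\varphi_i'(\alpha)\}_{i\in I}$, which are bounded measurable range operators by Lemma~\ref{lem:PtAnDec}. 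Here $J$ and $J'$ are the range functions with $V_J = U\,S(\A)$ and $V_{J'} = U'\,S(\A')$, so that $\varphi_i(\alpha) = (Uu_i)(\alpha)$ and $\varphi_i'(\alpha) = (U'u_i')(\alpha)$ are precisely the vectors appearing in (ii).

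Since the analysis operators $T,T'$ of $E(\A),E(\A')$ annihilate $S(\A)^\perp$ and $S(\A')^\perp$, their ranges are unchanged if we restrict to $S(\A)$ and $S(\A')$. I would therefore invoke Corollary~\ref{cor:piGram} to write
\[ T = \F_I\Bigl[\int_{\hat G}^\oplus \tilde T(\alpha)\,d\alpha\Bigr] U\big|_{S(\A)}, \qquad T' = \F_I\Bigl[\int_{\hat G}^\oplus \tilde T'(\alpha)\,d\alpha\Bigr] U'\big|_{S(\A')}. \]
Using Theorem~\ref{prop}(iv) to identify the adjoint of a decomposable operator with its fiberwise adjoint, the isometry identity $\F_I^*\F_I = I$ on $L^2(\hat G;\ell^2(I))$, and the functoriality of $F$ from Corollary~\ref{cor:CatIso} to compose decomposable operators fiberwise, this yields
\[ T^*T' = \bigl(U|_{S(\A)}\bigr)^*\Bigl[\int_{\hat G}^\oplus \tilde T(\alpha)^*\tilde T'(\alpha)\,d\alpha\Bigr]\,U'|_{S(\A')}, \]
in direct analogy with equation~\eqref{eq:MIMG}.

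The equivalence of (i) and (ii) then follows by unwinding the two notions of orthogonality. Globally, $\ran T\perp\ran T'$ iff $T^*T' = 0$; and since $U|_{S(\A)}$ and $U'|_{S(\A')}$ are unitaries onto $V_J$ and $V_{J'}$, the display above shows $T^*T' = 0$ iff $\int_{\hat G}^\oplus \tilde T(\alpha)^*\tilde T'(\alpha)\,d\alpha = 0$. By the uniqueness clause of Theorem~\ref{thm:MIOp}, the latter decomposable operator vanishes iff its range operator $\alpha\mapsto\tilde T(\alpha)^*\tilde T'(\alpha)$ vanishes for a.e.\ $\alpha$. Finally, $\tilde T(\alpha)^*\tilde T'(\alpha) = 0$ iff $\ran\tilde T(\alpha)\perp\ran\tilde T'(\alpha)$ in $\ell^2(I)$, which is exactly condition (ii).

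The main obstacle I anticipate is careful bookkeeping rather than a genuine difficulty: because $J$ and $J'$ may take values in distinct Hilbert spaces $\K,\K'$, one must treat $\tilde T(\alpha)^*\tilde T'(\alpha)$ as a range operator $J'\to J$ between \emph{different} range functions and verify that the composition of the corresponding decomposable operators is licensed by the category isomorphism of Corollary~\ref{cor:CatIso}. One must also confirm that pre-composing with the unitaries $U|_{S(\A)}$ and $U'|_{S(\A')}$ neither creates nor destroys orthogonality of ranges. Both points are handled exactly as in the proof of Corollary~\ref{cor:MIOrth}, so no new phenomena arise beyond those already present there.
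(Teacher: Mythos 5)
Your proposal is correct and follows exactly the route the paper prescribes: the paper's own ``proof'' is the remark that the argument is the same as that of Corollary~\ref{cor:MIOrth}, transported through the intertwining isometries, which is precisely what you carry out via Corollary~\ref{cor:piGram} and the identity $T^*T' = (U|_{S(\A)})^*\bigl[\int_{\hat G}^\oplus \tilde T(\alpha)^*\tilde T'(\alpha)\,d\alpha\bigr]U'|_{S(\A')}$. Your added care about $\K\neq\K'$ and about restricting $T,T'$ to $S(\A),S(\A')$ fills in exactly the details the paper leaves to the reader.
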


We also have LCA versions of Corollaries~\ref{cor:MIDual} and~\ref{cor:MICan}, respectively.

\begin{cor}
Let $\pi \colon G \to U(\H)$ be an admissible representation.
Fix sequences $\A = \{ u_i \}_{i\in I}$ and $\A' = \{ u_i' \}_{i\in I}$ in $\H$ generating the same invariant subspace
\[ S(\A) = \overline{\spn}\{\pi(x) u_i : x \in G,\, i \in I\} = \overline{\spn}\{\pi(x) u_i' : x \in G,\, i \in I\} = S(\A'), \]
such that $E(\A) = \{ \pi(x) u_i \}_{x\in G,\, i \in I}$ and $E(\A') = \{ \pi(x) u_i' \}_{x\in G,\, i \in I}$ are both frames for $S(\A)$. 
Then the following are equivalent, for any choice of linear isometry $U \colon \H \to L^2(\hat{G};\K)$ intertwining $\pi$ with modulation, where $\K$ is a separable Hilbert space:
\begin{enumerate}[(i)]
\item
$E(\A)$ and $E(\A')$ are dual frames.
\item
For a.e.~$\alpha \in \hat{G}$, $\{ (U u_i)(\alpha) \}_{i\in I}$ and $\{ (U u_i')(\alpha) \}_{i\in I}$ are dual frames.
\end{enumerate}
\end{cor}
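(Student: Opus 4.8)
The plan is to push everything through the intertwining isometry $U$ into the multiplication-invariant setting and then invoke Corollary~\ref{cor:MIDual}. Recall that in the present LCA situation the Parseval determining set is $\D = \{\hat{x}\}_{x\in G}$ with $Y = G$, so multiplication by $g_x := \hat{x}$ coincides with modulation $M_x$. Setting $\varphi_i := Uu_i$ and $\varphi_i' := Uu_i'$ in $L^2(\hat{G};\K)$, the intertwining relation $U\pi(x) = M_x U$ shows that $U$ carries the orbit $E(\A)$ onto the multiplication system $E(\{\varphi_i\}_{i\in I})$ and $S(\A)$ onto $S(\{\varphi_i\}_{i\in I})$, and likewise for the primed system. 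Since $S(\A) = S(\A')$ and $U$ is isometric, both transported systems generate the same MI space $V_J$, where $J(\alpha) = \overline{\spn}\{(Uu_i)(\alpha) : i\in I\}$ a.e.\ by \eqref{eq:J}.

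The key step is to factor the $G$-frame analysis operators through $U$. If $T, T' \colon S(\A) \to L^2(G\times I)$ denote the analysis operators of $E(\A), E(\A')$ and $\tilde{T}, \tilde{T}' \colon V_J \to L^2(Y\times I) = L^2(G\times I)$ those of the MI systems $E(\{\varphi_i\}), E(\{\varphi_i'\})$, then for $v\in S(\A)$ the computation $(Tv)(x,i) = \langle v, \pi(x) u_i\rangle = \langle Uv, M_x \varphi_i\rangle = (\tilde{T}(Uv))(x,i)$ yields $T = \tilde{T}\, U|_{S(\A)}$, and similarly $T' = \tilde{T}'\, U|_{S(\A)}$. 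Because $U|_{S(\A)} \colon S(\A) \to V_J$ is unitary, the transported systems $E(\{\varphi_i\}), E(\{\varphi_i'\})$ are again frames for $V_J$, and conjugation gives $T^*T' = (U|_{S(\A)})^* \tilde{T}^* \tilde{T}' \, U|_{S(\A)}$.

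With the factorization in hand the rest is immediate. Since $U|_{S(\A)}$ is unitary, $T^*T' = I_{S(\A)}$ holds if and only if $\tilde{T}^*\tilde{T}' = I_{V_J}$; that is, $E(\A)$ and $E(\A')$ are dual frames exactly when $E(\{\varphi_i\})$ and $E(\{\varphi_i'\})$ are dual frames in the MI sense. Corollary~\ref{cor:MIDual} then translates the latter into the statement that, for a.e.\ $\alpha\in \hat{G}$, $\{\varphi_i(\alpha)\}_{i\in I} = \{(Uu_i)(\alpha)\}_{i\in I}$ and $\{\varphi_i'(\alpha)\}_{i\in I} = \{(Uu_i')(\alpha)\}_{i\in I}$ are dual frames, which is precisely condition (ii).

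The only genuine obstacle is bookkeeping: one must identify the $G$-frame analysis operators of Definition~\ref{defn:GSys} (indexed by $G\times I$) with the MI analysis operators of Section~5 (indexed by $Y\times I$) under the identifications $Y = G$, $g_x = \hat{x}$, and $M_{g_x} = M_x$, and confirm that the dual-frame condition $T^*T' = I$ from the definition preceding Corollary~\ref{cor:MIDual} applies verbatim to continuous $G$-frames, which are continuous frames in the sense of that definition. Once these identifications are made explicit, every step reduces to the intertwining property of $U$ together with Corollary~\ref{cor:MIDual}.
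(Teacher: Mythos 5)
Your proof is correct and follows the same route the paper intends: transport the systems through the intertwining isometry $U$ into the MI setting and invoke Corollary~\ref{cor:MIDual}. The paper leaves these details implicit (stating only that the LCA results ``follow immediately from corresponding statements in the MI setting''), and your factorization $T = \tilde{T}\, U|_{S(\A)}$ with $U|_{S(\A)}$ unitary onto $V_J$ supplies exactly the bookkeeping the paper omits.
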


\begin{cor}
Let $\pi \colon G \to U(\H)$ be an admissible representation. Given a sequence $\A = \{ u_i \}_{i\in I}$, denote $E(\A) = \{ \pi(x) u_i \}_{i\in I}$ and $S(\A) = \overline{\spn}\{\pi(x) u_i : x \in G,\, i \in I\}$. Assume that $E(\A)$ is a frame for $S(\A)$, with frame operator $S$. Then:
\begin{enumerate}[(i)]
\item
The canonical dual frame for $E(\A)$ is $E(\A')$, where $\A' = \{ S^{-1} u_i \}_{i\in I}$.
\item
The canonical tight frame for $E(\A)$ is $E(\A'')$, where $\A'' = \{ S^{-1/2} u_i \}_{i\in I}$.
\end{enumerate}
\end{cor}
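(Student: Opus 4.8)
The plan is to transport everything through the intertwining isometry of Theorem~\ref{thm:LCAMod} and reduce to Corollary~\ref{cor:MICan}. First I would fix a linear isometry $U \colon \H \to L^2(\hat{G};\K)$ with $U\pi(x) = M_{\hat{x}}U$ for all $x\in G$, and set $\varphi_i = Uu_i$, $\A_U = \{\varphi_i\}_{i\in I}$. Because $\D = \{\hat{x}\}_{x\in G}$ is exactly the character set of $\hat{G}$, the multiplication system attached to $\A_U$ is $E(\A_U) = \{M_{\hat{x}}\varphi_i\}_{x\in G,\,i\in I} = U E(\A)$; thus $U$ restricts to a unitary $U_0 \colon S(\A)\to S(\A_U)$ carrying $E(\A)$ onto $E(\A_U)$, and $E(\A_U)$ is a frame for $S(\A_U)$ by Corollary~\ref{cor:piF}.

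Next I would relate the two frame operators. Writing $T,\tilde{T}$ for the analysis operators of $E(\A)$ and $E(\A_U)$---both landing in $L^2(G\times I)$ since here $Y=G$---the identity $M_{\hat{x}}\varphi_i = U\pi(x)u_i$ gives $\tilde{T} = TU_0^*$, hence $\tilde{S} := \tilde{T}^*\tilde{T} = U_0 S U_0^*$ for the corresponding frame operators. Since $U_0$ is unitary, functional calculus yields $\tilde{S}^{-1} = U_0 S^{-1}U_0^*$ and $\tilde{S}^{-1/2} = U_0 S^{-1/2}U_0^*$, so that $\tilde{S}^{-1}\varphi_i = U_0 S^{-1}u_i$ and $\tilde{S}^{-1/2}\varphi_i = U_0 S^{-1/2}u_i$ for each $i\in I$.

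Then I would invoke Corollary~\ref{cor:MICan}. Its part (i) says the canonical dual of $E(\A_U)$ is $E(\{\tilde{S}^{-1}\varphi_i\}_{i\in I}) = E(\{U_0 S^{-1}u_i\}_{i\in I}) = U_0 E(\{S^{-1}u_i\}_{i\in I})$. Using the general fact that a unitary equivalence $W\colon v_s\mapsto w_s$ conjugates frame operators as $S_w = W S_v W^*$ and therefore carries canonical duals to canonical duals (indeed $S_w^{-1}w_s = W S_v^{-1}v_s$), I pull back through $U_0$ to conclude that the canonical dual of $E(\A) = U_0^* E(\A_U)$ is $E(\{S^{-1}u_i\}_{i\in I}) = E(\A')$, proving (i). Part (ii) is identical with $S^{-1}$ replaced by $S^{-1/2}$ throughout, using Corollary~\ref{cor:MICan}(ii), and produces the canonical tight frame $E(\A'')$ with $\A'' = \{S^{-1/2}u_i\}_{i\in I}$.

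The one point requiring genuine care is the bookkeeping showing that the frame operator and its functions $S^{-1}$ and $S^{-1/2}$ conjugate correctly under $U_0$, together with the elementary but necessary observation that unitary equivalence preserves canonical and canonical tight duals; once these are in place, the statement is a direct translation of Corollary~\ref{cor:MICan} through the intertwiner. I note that an even more direct route avoids transport entirely: a change of variables in the Haar integral (valid since an abelian $G$ is unimodular) shows $S\pi(y) = \pi(y)S$ for all $y\in G$ on the $\pi$-invariant subspace $S(\A)$, whence $S^{-1}$ and $S^{-1/2}$---being norm limits of polynomials in $S$, as $\sigma(S)\subset[A,B]$ with $A>0$---also commute with every $\pi(y)$. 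Then $S^{-1}\pi(x)u_i = \pi(x)S^{-1}u_i$ and $S^{-1/2}\pi(x)u_i = \pi(x)S^{-1/2}u_i$, which is precisely the assertion that the canonical dual and canonical tight frame of $E(\A)$ equal $E(\A')$ and $E(\A'')$, respectively.
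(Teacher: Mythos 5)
Your main argument---transporting $E(\A)$ through the intertwining isometry of Theorem~\ref{thm:LCAMod}, checking that the frame operators conjugate as $\tilde S = U_0 S U_0^*$, and invoking Corollary~\ref{cor:MICan}---is exactly the route the paper intends (it presents this corollary as the ``LCA version'' of Corollary~\ref{cor:MICan}, following immediately from the MI statement via the intertwiner), and your bookkeeping is correct. The alternative argument you sketch at the end, showing directly that $S$ commutes with every $\pi(y)$ by left-invariance of Haar measure and hence so do $S^{-1}$ and $S^{-1/2}$ as norm limits of polynomials in $S$, is also valid and is in fact more elementary, since it bypasses the MI machinery entirely.
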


\subsection{Classifications of $G$-systems}

Finally, we interpret the results of Section~\ref{sec:MIclass} to classify Bessel systems generated by LCA groups.
When $N \in \{1,2,\dotsc \}$, we denote $\ell^2_N:= \ell^2(\{1,\dotsc,N\})$. Similarly, $\ell^2_\infty := \ell^2(\{1,2,\dotsc \})$.
The following is the LCA version of Theorem~\ref{thm:MIBE2} (resp.~Corollary~\ref{cor:MIFrCl}).

\begin{cor} \label{cor:piCl1}
For $N \in \{1,2,\dotsc,\infty\}$, unitary equivalence classes of Bessel $G$-systems (resp.\ $G$-frames) having $N$ generators are in one-to-one correspondence with positive (resp.\ locally invertible), integrable MI operators on $L^2(\hat{G};\ell^2_N)$.
\end{cor}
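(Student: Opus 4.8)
The plan is to reduce the classification of Bessel $G$-systems to the classification of Bessel systems of multiplications already obtained in Theorem~\ref{thm:MIBE2} and Corollary~\ref{cor:MIFrCl}, using the intertwining isometry of Theorem~\ref{thm:LCAMod}, and then to observe that the rank constraint appearing in those results becomes vacuous for the right choice of target space.

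First I would set up the reduction. Let $E(\A)$ be a Bessel $G$-system generated by $\A = \{u_i\}_{i\in I}$ with $|I| = N$. Restricting $\pi$ to $S(\A) = \overline{\spn}\{\pi(x) u_i : x \in G,\, i \in I\}$, the system $E(\A)$ is a complete Bessel system for $S(\A)$, so $\pi|_{S(\A)}$ is admissible with witness $\A$. Taking $\K = \ell^2_N$, so that $\dim \K = N = |I|$, Theorem~\ref{thm:LCAMod} provides a linear isometry $U \colon S(\A) \to L^2(\hat G;\ell^2_N)$ intertwining $\pi$ with modulation, together with the fact that $E(\A)$ is unitarily equivalent to the system of multiplications $E(\{U u_i\}_{i\in I})$ in $L^2(\hat G;\ell^2_N)$ (here $\D = \{\hat x\}_{x\in G}$ plays the role of the modulations). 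Since $Y = G$ and $M_{\hat x}$ is modulation by $x$, the index set $Y\times I$ agrees with $G\times I$, and the $G$-system notion of unitary equivalence coincides with the Bessel-system notion for these systems.

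Next I would show that $E(\A) \mapsto [E(\{U u_i\})]$ descends to a bijection on unitary equivalence classes. It is well defined and injective by transitivity: if $E(\A) \cong E(\A_1)$ with $\A_1 = \{w_i\}_{i\in I}$ and isometry $U_1$, then $E(\{U u_i\}) \cong E(\A) \cong E(\A_1) \cong E(\{U_1 w_i\})$, and the same chain shows independence of the choice of $U$. Surjectivity is immediate, since any Bessel system $E(\{\varphi_i\}_{i\in I})$ of multiplications in $L^2(\hat G;\ell^2_N)$ is itself a Bessel $G$-system for the modulation representation on $S(\{\varphi_i\})$. Composing this bijection with Theorem~\ref{thm:MIBE2}, applied with $X = \hat G$, $\H = \K = \ell^2_N$, and index set $I$ satisfying $\ell^2(I) = \ell^2_N$, identifies unitary equivalence classes of Bessel $G$-systems with $N$ generators with positive, integrable MI operators $\int_{\hat G}^\oplus \mathrm{Gr}(\alpha)\, d\alpha$ on $L^2(\hat G;\ell^2_N)$. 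The hypothesis $\rank \mathrm{Gr}(\alpha) \le \dim \K$ of Theorem~\ref{thm:MIBE2} is automatic here, because $\mathrm{Gr}(\alpha)$ acts on $\ell^2(I) = \ell^2_N$ and $\dim \ell^2_N = N = \dim \K$. This settles the Bessel case.

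For the $G$-frame case I would run the same argument with Corollary~\ref{cor:MIFrCl} in place of Theorem~\ref{thm:MIBE2}. The frame lower bound forces completeness, so $S(\A) = \H$; under $U$ the system $E(\{U u_i\})$ is a frame for its span $V_J = U(\H)$, matching the hypothesis of Corollary~\ref{cor:MIFrCl}, and conversely a locally invertible, integrable MI operator yields an MI frame system which is a $G$-frame for $V_J$ under modulation. Thus $G$-frames correspond to locally invertible, integrable MI operators, again with the rank condition vacuous. The main obstacle is bookkeeping rather than substance: one must verify that restricting to $S(\A)$ preserves admissibility and the generator count $N$, confirm that the two notions of unitary equivalence genuinely agree under the identification $Y = G$, and check that the choice $\dim \K = N$ exactly cancels the rank constraint, so that no condition beyond positivity (resp.\ local invertibility) and integrability survives.
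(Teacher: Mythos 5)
Your proposal is correct and follows essentially the same route as the paper: the paper treats Corollary~\ref{cor:piCl1} as the LCA instance of Theorem~\ref{thm:MIBE2} and Corollary~\ref{cor:MIFrCl}, obtained by transporting a $G$-system into $L^2(\hat G;\ell^2_N)$ via the intertwining isometry of Theorem~\ref{thm:LCAMod} (the mechanism is spelled out in Remark~\ref{rem:class}), with the rank condition $\rank \mathrm{Gr}(\alpha)\le\dim\H$ becoming vacuous for $\H=\ell^2_N$. Your additional checks---well-definedness and independence of the choice of $U$ via transitivity of unitary equivalence and Corollary~\ref{cor:MIBE1}, and the agreement of the two notions of unitary equivalence under $Y=G$---are exactly the bookkeeping the paper leaves implicit.
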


For Parseval frames, Corollary~\ref{cor:MIPF} says the following in the LCA setting.

\begin{cor} \label{cor:piCl2}
For $N \in \{1,2,\dotsc,\infty\}$, unitary equivalence classes of Parseval $G$-frames having $N$ generators are in one-to-one correspondence with 
integrable range functions
\[ J \colon \hat{G} \to \{ \text{closed subspaces of }\ell^2_N\}. \]
\end{cor}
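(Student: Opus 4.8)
The plan is to transport the MI-setting classification of Corollary~\ref{cor:MIPF} to the group setting by means of an intertwining isometry, exactly as in the preceding corollaries of this section. First I would note that a Parseval $G$-frame is in particular complete, so any representation $\pi\colon G\to U(\H)$ carrying one is admissible. Given such a system $E(\A)$ with $\A=\{u_i\}_{i\in I}$, $|I|=N$, Theorem~\ref{thm:LCAMod} supplies a separable Hilbert space $\K$ with $\dim\K\ge N$ and a linear isometry $U\colon\H\to L^2(\hat G;\K)$ intertwining $\pi$ with modulation; writing $\varphi_i=Uu_i$, the orbit $E(\A)$ is carried by $U$ to the system of modulations $E(\{\varphi_i\})$ in $L^2(\hat G;\K)$, which is a Parseval frame for $S(\{\varphi_i\})=U(\H)$. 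Corollary~\ref{cor:MIPF} then attaches to this system an integrable range function $J\colon\hat G\to\{\text{closed subspaces of }\ell^2_N\}$, namely the one for which $\mathrm{Gr}(\alpha)$, the Gramian of $\{\varphi_i(\alpha)\}_{i\in I}$ (an operator on $\ell^2_N$), is orthogonal projection onto $J(\alpha)$ a.e.\ $\alpha\in\hat G$. Because $\dim J(\alpha)\le N\le\dim\K$ automatically, the side condition $\dim J\le\dim\K$ in Corollary~\ref{cor:MIPF} imposes no restriction, which is precisely what lets the target of the correspondence be the set of \emph{all} integrable range functions into $\ell^2_N$.

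The heart of the argument is to check that $E(\A)\mapsto J$ is well defined on unitary equivalence classes and injective, i.e.\ that two Parseval $G$-frames are unitarily equivalent if and only if they produce the same range function. For this I would verify that an intertwining isometry transports unitary equivalence in both directions. Concretely, if $W\colon\H\to\H'$ is a unitary intertwining $\pi$ with $\pi'$ and satisfying $Wu_i=u_i'$, and if $U,U'$ are intertwining isometries for $\pi,\pi'$, then $\Phi:=U'WU^{-1}\colon V_J\to V_{J'}$ is a unitary which intertwines modulation (using $U^{-1}M_{\hat x}=\pi(x)U^{-1}$, $W\pi(x)=\pi'(x)W$, and $U'\pi'(x)=M_{\hat x}U'$) and satisfies $\Phi\varphi_i=\varphi_i'$; thus $E(\{\varphi_i\})$ and $E(\{\varphi_i'\})$ are unitarily equivalent MI systems, and the same computation run backwards produces a $G$-system equivalence from an MI equivalence. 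By Corollary~\ref{cor:MIBE1}, unitary equivalence of these modulation systems is the same as equality of their pointwise Gramians, hence equality of $J$ and $J'$. Specializing to $W=\mathrm{id}_\H$ with two choices of $U$ first shows that $J$ does not depend on the chosen intertwining isometry, so the map is well defined on unitary equivalence classes; the general two-sided transport then yields injectivity.

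For surjectivity I would start from an arbitrary integrable range function $J\colon\hat G\to\{\text{closed subspaces of }\ell^2_N\}$ and run Corollary~\ref{cor:MIPF} in reverse with $\K=\ell^2_N$: since $\dim J(\alpha)\le N=\dim\ell^2_N$, it produces a Parseval-frame system of modulations $E(\{\varphi_i\})$ in $L^2(\hat G;\ell^2_N)$ whose pointwise Gramians are the projections onto $J(\alpha)$. Regarding modulation restricted to $\H:=S(\{\varphi_i\})$ as an admissible representation $\pi$ and $\{\varphi_i\}_{i\in I}$ as its generators, $E(\{\varphi_i\})$ is a Parseval $G$-frame with $N$ generators, and choosing $U$ to be the inclusion $\H\hookrightarrow L^2(\hat G;\ell^2_N)$ shows that its associated range function is $J$. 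I expect the only real obstacle to be the bookkeeping in the well-definedness and injectivity step---making sure the intertwining identities are applied in the correct order and that the transport is genuinely reversible---while the remaining steps are direct applications of Theorem~\ref{thm:LCAMod} and Corollary~\ref{cor:MIPF}.
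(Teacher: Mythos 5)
Your proposal is correct and follows essentially the same route as the paper: reduce a Parseval $G$-frame to a system of modulations via the intertwining isometry of Theorem~\ref{thm:LCAMod}, then invoke the MI-setting classification of Corollary~\ref{cor:MIPF}, noting that the constraint $\dim J(\alpha)\le\dim\K$ is vacuous; this is exactly the passage through pointwise Gramians described in Remark~\ref{rem:class}. Your additional verification that the resulting range function is independent of the chosen intertwining isometry, and that unitary equivalence transports in both directions (via Corollary~\ref{cor:MIBE1}), is a correct and welcome filling-in of details the paper leaves implicit.
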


\begin{rem} \label{rem:class}
We summarize the classifications above for the reader primarily interested in $G$-systems.
As in Theorem~\ref{thm:MIBE2}, the correspondences in Corollaries~\ref{cor:piCl1} and~\ref{cor:piCl2} pass through Gramians. 
Specifically, let $\pi \colon G \to U(\H)$ be an admissible representation, and let $\A = \{ u_i \}_{i \in I}$ be a sequence in $\H$ whose orbit $E(\A) = \{ \pi(x) u_i \}_{x\in G,\, i\in I}$ is a complete Bessel $G$-system. If $N = |I|$, then we can assume without loss of generality that $I = \{ 1,2,\dotsc,N\}$ (if $N < \infty$) or $I = \{ 1,2,\dotsc \}$ (if $N = \infty$). Choose any linear isometry $U \colon \H \to L^2(\hat{G};\ell^2_N)$ intertwining $\pi$ with modulation. For a.e.~$\alpha \in \hat{G}$, let $\mathrm{Gr}(\alpha) \colon \ell^2_N \to \ell^2_N$ be the Gramian of the Bessel system $\{ (U u_i)(\alpha) \}_{i\in I}$. Then $\mathrm{Gr} := \int_{\hat{G}}^\oplus \mathrm{Gr}(\alpha)\, d\alpha$ is the corresponding MI operator from Corollary~\ref{cor:piCl1}. If $E(\A)$ is a Parseval frame, then $\mathrm{Gr}(\alpha)$ is orthogonal projection onto a subspace $J(\alpha) \subset \ell^2_N$ for a.e.~$\alpha \in \hat{G}$. This defines the corresponding range function in Corollary~\ref{cor:piCl2}.

Conversely, for every positive, integrable MI operator $\mathrm{Gr} = \int_{\hat{G}}^\oplus \mathrm{Gr}(\alpha)\, d\alpha$ on $L^2(\hat{G};\ell^2_N)$, there exists a sequence $\A = \{ \varphi_i \}_{i=1}^N$ in $L^2(\hat{G};\ell^2_N)$ such that $\{ \varphi_i(\alpha) \}_{i=1}^N$ is Bessel with Gramian $\mathrm{Gr}(\alpha)$ for a.e.~$\alpha \in \hat{G}$. Then the orbit of $\A$ under the modulation representation is a Bessel $G$-system corresponding to $\mathrm{Gr}$.

In particular, consider an integrable range function $J \colon \hat{G} \to \{ \text{closed subspaces of }\ell^2_N\}$. For a.e.~$\alpha \in \hat{G}$, let $P(\alpha) \colon \ell^2_N \to \ell^2_N$ be orthogonal projection onto $J(\alpha)$. Define $\A = \{ \varphi_i \}_{i=1}^N$ to be the sequence in $L^2(\hat{G};\ell^2_N)$ given by $\varphi_i(\alpha) = P(\alpha) \delta_i$, where $\delta_i$ is a canonical basis element of $\ell^2_N$. Then the orbit of $\A$ under the modulation representation is a Parseval frame for $V_J$, and $J$ is the corresponding range function in Corollary~\ref{cor:piCl2}.
\end{rem}

\begin{example}
We interpret the classification for Parseval $G$-frames with a single generator as follows.
When $N=1$, we can identify $\ell^2_N$ with $\C$, so that $L^2(\hat{G};\ell^2_1) = L^2(\hat{G})$. 
A measurable range function $J \colon \hat{G} \to \{ \text{closed subspaces of $\C$}\}$ amounts to a measurable subset 
\[ E = \{ \alpha \in \hat{G} : J(\alpha) \neq \{0\} \} \subset \hat{G}, \]
and $J$ is integrable if and only if $|E| < \infty$. Overall, equivalence classes of Parseval $G$-frames with a single generator are in one-to-one correspondence with measurable subsets $E \subset \hat{G}$ (modulo null sets) satisfying $|E| < \infty$.

Given such an $E$, the construction in Remark~\ref{rem:class} produces the single generator $\varphi = \chi_E$. 
For each $x \in G$, let $\hat{x} \colon \hat{G} \to \T$ be the character $\hat{x}(\alpha) = \overline{\alpha(x)}$. Then the orbit of $\varphi$ under the modulation representation is $\Phi_E := \{ \hat{x} \cdot \chi_E \}_{x\in G}$. It is a Parseval frame for $L^2(E)$.

In the case where $G$ is finite, let $F \in \C^{\hat{G} \times G}$ be the discrete Fourier transform (DFT) matrix $F = \left[ \overline{ \alpha(x) } \right]_{\alpha \in \hat{G},\, x \in G}$. As a short, fat matrix, $\Phi_E$ is obtained by extracting the rows labeled by $E$ from $F$, and then scaling to account for the normalized Haar measure on $\hat{G}$. Explicitly, $\Phi_E = |G|^{-1/2} \left[ \overline{ \alpha(x) } \right]_{\alpha \in E,\, x \in G} \in \C^{E \times G}$. The \emph{harmonic frames} obtained in this way have been broadly studied and applied~\cite{CK,EB,GKK,GVT,HMRSU,SH}. We have recovered the identification of Parseval $G$-frames with harmonic frames from~\cite{VW2}. See~\cite{CW11} for a classification of harmonic frames under a broader notion of unitary equivalence that is independent of group structure.
\end{example}

\begin{rem}
Suppose that $G$ is a finite abelian group. Since there are only finitely many subsets $E \subset \hat{G}$, there are only finitely many unitarily inequivalent Parseval $G$-frames having a single generator~\cite{VW2}. On the other hand, when $N>1$ there are uncountably many integrable range functions $J \colon \hat{G} \to \{ \text{closed subspaces of } \ell^2_N \}$. Hence there are uncountably many unitarily inequivalent Parseval $G$-frames having $N$ generators, for each $N > 1$.
\end{rem}

%APPLICATIONS FOR SI SPACES===================================================================
\section{Applications for shift-invariant spaces}\label{S8}

In this section we give applications of MI operators in the study of shift-invariant (SI) spaces and SI operators acting on these spaces. This provides a unifying way of obtaining results on SI spaces and SI operators which was initiated in \cite{BR}. We shall focus on the setting of SI spaces invariant under translations by an abelian group, which were studied in \cite{I}. 

Suppose that $\mathcal G$ is a second countable locally compact group and $G \subset \mathcal G$ is an abelian subgroup. Given a function $f\colon \mathcal G \to \C$ and $y \in \mathcal G$, we will write $L_y f\colon \mathcal G \to \C$ for the left translation given by
\[
(L_y f)(x) = f(y^{-1}x) \qquad x\in \mathcal G.
\]
We equip $\mathcal G$ with a left invariant Haar measure. 

\begin{defn}
A closed subspace $ M \subset L^2(\mathcal G)$ is called $G$-translation-invariant, or $G$-TI,  if $L_y(M) = M$ for all $y\in G$. We say that an operator $T: M \to M'$ between two $G$-TI spaces $M$ and $M'$ is translation-invariant if
\[
T L_y = L_y T \qquad\text{for all }y\in G.
\]
\end{defn}

In the classical setting of $\mathcal G = \R^n$ and $G=\Z^n$ these are known as shift-invariant subspaces of $L^2(\R^n)$, whose study was pioneered by de Boor, DeVore, and Ron \cite{BDR2, BDR} and the first author \cite{B}. When $G$ is a discrete co-compact subgroup of a second countable locally compact abelian (LCA) group $\mathcal G$, shift-invariant spaces were studied by Cabrelli and Paternostro \cite{CP}. The assumptions on $G$ can be further relaxed to $G$ being merely co-compact \cite{BR} or an arbitrary subgroup of a second countable LCA group \cite{I}. Finally, the second author has extended results on TI spaces to the setting when $G$ is an abelian subgroup of a second countable locally compact group $\mathcal G$, which is not necessarily abelian. The study of translation invariant spaces can be reduced to the study of MI spaces using either fiberization operator or a generalized Zak transform which converts left translation operators $L_y$ into multiplication operators acting on appropriate vector-valued MI spaces $L^2(X;\mathcal H)$. The following two results make this process possible.

Theorem \ref{lcag} establishes the existence of a fiberization mapping in the setting of TI spaces. It was shown in various degree of generality by several authors \cite{BR, CP, KR}. The setting of an arbitrary subgroup of an LCA group is due to the second author \cite[Proposition 4.3]{I}. 

\begin{theorem}\label{lcag} Let $\mathcal G$ be a second countable locally compact {\bf abelian} group and let $G \subset \mathcal G$ be its subgroup. Let $\mu_{\mathcal G}$ and $\mu_{\hat{\mathcal G}}$ be Haar measures on $\mathcal G$ and $\hat{\mathcal G}$ such that the Plancherel formula holds. Let $\mu_{G^*}$ be a Haar measure on an annihilator $G^*$ of $G$ in $\hat {\mathcal G}$, which is given by
\[
G^* = \{ \gamma \in \hat{\mathcal G}: \gamma(x) =1 \quad\text{for all }x\in G\}.
\]

For a fixed Borel section $\Omega \subset \hat {\mathcal G}$ of $\hat {\mathcal G}/ G^*$, such that a bijection $\hat {\mathcal G}/ G^* \to \Omega$ sends compact sets to precompact sets, let $\mu_\Omega$ be 
the unique regular Borel measure on $\Omega$ satisfying the Weil identity
\[
\int_{\hat {\mathcal G}} f d\mu_{\hat {\mathcal G}} = \int_{\Omega} \int_{G^*} f(\alpha\gamma) d\mu_{G^*}(\gamma) d\mu_\Omega(\alpha) \qquad\text{for all }f\in L^1(\hat {\mathcal G}, \mu_{\hat {\mathcal G}}).
\]
For any $x\in G$, define $\hat{x} \colon \Omega \to \T$ as an evaluation mapping $\hat{x}(\alpha)=\ov{\alpha(x)}$ for $\alpha\in \Omega \subset \hat {\mathcal G}$.

Then the following hold:
\begin{enumerate}[(i)]
\item There is a unitary map
\[
\mathcal T\colon L^2(\mathcal G, \mu_{\mathcal G}) \to L^2(\Omega,\mu_\Omega; L^2(G^*,\mu_{G^*}))
\]
given by 
\[
(\mathcal T f)(\alpha)(\gamma) = \hat f( \alpha \gamma) \qquad \gamma \in G^*, \alpha \in \Omega, f\in L^2({\mathcal G},\mu_{{\mathcal G}}).
\]
\item
The set $\{\hat{x}\}_{x \in G}$ is a Parseval determining set for $L^1(\Omega,\mu_\Omega)$.
\item For any $x\in G$ we have
\begin{equation}\label{lcag5}
(\mathcal T L_x f)(\alpha) = \hat{x}(\alpha) \mathcal T f(\alpha) \qquad \text{for a.e. }\alpha\in \Omega.
\end{equation}
\end{enumerate}
\end{theorem}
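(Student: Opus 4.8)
The plan is to realise $\mathcal{T}$ as the composition of the Plancherel transform on $\mathcal{G}$ with a Weil-type decomposition of $L^2(\hat{\mathcal{G}})$ over the section $\Omega$, to read off (iii) from the behaviour of the Fourier transform under translation, and to recognise (ii) as a Plancherel theorem for the subgroup $G$. For (i), recall first that the Fourier transform $\mathcal{F}_0 \colon L^2(\mathcal{G}) \to L^2(\hat{\mathcal{G}})$, $\mathcal{F}_0 f = \hat{f}$, is unitary by Plancherel, since the Haar measures are normalized for it. Next I would show that the restriction-along-the-section map $\Phi \colon L^2(\hat{\mathcal{G}}) \to L^2(\Omega; L^2(G^*))$, $(\Phi F)(\alpha)(\gamma) = F(\alpha\gamma)$, is unitary: applying the Weil identity to $F = |\hat{f}|^2 \in L^1(\hat{\mathcal{G}})$ gives
\[
\int_\Omega \int_{G^*} |\hat{f}(\alpha\gamma)|^2\, d\mu_{G^*}(\gamma)\, d\mu_\Omega(\alpha) = \int_{\hat{\mathcal{G}}} |\hat{f}|^2\, d\mu_{\hat{\mathcal{G}}} = \Norm{\hat{f}}^2,
\]
so $\Phi$ is isometric, while surjectivity follows because $(\alpha,\gamma) \mapsto \alpha\gamma$ is a Borel bijection of $\Omega \times G^*$ onto $\hat{\mathcal{G}}$ (as $\Omega$ is a section of $\hat{\mathcal{G}}/G^*$), so every element of $L^2(\Omega; L^2(G^*)) \cong L^2(\Omega \times G^*)$ is the pullback of a unique element of $L^2(\hat{\mathcal{G}})$. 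Then $\mathcal{T} = \Phi \circ \mathcal{F}_0$ is the asserted unitary.

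For (iii), I would use the elementary identity $\widehat{L_x f}(\beta) = \overline{\beta(x)}\, \hat{f}(\beta)$, valid for $x \in \mathcal{G}$ and $\beta \in \hat{\mathcal{G}}$; it is a one-line change of variables $z = x^{-1}y$ in the defining integral, using left-invariance of $\mu_{\mathcal{G}}$ and $\beta(xz) = \beta(x)\beta(z)$. Evaluating at $\beta = \alpha\gamma$ with $\alpha \in \Omega$, $\gamma \in G^*$, and using $(\alpha\gamma)(x) = \alpha(x)\gamma(x) = \alpha(x)$ — the key point being $\gamma(x) = 1$ for $\gamma \in G^*$ and $x \in G$ — yields $(\mathcal{T}L_x f)(\alpha)(\gamma) = \overline{\alpha(x)}\,(\mathcal{T}f)(\alpha)(\gamma) = \hat{x}(\alpha)\,(\mathcal{T}f)(\alpha)(\gamma)$, which is exactly \eqref{lcag5}.

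For (ii), the measurability clause is immediate, since for $h \in L^1(\Omega)$ the map $x \mapsto \int_\Omega h(\alpha)\,\alpha(x)\, d\mu_\Omega(\alpha)$ is continuous by dominated convergence. The heart of the matter is the Parseval identity
\[
\int_G \Big| \int_\Omega h(\alpha)\, \alpha(x)\, d\mu_\Omega(\alpha) \Big|^2 dx = \int_\Omega |h(\alpha)|^2\, d\mu_\Omega(\alpha),
\]
which is Plancherel's theorem for the pair $(G,\Omega)$. I would identify $\Omega$ with $\hat{G}$ through the restriction map $\alpha \mapsto \alpha|_G$: since $G^*$ is precisely the kernel of the (surjective) restriction $\hat{\mathcal{G}} \to \hat{G}$, the section $\Omega$ maps bijectively onto $\hat{G}$, carrying each $\hat{x}$ to the character of $\hat{G}$ determined by $x \in G$. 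Under this identification the displayed identity says exactly that $\{\hat{x}\}_{x\in G}$ is a Parseval determining set for $L^1(\hat{G})$ — the instance of Lemma~5.2 of~\cite{I} in which the ambient group is $\hat{G}$ — provided $\mu_\Omega$ is the Plancherel measure on $\hat{G} \cong \Omega$ dual to Haar measure on $G$.

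Establishing this compatibility of normalizations is the main obstacle, and I would obtain it from Weil's duality of measures. Regarding the given Weil identity as the quotient formula for the closed subgroup $G^* \subset \hat{\mathcal{G}}$ with quotient $\hat{\mathcal{G}}/G^* \cong \hat{G}$, and using that $\mu_{\mathcal{G}}$ and $\mu_{\hat{\mathcal{G}}}$ are already dual, the dual Weil formula forces the quotient measure $\mu_\Omega$ to be Plancherel-dual to Haar measure on the annihilator $(G^*)^\perp = G$, which is the required statement. (If $G$ is not closed, one first passes to its closure $\overline{G}$, which has the same annihilator $G^*$ and, by continuity and density, the same separating family $\{\hat{x}\}$.) Modulo carefully tracking these measure normalizations, (ii) follows, completing the proof.
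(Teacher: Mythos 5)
Your proof is correct and follows essentially the same route as the paper: the paper simply cites \cite[Proposition~4.3]{I} for (i) and (iii) --- whose content is exactly your factorization $\mathcal{T} = \Phi \circ \mathcal{F}_0$ through the Weil identity together with the translation--modulation intertwining $\widehat{L_x f}(\beta) = \overline{\beta(x)}\hat{f}(\beta)$ restricted to $\beta = \alpha\gamma$ with $\gamma \in G^*$ --- and obtains (ii), as you do, by identifying $\Omega$ with $\hat{\mathcal G}/G^* \cong \hat{G}$ and invoking the fact that the characters form a Parseval determining set (\cite[Lemma~5.2]{I}). The one substantive addition in your write-up is the attention to whether $\mu_\Omega$, defined by the Weil identity, coincides with the Plancherel measure dual to the (unspecified) Haar measure on $G$; the paper leaves this normalization implicit, and your appeal to Weil's duality of measures is the correct way to pin it down.
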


\begin{proof}
The existence of a Borel section is a classical result \cite{FG} and so is the Weil formula \cite[Theorem 2.49]{F}. Parts (i) and (iii) were shown in \cite[Proposition 4.3]{I}. Indeed, \eqref{lcag5} follows from the identification of $\Omega$ with $\hat {\mathcal G}/G^*$, which in turn is isomorphic with $\hat G$. For any $x\in G$, define $\hat{x}' \colon \hat G \to \T$ as an evaluation mapping $\hat{x}(\alpha)=\ov{\alpha(x)}$ for $\alpha \in  \hat G$. It was observed in Section \ref{sec:LCAFrm} that the collection  $\{\hat{x}'\}_{x\in G}$ is a Parseval determining set for $L^1(\hat G)$. Since $\hat{x}$ and $\hat{x}'$ can be identified, (ii) follows.
\end{proof}

In the case when $G$ is a co-compact subgroup of $\mathcal G$, the annihilator $G^*$ is a discrete subgroup of $\hat{\mathcal G}$. Then, it is customary to take $\mu_{G^*}$ to be a counting measure and hence $L^2(G^*,\mu_{G^*})$ is identified with $\ell^2(G^*)$. Hence, in this case Theorem \ref{lcag} reduces to \cite[Lemma 3.5 and Proposition 3.7]{BR}.

Theorem \ref{lcas} establishes the existence of the Zak transform in the setting of an arbitrary abelian subgroup of a second countable locally compact group which is due to the second author \cite[Theorem 4.3]{I}.

\begin{theorem}\label{lcas}
 Let $\mathcal G$ be a second countable locally compact group and let $G \subset \mathcal G$ be its {\bf abelian} subgroup. Let $\mu_{\mathcal G}$ be a left Haar measure on $\mathcal G$. Let $\mu_{G}$ and $\mu_{\hat G}$ be Haar measures on $G$ and $\hat G$ such that the Plancherel formula holds.
 
For a fixed Borel section $\Delta \subset \mathcal G$ of $G\backslash {\mathcal G}$, such that a bijection $  G \backslash  {\mathcal G} \to \Delta$ sends compact sets to precompact sets, let $\mu_\Delta$ be 
the unique regular Borel measure on $\Delta$ satisfying the Weil identity
\begin{equation}\label{lcas1}
\int_{\mathcal G} f d\mu_{\mathcal G} = \int_{\Delta} \int_{G} f(x y) d\mu_{G}(x) d\mu_\Delta(y) \qquad\text{for all }f\in L^1(\mathcal G, \mu_{\mathcal G}).
\end{equation}
For any $x\in G$, define $\hat{x} \colon \hat G \to \T$ as an evaluation mapping $\hat{x}(\alpha)=\ov{\alpha(x)}$ for $\alpha \in  \hat G$.
Then the following holds:
\begin{enumerate}[(i)]
\item There is a unitary map
\[
\mathcal Z\colon L^2(\mathcal G, \mu_{\mathcal G}) \to L^2(\hat G, \mu_{\hat G}; L^2(\Delta,\mu_\Delta))
\]
given by 
\begin{equation}\label{lcas4}
(\mathcal Z f)(\alpha)(y) = \widehat{ f( \cdot y)}(\alpha) \qquad y\in \Delta, \alpha \in \hat G, f\in L^2({\mathcal G}, \mu_{\mathcal G}).
\end{equation}
\item
The set $\{\hat{x}\}_{x \in G}$ is a Parseval determining set of $L^1(\hat G,\mu_{\hat G})$.
\item
For any $x\in G$ we have
\begin{equation}\label{lcas5}
(\mathcal Z L_x f)(\alpha) = \hat{x} (\alpha) \mathcal T f(\alpha) \qquad \text{for a.e. }\alpha \in \hat G.
\end{equation}
\end{enumerate}
\end{theorem}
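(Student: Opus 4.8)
The plan is to proceed in close analogy with the proof of Theorem~\ref{lcag}, deferring the deepest technical point to \cite[Theorem~4.3]{I} while filling in the isometry and the intertwining computation directly. The existence of a Borel section $\Delta \subset \mathcal{G}$ of $G\backslash\mathcal{G}$ sending compact sets to precompact sets is classical \cite{FG}, and the Weil identity \eqref{lcas1} defining $\mu_\Delta$ follows from standard quotient-integration theory (cf.\ \cite[Theorem~2.49]{F}) once we note that $G$, carrying a Haar measure $\mu_G$, is locally compact and hence a closed subgroup of $\mathcal{G}$. These two ingredients set up the target space $L^2(\hat G,\mu_{\hat G};L^2(\Delta,\mu_\Delta))$ and the formula \eqref{lcas4} for $\mathcal{Z}$.

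For part (i), I would first verify that $\mathcal{Z}$ is an isometry by a two-step computation. For fixed $y \in \Delta$, the slice $x \mapsto f(xy)$ is a function on $G$, so Plancherel on $G$ gives $\int_{\hat G} |\widehat{f(\cdot y)}(\alpha)|^2\,d\mu_{\hat G}(\alpha) = \int_G |f(xy)|^2\,d\mu_G(x)$. Integrating over $\Delta$ against $\mu_\Delta$, interchanging the order of integration, and invoking the Weil identity \eqref{lcas1} then yields $\Norm{\mathcal{Z}f}^2 = \int_{\mathcal{G}} |f|^2\,d\mu_{\mathcal{G}} = \Norm{f}^2$. The main obstacle is surjectivity: one must show the image of $\mathcal{Z}$ exhausts $L^2(\hat G; L^2(\Delta))$, which amounts to inverting the fiberwise Fourier transform in a manner measurably compatible with the section $\Delta$. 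This is precisely the content secured by the argument of \cite[Theorem~4.3]{I}, and I would cite it for this step rather than reconstruct the inverse in detail.

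For part (iii) I would compute directly. Fixing $y \in \Delta$ and writing $g(z) = f(zy)$ for $z \in G$, the relation $(L_x f)(zy) = f\bigl((x^{-1}z)y\bigr) = g(x^{-1}z) = (L_x g)(z)$ shows that the $y$-slice of $L_x f$ is the $G$-translate of the $y$-slice of $f$ (translation within $G$). Applying the intertwining of translation and modulation under the Fourier transform on $G$, namely $(L_x g)\caret = M_x \hat{g}$ with $(M_x\varphi)(\alpha) = \overline{\alpha(x)}\varphi(\alpha)$, gives $\widehat{(L_x f)(\cdot y)}(\alpha) = \overline{\alpha(x)}\,\widehat{f(\cdot y)}(\alpha) = \hat{x}(\alpha)\,(\mathcal{Z}f)(\alpha)(y)$, which is \eqref{lcas5} (with $\mathcal{Z}$ in place of the misprinted $\mathcal{T}$).

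Finally, part (ii) is immediate by the same reduction already used for Theorem~\ref{lcag}: the maps $\hat{x}\colon \hat G \to \T$ appearing here are exactly the characters $\hat{x}'$ on $\hat G$ considered in Section~\ref{sec:LCAFrm}, which form a Parseval determining set for $L^1(\hat G)$ by Lemma~5.2 of \cite{I} together with Pontryagin duality. Since $\hat{x}$ and $\hat{x}'$ are identified, the conclusion transfers verbatim, completing the proof.
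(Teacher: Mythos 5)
Your overall route coincides with the paper's: cite \cite{FG} for the Borel section, defer the unitarity of $\mathcal Z$ in (i) to \cite{I}, verify (iii) by the slicewise identity $(L_xf)(\cdot\, y)=L_x\bigl(f(\cdot\, y)\bigr)$ followed by the translation--modulation intertwining of the Fourier transform on $G$, and observe that (ii) was already established for the characters of $\hat G$ in Section~\ref{sec:LCAFrm}. The one step that does not survive scrutiny is your justification of the Weil identity \eqref{lcas1}: you attribute it to ``standard quotient-integration theory (cf.\ \cite[Theorem~2.49]{F}).'' That theorem produces a $\mathcal G$-invariant measure on a quotient by a closed subgroup $H$ only under the compatibility condition $\Delta_{\mathcal G}|_H=\Delta_H$ on modular functions. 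Here $G$ is abelian, hence unimodular, but $\mathcal G$ need not be, and $\Delta_{\mathcal G}|_G$ can be nontrivial (e.g.\ the translation subgroup of the $ax+b$ group); in that case no invariant measure on $G\backslash\mathcal G$ exists and the standard Weil formula simply fails. Moreover \eqref{lcas1} concerns \emph{right} cosets and a measure $\mu_\Delta$ supported on the section $\Delta$ itself, which in general depends on the choice of $\Delta$. The existence and uniqueness of such a measure is precisely the non-trivial point the paper isolates and delegates to \cite[Theorem~3.4]{I} (the same result already invoked as \eqref{eq:RWeil} in Section~\ref{sec: 2}). Your ``close analogy with Theorem~\ref{lcag}'' breaks down exactly here: in Theorem~\ref{lcag} the quotient is $\hat{\mathcal G}/G^*$ with $\hat{\mathcal G}$ abelian, so \cite[Theorem~2.49]{F} genuinely applies, whereas in the present theorem it does not.

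Two smaller remarks. First, your isometry computation for (i) tacitly assumes that $x\mapsto f(xy)$ lies in $L^1(G)\cap L^2(G)$ for a.e.\ $y$ so that the fiberwise Fourier transform is given by the integral formula; this holds only on a dense subspace (the paper notes this integrability is implicit in \eqref{lcas1} for $f\in L^1(\mathcal G)$), and one extends by continuity --- a routine point, and you correctly defer the harder surjectivity issue to \cite{I}. Second, you rightly read the $\mathcal T$ in \eqref{lcas5} as a misprint for $\mathcal Z$, and your direct verification of (iii) is a correct expansion of what the paper obtains by citation. The substantive correction needed is only the source of \eqref{lcas1}.
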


\begin{proof}
The existence of a Borel section again follows from the classical result \cite{FG}. However, the Weil formula involving right cosets $G\backslash {\mathcal G}$ is a non-trivial fact which was shown in \cite[Theorem 3.4]{I}. It is implicitly understood in \eqref{lcas1} that $x \mapsto f(xy)$ belongs to $L^1(G)$ for $\mu_\Delta$-a.e. $y\in \Delta$, and the function $y \mapsto \int_G f(xy) d\mu_G(x)$ is in $L^1(\Delta)$. In particular, the Fourier transform of $x \mapsto f(xy)$ is well-defined and denoted by $\widehat{ f( \cdot y)}$ in \eqref{lcas4}.
Parts (i) and (iii) were shown in \cite[Theorem 4.1]{I}. Finally, we have already seen that the collection  $\{\hat{x}\}_{x\in G}$ is a Parseval determining set for $L^1(\hat G)$. 
\end{proof}

Theorem \ref{lcag} and \ref{lcas} enable us to reduce the study of $G$-TI subspaces of $L^2(\mathcal G)$ to the framework of MI spaces where Theorem \ref{thm:MISpa} is readily available. Hence, we have the following result \cite[Theorem 5.1]{I}, which unifies a number of earlier results on the structure of SI spaces \cite{B, BR, BDR, KR}.

\begin{theorem}\label{ti}
 Let $\mathcal G$ be a second countable locally compact group and let $G \subset \mathcal G$ be its {\bf abelian} subgroup. Let $M \subset L^2(\mathcal G)$ be a $G$-TI space.
Equip $X=\hat G$ with Haar measure and let $\Delta$ be a Borel section of $G \backslash \mathcal G$ as in Theorem \ref{lcas}. 
\begin{enumerate}[(i)]
\item
There exists a measurable range function 
$J: X \to \{\text{closed subspaces of }L^2(\Delta)\}$, such that
\begin{equation}\label{ti1}
M= \{f \in L^2(X): \mathcal Z f(\alpha) \in J(\alpha) \text{ for a.e. }\alpha\in X \}.
\end{equation}

\item
Conversely, for any measurable range function $J$ as in (i), the space $M$ given by \eqref{ti1} is $G$-TI, and this correspondence is one-to-one provided we identify range functions that agree a.e. 

\item
If $\{f_i\}_{i\in I} \subset M$ is a countable family of generators of $M$ indexed by $I$,
\[
M = \ov{\spn}\{L_y f_i : y\in G, i\in I\},
\]
then
\[
J(\alpha) = \ov{\spn}\{\mathcal Z f(\alpha): i\in I\} \qquad\text{for a.e. }\alpha\in X.
\]

\item
Let $0<A\le B<\infty$. A collection 
\[
E^G(\{f_i\}_{i\in I} )=\{L_y f_i : y\in G, i\in I\}
\]
is a $A,B$-frame of $M$ if and only if $\{\mathcal Zf_i(\alpha): i\in I \}$ is $A,B$-frame for a.e. $\alpha\in X$.
\item 
Let $\{f_i\}_{i\in I}$ and $\{f'_i\}_{i\in I}$ be two countable families of functions in $M$ such that $E^G(\{f_i\}_{i\in I})$ and $E^G(\{f'_i\}_{i\in I})$ are Bessel sequences. The systems $E^G(\{f_i\}_{i\in I})$ and $E^G(\{f'_i\}_{i\in I})$ are dual frames in $M$ if and only if  $\{\mathcal Zf_i(\alpha): I \in I\}$ and $\{\mathcal Zf'_i(\alpha): I \in I \}$ are dual frames in $J(\alpha)$ for a.e. $\alpha\in X$. 
\item The above characterization also holds for canonical dual frames.
\end{enumerate}

In addition, if $\mathcal G$ is abelian, then all of the preceding results hold when $X= \Omega$ is a Borel section of $\hat {\mathcal G}/ G^*$ as in Theorem \ref{lcag}, $\Delta=G^*$, and the Zak transform $\mathcal Z$ is replaced by the fiberization transform $\mathcal T$.
\end{theorem}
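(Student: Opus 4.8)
The plan is to prove Theorem~\ref{ti} by transporting the entire statement across the Zak transform $\mathcal{Z}$ of Theorem~\ref{lcas} (and, for the abelian addendum, the fiberization transform $\mathcal{T}$ of Theorem~\ref{lcag}) into the multiplication-invariant setting, where every ingredient is already in place. The decisive facts are that $\mathcal{Z}\colon L^2(\mathcal{G}) \to L^2(\hat G; L^2(\Delta))$ is a unitary converting each left translation $L_x$, $x\in G$, into the multiplication operator $M_{\hat x}$ by \eqref{lcas5}, and that $\D=\{\hat x\}_{x\in G}$ is a Parseval determining set for $L^1(\hat G)$ by Theorem~\ref{lcas}(ii). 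Taking $X=\hat G$ with Haar measure, $\H = L^2(\Delta)$, and $Y=G$, Assumptions~\ref{assumptions} and~\ref{assumptions 2} are satisfied. First I would record that a Parseval determining set is in particular a determining set in the sense required by Theorem~\ref{thm:MISpa}: if $f\in L^1(\hat G)$ satisfies $\int_{\hat G} f\,\overline{\hat x}\,d\mu = 0$ for all $x\in G$, then \eqref{eq:ParsDet} forces $\int_{\hat G}|f|^2\,d\mu = 0$, hence $f=0$.

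For parts (i) and (ii), the observation is that a closed subspace $M\subset L^2(\mathcal{G})$ is $G$-TI exactly when $\mathcal{Z}(M)$ is invariant under $M_{\hat x}$ for every $x\in G$, i.e.\ invariant under multiplication by the determining set $\D$. The equivalence (ii)$\iff$(iii) of Theorem~\ref{thm:MISpa} then shows that $\mathcal{Z}(M)=V_J$ for a measurable range function $J\colon \hat G \to \{\text{closed subspaces of }L^2(\Delta)\}$, unique up to a.e.\ equality, and pulling back through the unitary $\mathcal{Z}$ gives exactly \eqref{ti1} together with its one-to-one converse. For part (iii), if $\{f_i\}_{i\in I}$ generates $M$ as a $G$-TI space, then $\mathcal{Z}(L_y f_i)=M_{\hat y}\mathcal{Z}f_i$ shows that $\{\mathcal{Z}f_i\}_{i\in I}$ generates $\mathcal{Z}(M)=V_J$ as an MI space; the range-function formula \eqref{MI0} (equivalently \eqref{eq:J}) then yields $J(\alpha)=\ov{\spn}\{\mathcal{Z}f_i(\alpha):i\in I\}$ for a.e.\ $\alpha$.

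For parts (iv)--(vi), I would identify the system of translates $E^G(\{f_i\}_{i\in I})=\{L_y f_i\}_{y\in G,\,i\in I}$ with the system of multiplications $\{M_{\hat y}\mathcal{Z}f_i\}_{y\in G,\,i\in I}=E(\{\mathcal{Z}f_i\}_{i\in I})$ under the unitary $\mathcal{Z}$. Since a unitary preserves frame and Bessel bounds, the duality relation $T^*T'=I$, and frame operators, each assertion collapses to its MI analogue: (iv) is Theorem~\ref{thm:MIF}, (v) is Corollary~\ref{cor:MIDual}, and (vi) is Corollary~\ref{cor:MICan}(i), all applied with $X=\hat G$, $Y=G$, and $\D=\{\hat x\}$. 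The abelian addendum follows verbatim with $\mathcal{Z}$ replaced by $\mathcal{T}$, since Theorem~\ref{lcag}(i)--(iii) supplies the same three ingredients---unitarity, the intertwining of $L_x$ with $M_{\hat x}$, and the Parseval determining set property---now over $X=\Omega$ with fibers $L^2(G^*)$.

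I do not expect a serious obstacle, as the real analytic content has already been absorbed into Theorems~\ref{lcas} and~\ref{lcag} and into the MI structure theory of the earlier sections. The one point demanding care is the translation between a subspace (or system) being \emph{generated under translation by $G$} and being \emph{generated under multiplication by all of $L^\infty(\hat G)$}: this hinges precisely on $\D=\{\hat x\}$ being a determining set, so that invariance and density need only be checked against $\D$ rather than against all of $L^\infty(\hat G)$. Once this dictionary is fixed, every clause of the theorem is a direct reading of the corresponding MI result.
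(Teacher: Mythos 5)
Your proposal is correct and follows essentially the same route as the paper: transport everything through the unitary Zak transform $\mathcal{Z}$ (or $\mathcal{T}$ in the abelian case), use that $\{\hat x\}_{x\in G}$ is a (Parseval) determining set so that $G$-TI spaces correspond to MI spaces, and then invoke Theorem~\ref{thm:MISpa} for (i)--(iii), Theorem~\ref{thm:MIF} for (iv), Corollary~\ref{cor:MIDual} for (v), and Corollary~\ref{cor:MICan} for (vi). Your explicit remark that a Parseval determining set separates points of $L^1(\hat G)$, hence is a determining set, is a small point the paper leaves implicit, but otherwise the arguments coincide.
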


\begin{proof}
Parts (i), (ii), and (iii) follow by combining Theorems \ref{thm:MISpa} and \ref{lcas}. Indeed, by Theorem \ref{lcas} there is one-to-one correspondence between multiplication-invariant spaces $V$ of $L^2(X;L^2(\Delta))$ with respect to $\{\hat{x}\}_{x\in G}$ and $G$-TI subspaces $M$ of $L^2(\mathcal G)$ given by $V=\mathcal Z(M)$. Hence, Theorem \ref{thm:MISpa} can be applied to $\Delta$ to yield a range function characterization of $V$. Parts (iv), (v), and (vi) then follow by Theorem \ref{thm:MIF}, Corollary \ref{cor:MIDual}, and Corollary \ref{cor:MICan}, resp.
\end{proof}

Theorem \ref{ti} enables us to define the dimension function for a $G$-TI space $M$ as 
\begin{equation}\label{dim5}
\dim^G_M\colon X \to \N \cup \{0,\infty\}, \qquad \dim^G_M(\alpha) = \dim J(\alpha), \ \alpha\in X.
\end{equation}
Here, $X=\hat G$ or in the case of abelian $\mathcal{G}$ we can take for $X$ a Borel section $\Omega$ of $\hat {\mathcal G}/ G^*$, which is isomorphic with $\hat G$. Note that the dimension functions defined via fiberization transform $\mathcal T$ or the Zak transform $\mathcal Z$ coincide regardless whether we take $\mathcal T$ or $\mathcal Z$ in Theorem \ref{ti}. This follows by the relation between $\mathcal T$ and $\mathcal Z$ shown by the second author, see the calculation after \cite[Proposition 4.3]{I}. The superscript $G$ in \eqref{dim5} indicates the dependence of the dimension function on a subgroup $G \subset \mathcal G$. Indeed, every $G$-TI space is obviously also $G'$-TI for any subgroup $G' \subset G$. However, when not necessary we shall drop this dependence by writing $\dim_M$.

In parallel to Theorem \ref{ti}, the study of TI operators can be reduced to the framework of MI operators where Theorem \ref{thm:MIOp} becomes very useful.

\begin{theorem}\label{tio}
 Let $\mathcal G$, $G \subset \mathcal G$, $X$ and $\Delta$ be as in Theorem \ref{ti}. Let $M, M' \subset L^2(\mathcal G)$ be two $G$-TI space with the corresponding range functions $J,J'\colon X \to \{\text{closed subspaces of }L^2(\Delta)\}$. Let $T\colon M \to M'$ be $G$-TI operator.
\begin{enumerate}[(i)]
\item 
There exists a bounded measurable range operator $R\colon J \to J'$ such that
\begin{equation}\label{tio1}
\mathcal Z \circ T \circ \mathcal Z^{-1} = \int^{\oplus}_X R(\alpha) d\mu(\alpha),
\end{equation}
where $\mu$ is a Haar measure on $X=\hat G$.
\item Conversely, for any measurable range operator $R\colon J \to J'$, an operator $T\colon M \to M'$ given by \eqref{tio1} is $G$-TI, and this correspondence is one-to-one provided we identify range operators that agree a.e. 
\item Any of the properties of $G$-TI operator $T$, which are listed in Theorem \ref{prop}, is reflected by the corresponding properties of a range operator $R$.
\item The spectra of range operator $R$ satisfy
\[
\sigma(R(\alpha)) \subset \sigma(T) \qquad\text{for $\mu$-a.e.\ }\alpha\in X.
\]
\item The relation \eqref{ft4} for a functional calculus between $T$ and $R$ holds when either $h$ is a holomorphic function on some neighborhood of $\sigma(T)$ or
$h$ is a bounded complex Borel function on $\sigma(T)$ and $T$ is normal.
\end{enumerate}
In addition, if $\mathcal G$ is abelian, then all of the preceding results hold when $X$ is a Borel section of $\hat {\mathcal G}/ G^*$, $\Delta=G^*$, and \eqref{tio1} replaced by
\begin{equation}\label{tio3}
\mathcal T \circ T \circ \mathcal T^{-1} = \int^{\oplus}_X R(\alpha) d\mu(\alpha),
\end{equation}
\end{theorem}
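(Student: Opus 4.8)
The plan is to transport every assertion from the multiplication-invariant (MI) theory through the Zak transform $\mathcal{Z}$ of Theorem~\ref{lcas}, exploiting that it is a unitary $L^2(\mathcal{G}) \to L^2(X; L^2(\Delta))$ satisfying $\mathcal{Z} L_x \mathcal{Z}^{-1} = M_{\hat{x}}$ for all $x \in G$ by \eqref{lcas5}. First I would record that $\{\hat{x}\}_{x\in G}$, being a Parseval determining set for $L^1(X)$ by Theorem~\ref{lcas}(ii), separates points of $L^1(X)$ and hence is a determining set in the sense required by Theorems~\ref{thm:MISpa} and~\ref{thm:MIOp}. By the correspondence in Theorem~\ref{ti}, the images $V := \mathcal{Z}(M)$ and $V' := \mathcal{Z}(M')$ are then MI subspaces of $L^2(X; L^2(\Delta))$ whose range functions are precisely $J$ and $J'$.

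To establish (i) and (ii), I would set $\tilde{T} := \mathcal{Z} \circ T \circ \mathcal{Z}^{-1} \colon V \to V'$ and use the intertwining relation to compute
\[ \tilde{T} M_{\hat{x}} = \mathcal{Z}\, T L_x\, \mathcal{Z}^{-1}, \qquad M_{\hat{x}} \tilde{T} = \mathcal{Z}\, L_x T\, \mathcal{Z}^{-1} \qquad (x \in G). \]
This shows that $T L_x = L_x T$ for all $x \in G$ if and only if $\tilde{T} M_{\hat{x}} = M_{\hat{x}} \tilde{T}$ for all $x \in G$. Applying Theorem~\ref{thm:MIOp} with the determining set $\D = \{\hat{x}\}_{x\in G}$, the latter is equivalent to the existence of a bounded measurable range operator $R \colon J \to J'$, unique up to a.e.\ equality, with $\tilde{T} = \int_X^\oplus R(\alpha)\, d\mu(\alpha)$; this is exactly \eqref{tio1}, and it yields (i), (ii), and the uniqueness statement simultaneously.

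For (iii)--(v) I would invoke the fact that conjugation by the unitary $\mathcal{Z}$ preserves every relevant operator-theoretic quantity, so that $T$ and $\tilde{T}$ have the same norm, the same spectrum $\sigma(T) = \sigma(\tilde{T})$, and compatible functional calculi $\mathcal{Z}\, h(T)\, \mathcal{Z}^{-1} = h(\tilde{T})$ for any $h$ permitted in Theorem~\ref{ft}, and likewise $T$ has any of the properties catalogued in Theorem~\ref{prop} exactly when $\tilde{T}$ does. Then (iii) is immediate from Theorem~\ref{prop} applied to $\tilde{T}$; (iv) follows from Corollary~\ref{len} combined with $\sigma(\tilde{T}) = \sigma(T)$; and (v) follows from Theorem~\ref{ft} applied to $\tilde{T}$, which gives $h(\tilde{T}) = \int_X^\oplus h(R(\alpha))\, d\mu(\alpha)$ and hence the relation \eqref{ft4} between $T$ and $R$ after conjugating by $\mathcal{Z}$.

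For the abelian addendum, the same argument runs verbatim with $\mathcal{Z}$ replaced by the fiberization transform $\mathcal{T}$ of Theorem~\ref{lcag}, $\Delta$ by $G^*$, and $X$ by the Borel section $\Omega$; the only facts consumed are that $\mathcal{T}$ is unitary with $\mathcal{T} L_x \mathcal{T}^{-1} = M_{\hat{x}}$ (by \eqref{lcag5}) and that $\{\hat{x}\}_{x\in G}$ is a determining set for $L^1(\Omega)$ (by Theorem~\ref{lcag}(ii)). Since the proof is entirely a mechanical transport of the MI results across a single unitary conjugation, I do not anticipate any genuine obstacle; the only steps demanding care are the identification of $J, J'$ as the range functions of $V, V'$ and the verification that the $G$-TI condition on $T$ corresponds exactly to the MI commutation condition on $\tilde{T}$, both of which are settled in the first two paragraphs.
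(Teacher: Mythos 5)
Your proposal is correct and follows essentially the same route as the paper, whose proof of Theorem~\ref{tio} is simply the citation chain you reconstruct: parts (i)--(ii) by combining Theorems~\ref{thm:MIOp}, \ref{lcas}, and \ref{ti}, and parts (iii)--(v) by Theorem~\ref{prop}, Corollary~\ref{len}, and Theorem~\ref{ft} transported through the unitary $\mathcal Z$ (resp.\ $\mathcal T$). The details you supply---that a Parseval determining set separates points of $L^1(X)$ and hence is a determining set, and that unitary conjugation preserves spectrum and functional calculus---are exactly the implicit steps the paper leaves to the reader.
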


\begin{proof} Parts (i) and (ii) follow by combing Theorems \ref{thm:MIOp}, \ref{lcas}, and \ref{ti}(i)(ii).
Parts (iii), (iv), and (v) then follow by Theorem \ref{prop}, Corollary \ref{len}, and Theorem \ref{ft}, resp. 
\end{proof}

We conclude with the following result which is 
a generalization of \cite[Theorems 4.9 and 4.10]{B} to the setting of TI spaces. Theorem \ref{dimv} is an immediate consequence of Theorems \ref{dimo} and \ref{tio}.

\begin{theorem} \label{dimv}
Suppose that $M, M' \subset L^2(\mathcal G)$ are two $G$-TI spaces. The following are equivalent:
\begin{enumerate}[(i)]
\item $\dim_M(\alpha)=\dim_{M'}(\alpha)$ for a.e.\ $\alpha\in X$,
\item there exists a $G$-TI isometric isomorphism $T\colon M \to M'$,
\item there exists a $G$-TI isomorphism $T\colon M \to M'$.
\end{enumerate}
\end{theorem}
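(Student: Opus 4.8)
The plan is to transport the whole statement through the Zak transform of Theorem~\ref{lcas} (or the fiberization transform of Theorem~\ref{lcag} when $\mathcal{G}$ is abelian), thereby reducing it to the already-established MI analogue in Theorem~\ref{dimo}. Concretely, let $V = \mathcal{Z}(M)$ and $V' = \mathcal{Z}(M')$ be the MI subspaces of $L^2(X; L^2(\Delta))$ corresponding to $M$ and $M'$ under Theorem~\ref{ti}, with associated range functions $J$ and $J'$. By the very definition of the dimension function in \eqref{dim5}, we have $\dim_M(\alpha) = \dim J(\alpha) = \dim_V(\alpha)$ and likewise $\dim_{M'} = \dim_{V'}$ a.e.\ on $X$, so condition (i) of Theorem~\ref{dimv} is identical to condition (i) of Theorem~\ref{dimo} applied to the pair $V, V'$.

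Next I would set up the correspondence on operators. Since $\mathcal{Z}$ is unitary and, by \eqref{lcas5}, intertwines each $L_x$ ($x \in G$) with multiplication by $\hat{x}$, conjugation $T \mapsto \tilde{T} := \mathcal{Z}\, T\, \mathcal{Z}^{-1}$ carries $G$-TI operators $M \to M'$ bijectively onto MI operators $V \to V'$; this is precisely the content of Theorem~\ref{tio}(i)--(ii). Because $\mathcal{Z}$ restricts to unitaries $M \to V$ and $M' \to V'$, the operator $T$ is an isometry (resp.\ an isomorphism) exactly when $\tilde{T}$ is, so a $G$-TI isometric isomorphism $M \to M'$ exists if and only if an MI isometric isomorphism $V \to V'$ exists, and similarly for plain isomorphisms. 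Thus conditions (ii) and (iii) of Theorem~\ref{dimv} match conditions (ii) and (iii) of Theorem~\ref{dimo} verbatim under the dictionary $T \leftrightarrow \tilde{T}$.

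With all three conditions identified across the transform, the equivalence (i)$\iff$(ii)$\iff$(iii) for $M, M'$ follows immediately from the corresponding equivalence for $V, V'$ in Theorem~\ref{dimo}. Since the dimension function is independent of whether one uses $\mathcal{Z}$ or $\mathcal{T}$ (as noted after \eqref{dim5}), the abelian case is covered by the same argument with $\mathcal{T}$ in place of $\mathcal{Z}$. There is no genuine obstacle here: the only points requiring care are the bookkeeping facts that conjugation by a unitary preserves ``isometric isomorphism'' and ``isomorphism,'' and that the two notions of dimension function agree---both of which are routine. In effect the substantive work has already been done in Theorems~\ref{ti}, \ref{dimo}, and~\ref{tio}, and this proof is merely their assembly.
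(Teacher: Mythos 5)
Your proposal is correct and follows exactly the route the paper takes: the paper states Theorem~\ref{dimv} as an immediate consequence of Theorems~\ref{dimo} and~\ref{tio}, and your argument simply spells out that assembly (transport via $\mathcal{Z}$ or $\mathcal{T}$, match the dimension functions, and invoke the MI version). Nothing is missing and no further commentary is needed.
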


%APPENDIX=====================================================================================
\appendix
\section{}
%\section{Appendix}

We have not been able to locate references for the results below, so we supply our own proofs.

\begin{lemma} \label{lem:WOTWS}
The weak operator topology on $L^\infty(X)$, viewed as an algebra of multiplication operators on $L^2(X;\H)$, is identical to the weak-$\ast$ topology it receives as the dual of $L^1(X)$, provided that $\H \neq \{0\}$.
\end{lemma}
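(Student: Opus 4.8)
The plan is to show that the weak operator topology (WOT) and the weak-$\ast$ topology are both realized as the initial topology on $L^\infty(X)$ induced by \emph{the same} family of linear functionals, namely the functionals $\phi \mapsto \int_X \phi\, g\, d\mu$ as $g$ ranges over $L^1(X)$. Since each of the two topologies is, by definition, the coarsest topology making a prescribed family of functionals continuous, it suffices to verify that the two families agree.

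First I would unwind the WOT. Under the linear embedding $M \colon L^\infty(X) \to B(L^2(X;\H))$, $\phi \mapsto M_\phi$ (which is injective precisely because $\H \neq \{0\}$), the inherited subspace topology is the initial topology induced by the functionals
\[ \phi \mapsto \langle M_\phi \varphi, \psi \rangle = \int_X \phi(x) \langle \varphi(x), \psi(x) \rangle\, d\mu(x), \qquad \varphi, \psi \in L^2(X;\H). \]
By Cauchy--Schwarz the field $h_{\varphi,\psi}(x) := \langle \varphi(x), \psi(x)\rangle$ satisfies $\Norm{h_{\varphi,\psi}}_{L^1} \leq \Norm{\varphi}\,\Norm{\psi} < \infty$, so $h_{\varphi,\psi} \in L^1(X)$. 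Thus every WOT functional has the form $\phi \mapsto \int_X \phi\, g\, d\mu$ with $g \in L^1(X)$, and each such functional is weak-$\ast$ continuous; this already yields that the WOT is coarser than the weak-$\ast$ topology.

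For the reverse inclusion, the crux is to show that \emph{every} $g \in L^1(X)$ arises as some $h_{\varphi,\psi}$. Here I would invoke the classical factorization $L^1(X) = L^2(X)\cdot L^2(X)$: writing $g = a\,\overline{b}$ with $a = \operatorname{sgn}(g)|g|^{1/2}$ and $b = |g|^{1/2}$ as scalar functions in $L^2(X)$, and fixing a unit vector $e \in \H$ (which exists exactly because $\H \neq \{0\}$), I set $\varphi = a\, e$ and $\psi = b\, e$. Then $\varphi,\psi \in L^2(X;\H)$ and $h_{\varphi,\psi}(x) = a(x)\overline{b(x)}\,\langle e,e\rangle = g(x)$. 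Consequently the family of WOT functionals is exactly $\{\, \phi \mapsto \int_X \phi\, g\, d\mu : g \in L^1(X)\,\}$. Finally, using $\int_X f\,\overline{\phi}\, d\mu = \overline{\int_X \overline{f}\,\phi\, d\mu}$ and the fact that $f \mapsto \overline{f}$ is a bijection of $L^1(X)$, this family coincides as a set of seminorms with $\{\, \phi \mapsto \lvert\int_X f\,\overline{\phi}\, d\mu\rvert : f \in L^1(X)\,\}$, which is precisely the family defining the weak-$\ast$ topology on $L^\infty(X) = L^1(X)^\ast$. Equal families of seminorms give equal topologies.

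The only genuine obstacle is the surjectivity step, that is, realizing an arbitrary $L^1(X)$ function as a pointwise inner product of two $L^2(X;\H)$ fields; this is dispatched by the $L^2$-factorization of $L^1$ combined with the existence of a unit vector in $\H$, and it is exactly here that the hypothesis $\H \neq \{0\}$ is used. The bookkeeping with the complex conjugate in the $L^1$--$L^\infty$ pairing is routine.
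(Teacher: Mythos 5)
Your proposal is correct and follows essentially the same route as the paper: both reduce to the observation that the pointwise inner products $\langle \varphi(\cdot), \psi(\cdot)\rangle$ of fields $\varphi,\psi \in L^2(X;\H)$ are exactly the functions in $L^1(X)$, via Cauchy--Schwarz in one direction and, in the other, the factorization of $g \in L^1(X)$ into a product of two $L^2(X)$ functions tensored with a unit vector of $\H$ (which is where $\H \neq \{0\}$ enters). If anything, your formulation in terms of equality of the two inducing families of functionals, and hence of the initial topologies, is slightly more careful than the paper's argument, which is phrased in terms of convergence of sequences.
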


\begin{proof}
Let $\{\phi_n\}_{n=1}^\infty$ be a sequence in $L^\infty(X)$, and let $\phi \in L^\infty(X)$. If $\phi_n \to \phi$ in the weak-$\ast$ topology, then for any $\varphi,\psi \in L^2(X;\H)$, we obviously have
\begin{align*}
\langle M_{\phi_n} \varphi, \psi \rangle &= \int_X \langle (M_{\phi_n} \varphi)(x), \psi(x) \rangle d\mu(x) = \int_X \langle \varphi(x), \psi(x) \rangle \phi_n(x)\, d\mu(x) \\
&\to \int_X \langle \varphi(x), \psi(x) \rangle \phi(x)\, d\mu(x) = \langle M_\phi \varphi, \psi \rangle,
\end{align*}
so $\phi_n \to \phi$ in the weak operator topology.

Conversely, when $\phi_n \to \phi$ in the weak operator topology and $f\in L^1(X)$ is arbitrary, we can find a measurable unimodular function $h\colon X \to \mathbb{T}$ such that $f(x) = h(x) |f(x)|$ for a.e.\ $x\in X$. Fix a unit vector $u \in \H$, and define $\varphi, \psi \colon X \to \H$ by $\varphi(x) = h(x) |f(x)|^{1/2}u$ and $\psi(x) = |f(x)|^{1/2} u$ for a.e.\ $x\in X$. Then
\[ \int_X \Norm{\varphi(x)}^2 d\mu(x) = \int_X \Norm{\psi(x)}^2 d\mu(x) = \int_X |f(x)|\, d\mu(x) < \infty, \]
and
\[ \langle M_{\phi_n} \varphi, \psi \rangle = \int_X \langle \phi_n(x)\cdot h(x) |f(x)|^{1/2} u , |f(x)|^{1/2} u \rangle d\mu(x) = \int_X \phi_n(x) f(x)\, d\mu(x) \]
converges to
\[ \langle M_\phi \varphi, \psi \rangle = \int_X \phi(x) f(x)\, d\mu(x), \]
so $\phi_n \to \phi$ in the weak-$\ast$ topology.
\end{proof}

\begin{lemma} \label{lem:SigFin}
If $G$ is a locally compact abelian group and $\H$ is a separable Hilbert space, then the decomposing measure of every representation $\pi \colon G \to U(\H)$ is $\sigma$-finite.
\end{lemma}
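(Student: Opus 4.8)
The plan is to play the separability of $\H$ against the nondegeneracy condition $J(\alpha)\neq\{0\}$ that is built into the notion of a direct integral decomposition. Through the unitary $U$, the space $V_J = U(\H)\subset L^2(\hat G,\mu;\K)$ is separable, so I would fix an orthonormal basis $\{e_n\}$ of $\H$ and set $\psi_n = Ue_n$, obtaining an orthonormal basis of $V_J$. Since $\psi_n\in L^2(\hat G,\mu;\K)$, the function $\alpha\mapsto \Norm{\psi_n(\alpha)}^2$ is $\mu$-integrable, so each support $A_n := \{\alpha : \psi_n(\alpha)\neq 0\}$ is $\sigma$-finite, being the increasing union of the finite-measure sets $\{\Norm{\psi_n}^2 \geq 1/k\}$. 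Thus $S := \bigcup_n A_n$ is $\sigma$-finite, and the whole statement reduces to showing that $Z := \hat G\setminus S = \{\alpha : \psi_n(\alpha)=0 \text{ for all } n\}$ is $\mu$-null.

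Next I would pin down the fibers on $Z$ using multiplication invariance. The set $Z$ is measurable and $\chi_Z\in L^\infty(\hat G)$, so $M_{\chi_Z}$ preserves the MI space $V_J$. By construction $M_{\chi_Z}\psi_n = \chi_Z\psi_n = 0$ for every $n$, and since $\{\psi_n\}$ spans a dense subspace of $V_J$ while $M_{\chi_Z}$ is bounded, $M_{\chi_Z}$ annihilates all of $V_J$; that is, every $\psi\in V_J$ vanishes $\mu$-a.e.\ on $Z$. Invoking Lemma~\ref{lem:rangProj}, orthogonal projection onto $V_J$ acts fiberwise as $\varphi(\alpha)\mapsto P_J(\alpha)\varphi(\alpha)$. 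For any $v\in\K$ and any measurable $E\subset Z$ with $\mu(E)<\infty$, the function $\chi_E v$ lies in $L^2(\hat G,\mu;\K)$, so its projection $\alpha\mapsto \chi_E(\alpha)P_J(\alpha)v$ belongs to $V_J$ and hence vanishes a.e.\ on $Z\supset E$. Letting $v$ range over a countable dense subset of $\K$ yields $P_J(\alpha)=0$, i.e.\ $J(\alpha)=\{0\}$, for a.e.\ $\alpha\in E$. Because $J(\alpha)\neq\{0\}$ $\mu$-a.e., this forces $\mu(E)=0$: every finite-measure subset of $Z$ is $\mu$-null.

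The remaining, and genuinely delicate, step is to pass from \emph{``every finite-measure subset of $Z$ is null''} to $\mu(Z)=0$; I expect this to be the main obstacle, since for a general Borel measure a ``purely infinite'' part (every subset null or of infinite measure) could survive everything above. I would close the gap using the uniqueness clause for decomposing measures: the restriction $\mu|_S$ is itself a $\sigma$-finite decomposing measure for $\pi$, because every function of $V_J$ is supported in $S$, so $U$ carries $\H$ unitarily into $V_{J|_S}\subset L^2(\hat G,\mu|_S;\K)$ with $J|_S\neq\{0\}$ a.e.\ $\mu|_S$. If $\mu(Z)>0$, then $\mu$ and $\mu|_S$ would be two decomposing measures for $\pi$ that are not mutually absolutely continuous, contradicting their uniqueness up to mutual absolute continuity. (Alternatively, one may simply note that $\hat G$ is second countable, hence $\sigma$-compact, so that if $\mu$ is taken to be locally finite it is automatically $\sigma$-finite and $Z$ splits into countably many finite-measure pieces, each null by the previous paragraph.) Either way $\hat G = S\cup Z$ is a countable union of finite-measure sets, so $\mu$ is $\sigma$-finite.
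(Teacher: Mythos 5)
Your first half --- covering $S=\bigcup_n\{\alpha:\psi_n(\alpha)\neq 0\}$ by the Chebyshev sets $\{\alpha : \Norm{\psi_n(\alpha)}^2>1/k\}$ of finite measure --- is exactly the paper's argument. The divergence, and the problem, lies in how you dispose of $Z=\hat G\setminus S$. The paper kills $Z$ in one stroke: since $\{\psi_n\}$ generates $V_J$ as an MI space, formula \eqref{MI0} of Theorem~\ref{thm:MISpa} gives $J(\alpha)=\overline{\spn}\{\psi_n(\alpha): n\}$ for a.e.\ $\alpha$, so $J(\alpha)=\{0\}$ for a.e.\ $\alpha\in Z$, and the hypothesis $J(\alpha)\neq\{0\}$ a.e.\ forces $\mu(Z)=0$ outright --- no finite-measure restriction ever enters. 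Your substitute, projecting the test functions $\chi_E v$ via Lemma~\ref{lem:rangProj}, is correct as far as it goes, but it only reaches subsets $E\subset Z$ with $\mu(E)<\infty$, and this is what manufactures the ``purely infinite part'' problem you then have to fight.

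Neither of your two proposed escapes from that problem is solid. Option (a) invokes the uniqueness of decomposing measures up to mutual absolute continuity; but that uniqueness statement belongs to the same direct-integral and multiplicity theory whose standard formulation (in the sources the paper cites) is for $\sigma$-finite measures, so invoking it for a possibly non-$\sigma$-finite $\mu$ in order to \emph{prove} $\sigma$-finiteness risks exactly the circularity this lemma is meant to remove. Option (b) assumes $\mu$ is locally finite, which is not part of the paper's definition of a decomposing measure; and if local finiteness were granted, $\sigma$-compactness of the second countable group $\hat G$ would make the lemma immediate and render all of the preceding work unnecessary, which is a strong hint that it is not the intended hypothesis. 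So as written the argument has a genuine gap at its final step. The repair is simple: replace the projection argument on $Z$ by an appeal to \eqref{MI0}, which is the paper's route.
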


\begin{proof}
Let $\mu$ be a decomposing measure for $\pi$. By definition there exists a separable Hilbert space $\mathcal{K}$, a measurable range function
\[ J \colon \hat{G} \to \{ \text{closed subspaces of }\mathcal{K}\} \]
with $J(\alpha)\neq\{0\}$ $\mu$-a.e.\ $\alpha \in \hat{G}$, and a unitary $U \colon \H \to V_{J} \subset L^2(\hat{G},\mu;\mathcal{K})$ intertwining $\pi$ with modulation.

Since $V_{J} \cong \H$ is separable, it has a countable orthonormal basis $\{ \varphi_i \}_{i=1}^N$.
By Theorem \ref{thm:MISpa}, the range function $J$ satisfies 
\[ J(\alpha) = \overline{\spn}\{ \varphi_i(\alpha) : 1 \leq i \leq N\} \qquad (\alpha \in \hat{G}) \]
Since $J(\alpha) \neq \{0\}$ $\mu$-a.e.~$\alpha \in \hat{G}$, we have
\[ \mu(E_0) := \mu(\{ \alpha \in \hat{G} : \varphi_i(\alpha) = 0 \text{ for all }i \}) = 0. \]
Finally, $\hat{G}$ is the union of $E_0$ and the sets
\[ E_{i,n} := \{ \alpha \in \hat{G} : \Norm{\varphi_i(\alpha)}^2 > 1/n \}, \]
each of which satisfies $\mu(E_{i,n}) \leq n \Norm{\varphi_i}^2 < \infty$.
\end{proof}

\bibliographystyle{abbrv}
\bibliography{MI}

\end{document}